\tikzset{
  curve/.style={
    settings={#1},
    to path={
      (\tikztostart)
      .. controls ($(\tikztostart)!\pv{pos}!(\tikztotarget)!\pv{height}!270:(\tikztotarget)$)
      and ($(\tikztostart)!1-\pv{pos}!(\tikztotarget)!\pv{height}!270:(\tikztotarget)$)
      .. (\tikztotarget)\tikztonodes
    },
  },
  settings/.code={%
    \tikzset{quiver/.cd,#1}%
    \def\pv##1{\pgfkeysvalueof{/tikz/quiver/##1}}%
  },
  quiver/.cd,
  pos/.initial=0.35,
  height/.initial=0,
}
\numberwithin{equation}{section}
\newtheorem{theorem}{Theorem}[section]
\newtheorem{proposition}[theorem]{Proposition}
\newtheorem{corollary}[theorem]{Corollary}
\newtheorem{lemma}[theorem]{Lemma}
\theoremstyle{definition}
\newtheorem{remark}[theorem]{Remark}
\newtheorem{definition}[theorem]{Definition}
\newtheorem{example}[theorem]{Example}
\newcommand{\C}{\mathbb{C}}
\newcommand{\Z}{\mathbb{Z}}
\newcommand{\Aqt}{\mathbb{A}_{q,t}}
\newcommand{\Bqt}{\mathbb{B}_{q,t}}
\newcommand{\Eqt}{\mathbb{E}_{q,t}}
\newcommand{\DBqt}{\mathbb{DB}_{q,t}}
\newcommand{\sq}{\square}
\newcommand{\adj}{\Theta}
\newcommand{\e}{\mathbbm{1}}
\newcommand{\Sym}{\mathrm{Sym}}
\DeclareMathOperator{\End}{End}
\newcommand{\AH}{\mathrm{AH}}
\newcommand{\EHA}{\mathcal{E}_{q,t}}
\newcommand{\ext}{\mathrm{ext}}
\newcommand{\z}{\mathbf{z}} %for the formal variable in the relations 
\newcommand{\w}{\mathbf{w}}
\def\opp{\mathrm{opp}}
\def\modd{\operatorname{-mod}}
\newcommand{\Ad}{\mathrm{Ad}}
\newcommand{\ad}{\mathrm{ad}}
\newcommand{\newword}[1]{\emph{\textbf{#1}}}
\author{Nicolle Gonz\'alez}
\address{Department of Mathematics,  University of California Berkeley , CA 94720-3840, U.S.A.}
\email{nicolle@math.berkeley.edu}
\author{Eugene Gorsky}
\address{Department of Mathematics\\ University of California, Davis\\ One Shields Avenue, Davis, CA, USA}
\email{egorskiy@math.ucdavis.edu}
\author{Jos\'e Simental}
\address{Instituto de Matem\'aticas, Universidad Nacional Aut\'onoma de M\'exico. Ciudad Universitaria, CDMX,  M\'exico}
\email{simental@im.unam.mx}
\title{The elliptic Hall algebra and the double Dyck path algebra}%spherical $\Bqt$}
\begin{document}

\begin{abstract}
    We show that the positive half $\mathcal{E}_{q,t}^{>}$ of the elliptic Hall algebra is embedded as a natural spherical subalgebra inside the double Dyck path algebra $\mathbb{B}_{q,t}$ introduced by Carlsson, Mellit and the second author. For this, we use the Ding-Iohara-Miki presentation of the elliptic Hall algebra and identify the generators inside $\mathbb{B}_{q,t}$. In order to obtain the entire elliptic Hall algebra $\mathcal{E}_{q,t}$, we define a \lq\lq double'' $\mathbb{DB}_{q,t}$ of the double Dyck path algebra, together with its positive and negative subalgebras and an involution that exchanges them.  
\end{abstract}

\maketitle

%\tableofcontents

\section{Introduction}

In this paper, we study the relation between the \emph{elliptic Hall algebra} $\EHA$ and the \emph{double Dyck path algebra} $\Bqt$. 

%\subsection{}

The \newword{elliptic Hall algebra} was introduced by Burban and Schiffmann in \cite{EHA} and in the last decade found remarkable applications in algebraic geometry \cite{negut2015moduli,negut2018hecke,neguț2019surfaces,KivTsai,zhao2021k,zhao2019feigin}, mathematical physics \cite{neguct2018q,neguct2022toward,neguct2016exts}, combinatorics \cite{BGLX,BHMPS,BHMPS2,BHMPS3,Mellit}, knot theory \cite{GN,MortonSamuelson} and categorification \cite{GNtrace1,GNtrace2}. 

Although $\EHA$ has many equivalent definitions, in this paper we will use the isomorphism \cite{SchiffmannDrinfeld} between $\EHA$ and the so-called {\bf Ding-Iohara-Miki algebra} \cite{FFJMM}. This allows us to define  $\EHA$ by generators $e_{m},f_m,\ m\in \Z$ and $\psi_m^{\pm},\ m\in \Z_{\ge 0}$ modulo certain explicit relations, see Section \ref{sec:EHA-generators-and-relations}. The algebra $\EHA$ is graded with all $e_m$ of degree 1, all $f_m$ of degree $(-1)$ and all $\psi_m^{\pm}$ of degree zero. In particular, one can study the so-called \emph{positive} subalgebras:
$$
\EHA^{>}=\langle e_m\rangle \subset \EHA^{\geq}=\langle \psi_{m}^{\pm}, e_m\rangle\subset \EHA.
$$
The algebra $\EHA$ can be reconstructed from $\EHA^{\geq}$ by a formal Drinfeld double construction \cite{EHA} and admits an anti-involution $\Theta_{\mathcal{E}}$ exchanging the algebra $\EHA^{\geq}$ with the \emph{negative} part $\EHA^{\leq} = \langle \psi_m^{\pm}, f_m\rangle$. 

The algebra $\EHA$ acts on the space $\Sym_{q,t}$ of symmetric functions over $\C(q,t)$ in infinitely many variables. This polynomial representation was defined and studied by Feigin-Tsymbaliuk \cite{FT} and Schiffmann-Vasserot \cite{SV}; see Section \ref{sec: EHA polynomial} for the explicit formulas. In particular, the generators $e_m$ increase the degree of a symmetric function by one, the generators $f_m$ decrease the degree by one, and the generators $\psi^{\pm}_m$ preserve the degree.
\smallskip

The \newword{double Dyck path algebra} $\Bqt$ (or rather its cousin $\Aqt$) was introduced by Carlsson and Mellit \cite{Shuffle} in their proof of celebrated Shuffle Conjecture in algebraic combinatorics, further  generalized in \cite{Mellit}. Later work found deep connections between $\Bqt$, algebraic geometry of  parabolic Hilbert schemes \cite{CGM}, representation theory of double affine Hecke algebras (DAHA) \cite{milo,Milo2,Milo3,IonWu}, and skein theory \cite{Novarini, GH}. A characteristic feature of $\Bqt$ is a collection of idempotents $\e_k, k\ge 0$ which decompose any\footnote{We will only consider representations $V$ of $\Bqt$ which are locally finite in the sense that for any $v \in V$, $\e_kv \neq 0$ only for finitely many $k$.} 
representation $V$ as a direct sum 
\begin{equation}
\label{eq: V decomposition intro}
V=\bigoplus_{k=0}^{\infty}V_k,\ V_k=\e_kV.
\end{equation}
The generators of $\Bqt$ act between the different subspaces $V_k$, namely for each $k$ there is:
\begin{itemize}
\item an operator $d_{+}:V_k\to V_{k+1}$,
\item an operator $d_{-}:V_k\to V_{k-1}$,
\item an affine Hecke algebra $\AH_k$ with generators $T_i,z_i$ acting on the space $V_k$.
\end{itemize}
The algebra $\Bqt$ also has a polynomial representation \eqref{eq: V decomposition intro} such that $V_0=\Sym_{q,t}$ and
$V_k=\Sym_{q,t}[y_1,\ldots,y_k]$ for $k>0$. 
In our previous work \cite{calibrated} we constructed a large class of \newword{calibrated} representations of $\Bqt$ that, in particular, includes the polynomial representation.

It was observed in \cite{CGM} and  \cite{Mellit} that the polynomial representations of $\EHA^{>}$ and $\e_0\Bqt\e_0$ are similar (see Lemma \ref{lem: polynomial reps agree} for a more precise statement). 

\begin{proposition}
Define the elements
\begin{equation}
\label{eq: def e intro}
e_m:=\e_0d_{-}z_1^md_+\e_0\in \e_0 \Bqt \e_0,\ m\in \Z.
\end{equation}
Then the action of $e_m$ in the polynomial representation agrees with the action of the eponymic generators in $\EHA^{>}$, up to a scalar.
\end{proposition}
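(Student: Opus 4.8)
The plan is to prove this by comparing explicit operators on the polynomial representation $\Sym_{q,t}$. Since both $\EHA^{>}$ and $\e_0\Bqt\e_0$ act on $\Sym_{q,t}$, and the statement is about agreement of actions (up to scalar), it suffices to compute the operator $e_m = \e_0 d_{-}z_1^m d_{+}\e_0$ acting on an arbitrary symmetric function $F \in \Sym_{q,t} = V_0$ and match it against the known Feigin–Tsymbaliuk/Schiffmann–Vasserot formula for the generator $e_m$ of $\EHA^{>}$ recalled in Section \ref{sec: EHA polynomial}. The key point is that the abstract relations in $\EHA^{>}$ among the $e_m$ are determined by their action in the (faithful) polynomial representation, so matching operators on $\Sym_{q,t}$ is exactly what is needed; no separate verification of relations is required once the operators coincide.

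\medskip

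I would carry this out in the following steps. First I would use the explicit polynomial-representation formulas for $\Bqt$ to unwind $\e_0 d_{-}z_1^m d_{+}\e_0$. Applying $d_{+}$ sends $F \in V_0 = \Sym_{q,t}$ into $V_1 = \Sym_{q,t}[y_1]$; the operator $d_{+}$ in the polynomial representation has an explicit formula, typically of the form $d_{+}F = $ (a plethystic/multiplication operation introducing the new variable $y_1$). Next, $z_1$ acts on $V_1$ as the Cherednik–Dunkl-type operator from the affine Hecke algebra $\AH_1$, and raising it to the $m$-th power produces the factor $z_1^m$; because $V_1$ involves a single $y$-variable, $z_1$ acts in a controlled, essentially diagonal-in-degree way that makes $z_1^m$ tractable. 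Finally $d_{-}$ maps $V_1 \to V_0$ by an explicit contraction/specialization formula, and composing with $\e_0$ projects back to $\Sym_{q,t}$. Assembling these three explicit formulas gives a closed-form operator on $\Sym_{q,t}$ depending on $m$.

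\medskip

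The last step is the actual comparison: I would match the resulting closed-form operator against the generator $e_m$ of $\EHA^{>}$ from Section \ref{sec: EHA polynomial}, identifying the proportionality scalar (which may depend on $q,t$ and possibly on $m$). Lemma \ref{lem: polynomial reps agree}, which records that the two polynomial representations agree, should supply the dictionary between the two sets of variables/operators and pin down the scalar precisely. I expect the cleanest route is to test both operators on a spanning set — for instance power-sum symmetric functions $p_\lambda$, or better the operators' action read off via generating series in a formal variable $\sum_m e_m w^m$ — so that the single-variable integral/residue coming from $d_{-}\,(\cdot)\,d_{+}$ collapses to a recognizable expression.

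\medskip

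The main obstacle I anticipate is bookkeeping rather than conceptual: the operator $z_1$ in the affine Hecke algebra $\AH_1$ acts on $\Sym_{q,t}[y_1]$ through a nontrivial combination of multiplication by $y_1$ and a difference operator twisted by the $\Sym_{q,t}$ factor, so computing $z_1^m$ and then contracting with $d_{-}$ requires careful tracking of plethystic substitutions and of the way the single Hecke variable interacts with the symmetric-function coefficients. Packaging everything into a generating function $\sum_m e_m w^m$ should tame this, since the sum $\sum_m z_1^m w^m = (1 - w z_1)^{-1}$ converts the power into a geometric series whose $d_{-}(\cdot)d_{+}$ contraction I expect to recognize directly as the generating series of the $\EHA^{>}$ generators, up to the global scalar.
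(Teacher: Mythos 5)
Your overall strategy---unwind the operator $\e_0 d_{-}z_1^m d_{+}\e_0$ on $V_0$ and compare it with the Feigin--Tsymbaliuk formula---is exactly the paper's (Lemma \ref{lem:comparison-e-action}), but your execution has a genuine gap at the step you dismiss as ``bookkeeping.'' You propose to compute $z_1^m$ on $V_1 = \Sym_{q,t}[y_1]$ in raw plethystic coordinates, asserting that $z_1$ acts ``in a controlled, essentially diagonal-in-degree way.'' It does not: $z_1$ preserves the grading but is a nontrivial Cherednik-type difference operator mixing $y_1$ with the symmetric-function part, and neither its $m$-th power nor the resolvent $(1-wz_1)^{-1}$ from your generating-series plan has a usable closed form in those coordinates---inverting $1-wz_1$ is precisely what diagonalization accomplishes, so the geometric-series trick presupposes the missing idea rather than supplying it. The paper's device is to work in the calibrated CGM basis, where everything collapses to one line: by \eqref{eq: d+ polynomial} with $k=0$ one has $d_{+}I_\lambda = \pm\sum_x c(\lambda;x)I_{\lambda,(x)}$ with $x$ running over addable boxes, by \eqref{eq: z polynomial} the element $z_1$ acts on $I_{\lambda,(x)}$ by the scalar $x$ (the $(q,t)$-content), so $z_1^m$ contributes $x^m$, and by \eqref{eq: d- polynomial} one has $d_{-}I_{\lambda,(x)} = I_{\lambda\cup x}$; hence $e_m I_\lambda = \sum_{\mu = \lambda\cup x}c(\lambda;x)x^m I_\mu$, and the comparison with Corollary \ref{cor: e and f short} (prepared by the coefficient manipulations of Lemma \ref{lem: c for partitions as a product}) yields agreement with the $m$-independent scalar $-(1-q)(1-t)$. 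Diagonalizing $z_1$ is not optional cleanup; it is the crux, and without it your middle step would stall.

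A second, logical problem: you invoke Lemma \ref{lem: polynomial reps agree} to ``supply the dictionary'' and ``pin down the scalar,'' but in the paper that lemma \emph{is} the statement being proved (it is deduced from Lemmas \ref{lem:comparison-e-action} and \ref{lem:comparison-f-action}), so using it as an input is circular. What you may legitimately use is the identification of $V_0$ with $\Sym_{q,t}$ under which the basis element $I_\lambda$ is the modified Macdonald polynomial $\widetilde{H}_\lambda(X;q,t)$ (cited from \cite[Theorem 7.0.1]{CGM}); it is this identification, not the lemma, that aligns the two actions box-by-box and lets the coefficient comparison do the work. One further small point: the scalar comes out independent of $m$, which is what makes the single rescaling $e_m = -(1-q)(1-t)\tilde e_m$ uniform across all generators---your hedge that the scalar ``possibly depends on $m$'' would, if realized, weaken the statement actually needed later.
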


The main result of the paper (Theorem \ref{thm: main intro} below)  
greatly strengthens this result, by showing that in fact the \emph{spherical subalgebra} $\e_0 \Bqt \e_0$ of $\Bqt$ is generated by the elements $e_m$ and isomorphic to $\EHA^{>}$. 

\subsection{The Double of $\Bqt$}

The algebra $\Bqt$ has a nonnegative grading (with $\deg(d_-)=\deg(T_i)=\deg(z_i)=0$ and $\deg(d_+)=1$) which agrees with the grading on the polynomial representation. In particular, any element of $\e_0\Bqt\e_0$ acting on $\Sym_{q,t}$ must increase the degree, and there is no hope of obtaining the generators $f_m\in \EHA$ using $\Bqt$.

To circumvent this problem, we define a new algebra $\DBqt$, a ``double" of $\Bqt$ akin to the Drinfeld double construction of $\EHA$ in which we define the new families of elements $\psi_m^\pm$ and
\begin{equation}
\label{eq: def f intro}
f_m:=\e_0d_{+}z_1^md_{-}\e_0\in \e_0 \DBqt \e_0,\ m\in \Z.
\end{equation}
$\DBqt$ also has ``positive" and ``negative" parts exchanged by an anti-involution $\adj$, and a
family of new idempotents $\e_{-k}=\Theta(\e_k),k\ge 0$. The
decomposition in \eqref{eq: V decomposition intro} transforms into
\begin{equation}
\label{eq: V decomposition intro double}
V=\bigoplus_{k=-\infty}^{\infty}V_k,\ V_k=\e_kV.
\end{equation}
Our first main result defines the polynomial representation of $\DBqt$ and compares it with the polynomial representation of $\EHA$.

\begin{theorem}
\label{thm: polynomial double intro}
The algebra $\DBqt$ admits a polynomial representation $V=\bigoplus_{k=-\infty}^{\infty}V_k$ with 
$$
V_k=\begin{cases}
\Sym_{q,t}[y_1,\ldots,y_k] & \text{if}\ k>0\\
\Sym_{q,t} & \text{if}\ k=0\\
\Sym_{q,t}[y_1,\ldots,y_{|k|}] & \text{if}\ k<0.\\
\end{cases}
$$
In particular, the action of the generators $e_m$, $f_m$, and $\psi_m^\pm$ on $V_0=\Sym_{q,t}$ defined in \eqref{eq: def e intro} and \eqref{eq: def f intro} agrees with the action of the eponymic generators of $\EHA$, up to scalars.
\end{theorem}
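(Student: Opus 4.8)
The plan is to construct the representation explicitly and then verify the defining relations of $\DBqt$ by splitting them along the positive/negative grading. For the nonnegative spaces $\bigoplus_{k\ge 0}V_k$, together with the operators $d_+$, $d_-$ and the affine Hecke algebras $\AH_k$, I would simply take the polynomial representation of $\Bqt$ built in \cite{calibrated}; no new input is required there. The content of the theorem lies in the negative spaces $V_{-k}$ and in the way the two halves are glued. To define the action on the $V_{-k}$, I would use the anti-involution $\adj$ and the symmetry $\e_{-k}=\adj(\e_k)$: realize each $V_{-k}$ as $\Sym_{q,t}[y_1,\dots,y_k]$, fix a nondegenerate pairing between $V_k$ and $V_{-k}$, and declare the negative subalgebra to act on the negative spaces as the mirror image, under $\adj$, of the positive subalgebra acting on the positive spaces. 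Since $\adj$ is an anti-isomorphism exchanging the positive and negative subalgebras, this automatically transports every relation valid on the positive side to the corresponding relation on the negative side.

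With the action in place, I would group the defining relations of $\DBqt$ into three families: (i) relations among the positive generators, (ii) relations among the negative generators, and (iii) the mixed relations, which involve $d_+$- and $d_-$-type operators straddling the degree-zero space $V_0$. Family (i) holds because the nonnegative part is exactly the $\Bqt$-representation of \cite{calibrated}, and family (ii) holds by the $\adj$-symmetry of the construction. The entire problem therefore reduces to verifying family (iii).

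Family (iii) is the crux and, I expect, the main obstacle. The operators $e_m=\e_0 d_- z_1^m d_+\e_0$ raise degree on $V_0$ by passing through $V_1$, whereas $f_m=\e_0 d_+ z_1^m d_-\e_0$ lower degree by passing through the newly created space $V_{-1}$; consequently a double relation of the shape $[e_m,f_n]=(\text{a combination of }\psi)$ imposes a precise compatibility between the known $\Bqt$-action near $V_1$ and the $\adj$-mirrored action near $V_{-1}$. I would check these relations by direct computation in the polynomial representation: commute $d_\pm$ past the generators $z_i$ and $T_i$ using the affine Hecke relations, evaluate the resulting composites on a symmetric function, and collect terms. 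The useful simplification is that on $V_0$ both composites collapse to operators expressible through the standard creation and annihilation operators on $\Sym_{q,t}$, so each mixed relation becomes an identity of rational functions in $q$, $t$ and the spectral parameters, which can be verified term by term.

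It remains to match the action on $V_0$ with $\EHA$. The agreement of the $e_m$ with the eponymous generators of $\EHA^{>}$ is precisely the Proposition stated above, which itself rests on Lemma \ref{lem: polynomial reps agree}; applying $\adj$ and the pairing transfers this statement to the agreement of the $f_m$ with the eponymous negative generators of $\EHA$. Finally, the $\psi_m^\pm$ are read off from the mixed commutators computed in family (iii) and compared against the explicit polynomial-representation formulas of Feigin--Tsymbaliuk \cite{FT} and Schiffmann--Vasserot \cite{SV}. Once $e_m$, $f_m$ and $\psi_m^\pm$ are seen to act as the corresponding $\EHA$-generators up to scalars and to satisfy the $\EHA$-relations on $V_0$, the comparison is complete.
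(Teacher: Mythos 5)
Your architecture is essentially the paper's: construct the nonnegative half from the calibrated polynomial representation, produce the negative half as the $\adj$-mirror, observe that the only relation genuinely straddling $V_0$ is (DB$^0$), and match the degree-zero operators with Feigin--Tsymbaliuk. But you diverge from the paper on both of the nontrivial steps, and one of your choices creates unnecessary work. For the negative half, the paper does \emph{not} transport structure through $\adj$; it constructs $V^{-}$ directly, with an explicit basis $I^{-}_{\lambda,\underline{w}}$ indexed by the opposite Young poset and formulas \eqref{eq: z polynomial negative}--\eqref{eq: d- polynomial negative}, and proves these define a $\DBqt^{\leq}$-representation via the isomorphism $\DBqt^{\leq}\cong \mathbb{DB}^{\geq}_{q^{-1},t^{-1}}$ together with the calibrated machinery of \cite[Theorem 3.25]{calibrated}; the key new input is that the dual coefficients $c^{\ast}(\mu;x)$ of \eqref{eq: c* for partitions} satisfy the dual monodromy conditions (Lemma \ref{lem:dual-monodromy-conditions}, resting on Lemma \ref{lem: c c*}). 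Your duality route is the interpretation the paper records in a remark (citing \cite[Theorem 7.10]{calibrated}), and it can be made rigorous, but only with care you currently elide: one must pass to the \emph{locally finite} part of the graded dual (on the full dual the transposed operators do not preserve the desired space), and one must rescale the dual basis to realize $V_{-k}\cong \Sym_{q,t}[y_1,\dots,y_k]$ with clean formulas -- this is exactly what the calibrated structure supplies. Also note that $\DBqt^{\geq}$ is strictly larger than the algebra of \cite{calibrated}: the operators $\Delta^{\ast}_{p_m}$ are new, though since they act diagonally (formula \eqref{eq: Delta* polynomial}) the extra relations are immediate.

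For your family (iii), verifying $[e(\z),f(\w)]$ by direct term-by-term computation in the polynomial representation would amount to reproving \cite[Theorem 3.1]{FT}: the off-diagonal cancellations between $e_mf_n$ and $f_ne_m$, and especially the diagonal identity expressing the sum over add-then-remove-a-box terms as the eigenvalue of $\psi^{\pm}$, are precisely Feigin--Tsymbaliuk's nontrivial content. The paper avoids this computation entirely: Lemmas \ref{lem:comparison-e-action} and \ref{lem:comparison-f-action} identify the actions of $e_m$ and $f_m$ on $V_0$ with the eponymic EHA operators (up to the scalar $-(1-q)(1-t)$ for $e_m$), Corollary \ref{cor: feigin tsymbaliuk psi exponential} identifies the $\psi^{\pm}(\z)$ action, and then (DB$^0$) follows by citation -- the scalar mismatch being exactly why \eqref{eq: weak e and f commutation in double} carries the factor $(1-q^{-1}t^{-1})^{-1}$ rather than $g(1,1)^{-1}$. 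Since your final paragraph performs this comparison anyway, your direct computation is redundant; drop it and the argument closes faster. One logical slip should also be corrected: in $\DBqt$ the elements $\psi^{\pm}_m$ are not ``read off from the mixed commutators''; they are defined a priori by the generating series \eqref{eq: def psi plus}--\eqref{eq: def psi minus} in $\e_0\Delta_{p_m}\e_0$ and $\e_0\Delta^{\ast}_{p_m}\e_0$, and (DB$^0$) is an imposed relation whose validity on $V$ is precisely what must be checked.
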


\begin{remark}
More precisely, for $k > 0$ the space $V_k = \Sym_{q,t}[y_1, \dots, y_k] = \Sym_{q,t} \otimes \C[y_1, \dots, y_k]$ has a natural grading, and $V_{-k}$ is the graded dual space. Since the polynomial representation of $\Bqt$ is calibrated, the space $V_k$ comes equipped with a natural graded basis \cite{OrrMilo} and this allows us to identify $V_k^{\vee}$ with $V_k$, see Sections \ref{sec:polynomial-bqt} and \ref{sec:polynomial-dbqt} for details. 
\end{remark}

\subsection{The Spherical Subalgebra}

Next, we study the {\bf spherical subalgebra} $\e_0\DBqt\e_0$ in detail. The following is the primary result of the paper.

\begin{theorem}
\label{thm: main intro}
There exists an isomorphism $\e_0\DBqt\e_0\simeq \EHA$ which sends the generators $e_m,f_m,$ and $\psi_m^\pm$ to their namesakes in $\EHA$.
\end{theorem}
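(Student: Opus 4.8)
The plan is to construct an explicit algebra homomorphism $\Phi\colon \EHA \to \e_0\DBqt\e_0$ sending each generator to its namesake, and then to prove that $\Phi$ is bijective. For the first step I would verify that the elements $e_m = \e_0 d_- z_1^m d_+ \e_0$, $f_m = \e_0 d_+ z_1^m d_- \e_0$ and $\psi_m^\pm$ satisfy the Ding--Iohara--Miki relations recalled in Section~\ref{sec:EHA-generators-and-relations}. Since those relations are phrased through generating series, the efficient route is to assemble $\sum_m e_m z^{-m}$, $\sum_m f_m z^{-m}$ and the $\psi^\pm$-series as operators built from $d_\pm$ and the affine Hecke generators, and to reduce each relation to an identity in $\e_0\DBqt\e_0$. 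The relations internal to the positive half are already guaranteed by the known isomorphism $\e_0\Bqt\e_0 \cong \EHA^{>}$, and the ones internal to the negative half follow by applying the anti-involution $\adj$; the genuinely new relations to check are the mixed $ef$-relations and the relations coupling the $e_m,f_m$ to the $\psi_m^\pm$, which encode how the two halves interact across the idempotent $\e_0$.

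For surjectivity I would show that $\e_0\DBqt\e_0$ is generated by the $e_m$, $f_m$ and $\psi_m^\pm$. Using the grading of $\DBqt$ and the decomposition $V = \bigoplus_k V_k$ of \eqref{eq: V decomposition intro double}, any element of $\e_0\DBqt\e_0$ is a sum of words $\e_0 w \e_0$ in which $d_+$ and $d_-$ alternate with affine Hecke elements sandwiched between them. The strengthened positive-part statement reduces every such word supported in nonnegative degree to a polynomial in the $e_m$; applying $\adj$ gives the analogous reduction of negative-degree words to polynomials in the $f_m$; and the $\psi_m^\pm$ capture the degree-zero contributions, namely the operators $\e_0 d_- \cdots d_+\e_0$ that preserve $V_0$. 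Straightening a general $\e_0 w \e_0$ into this form is essentially the same problem as establishing a triangular decomposition, so surjectivity and injectivity are intertwined through the structure theory of the double.

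Injectivity is the crux, and I expect it to be the main obstacle. The naive strategy---invoking faithfulness of the polynomial representation on $V_0 = \Sym_{q,t}$---fails, because by Theorem~\ref{thm: polynomial double intro} that representation of $\EHA$ factors through a fixed central character and hence cannot detect the whole algebra. Instead I would exploit that $\DBqt$ is built as a Drinfeld double: both $\EHA$ and $\e_0\DBqt\e_0$ arise from their ``$\ge$'' subalgebras by the double construction of \cite{EHA}, paired by the same Hopf pairing. Concretely, writing $\DBqt^{>}$ and $\DBqt^{\le}$ for the positive part and the negative-plus-Cartan part, I would establish a triangular decomposition
\begin{equation}
\e_0\DBqt\e_0 \;\cong\; (\e_0\DBqt^{\le}\e_0)\otimes(\e_0\DBqt^{>}\e_0)
\end{equation}
mirroring $\EHA \cong \EHA^{\le}\otimes\EHA^{>}$, show that $\Phi$ carries one decomposition to the other, and verify that $\Phi$ restricts to an isomorphism on each factor: on the positive factor this is the known isomorphism with $\EHA^{>}$, on the negative-plus-Cartan factor it follows by transporting through $\adj$ and $\Theta_{\mathcal{E}}$ together with a direct computation for the $\psi_m^\pm$. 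A homomorphism respecting both triangular decompositions and inducing isomorphisms on the factors is itself an isomorphism, which yields injectivity.

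The principal difficulty is therefore proving that the triangular decomposition of $\e_0\DBqt\e_0$ is genuine, i.e.\ that the multiplication map from the tensor product of the factors is bijective with no unexpected collapsing. This is a PBW-type theorem for the double $\DBqt$ restricted to the spherical subalgebra, and it is precisely here that the detailed design of $\DBqt$---its defining relations and the behavior of the new idempotents $\e_{-k} = \adj(\e_k)$---must be used to guarantee non-degeneracy of the pairing between the positive and negative halves.
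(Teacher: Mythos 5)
Your proposal contains two genuine gaps, and the first is fatal: you presuppose the hardest part of the theorem. You write that the relations internal to the positive half are ``already guaranteed by the known isomorphism $\e_0\Bqt\e_0\cong\EHA^{>}$'', and your surjectivity argument then reduces arbitrary words to polynomials in the $e_m$ using this ``strengthened positive-part statement''. But that isomorphism is not a known input --- it is the main new content of the paper. What was known beforehand is only that the $e_m$ act on the polynomial representation like the eponymous $\EHA$-generators up to scalar; this gives a surjection $\EHA^{>}\twoheadrightarrow\Eqt^{>}$ onto the subalgebra \emph{generated by} the $e_m$, but says nothing about whether that subalgebra exhausts $\e_0\Bqt\e_0$. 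The paper must prove this generation statement from scratch, and it is the crux: first a straightening algorithm with ``special elements'' shows that $\e_0\Bqt\e_0$ is generated by the elements $Y_{m_1,\ldots,m_n}=\e_0 d_-z_1^{m_1}\varphi\cdots\varphi z_1^{m_n}d_+\e_0$ (Theorem \ref{thm: special generate}), then the commutation relations \eqref{eq: Y relation 1} and \eqref{eq: Y relation 2} are established by lengthy computation (Theorem \ref{thm: Y commutation relation}), and finally a result of \cite{GNtrace1} converts these relations into the statement that each $Y_{m_1,\ldots,m_n}$ is a polynomial in the $e_k$ (Theorem \ref{thm: spherical generated by e}). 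None of this machinery appears in your proposal, so your surjectivity argument is circular.

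Second, your injectivity discussion rejects the strategy that actually works and replaces it with an unproven one. Faithfulness of the polynomial representation of $\EHA$ is precisely \cite[Proposition 4.10]{SV}, quoted as Theorem \ref{thm: poly EHA faithful}; for the algebra $\EHA$ as normalized in this paper the ``naive'' argument goes through and is exactly the paper's proof: the defining relations of $\DBqt$ (note that the mixed relation $[e(\z),f(\w)]$ is literally the relation (DB$^0$) built into the definition, so there is nothing to verify there) yield a surjection $\EHA\twoheadrightarrow\Eqt$, the actions on $V_0$ agree up to scalars (Lemma \ref{lem: polynomial reps agree}), so the composition $\EHA\to\Eqt\to\End(V_0)$ is injective and hence so is $\EHA\to\Eqt$ (Corollary \ref{cor: bqt to EHA}). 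Your substitute --- a triangular/PBW decomposition of $\e_0\DBqt\e_0$ justified by viewing it as a Drinfeld double with a nondegenerate Hopf pairing --- has no foundation here: $\DBqt$ is defined by quiver generators and relations, not as a double with a pairing, and you concede that the nondegeneracy/PBW step (which is essentially equivalent to the theorem itself) is left unproven. In the paper, such structural facts about $\e_0\DBqt\e_0$ are corollaries of the isomorphism, not inputs to it.
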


An immediate consequence of the polynomial representation of $\EHA$ being faithful \cite[Proposition 4.10]{SV}, is the following.

\begin{corollary}
The polynomial representation of $\e_0\DBqt\e_0$ (and hence of $\e_0\Bqt\e_0$) is faithful.
\end{corollary}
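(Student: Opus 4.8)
The plan is to transport the faithfulness of the $\EHA$ polynomial representation across the isomorphism of Theorem~\ref{thm: main intro}. Write $\Phi\colon \e_0\DBqt\e_0 \xrightarrow{\sim} \EHA$ for that isomorphism, let $\rho\colon \EHA \to \End(\Sym_{q,t})$ denote the polynomial representation of $\EHA$, and let $\pi\colon \e_0\DBqt\e_0 \to \End(V_0)$ denote the polynomial representation of the spherical subalgebra, i.e.\ the action on $V_0 = \Sym_{q,t}$ induced by the representation $V$ of Theorem~\ref{thm: polynomial double intro}. The goal is to show $\ker\pi = 0$.

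First I would observe, using Theorem~\ref{thm: polynomial double intro}, that on the generators $e_m$, $f_m$, $\psi_m^\pm$ the homomorphism $\pi\circ\Phi^{-1}$ agrees with $\rho$ up to nonzero scalars. Packaging these scalars as an automorphism $\sigma$ of $\EHA$ (rescaling each generator by the relevant nonzero scalar, which one checks preserves the Ding--Iohara--Miki relations of Section~\ref{sec:EHA-generators-and-relations}), we obtain $\pi\circ\Phi^{-1} = \rho\circ\sigma$, hence $\pi = \rho\circ\sigma\circ\Phi$. Since $\rho$ is faithful by \cite[Proposition~4.10]{SV}, and both $\sigma$ and $\Phi$ are isomorphisms, the composite $\pi$ is injective, which proves faithfulness of the polynomial representation of $\e_0\DBqt\e_0$.

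For the parenthetical statement about $\e_0\Bqt\e_0$, I would use that $\e_0\Bqt\e_0$ sits inside $\e_0\DBqt\e_0$ as a subalgebra acting on the same space $V_0$ compatibly with $\pi$ (the polynomial representation of $\Bqt$ on $V_0$ being the restriction of that of $\DBqt$ by the construction of Theorem~\ref{thm: polynomial double intro}); the restriction of the injective map $\pi$ to any subalgebra remains injective, so the polynomial representation of $\e_0\Bqt\e_0$ is faithful as well. The only point needing genuine care---the ``main obstacle,'' though a mild one---is verifying that the generator-wise rescaling of Theorem~\ref{thm: polynomial double intro} indeed assembles into a well-defined automorphism $\sigma$, equivalently that the chosen scalars are compatible across all the defining relations of $\EHA$. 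Once this compatibility is confirmed, the corollary follows purely formally.
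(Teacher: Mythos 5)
Your proposal is correct and is essentially the paper's own argument: the paper also obtains the corollary immediately by transporting faithfulness of the polynomial representation of $\EHA$ (Theorem \ref{thm: poly EHA faithful}, i.e.\ \cite[Proposition 4.10]{SV}) across the isomorphism $\e_0\DBqt\e_0\simeq\EHA$ of Theorem \ref{thm: main intro}, with the generator-wise scalar comparison supplied by Lemma \ref{lem: polynomial reps agree}, and the statement for $\e_0\Bqt\e_0$ following by restriction to a subalgebra. The one step you defer---that the scalars assemble into an automorphism $\sigma$---is in fact automatic and needs no relation-by-relation check: since $\pi\circ\Phi^{-1}$ and $\rho$ are both representations of $\EHA$ and $\rho$ is faithful, every defining relation $r$ satisfies $\rho(\sigma(r))=\pi(\Phi^{-1}(r))=0$ and hence $\sigma(r)=0$ (note the scalars are forced to be compatible, e.g.\ the scalar on $\psi^{\pm}_m$ must equal the product of those on $e_m$ and $f_m$ because of relation \eqref{eq: commutation of e and f}, which is exactly the normalization built into (DB$^0$)).
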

It remains unknown whether the polynomial representation of $\DBqt$, or even of $\Bqt$, is faithful. 

The proof of Theorem \ref{thm: main intro} consists of several different steps: 
\smallskip

In the {\bf first step}, we introduce the subalgebra $\Eqt\subset \e_0\DBqt\e_0$ generated by $e_m,f_m,$ and $\psi_m^\pm$ and prove the following:

\begin{theorem}
\label{thm: intro step 1}
We have the isomorphism $\Eqt\simeq \EHA$ which sends the generators $e_m,f_m$ and $\psi_m^\pm$ from \eqref{eq: def e intro},\eqref{eq: def f intro} to their eponyms in $\EHA$.
\end{theorem}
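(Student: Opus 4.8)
The plan is to construct a surjective algebra homomorphism $\phi\colon \EHA \to \Eqt$ sending each DIM generator to its eponym, and then prove that $\phi$ is injective by exploiting the faithfulness of the polynomial representation of $\EHA$; the map in the statement is then $\phi^{-1}$. Since $\EHA$ is \emph{defined} by the Ding--Iohara--Miki generators $e_m, f_m, \psi_m^\pm$ modulo the relations of Section~\ref{sec:EHA-generators-and-relations}, the existence of $\phi$ is equivalent to checking that the elements $e_m = \e_0 d_- z_1^m d_+ \e_0$, $f_m = \e_0 d_+ z_1^m d_- \e_0$, and the $\psi_m^\pm$ satisfy all of those relations as identities \emph{inside} $\e_0\DBqt\e_0$. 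Surjectivity is then automatic, since by definition $\Eqt$ is generated by these very elements.

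For the relation-checking I would organize the work according to the three blocks of the DIM presentation. First, the relations among the $e_m$ alone hold because $\langle e_m\rangle = \e_0\Bqt\e_0$, and this positive spherical subalgebra is identified with $\EHA^{>}$ in the positive-half step of the paper; in particular the quadratic and cubic (Serre-type) relations for the $e$-current are inherited from $\Bqt$. Second, the relations among the $f_m$ alone come for free from the anti-involution $\adj$ of $\DBqt$, which interchanges $\e_k \leftrightarrow \e_{-k}$ and $e_m \leftrightarrow f_m$: because $\Theta_{\mathcal{E}}$ carries $\EHA^{\geq}$ isomorphically onto $\EHA^{\leq}$, the $f$-relations are precisely the $\adj$-images of the $e$-relations. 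Third, and most importantly, one must verify the \emph{mixed} relations --- the commutator $[e_m, f_n]$ expressed through the $\psi_m^\pm$, the mutual commutativity of the $\psi$'s, and the $\psi$--$e$ and $\psi$--$f$ relations --- directly in $\e_0\DBqt\e_0$, using the defining relations of the double together with the structure of the Hecke generators $T_i, z_i$ and of $d_\pm$.

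Granting the surjection $\phi$, injectivity follows from a faithfulness argument. Let $\pi\colon \Eqt \to \End(V_0)$ denote the action on $V_0 = \Sym_{q,t}$ coming from the polynomial representation of $\DBqt$. By Theorem~\ref{thm: polynomial double intro}, after normalizing the generators by the appropriate scalars --- which, by the structure of the DIM relations, amounts to precomposing with a rescaling automorphism $e_m\mapsto a\,e_m,\ f_m\mapsto a^{-1}f_m$ of $\EHA$ --- the composite $\pi\circ\phi\colon \EHA \to \End(\Sym_{q,t})$ is exactly the Feigin--Tsymbaliuk--Schiffmann--Vasserot polynomial representation, which is faithful by \cite[Proposition 4.10]{SV}. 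Hence $\Ker\phi \subseteq \Ker(\pi\circ\phi) = 0$, so $\phi$ is injective and therefore an isomorphism.

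The main obstacle is the third block of relations. The difficulty is that this must be carried out \emph{at the algebra level} rather than in the representation: since the polynomial representation of $\DBqt$ is not known to be faithful, merely matching $[e_m, f_n]$ with the correct combination of $\psi_m^\pm$ on $V_0$ would not suffice. Establishing the commutator identity as an identity in $\e_0\DBqt\e_0$ requires pushing $f_n = \e_0 d_+ z_1^n d_- \e_0$ past $e_m = \e_0 d_- z_1^m d_+ \e_0$ through the doubling relations while keeping careful track of how the new idempotents $\e_{\pm k}$ and the affine Hecke generators reorganize the resulting expression into the $\psi$-currents. This bookkeeping, together with the precise identification of the scalars linking the $\psi_m^\pm$ of $\DBqt$ with those of the DIM presentation, is where the real content of the theorem lies.
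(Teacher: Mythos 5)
Your overall scaffolding (build a surjection $\EHA\twoheadrightarrow\Eqt$ by checking the DIM relations, get injectivity from faithfulness of the polynomial representation via Theorem \ref{thm: poly EHA faithful} and Lemma \ref{lem: polynomial reps agree}) is exactly the paper's route, and your treatment of the $e$-block (direct verification in $\Bqt$) and the $f$-block (apply $\adj$, as in Corollary \ref{cor: E minus}) matches Theorem \ref{thm: E plus} and its corollaries. But there is a genuine gap precisely where you locate ``the real content of the theorem.'' You propose to \emph{derive} the commutator relation $[e(\z),f(\w)]=\cdots$ inside $\e_0\DBqt\e_0$ by pushing $f_n=\e_0 d_+z_1^n d_-\e_0$ past $e_m=\e_0 d_- z_1^m d_+\e_0$ through the doubling relations. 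This computation cannot succeed: in $\DBqt$ the relation \eqref{eq: weak e and f commutation in double} is itself a \emph{defining relation} (DB$^0$), not a consequence of the others. As the paper remarks after the definition of $\DBqt$, none of the relations (DB$^\pm$) involve $\e_0[d_+,d_-]\e_0$, so the quiver relations alone place no constraint relating $\e_0 d_-z_1^m d_+ d_+z_1^n d_-\e_0$-type words to $\e_0 d_+z_1^nd_-d_-z_1^md_+\e_0$-type words; the $[e,f]$ identity is imposed by fiat precisely to glue the two halves, mirroring the Drinfeld double construction of $\EHA$. So this step is trivial given the definition of $\DBqt$ (which the paper emphasizes in the introduction: ``the equation for the commutator $[e_m,f_k]$ is built into the definition of $\DBqt$''), whereas the computation you plan would stall. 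The genuinely non-automatic mixed verifications are instead the $\psi$--$e$ relations \eqref{eq: commutation e with psi}, which the paper gets cheaply from $[\Delta_{p_m},e_k]=e_{k+m}$, $[\Delta^*_{p_m},e_k]=e_{k-m}$ (Lemma \ref{lem: delta e commutation new}) together with the exponential identity of Lemma \ref{lem: exp identity}, since $\psi^{\pm}(\z)$ are by construction exponentials in the commuting $\Delta$'s; you lump these into the ``hard'' block, but they follow essentially formally, while the real labor sits in the quadratic and cubic relations for the $e_m$ in Sections \ref{sec: quadratic check}--\ref{sec: cubic check}, which you dispatch in one sentence.

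Two smaller points. First, your justification of the $e$-relations via ``$\langle e_m\rangle=\e_0\Bqt\e_0$'' invokes Theorem \ref{thm: spherical generated by e}, which is the much harder steps 2--3 of the paper and is logically downstream of the present theorem; as written this flirts with circularity, though it is harmless if you only mean (as the paper does) that \eqref{eq: quadratic e} and \eqref{eq: cubic e} are checked by direct computation with $d_\pm$, $z_i$, $T_i$. Second, the normalization is not a precomposition with an automorphism of the form $e_m\mapsto a\,e_m$, $f_m\mapsto a^{-1}f_m$: by Lemmas \ref{lem:comparison-e-action} and \ref{lem:comparison-f-action}, only $e_m$ acquires the scalar $-(1-q)(1-t)$ while $f_m$ and $\psi_m^{\pm}$ act exactly as their eponyms, and the compensating scalar is absorbed into \eqref{eq: weak e and f commutation in double} itself, which differs from \eqref{eq: commutation of e and f} by exactly the factor $-(1-q)^{-1}(1-t)^{-1}$. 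This is bookkeeping rather than a flaw in the injectivity argument, but as stated your rescaling does not match the one actually needed.
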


The proof of Theorem \ref{thm: intro step 1} occupies Section \ref{sec: EHA relations}. We first use the relations in $\Bqt$ to prove that the elements $e_m\in \e_0\Bqt\e_0$ satisfy the defining relations \eqref{eq: quadratic e} and \eqref{eq: cubic e} in $\EHA^{>}$. Next, we use the involution $\Theta$ to prove the analogous result for $f_m$. The rest of the relations involving the generators  $\psi_m^{\pm}$ follow more or less automatically (in particular, the equation for the commutator $[e_m,f_k]$ is built into the definition of $\DBqt$). Altogether, this yields a surjective homomorphism
$$
\EHA\twoheadrightarrow \Eqt.
$$
Using Theorem \ref{thm: polynomial double intro} and faithfulness of the
polynomial representation of $\EHA$, we conclude that this is an isomorphism.

For the {\bf second step}, we define certain elements
\begin{equation}
\label{eq: def Y intro}
Y_{m_1,\ldots,m_n}:=\e_0d_{-}z_1^{m_1}\varphi z_2^{m_2}\cdots \varphi z_1^{m_n}d_{+}\e_0\in \e_0\Bqt\e_0\quad m_i\in \Z,
\end{equation}
where $\varphi:=\frac{1}{q-1}[d_{+},d_{-}]$.
The elements $Y_{m_1,\ldots,m_n}$ were previously considered (in the polynomial representation of $\Bqt$) in \cite[Section 8.2]{CGM}.
Evidently, for $n=1$ we have
$Y_{m}=e_m \in \Eqt$. However, it is not clear from this definition that $Y_{m_1,\ldots,m_n}\in \Eqt$  for $n>1$. Nevertheless, we prove the following.

\begin{theorem}
\label{thm: intro step 2}
The elements $Y_{m_1,\ldots,m_n}$ generate the spherical subalgebra $\e_0\Bqt\e_0$.
\end{theorem}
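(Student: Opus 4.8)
The plan is to show that the subalgebra $\mathcal{Y}\subseteq\e_0\Bqt\e_0$ generated by the $Y_{m_1,\ldots,m_n}$ contains a spanning set of $\e_0\Bqt\e_0$; the reverse inclusion is immediate from \eqref{eq: def Y intro}. Since $d_{\pm}$ shift the idempotent grading by $\pm 1$ while the $T_i,z_i$ preserve it, the space $\e_0\Bqt\e_0$ is spanned by monomials $\e_0 w\e_0$ in which $w$ is a word in $d_+,d_-,T_i,z_i$ whose associated lattice path starts and ends at level $0$ and stays nonnegative (a Dyck-type path). Decomposing such a path at its returns to level $0$ and inserting $\e_0$ at each return, any such monomial factors as a product of \emph{prime} excursions $\e_0 d_- P d_+\e_0$, where $P$ is a word acting at level $\ge 1$. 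Thus it suffices to prove that each prime excursion lies in $\mathcal{Y}$.

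First I would record the basic identity coming from $\varphi=\frac{1}{q-1}[d_+,d_-]$, namely
$$d_-d_+ = d_+d_- - (q-1)\varphi,$$
together with the observation that on $V_1$ one has $d_+d_-=d_+\e_0 d_-$, so that the term $d_+d_-$ splits a word at level $0$. The base case is height $1$: here $P$ is a polynomial in $z_1$ (as $\AH_1$ has no $T_i$), and $\e_0 d_- z_1^m d_+\e_0=Y_m\in\mathcal{Y}$ by definition, while repeated returns to level $0$ give products of such elements. For the inductive step I would take a prime excursion of maximal height $h$ and locate, at a peak, an innermost factor $d_- R d_+$ that climbs from level $h-1$ to level $h$, with $R$ a level-$h$ operator in the $z_i,T_i$. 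Applying the identity above converts this factor into $d_+d_-$, which dips below level $h-1$ and hence lowers the height (or splits off a product of shorter excursions at level $0$), plus a multiple of $\varphi$; iterating, the whole excursion should be rewritten so that all remaining up-steps are realized by insertions of $\varphi$ at level $1$, matching the shape of $Y_{m_1,\ldots,m_n}$ in \eqref{eq: def Y intro}.

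The hard part will be controlling the decorations during this reduction. Commuting $d_{\pm}$ past the $z_i$ and $T_i$ across a change of level produces correction terms governed by the defining relations of $\Bqt$, and one must verify that (i) the Hecke generators $T_i$ appearing at levels $>1$ are absorbed, so that only powers of $z_1$ survive in each level-$1$ slot between consecutive $\varphi$'s, and (ii) no $\varphi$ acting at level $>1$ is left over, since such a $\varphi$ would itself reach above level $1$ and is not of $Y$-type. Point (ii) is the genuine obstacle: a single application of the identity at level $h-1$ introduces $\varphi$ acting at level $h-1$, which again climbs to level $h$, so a naive induction on height does not terminate. To close this I would establish auxiliary commutation relations transporting $\varphi$ and $z_i$ between adjacent levels through $d_{\pm}$ and $T_i$ (the analogues, for $\varphi$, of the relations already available for $z_i$), allowing every higher-level $\varphi$ to be pushed down to level $1$ at the cost of strictly lower terms.

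Finally, with this bookkeeping in place, a lexicographic induction on the triple consisting of maximal height, number of up-steps at the top level, and total $z$-degree should reduce every prime excursion to a linear combination of products of $Y_{m_1,\ldots,m_n}$, giving $\mathcal{Y}=\e_0\Bqt\e_0$. I expect step (ii) — finding and proving the relations that move $\varphi$ down one level — to be the crux, as it is exactly what defeats the naive height induction and what makes the elements $Y_{m_1,\ldots,m_n}$ with $n>1$, rather than merely the $e_m$, the natural output of the reduction.
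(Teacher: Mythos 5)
Your high-level strategy --- treat $\varphi$ as an extra letter, split words in $\e_0\Bqt\e_0$ at returns to level $0$, and rewrite peaks via $d_-d_+=d_+d_--(q-1)\varphi$ --- is the same rewriting that drives the paper's proof (Theorem \ref{thm: special generate}), but your proposal stops exactly where the work begins: both of your steps (i) and (ii) are announced rather than proved, and the termination of your lexicographic induction is asserted, not verified. Concretely, the ``auxiliary commutation relations transporting $\varphi$ between adjacent levels'' that you defer as the crux are not something to be discovered: they are already among the defining relations of $\Bqt$, namely \eqref{eq:phi}, which gives $\varphi d_+ = qT_1^{-1}d_+\varphi$ and $q\varphi d_- = d_-\varphi T_{k-1}$, together with $z_i\varphi=\varphi z_{i-1}$ and $T_i\varphi=\varphi T_{i-1}$ (Lemma \ref{lem: phi relations}(a)). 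The genuine missing idea in your sketch is how to dispose of the Hecke generators these relations create: each time you push a $\varphi$ down through $d_+$ you produce a $T_1^{-1}$, and absorbing an arbitrary element of $\AH_k$ into a word in $d_-,\varphi,z_i$ is a nontrivial lemma (the paper's Lemma \ref{lem: special T}), whose proof needs the basis $T_wz_1^{a_1}\cdots z_k^{a_k}$ of $\AH_k$, the relations $d_-^2T_{k-1}=d_-^2$ and $d_-\varphi T_{k-1}=q\varphi d_-$, and crucially $\varphi^2T_{k-1}=T_1\varphi^2$ (Lemma \ref{lem: phi relations}(b), imported from \cite{CGM}), via a case analysis on the two rightmost letters $d_-d_-$, $\varphi d_-$, $d_-\varphi$, $\varphi\varphi$. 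Your point (i) \emph{is} this lemma, and without it neither your reduction nor your termination measure can be checked: absorption reorders $d_-$'s and $\varphi$'s inside the word (e.g.\ $\varphi d_-T_{k-1}=d_-\varphi+(1-q)\varphi d_-$), which changes the levels at which the $\varphi$'s sit, so it is not clear your triple (height, top up-steps, $z$-degree) strictly decreases.

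It is also worth noting how the paper sidesteps the non-termination you worry about, since its solution differs from your step (ii). The paper never tries to push every higher-level $\varphi$ down to level $1$: it defines \emph{special} words (products of $d_-,\varphi,z_i$ only, with $\varphi$'s allowed at every level), shows they are stable, modulo \emph{factorizable} elements that restart the induction at level $0$, under right multiplication by each generator (Lemmas \ref{lem: special T} and \ref{lem: special d+}), and inducts simply on the number of $d_-$ and $\varphi$ letters. The higher-level $\varphi$'s then disappear for free: a special element of $\e_0\Bqt\e_1$ contains exactly one $d_-$, so all its loops sit at level $1$ and it is automatically some $A_{m_1,\ldots,m_n}$, whence $A_{m_1,\ldots,m_n}d_+=Y_{m_1,\ldots,m_n}$ as in \eqref{eq: def Y}. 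If you import \eqref{eq:phi} and prove the absorption lemma, your argument essentially becomes the paper's (with the cosmetic difference that you push $\varphi$'s through $d_+$ while the paper pushes $d_+$ through $\varphi$'s); but as written, the proposal has a genuine gap at precisely the step you yourself flag as decisive.
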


In Section \ref{sec: special} we describe an explicit inductive procedure for writing an arbitrary element of $\e_0\Bqt\e_0$ as a polynomial in $Y_{m_1,\ldots,m_n}$. In fact, we define a ``higher level" analogues of $Y_{m_1,\ldots,m_n}$  in $\e_0\Bqt\e_k$ for $k\ge 0$ and prove that they generate $\e_0\Bqt\e_k$, see Theorem \ref{thm: special generate}.

Finally, in the {\bf third step} we prove the containment $Y_{m_1,\ldots,m_n}\in \Eqt$ for all $n$. 
Presenting $Y_{m_1,\ldots,m_n}$ as explicit polynomials in $e_k$ seems to be a 
challenging problem 
so we choose a rather indirect approach inspired by \cite{negut2018hecke,GNtrace1}. First, we prove some commutation relations for $Y_{m_1,\ldots,m_n}$.

\begin{theorem}
\label{thm: intro step 3}
The following relations hold:
\begin{equation}
\label{eq: Y relation 1 intro}
Y_{m_1,\ldots,m_i,m_{i+1},\ldots,m_n}-qtY_{m_1,\ldots,m_i-1,m_{i+1}+1,\ldots,m_n}=-Y_{m_1,\ldots,m_i}Y_{m_{i+1},\ldots,m_n},
\end{equation}
\begin{equation}
\label{eq: Y relation 2 intro}
[e_k,Y_{m_1,\ldots,m_n}]=(t-1)(q-1)\sum_{i=1}^{n}\begin{cases}
\sum_{a=1}^{k-m_i}Y_{m_1,\ldots,m_{i-1},k-a,m_i+a,m_{i+1},\ldots,m_n} & \text{if}\ k>m_i\\
0 & \text{if}\ k=m_i\\
-\sum_{a=1}^{m_i-k}Y_{m_1,\ldots,m_{i-1},m_i-a,k+a,m_{i+1},\ldots,m_n} & \text{if}\ k< m_i.
\end{cases}
\end{equation}
\end{theorem}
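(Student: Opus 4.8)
The plan is to establish both identities as equalities of elements of the spherical subalgebra $\e_0\Bqt\e_0$, working directly from the defining relations of $\Bqt$ (the affine Hecke relations among the $T_i, z_i$ in each $\AH_k$, together with the relations governing $d_+, d_-$ and their interaction with the Hecke generators). I would emphasize at the outset that this must be done at the level of the algebra rather than in the polynomial representation: faithfulness of the polynomial representation of $\Bqt$ is not available, and faithfulness of the polynomial representation of $\e_0\Bqt\e_0$ is only obtained afterward as a consequence of the main theorem, so verifying these relations on $\Sym_{q,t}$ would be circular.

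For \eqref{eq: Y relation 1 intro}, I would observe that passing from $Y_{\ldots,m_i,m_{i+1},\ldots}$ to $Y_{\ldots,m_i-1,m_{i+1}+1,\ldots}$ only alters the local segment $\cdots z^{m_i}\varphi z^{m_{i+1}}\cdots$ of the chain in \eqref{eq: def Y intro}, all other factors being untouched. The heart of the matter is thus a single local identity: using the definition $\varphi=\frac{1}{q-1}[d_+,d_-]$ together with the commutation rules between $d_\pm$ and the $z_j$, I would show that $z^{a}\varphi z^{b}-qt\,z^{a-1}\varphi z^{b+1}$ equals a term in which the commutator $\varphi$ \emph{breaks}, i.e.\ resolves into a factor of the shape $d_+\e_0\cdots\e_0 d_-$. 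Reinstating the bookends $\e_0 d_-(\cdots)$ and $(\cdots)d_+\e_0$ then separates the chain into two independent sub-chains, producing exactly the product $-Y_{m_1,\ldots,m_i}Y_{m_{i+1},\ldots,m_n}$. The coefficient $qt$ on the left is precisely the scalar produced by combining the $T_i$-mediated exchange $z_i\leftrightarrow z_{i+1}$ with the degree shift carried by $\varphi$, which is the true source of the telescoping.

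For \eqref{eq: Y relation 2 intro} I would argue by induction on $n$. In the base case $n=1$ one has $Y_k=e_k$, and I would compute $[e_k,Y_m]$ directly from \eqref{eq: def e intro} and \eqref{eq: def Y intro}: sliding the $d_-z^k d_+$ of $e_k$ past the chain of $Y_m$ and collecting the resulting two-strand terms $\e_0 d_- z^{c}\varphi z^{d}d_+\e_0=Y_{c,d}$ yields a geometric-type sum $\sum_a Y_{k-a,m+a}$ with overall coefficient $(t-1)(q-1)$; whether this sum runs ``forward'' or ``backward,'' and hence both the sign and the three-case structure with vanishing at $k=m$, is dictated by the sign of $k-m$ in the resummation. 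For the inductive step I would use \eqref{eq: Y relation 1 intro} to rewrite $Y_{m_1,\ldots,m_n}$ in terms of products of strictly shorter chains (up to shifted $Y$'s), apply the Leibniz rule for the derivation $[e_k,-]$, and invoke the induction hypothesis; reassembling the contributions over all split points reproduces the stated sum over insertion positions $i$.

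The main obstacle is the base of the induction together with the two local identities it rests on: extracting the precise $q,t$-coefficients, the global sign, and the $k\gtrless m_i$ trichotomy requires a careful combination of the affine Hecke exchange relation with the $d_\pm$--$z$ commutation rules, and it is exactly here that the delicate resummation of the sliding process must be carried out. Once the $n=1$ case of \eqref{eq: Y relation 2 intro} and the factorization \eqref{eq: Y relation 1 intro} are in hand, the passage to general $n$ is a formal consequence of the Leibniz rule and the induction hypothesis.
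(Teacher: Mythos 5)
Your treatment of \eqref{eq: Y relation 1 intro} is essentially the paper's proof: the identity is indeed local, and the paper's Lemma \ref{lem: A relation 1} implements exactly the breaking you describe, rewriting $z_1^{m_i-1}(z_1\varphi-qt\varphi z_1)\e_1=-z_1^{m_i}d_+d_-\e_1$ so that the chain factors through $\e_0$ and splits as $-Y_{m_1,\ldots,m_i}Y_{m_{i+1},\ldots,m_n}$. One small correction: the coefficient $qt$ does not come from any $T_i$-mediated exchange --- at level $1$ there are no $T_i$ at all --- but directly from the defining relation \eqref{eq:qphi} specialized to $k=1$. Your insistence on working in the algebra rather than the polynomial representation is also correct and matches the paper.

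However, your inductive scheme for \eqref{eq: Y relation 2 intro} has a genuine gap. Relation \eqref{eq: Y relation 1 intro} expresses only the \emph{difference} $Y_{\underline{m}}-qt\,Y_{\underline{m}'}$, where $\underline{m}'=(m_1,\ldots,m_i-1,m_{i+1}+1,\ldots,m_n)$, as a product of shorter chains; both terms in the difference have length $n$. Solving for $Y_{\underline{m}}$ gives $Y_{\underline{m}}=qt\,Y_{\underline{m}'}-Y_{m_1,\ldots,m_i}Y_{m_{i+1},\ldots,m_n}$, and since the shift $(m_i,m_{i+1})\mapsto(m_i-1,m_{i+1}+1)$ can be iterated forever without the tuple ever shortening, you can never eliminate the length-$n$ term and apply the induction hypothesis. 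The most your Leibniz argument yields is that the defect $E(\underline{m}):=[e_k,Y_{\underline{m}}]-\mathrm{RHS}(\underline{m})$ satisfies $E(\underline{m})=qt\,E(\underline{m}')$ for every adjacent shift, which does not force $E=0$ in $\Bqt$: there is no completion or grading that makes the iterated $(qt)^N$-rescaling converge, and representation-theoretic arguments are excluded for the reason you yourself give. Indeed, if length reduction via \eqref{eq: Y relation 1 intro} alone were possible, the containment $Y_{\underline{m}}\in\Eqt^{>}$ would be nearly immediate, whereas the paper must invoke the nontrivial result \cite[Proposition 2.13]{GNtrace1} (which introduces denominators, cf.\ Example \ref{eq: Y 00}) precisely because it is not. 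The paper instead proves \eqref{eq: Y relation 2 intro} for all $n$ simultaneously by a transport argument: Lemma \ref{lem: right end} rewrites $Y_{\underline{m}}e_k$ with the element $d_-T_1^{-1}z_1^kd_+$ inserted at the right end of the chain; Corollary \ref{cor: push T left} commutes this element leftward past one $z_1^{m_i}\varphi$ at a time, each step emitting a correction term $D_i$ involving $h_{m_i-k-1}(z_1,z_2)$; at the left end it resolves into $e_kY_{\underline{m}}$ up to a $Y_{k,\underline{m}}$ term; and Lemma \ref{lem: A relation 2} (which is where relation \eqref{eq: Y relation 1 intro} actually enters, via Lemma \ref{lem: A relation 1}) converts each $D_i$ into the local sums, with the trichotomy in $k\gtrless m_i$ coming from the sign behavior of $h_{m_i-k-1}$ rather than from a resummation in a base case. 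Your base case $n=1$ is fine in substance, but it is the general-$n$ step that needs this different mechanism.
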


\begin{remark}
Theorem \ref{thm: intro step 3} is motivated by the work of Negu\cb{t} who constructed analogous elements $X_{m_1,\ldots,m_n}\in \EHA^{>}$ in \cite[Proposition 6.2]{NegutShuffle} and proved in \cite{negut2018hecke} that they satisfy the relations \eqref{eq: Y relation 1 intro} and \eqref{eq: Y relation 2 intro}.
See also \cite[Proposition 4.2.1]{BHMPS}.
\end{remark}

The proof of these relations occupies Section \ref{sec: Y proofs}. Next, we use the results of \cite{GNtrace1} and equations \eqref{eq: Y relation 1 intro}-\eqref{eq: Y relation 2 intro} to prove  in Theorem \ref{thm: spherical generated by e} that $Y_{m_1,\ldots,m_n}$ can be expressed as  polynomials in $e_m$.

\begin{corollary}
\label{cor: intro Y in E}
The elements $Y_{m_1,\ldots,m_n}$ are contained in $\Eqt$.
\end{corollary}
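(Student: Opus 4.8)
The plan is to identify each $Y_{m_1,\dots,m_n}$, under the isomorphism $\Eqt\simeq\EHA$ of Theorem~\ref{thm: intro step 1}, with the shuffle element $X_{m_1,\dots,m_n}\in\EHA^{>}$ constructed by Negu\cb{t} in \cite[Proposition 6.2]{NegutShuffle}. Since the $X_{m_1,\dots,m_n}$ are by construction polynomials in the generators $e_m$, they already lie in $\Eqt$, and the Corollary follows once the equality $Y_{m_1,\dots,m_n}=X_{m_1,\dots,m_n}$ is established. Both families share the base case $Y_m=e_m=X_m$ (the former by \eqref{eq: def Y intro} at $n=1$), and by Theorem~\ref{thm: intro step 3} together with \cite{negut2018hecke} (see also \cite[Proposition 4.2.1]{BHMPS}) both satisfy the \emph{same} pair of relations \eqref{eq: Y relation 1 intro} and \eqref{eq: Y relation 2 intro}. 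It therefore suffices to show that these two relations, together with the base case in length one, determine the whole family uniquely; the identification, and hence the containment $Y_{m_1,\dots,m_n}\in\Eqt$, then follows.

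This uniqueness I would prove by induction on the length $n$, the case $n=1$ being the base case. Assume all elements of length $<n$ are determined, equivalently already known to lie in $\Eqt$. First, relation \eqref{eq: Y relation 1 intro}, rewritten as $Y_{m_1,\dots,m_n}=qt\,Y_{m_1,\dots,m_i-1,m_{i+1}+1,\dots,m_n}-Y_{m_1,\dots,m_i}Y_{m_{i+1},\dots,m_n}$ for $1\le i\le n-1$, expresses the difference between two length-$n$ tuples differing by one unit of weight moved across the adjacent positions $(i,i+1)$ in terms of a product of two strictly shorter elements, which lies in $\Eqt$ by the inductive hypothesis. Since these adjacent moves preserve the total degree $s=\sum_j m_j$ and, being the root vectors $\pm(e_i-e_{i+1})$, act transitively on the integer points of $\{\sum_j x_j=s\}\subset\Z^n$, all length-$n$ elements of a fixed degree $s$ coincide modulo $\Eqt$ with a single chosen representative $Y_{\vec r}$, up to an explicit power of $qt$. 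Second, applying relation \eqref{eq: Y relation 2 intro} to a length-$(n-1)$ tuple $\vec p$ (which lies in $\Eqt$ by the inductive hypothesis) and commuting with a generator $e_k$ produces a length-$n$ combination $[e_k,Y_{\vec p}]\in\Eqt$; one checks directly from \eqref{eq: Y relation 2 intro} that every tuple occurring on the right-hand side has total degree $|\vec p|+k$, so after the first reduction this combination becomes, modulo $\Eqt$, a single scalar multiple $c\cdot Y_{\vec r}$ of the representative of that degree.

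The main obstacle is precisely the nonvanishing of the scalar $c$. Collecting the contributions in \eqref{eq: Y relation 2 intro} after the reduction of the first step turns $c$ into a signed sum of powers of $qt$ of telescoping, geometric-series type, and establishing $c\neq0$ for suitable choices of $\vec p$ and $k$ is exactly the kind of computation carried out for Negu\cb{t}'s $X$-elements in \cite{GNtrace1}; since $q,t$ are generic this reduces to checking that a certain Laurent polynomial in $q,t$ is not identically zero, which can be read off from a single leading term. Granting $c\neq0$, we solve $c\cdot Y_{\vec r}\in\Eqt$ for $Y_{\vec r}\in\Eqt$, and letting $k$ and $\vec p$ range over all admissible values realizes every total degree $s$; by the first reduction every length-$n$ element then lies in $\Eqt$. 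This completes the induction, yields $Y_{m_1,\dots,m_n}=X_{m_1,\dots,m_n}$, and proves the Corollary.
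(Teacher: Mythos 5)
Your proposal is correct and takes essentially the same route as the paper: the paper likewise deduces the containment from the two relations of Theorem \ref{thm: intro step 3}, invoking \cite[Theorem 2.13]{GNtrace1} (see the proof of Theorem \ref{thm: spherical generated by e}) for precisely the induction you sketch — adjacent $qt$-moves via \eqref{eq: Y relation 1 intro} plus commutators with $e_k$ via \eqref{eq: Y relation 2 intro} and a nonvanishing scalar. Note that your detour through uniqueness and the identification $Y_{m_1,\dots,m_n}=X_{m_1,\dots,m_n}$ is unnecessary (the induction already yields $Y_{m_1,\dots,m_n}\in\Eqt$ directly), and at the one genuinely delicate point, the nonvanishing of the scalar $c$, you defer to the same reference \cite{GNtrace1} that the paper cites wholesale.
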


By combining Theorem \ref{thm: intro step 2} and Corollary \ref{cor: intro Y in E} we conclude that $\e_0\Bqt\e_0$ is indeed generated by the elements $e_m$. This implies:

\begin{theorem}
\label{thm: intro spherical is E}
We have $\e_0\DBqt\e_0=\Eqt$.
\end{theorem}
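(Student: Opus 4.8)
The inclusion $\Eqt\subseteq\e_0\DBqt\e_0$ holds by the very definition of $\Eqt$ as the subalgebra generated by the $e_m$, $f_m$, and $\psi_m^\pm$, so the content is entirely in the reverse inclusion $\e_0\DBqt\e_0\subseteq\Eqt$. My plan is to exploit that $\DBqt$ is assembled as a Drinfeld-type double: it is generated by $\Bqt$, by its image $\adj(\Bqt)$ under the anti-involution, and by the Cartan elements $\psi_m^\pm$, and it carries a triangular decomposition with these as its positive, negative, and Cartan parts. Because the Cartan commutes with every idempotent $\e_k$, sandwiching this decomposition between two copies of $\e_0$ should identify the spherical subalgebra $\e_0\DBqt\e_0$ as the algebra generated by the three \emph{spherical} pieces
$$
\e_0\Bqt\e_0,\qquad \adj\!\left(\e_0\Bqt\e_0\right)=\e_0\,\adj(\Bqt)\,\e_0,\qquad\text{and}\qquad \{\psi_m^\pm\}.
$$
Granting this, it suffices to check that each of the three pieces is contained in $\Eqt$.

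For the positive piece, Theorem~\ref{thm: intro step 2} gives that $\e_0\Bqt\e_0$ is generated by the elements $Y_{m_1,\ldots,m_n}$, while Corollary~\ref{cor: intro Y in E} gives $Y_{m_1,\ldots,m_n}\in\Eqt$ for all $n$; hence $\e_0\Bqt\e_0\subseteq\Eqt$. For the negative piece I would use that $\adj$ is an anti-involution with $\adj(\e_0)=\e_0$ which, by the way the double is set up, sends the distinguished generators $e_m,f_m,\psi_m^\pm$ of $\Eqt$ back into $\Eqt$ (under the isomorphism of Theorem~\ref{thm: intro step 1} it corresponds to the anti-involution $\Theta_{\mathcal E}$ of $\EHA$ exchanging $\EHA^{\geq}$ and $\EHA^{\leq}$); consequently $\adj(\Eqt)=\Eqt$. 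Applying $\adj$ to the inclusion just obtained yields $\e_0\,\adj(\Bqt)\,\e_0=\adj(\e_0\Bqt\e_0)\subseteq\adj(\Eqt)=\Eqt$. The Cartan piece lies in $\Eqt$ by definition. Combining the three and invoking the generation statement of the previous paragraph gives $\e_0\DBqt\e_0\subseteq\Eqt$, and hence $\e_0\DBqt\e_0=\Eqt$.

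The main obstacle is precisely the structural claim of the first paragraph: a priori the spherical subalgebra is only \emph{spanned} by mixed products $\e_0\,b^-\,c\,b^+\,\e_0$ with $b^-\in\adj(\Bqt)$, $b^+\in\Bqt$, and $c$ in the Cartan, and one must show that such a product can be rewritten using only the three spherical generating sets. Inserting resolutions of the identity $\sum_k\e_k$ and using $c\,\e_k=\e_k\,c$ collapses each mixed product into a sum of terms $(\e_0 b^-\e_k)(\e_k c\,\e_k)(\e_k b^+\e_0)$; the difficulty is that the factors $\e_0 b^-\e_k$ and $\e_k b^+\e_0$ are not themselves spherical, so controlling how the idempotents $\e_k$ interlace the two halves of the double — and re-expressing these off-diagonal factors through the spherical generators — is the delicate point. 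This is exactly the bookkeeping that the explicit construction of $\DBqt$ in the body of the paper is designed to make manageable.
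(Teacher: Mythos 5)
Your three containments are exactly the ingredients of the paper's proof: the positive spherical piece lies in $\Eqt$ by Theorem \ref{thm: intro step 2} together with Corollary \ref{cor: intro Y in E}, the negative piece follows by applying $\adj$ (which fixes $\e_0$, swaps $e_m \leftrightarrow f_m$, fixes the spherical Cartan elements, hence preserves $\Eqt$), and the Cartan piece is in $\Eqt$ by definition. However, your argument has a genuine gap at precisely the point you flag yourself: the structural claim that $\e_0\DBqt\e_0$ is generated by the three spherical pieces is asserted, identified as ``the main obstacle,'' and then deferred as bookkeeping. Since everything else is either a definition or a citation of earlier results, this claim \emph{is} the remaining content of the theorem, and a proof that postpones it is incomplete. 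Moreover, your framing of the difficulty is off: you treat $\DBqt$ as an abstract double for which one would first need a triangular decomposition or normal ordering (mixed products $\e_0 b^- c\, b^+ \e_0$, off-diagonal factors $\e_0 b^-\e_k$), and your premise that $\DBqt$ is generated by $\Bqt$, $\adj(\Bqt)$ and the $\psi^\pm_m$ is itself inaccurate, since the loops $\Delta_{p_m}, \Delta^{\ast}_{p_m}$ at vertices $k\neq 0$ are generators not visibly accounted for by that list.

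In the paper's presentation no decomposition theorem is needed, because $\DBqt$ is a quotient of the path algebra of the chain quiver $Q_{\Z}$: every monomial in $\e_0\DBqt\e_0$ is (the class of) a path from vertex $0$ to itself, and since the only arrows between distinct vertices are the nearest-neighbour $d_{\pm}$, any such path must pass through vertex $0$ each time it crosses between the positive and negative halves. Splitting the path at its visits to $0$ factors the monomial, on the nose, into monomials lying in $\e_0\DBqt^{\geq}\e_0$ and $\e_0\DBqt^{\leq}\e_0$ --- the interlacing of idempotents you worry about never occurs. The only remaining issue is the $\Delta$-loops at intermediate vertices inside a one-sided segment, and these are handled by the relations \eqref{eq:delta}, \eqref{eq:delta star}, \eqref{eq: delta d+} (and their negative counterparts \eqref{eq:negative delta}, \eqref{eq:negative delta star}, \eqref{eq: negative delta d-}): on the positive side $\Delta_{p_m}$ commutes with $d_-$, the $z_j$ and the $T_i$, while $\Delta_{p_m}d_+ = d_+\Delta_{p_m} + z_1^m d_+$, so each $\Delta$ can be pushed along its segment to vertex $0$ at the cost of terms containing strictly fewer $\Delta$'s; induction on the number of $\Delta$'s then writes every monomial of $\e_0\DBqt\e_0$ as a product of monomials in $\e_0\Bqt\e_0$, $\e_0\Bqt^{-}\e_0$ and the spherical Cartan elements $\e_0\Delta_{p_m}\e_0$, $\e_0\Delta^{\ast}_{p_m}\e_0$. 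This is the one-line decomposition the paper invokes; with it in hand, your three containments complete the proof exactly as you describe.
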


By combining Theorems \ref{thm: intro step 1} and \ref{thm: intro spherical is E}, we obtain Theorem \ref{thm: main intro} proving that $\e_0\DBqt\e_0\simeq \EHA$.

\subsection{Future Directions}

Theorem \ref{thm: main intro} opens up several new directions and questions in the representation theory of both $\EHA$ and $\DBqt$. Some of these include:

\begin{enumerate}
\item Which calibrated representations of $\Bqt$ from \cite{calibrated} can be lifted to representations of $\DBqt$? By Theorem \ref{thm: main intro} these would yield representations of $\EHA$. We plan to address this question in future work \cite{caldouble}.
\item Given a representation of $\EHA$, is it possible to lift it to a representation of $\DBqt$ or $\Bqt$? 
\item A product of several $d_{+}$ and $d_{-}$ in $\Bqt$ corresponding to a Dyck path defines an element of $\e_0\Bqt\e_0$. The action of such elements in the polynomial representation played a crucial role in \cite{Shuffle}. Is it possible to define the corresponding element of $\EHA$ more explicitly? What if we add monomials in $z_i$ between $d_{\pm}$?
\item Blasiak et al. \cite{BHMPS3} recently defined lots of interesting elements of $\EHA$ generalizing $Y_{m_1,\ldots,m_n}$. Is it possible to write these elements in $\Bqt$ more explicitly?
\item The elliptic Hall algebra $\EHA$ is part of a large family of {\bf quantum toroidal algebras} which can be assigned to an arbitrary affine Dynkin diagram ($\EHA$ corresponds to $\widehat{A_1}$). Is it possible to define the analogues of $\Bqt$ or $\DBqt$ for arbitrary Dynkin diagrams?
\item Is the polynomial representation of $\DBqt$ faithful? 
\item The elliptic Hall algebra has an integral form defined in \cite{neguct2022integral}, it would be very interesting to define a matching integral form of $\Bqt$.  Note that Theorem \ref{thm: main intro} requires working over $\C(q,t)$, since expressing the elements $Y_{m_1,\ldots,m_n}$ in terms of $e_k$ requires denominators. 
See Example \ref{eq: Y 00} for a sample  of this phenomenon.
\item There is an $\mathrm{SL}_2(\Z)$-action on the elliptic Hall algebra $\EHA$. Does this action extend to the algebra $\DBqt$? Some indication of this is given in \cite[Section 3]{Mellit}.
\end{enumerate}

\subsection*{Acknowledgments}

The authors would like to thank Milo Bechtloff Weising, Erik Carlsson, Anton Mellit, Andrei Negu\cb{t} and Monica Vazirani for many helpful discussions. The work of E. G. was partially supported by the NSF grant DMS-2302305. The work of J. S. was partially supported by CONAHCyT project CF-2023-G-106 and UNAM’s PAPIIT Grant IA102124.

\section{The Elliptic Hall Algebra}

\subsection{Generators and Relations}\label{sec:EHA-generators-and-relations} As mentioned in the introduction, the \newword{elliptic Hall algebra} arises under many guises and names in the literature. Here, we follow \cite{FFJMM} and consider the algebra $\EHA$ with generators $e_m,f_m\ (m\in \Z)$ and $\psi^+_{m},\psi^{-}_{-m}\ (m\in \Z_{\ge 0}).$  It is convenient to package these in various generating series 
\[
e(\z)=\sum_{m\in \Z}e_m\z^{-m},\ f(\z)=\sum_{m\in \Z}f_m\z^{-m},\ \psi^{+}(\z)=\sum_{m\in \Z_{\ge 0}}\psi^{+}_m\z^{-m},\ \psi^{-}(\z)=\sum_{m\in \Z_{\geq 0}}\psi^{-}_{-m}\z^{m}.
\]

Now, fixing parameters $q_1=q$, $q_2=t$ and $q_3=q^{-1}t^{-1}$ so that $q_1q_2q_3=1$, and letting $\sigma_1$ and $\sigma_2$ denote the elementary symmetric functions:
\begin{align*}
\sigma_1&=q_1+q_2+q_3 = q+t+q^{-1}t^{-1},\\
\sigma_2&=q_1q_2+q_2q_3+q_1q_3 = qt+q^{-1}+t^{-1},
\end{align*}
define the polynomial
\begin{equation}
\label{eq: def g}
g(\z,\w):=(\z-q_1\w)(\z-q_2\w)(\z-q_3\w)=\z^3-\sigma_1\z^2\w+\sigma_2\z\w^2-\w^3.
\end{equation}
The defining relations of $\EHA$ are then given by \cite{FFJMM}:
\begin{subequations}
\begin{align}
\label{eq: quadratic e}
g(\z,\w)e(\z)e(\w)&=-g(\w,\z)e(\w)e(\z),
\\
\label{eq: quadratic f}
g(\w,\z)f(\z)f(\w)&=-g(\z,\w)f(\w)f(\z),
\\
\label{eq: commutation e with psi}
g(\z,\w)\psi^{\pm}(\z) e(\w)&=-g(\w,\z)e(\w)\psi^{\pm}(\z),\ 
\\
\label{eq: commutation f with psi}
g(\w,\z)\psi^{\pm}(\z) f(\w)&=-g(\z,\w)f(\w)\psi^{\pm}(\z),
\end{align}
\begin{equation}
\label{eq: commutation of e and f}
[e(\z),f(\w)]=g(1,1)^{-1}\left(\sum_{n\in \Z}\z^n \w^{-n}\right)(\psi^+(\w)-\psi^{-}(\z)),
\end{equation}
\begin{equation}
\label{eq: psi commute}
[\psi_i^{\pm},\psi_{j}^{\pm}]=0,\ [\psi_i^{\pm},\psi_{j}^{\mp}]=0,
\end{equation}
\begin{equation}
\label{eq: psi 0 invertble}
\psi_0^{\pm}(\psi_0^{\pm})^{-1}=(\psi_0^{\pm})^{-1}\psi_0^{\pm}=1,
\end{equation}
\begin{align}
\label{eq: cubic e}
[e_0,[e_1,e_{-1}]]&=0,\\
\label{eq: cubic f}
[f_0,[f_1,f_{-1}]]&=0.
\end{align}
\end{subequations}

Notice that \eqref{eq: psi commute} simply says that $\psi_i^{\pm}$ pairwise commute, and \eqref{eq: psi 0 invertble} that both $\psi_0^{+}$ and $\psi_0^{-}$ are invertible. 

A less trivial consequence of \eqref{eq: commutation e with psi} and \eqref{eq: commutation f with psi} is that both $\psi_0^{+}$ and $\psi_0^{-}$ commute with all $e_m$ and $f_m$ and hence are central in $\EHA$. Indeed, looking at the coefficient of $\z^{3}\w^{-m}$ (for $\psi^+$) and $\z^0 \w^{-m}$ (for $\psi^-$)
on both sides of  \eqref{eq: commutation e with psi} we obtain that $\psi_0^{\pm}e_m = e_m\psi_0^{\pm}$, and similarly that $\psi_0^{\pm}f_m = f_m\psi_0^{\pm}$. Note, it is important here that the generating series $\psi^{+}(\z)$ and $\psi^{-}(\z)$ are infinite only on one side. 

Let us also elaborate on the relation \eqref{eq: commutation of e and f}. By expanding the formal power series on both sides, we see that \eqref{eq: commutation of e and f} is equivalent to:
\begin{equation}\label{eq:explicit-commutation-e-and-f}
    [e_n, f_m] = \begin{cases} g(1,1)^{-1}\psi_{m+n}^{+}, & \text{if} \; m+n > 0 \\ -g(1,1)^{-1}\psi_{-(m+n)}^{-} & \text{if} \; m+n < 0, \\
    g(1,1)^{-1}(\psi_0^{+} - \psi_0^{-}) & \text{if} \; m+n = 0
    \end{cases} 
\end{equation}

The following is clear from the relations.
\begin{lemma}
\label{lem: theta e f EHA}
The algebra $\EHA$ has a $\C(q,t)$-linear 
anti-involution 
$\Theta_{\mathcal{E}}$ defined by 
$$
\Theta_{\mathcal{E}}(e_m)=f_m,\ \Theta_{\mathcal{E}}(f_m)=e_m,\ \Theta_{\mathcal{E}}(\psi^{\pm}_m)=\psi^{\pm}_m.
$$
\end{lemma}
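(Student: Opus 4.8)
The plan is to verify that the proposed anti-involution $\Theta_{\mathcal{E}}$ is consistent with each defining relation of $\EHA$, treating the generating-series relations \eqref{eq: quadratic e}--\eqref{eq: cubic f} one at a time. Since an anti-involution reverses the order of products, the central observation is that $\Theta_{\mathcal{E}}$ should send each relation for the $e$-generators to the corresponding relation for the $f$-generators and vice versa. First I would note that $\Theta_{\mathcal{E}}$ extends $\C(q,t)$-linearly to the generating series via $\Theta_{\mathcal{E}}(e(\z)) = f(\z)$, $\Theta_{\mathcal{E}}(f(\z)) = e(\z)$, and $\Theta_{\mathcal{E}}(\psi^{\pm}(\z)) = \psi^{\pm}(\z)$, since the scalar coefficients $\z^{-m}$ are fixed. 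The check is then purely formal once one keeps careful track of the order reversal.

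Concretely, applying $\Theta_{\mathcal{E}}$ to the quadratic relation \eqref{eq: quadratic e}, the left-hand side $g(\z,\w)e(\z)e(\w)$ becomes $g(\z,\w)f(\w)f(\z)$ (the scalar polynomial $g(\z,\w)$ is unaffected, while the product of the two series reverses), and the right-hand side becomes $-g(\w,\z)f(\z)f(\w)$. Rearranging, this is exactly the quadratic relation \eqref{eq: quadratic f} for $f$, with the roles of $\z$ and $\w$ in $g$ swapped as required. The same bookkeeping sends \eqref{eq: quadratic f} back to \eqref{eq: quadratic e}, and sends the mixed relations \eqref{eq: commutation e with psi} and \eqref{eq: commutation f with psi} to each other, since $\Theta_{\mathcal{E}}$ fixes $\psi^{\pm}(\z)$ and reverses the order of the $\psi^{\pm}$--$e$ (resp. $\psi^{\pm}$--$f$) product. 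The relations \eqref{eq: psi commute} and \eqref{eq: psi 0 invertble} involve only the $\psi$-generators, which are fixed, so they are preserved immediately (the commutator $[\psi_i^{\pm},\psi_j^{\pm}]$ merely changes overall sign under order reversal but still equals zero). The cubic Serre-type relations \eqref{eq: cubic e} and \eqref{eq: cubic f} are interchanged: applying $\Theta_{\mathcal{E}}$ to a nested commutator reverses each bracket, introducing signs that cancel, so $\Theta_{\mathcal{E}}([e_0,[e_1,e_{-1}]]) = \pm[f_0,[f_1,f_{-1}]]$, which vanishes precisely because of \eqref{eq: cubic f}.

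The only relation requiring genuine attention is the commutator relation \eqref{eq: commutation of e and f}, or equivalently its explicit form \eqref{eq:explicit-commutation-e-and-f}. Applying the anti-involution to $[e_n,f_m] = e_nf_m - f_me_n$ yields $\Theta_{\mathcal{E}}(f_m)\Theta_{\mathcal{E}}(e_n) - \Theta_{\mathcal{E}}(e_n)\Theta_{\mathcal{E}}(f_m) = e_mf_n - f_ne_m = [e_m,f_n]$, while the right-hand side, built from the fixed $\psi$-generators and the scalar $g(1,1)^{-1}$, is symmetric enough in $m$ and $n$ (both cases depend only on the sum $m+n$) to match. Thus one checks that $\Theta_{\mathcal{E}}([e_n,f_m])$ equals the prescribed value of $[e_m,f_n]$, which holds because the right-hand side of \eqref{eq:explicit-commutation-e-and-f} depends only on $m+n$. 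Having confirmed compatibility with every defining relation, I would conclude that $\Theta_{\mathcal{E}}$ descends to a well-defined algebra anti-homomorphism $\EHA \to \EHA$; since it visibly squares to the identity on generators, it is an involution, hence an anti-involution. The main (and only mild) obstacle is the sign-and-order bookkeeping in the cubic relation and ensuring the symmetry of \eqref{eq:explicit-commutation-e-and-f} under exchanging $m$ and $n$ in the commutator relation; everything else is immediate from the shape of the relations.
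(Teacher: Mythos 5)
Your proposal is correct and takes essentially the same approach as the paper: the paper offers no written argument beyond declaring the lemma ``clear from the relations,'' and your relation-by-relation check is precisely that verification spelled out. In particular, your key observations — that order reversal exchanges \eqref{eq: quadratic e} with \eqref{eq: quadratic f} and \eqref{eq: commutation e with psi} with \eqref{eq: commutation f with psi}, fixes the $\psi$-relations, interchanges the cubic relations up to a harmless sign, and preserves \eqref{eq: commutation of e and f} because the right-hand side of \eqref{eq:explicit-commutation-e-and-f} depends only on $m+n$ — are exactly the checks implicit in the paper's one-line justification.
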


We will   work with various subalgebras of $\EHA$.

\begin{definition}\label{def:pos EHA}
The \emph{positive subalgebra} $\EHA^{\geq}$ has generators $e_m, m\in \Z$ and $\psi^+_m,\psi^{-}_{-m}$ ($m\ge 0$) satisfying the relations 
\eqref{eq: quadratic e}, \eqref{eq: commutation e with psi}, \eqref{eq: psi commute}, \eqref{eq: psi 0 invertble}, and \eqref{eq: cubic e}.
\end{definition}

\begin{definition}\label{def:neg EHA}
The \emph{negative subalgebra} $\EHA^{\leq}$ has generators $f_m, m\in \Z$ and $\psi^+_m,\psi^{-}_{-m}$ ($m\ge 0$) satisfying the relations \eqref{eq: quadratic f}, \eqref{eq: commutation f with psi}, \eqref{eq: psi commute}, \eqref{eq: psi 0 invertble}, and \eqref{eq: cubic f}.
\end{definition}

We similarly define the subalgebra 
$\EHA^{>}\subset \EHA^{\geq}$ (resp. $\EHA^{<}\subset \EHA^{\leq}$) with generators $e_m, m\in \Z$ (resp. $f_m, m\in \Z$)  modulo the relations \eqref{eq: quadratic e} and \eqref{eq: cubic e} (resp. \eqref{eq: quadratic f} and \eqref{eq: cubic f}).

Evidently, the involution $\Theta_{\mathcal{E}}$ exchanges $\EHA^{>}$ with $\EHA^{<}$, and $\EHA^{\geq}$ with $\EHA^{\leq}$.

\subsection{The Polynomial Representation of $\EHA$}
\label{sec: EHA polynomial} Following Feigin and Tsymbaliuk \cite{FT}, we now describe the polynomial representation of $\EHA$ on the space $\Sym_{q,t}$ of symmetric functions over $\C(q,t)$. 

 Given any partition $\lambda$ and a box $\Box \in \lambda$ in row $r$ and column $c$, the \newword{(q,t)-content} of this box is the scalar $q^{c-1}t^{r-1}$. In what follows we abuse notation, and denote both the box in the partition and its associated $(q,t)$-content by $\Box$. Since the position of a box is uniquely determined by its $(q,t)$-content, we also sometimes abuse notation and write $\lambda \cup x$ to the denote the partition obtained from $\lambda$ by adding the box whose $(q,t)$-content is equal to $x$.

So then, taking the basis of $\Sym_{q,t}$ to be the modified Macdonald polynomials $I_\lambda := \widetilde{H}_\lambda(X;q,t)$, the action is given by \cite[Lem. 3.1 and Prop. 3.1]{FT}:
\begin{subequations}
\begin{align}
\label{eq: feigin tsymbaliuk e}
e_mI_{\lambda}& =\sum_{\mu=\lambda\cup x} \frac{(q^{\mu_i-1}t^{i-1})^m}{1-qt}\prod_{j=1}^{\infty}\frac{(1-q^{\mu_j-\mu_i+1}t^{j-i+1})}{(1-q^{\mu_{j}-\mu_i+1}t^{j-i})}I_{\mu},
\\
\label{eq: feigin tsymbaliuk f}
f_mI_{\mu}& =\sum_{\lambda=\mu-x}
\frac{(q^{\lambda_i}t^{i-1})^{m-1}}{1-qt}\prod_{j=1}^{\infty}\frac{1-q^{\lambda_i-\lambda_{j}+1}t^{i-j+1}}{1-q^{\lambda_{i}-\lambda_j+1}t^{i-j}}I_{\lambda},
\end{align}
where the sums are taken over all partitions $\mu$ (resp. $\lambda$) that can be obtained from $\lambda$ (resp. $\mu$) by adding (resp. removing) a box, with $i$ equal to the row of the box.

 We also have, 
\begin{align}
\label{eq: feigin tsymbaliuk psi}
\psi^{\pm}(\z)I_{\lambda}& = \left(-\frac{1-q^{-1}t^{-1}\z^{-1}}{1-\z^{-1}}\prod_{\square\in \lambda}\frac{(1-q^{-1}\square \z^{-1})(1-t^{-1}\square \z^{-1})(1-qt\square \z^{-1})}{(1-q\square \z^{-1})(1-t\square \z^{-1})(1-q^{-1}t^{-1}\square \z^{-1})} \right)I_{\lambda},
\end{align}
\end{subequations}
where the coefficient of $I_{\lambda}$ in the RHS of \eqref{eq: feigin tsymbaliuk psi} is a rational function in $\z$, and should thus be expanded as a power series in $\z^{\mp 1}$ to obtain the formula for the action of $\psi^{\pm}(\z)$.

\begin{theorem}[\cite{SV}, Proposition 4.10]
\label{thm: poly EHA faithful}
The polynomial representation of $\EHA$ is faithful.
\end{theorem}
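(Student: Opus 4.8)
The plan is to deduce faithfulness of the full algebra from the faithfulness of its two halves, using the triangular decomposition together with the highest-weight structure of the polynomial representation. The starting point is a PBW/triangular decomposition of $\EHA$: writing $\EHA^{0}=\langle \psi_m^{\pm}\rangle$ for the Cartan part, multiplication induces a $\C(q,t)$-linear isomorphism $\EHA^{>}\otimes \EHA^{0}\otimes \EHA^{<}\xrightarrow{\ \sim\ }\EHA$. This is a consequence of the realization of $\EHA$ as the Drinfeld double of $\EHA^{\geq}$ together with the shuffle-algebra description of the halves. Since $\Sym_{q,t}=\bigoplus_n (\Sym_{q,t})_n$ is graded by degree, with $I_\lambda$ in degree $|\lambda|$, compatibly with the grading of $\EHA$ in which $e_m$ (resp.\ $f_m$) has degree $+1$ (resp.\ $-1$), it suffices to show that a homogeneous element $a$ annihilating $\Sym_{q,t}$ must vanish.

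From the explicit formulas \eqref{eq: feigin tsymbaliuk f} and \eqref{eq: feigin tsymbaliuk psi}, the vacuum $1=I_{\emptyset}$ is annihilated by every $f_m$, each $\psi^{\pm}(\z)$ acts on it by an explicit nonzero scalar series, and by a Vandermonde argument (varying $m$ in \eqref{eq: feigin tsymbaliuk e}) one gets $\Sym_{q,t}=\EHA^{>}\cdot 1$. Thus $\Sym_{q,t}$ is a cyclic highest-weight module. Moreover the Cartan part acts diagonally, $\psi^{\pm}(\z)I_\lambda=\gamma_\lambda(\z)I_\lambda$, and the assignment $\lambda\mapsto \gamma_\lambda$ is injective for generic $q,t$, since the zeros and poles of $\gamma_\lambda(\z)$ record exactly the $(q,t)$-contents of the boxes of $\lambda$, and these determine $\lambda$. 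Hence $\EHA^{0}$ separates the basis $\{I_\lambda\}$. I note for transparency that the central elements $\psi_0^{\pm}$ act by fixed invertible scalars, independent of $\lambda$ (the limiting values of $\gamma_\lambda(\z)$ as $\z\to\infty$ and $\z\to 0$); accordingly faithfulness is understood for the form of $\EHA$ in which these central values are fixed, as is implicit in \cite{SV}.

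The technical heart is the faithfulness of the positive half, i.e.\ injectivity of $\EHA^{>}\to \End(\Sym_{q,t})$. (This is \emph{not} the same as injectivity of $u\mapsto u\cdot 1$, since each $(\Sym_{q,t})_n$ is finite-dimensional while $\EHA^{>}$ is not.) Here I would pass to the shuffle-algebra realization $\EHA^{>}\cong \Shuf$, valid precisely because of the relations \eqref{eq: quadratic e} and \eqref{eq: cubic e}, under which a degree-$n$ element becomes a symmetric rational function $F(x_1,\dots,x_n)$. The matrix coefficient of $F$ between $I_\mu$ and $I_\lambda$, where $\lambda$ is obtained from $\mu$ by adding $n$ boxes, evaluates to a content-weighted symmetrization of $F$ over the tableaux recording the order of the added boxes, with the nonvanishing weights read off from \eqref{eq: feigin tsymbaliuk e}. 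As $\mu$ grows the $(q,t)$-contents of the addable boxes range over enough values to determine $F$, so $F$ acting as zero forces $F=0$. Applying the anti-involution $\Theta_{\mathcal{E}}$ of Lemma \ref{lem: theta e f EHA} yields the mirror statement for $\EHA^{<}$. I expect this step to be the main obstacle, as it requires both the injectivity of the shuffle map and the density of the available content evaluations.

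To assemble the full faithfulness, write $a=\sum_k u_k h_k v_k$ in triangular form and suppose it acts as zero. Using the diagonal, partition-separating action of $\EHA^{0}$ one restricts to a single joint $\EHA^{0}$-eigenspace, and using the nondegeneracy of the Drinfeld pairing $\EHA^{>}\times \EHA^{<}\to \C(q,t)$---which over $\C(q,t)$ is reflected in the Fock action as a nondegenerate Shapovalov-type form, recovered through matrix coefficients of the shape $\langle\, \cdot\,,\, u\,v\cdot 1\rangle$ and the commutation relation \eqref{eq:explicit-commutation-e-and-f}---one peels off the $v_k$, reducing to the positive half. The faithfulness of the third paragraph then forces each $u_k h_k=0$, and hence $a=0$. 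Only the third paragraph carries genuine content; the remaining steps are formal consequences of the triangular decomposition, the explicit diagonal action of $\EHA^{0}$, and the nondegeneracy of the double pairing.
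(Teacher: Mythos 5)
You should first note that the paper contains no proof of Theorem \ref{thm: poly EHA faithful}: it is imported wholesale from \cite{SV}, where (Proposition 4.10) the argument runs through the realization of $\EHA$ as a stable limit of spherical double affine Hecke algebras and the compatibility of the polynomial representation with the finite-rank projections --- not through the shuffle-evaluation route you propose. So there is no internal proof to match; your proposal must stand on its own, and judged that way it is a reasonable architecture (triangular decomposition, highest-weight cyclicity, separation of the Macdonald basis by the $\psi^{\pm}$-eigenvalues, reduction to the positive half) but it has real gaps at exactly the two places you yourself flag as carrying the content. First, the ``technical heart'' is asserted rather than proved: both the isomorphism $\EHA^{>}\cong\Shuf$ (injectivity is due to Schiffmann--Vasserot, surjectivity to Negu\cb{t} \cite{NegutShuffle}) and the claim that vanishing of the tableau evaluations forces $F=0$ are theorems of depth comparable to the cited result. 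The latter in particular is not a soft density statement: one must use the known pole structure (wheel conditions) of elements of $\Shuf$ and show that the lattice points $(q^{a_1}t^{b_1},\dots,q^{a_n}t^{b_n})$ arising from standard sequences of addable boxes are a determining set for such functions. Without that argument, the step ``the contents range over enough values to determine $F$'' is precisely the faithfulness assertion in disguise.

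Second, the assembly step is underspecified in a way that matters. The relation \eqref{eq:explicit-commutation-e-and-f} involves the delta-function series $\sum_n \z^n\w^{-n}$, so ``moving the $v_k$ past a test element $x\in\EHA^{>}$'' generates infinite families of terms, and the claim that the Drinfeld pairing is \emph{realized} by Fock matrix coefficients $\langle\,\cdot\,, u v\cdot 1\rangle$ (your Shapovalov-type form) is a nontrivial statement that you invoke but do not establish; nondegeneracy of the abstract Hopf pairing on $\EHA^{>}\times\EHA^{<}$ does not by itself transfer to the module-theoretic form without this bridge. Relatedly, the triangular isomorphism $\EHA^{>}\otimes\EHA^{0}\otimes\EHA^{<}\xrightarrow{\sim}\EHA$ in the Ding--Iohara--Miki presentation is again a quoted structural fact, not something your text derives. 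Your remark about the central elements $\psi_0^{\pm}$ acting by fixed scalars is correct and important --- strictly, one proves faithfulness of the quotient at that central character, which is the intended reading of the theorem --- but it underlines that the statement is sensitive to exactly the bookkeeping your sketch elides. In short: as a map of how a proof could go, the proposal is sound and consistent with arguments in the literature (e.g.\ \cite{FT,NegutShuffle}); as a proof, the two pivotal steps are placeholders, and the cited source \cite{SV} closes them by an entirely different mechanism.
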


It will be useful to slightly rewrite equations \eqref{eq: feigin tsymbaliuk e}-\eqref{eq: feigin tsymbaliuk psi}. 
Define the scalars in $\C(q,t)$,
\begin{subequations}
\begin{align}
\label{eq: c for partitions}
c(\lambda;x)&:=-\Lambda\left(-x^{-1}+(1-q)(1-t)B_{\lambda}x^{-1}+1\right)\\
\label{eq: c* for partitions}
c^*(\mu;x)&:=x^{-1}\Lambda\big(-(1-q)(1-t)B^*_{\lambda}x\big)
\end{align}  
\end{subequations}
where $\lambda, \mu$ are partitions with $\mu = \lambda \cup x$, and  
$$
B_{\lambda}=\sum_{\sq\in \lambda}\sq,\quad B^*_{\lambda}=\sum_{\sq\in \lambda}\sq^{-1}, \quad \text{and} \quad \Lambda\left(\sum \phi_{i,j}q^it^j\right)=\prod (1-q^it^j)^{\phi_{i,j}}.
$$

\begin{lemma}
\label{lem: c for partitions as a product}
Suppose that $\mu=\lambda\cup \Box$ with $\Box$ in row $i$ and $(q,t)$-content of $\Box$ is equal to $x$. 
Then the coefficients \eqref{eq: c for partitions} and \eqref{eq: c* for partitions} can be written as
$$
c(\lambda;x)=-(1-t)\prod_{j\neq i}\frac{(1-q^{\lambda_j-\lambda_i}t^{j-i+1})}{(1-q^{\lambda_j-\lambda_i}t^{j-i})}=-(1-t)\prod_{j\neq i}\frac{(1-q^{\mu_j-\mu_i+1}t^{j-i+1})}{(1-q^{\mu_j-\mu_i+1}t^{j-i})}.
$$
and
$$
c^*(\mu;x)=\frac{(q^{\lambda_i}t^{i-1})^{-1}}{(1-q)}\prod_{j\neq i}\frac{(1-q^{\lambda_i-\lambda_j+1}t^{i-j+1})}{(1-q^{\lambda_i-\lambda_j+1}t^{i-j})}
$$
\end{lemma}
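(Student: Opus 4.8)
The plan is to prove both identities by a direct plethystic computation, exploiting that $\Lambda$ is multiplicative, $\Lambda(A+B)=\Lambda(A)\Lambda(B)$, and acts on monomials by $\Lambda(q^at^b)=1-q^at^b$. Throughout I will use that the box added to $\lambda$ in row $i$ sits at the end of that row, so its $(q,t)$-content is $x=q^{\lambda_i}t^{i-1}$, and that $\mu_i=\lambda_i+1$ while $\mu_j=\lambda_j$ for $j\neq i$. Thus the whole calculation is a matter of turning the arguments of $\Lambda$ in \eqref{eq: c for partitions} and \eqref{eq: c* for partitions} into an explicit finite sum of monomials $q^at^b$ and then reading off the product.

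First I would expand $B_\lambda$ and $B^*_\lambda$ row by row as finite geometric series,
$$B_\lambda=\sum_{j\ge 1}t^{j-1}\,\frac{1-q^{\lambda_j}}{1-q},\qquad B^*_\lambda=\sum_{j\ge 1}t^{1-j}\,\frac{1-q^{-\lambda_j}}{1-q^{-1}},$$
so that the factor $(1-q)$ in the definitions cancels cleanly. Substituting the first expansion into \eqref{eq: c for partitions} and using $x^{-1}=q^{-\lambda_i}t^{1-i}$, the argument of $\Lambda$ becomes
$$1-x^{-1}+q^{-\lambda_i}\sum_{j\ge 1}(t^{j-i}-t^{j-i+1})(1-q^{\lambda_j}),$$
and an entirely parallel substitution handles $c^*$. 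The key step is to split each sum into the part independent of $q^{\lambda_j}$ and the part carrying $q^{\pm\lambda_j}$.

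For $c(\lambda;x)$ the $\lambda_j$-independent part $q^{-\lambda_i}\sum_{j\ge1}(t^{j-i}-t^{j-i+1})$ telescopes to $q^{-\lambda_i}t^{1-i}=x^{-1}$, exactly cancelling the $-x^{-1}$ term and leaving $1+\sum_{j}q^{\lambda_j-\lambda_i}(t^{j-i+1}-t^{j-i})$. Isolating the $j=i$ summand $t-1$ converts the leading $1$ into $t$, so the argument equals $t+\sum_{j\ne i}q^{\lambda_j-\lambda_i}(t^{j-i+1}-t^{j-i})$; applying $\Lambda$ termwise then gives
$$\Lambda=(1-t)\prod_{j\ne i}\frac{1-q^{\lambda_j-\lambda_i}t^{j-i+1}}{1-q^{\lambda_j-\lambda_i}t^{j-i}},$$
and the overall sign in \eqref{eq: c for partitions} produces the first claimed formula; the second equality is immediate from $\lambda_j-\lambda_i=\mu_j-\mu_i+1$. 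The computation for $c^*$ runs in parallel: the $\lambda_j$-carrying part again yields a product over $j\neq i$, the $j=i$ term supplies the prefactor $\tfrac{1}{1-q}$, and the overall factor $x^{-1}$ accounts for $(q^{\lambda_i}t^{i-1})^{-1}$.

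The main obstacle is the bookkeeping of the telescoping tails and, crucially, the direction in which the geometric series are summed. The empty rows $\lambda_j=0$ each contribute a factor that is individually nontrivial, and only after telescoping in the correct completion of $\C(q,t)$ — so that $t^{\pm N}\to 0$ — do they collapse to the stated finite product and boundary prefactor. Keeping these convergence conventions consistent between the two cases, where the relevant powers of $t$ (namely $t^{j-i}$ versus $t^{i-j}$) carry opposite signs and so force telescoping in opposite directions, is the delicate point; once the boundary terms are correctly identified, everything else is routine expansion and termwise application of $\Lambda$.
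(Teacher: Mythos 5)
Your proposal is correct and takes essentially the same route as the paper's proof: expand $B_\lambda$ and $B^*_\lambda$ row by row as geometric series, let the $\lambda_j$-independent tail telescope against the $-x^{-1}$ term (respectively produce the boundary term that combines with the $j=i$ summand into $-q$), isolate the $j=i$ term, and apply $\Lambda$ factorwise; your sketched $c^*$ computation matches the paper's equally brief treatment of that case. Your remark about the opposite telescoping directions ($t^{N}\to 0$ versus $t^{-N}\to 0$ in the two completions) is a sound clarification of a convention the paper leaves implicit.
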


\begin{proof}
We expand $B_{\lambda}$ and $B^*_{\lambda}$ as  sums of geometric sequences:
$$
B_{\lambda}=\sum_{j=1}^{\infty}(1+\ldots+q^{\lambda_j-1})t^{j-1}=\sum_{j=1}^{\infty}\frac{1-q^{\lambda_j}}{1-q}t^{j-1}=\frac{1}{(1-q)(1-t)}-\frac{1}{1-q}\sum_{j=1}^{\infty}q^{\lambda_j}t^{j-1}
$$
and similarly
$$
B^*_{\lambda}=\frac{1}{(1-q^{-1})(1-t^{-1})}-\frac{1}{1-q^{-1}}\sum_{j=1}^{\infty}q^{-\lambda_j}t^{1-j}.
$$
Note that $x=q^{\lambda_i}t^{i-1}$. Therefore
\begin{align*}
-x^{-1}+(1-q)(1-t)B_{\lambda}x^{-1}+1&=-(1-t)\sum_{j=1}^{\infty}q^{\lambda_j}t^{j-1}x^{-1}+1
\\
&=-\sum_{j\neq i}q^{\lambda_j-\lambda_i}(t^{j-i}-t^{j-i+1})-(1-t)+1\\
&=-\sum_{j\neq i}q^{\lambda_j-\lambda_i}(t^{j-i}-t^{j-i+1})+t.
\end{align*}
and
$$
c(\lambda;x)=-(1-t)\prod_{j\neq i}\frac{(1-q^{\lambda_j-\lambda_i}t^{j-i+1})}{(1-q^{\lambda_j-\lambda_i}t^{j-i})}.
$$
For the first equation it suffices to notice that $\mu_i=\lambda_i+1$ and $\mu_j=\lambda_j$ for $j\neq i$. For the second equation, we write
\begin{align*}
-(1-q)(1-t)B^*_{\lambda}x&=-qt-(1-t)q\sum_{j=1}^{\infty}q^{\lambda_i-\lambda_j}t^{i-j}
\\
&=-q-\sum_{j\neq i}q^{\lambda_i-\lambda_j+1}(t^{i-j}-t^{i-j+1}).
\end{align*}
\end{proof}

\begin{corollary}
\label{cor: e and f short}
We have
$$
e_mI_{\lambda}=-\frac{1}{(1-q)(1-t)}\sum_{\mu=\lambda\cup x}c(\lambda;x)x^mI_{\mu},\quad 
f_mI_{\mu}=\sum_{\lambda=\mu-x}c^*(\mu;x)x^m I_{\lambda}
$$
\end{corollary}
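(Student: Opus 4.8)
The plan is to derive both formulas directly from the Feigin--Tsymbaliuk expressions \eqref{eq: feigin tsymbaliuk e} and \eqref{eq: feigin tsymbaliuk f} by reconciling the infinite products appearing there with the finite products computed in Lemma \ref{lem: c for partitions as a product}. The essential point is that the products in the Feigin--Tsymbaliuk formulas range over all $j\ge 1$, whereas those in the Lemma omit the index $j=i$, where $i$ is the row of the added (resp. removed) box.

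First I would isolate the $j=i$ factor from each infinite product. For $e_m$, the factor at $j=i$ is $\frac{1-q^{\mu_i-\mu_i+1}t^{i-i+1}}{1-q^{\mu_i-\mu_i+1}t^{i-i}}=\frac{1-qt}{1-q}$, and identically for $f_m$ the factor at $j=i$ is $\frac{1-q^{\lambda_i-\lambda_i+1}t^{i-i+1}}{1-q^{\lambda_i-\lambda_i+1}t^{i-i}}=\frac{1-qt}{1-q}$. Thus in each case the infinite product equals $\frac{1-qt}{1-q}$ times the corresponding product over $j\ne i$. Pulling this factor out cancels the prefactor $(1-qt)^{-1}$ and leaves an overall factor $(1-q)^{-1}$.

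Next I would match the residual products with the Lemma. For $e_m$, the second expression for $c(\lambda;x)$ gives $\prod_{j\ne i}\frac{1-q^{\mu_j-\mu_i+1}t^{j-i+1}}{1-q^{\mu_j-\mu_i+1}t^{j-i}}=-\frac{c(\lambda;x)}{1-t}$, so the coefficient of $I_\mu$ becomes $\frac{x^m}{1-q}\cdot\left(-\frac{c(\lambda;x)}{1-t}\right)=-\frac{c(\lambda;x)x^m}{(1-q)(1-t)}$, where I use $x=q^{\mu_i-1}t^{i-1}$ so that $(q^{\mu_i-1}t^{i-1})^m=x^m$. For $f_m$, the formula for $c^*(\mu;x)$ gives $\frac{x^{-1}}{1-q}\prod_{j\ne i}\frac{1-q^{\lambda_i-\lambda_j+1}t^{i-j+1}}{1-q^{\lambda_i-\lambda_j+1}t^{i-j}}=c^*(\mu;x)$, and since $x=q^{\lambda_i}t^{i-1}$ the prefactor $(q^{\lambda_i}t^{i-1})^{m-1}$ equals $x^{m}x^{-1}$; absorbing the $x^{-1}$ into $c^*(\mu;x)$ yields the coefficient $c^*(\mu;x)x^m$.

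The computation is essentially bookkeeping rather than a genuine obstacle; the only points requiring care are the correct identification of the $j=i$ factor in each infinite product and the matching of powers of $x$ in the $f_m$ case, where the exponent shift $m\mapsto m-1$ precisely supplies the extra $x^{-1}$ that the definition of $c^*(\mu;x)$ already carries.
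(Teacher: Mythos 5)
Your proposal is correct and follows exactly the route the paper intends: the paper states this as an immediate consequence of Lemma \ref{lem: c for partitions as a product}, whose finite products over $j\neq i$ were formulated precisely so that extracting the $j=i$ factor $\frac{1-qt}{1-q}$ from the Feigin--Tsymbaliuk products \eqref{eq: feigin tsymbaliuk e}--\eqref{eq: feigin tsymbaliuk f} and matching powers of $x$ (including the $m-1$ exponent absorbing the $x^{-1}$ in $c^*(\mu;x)$) yields the stated formulas. Your bookkeeping, including the identifications $x=q^{\mu_i-1}t^{i-1}$ and $x=q^{\lambda_i}t^{i-1}$, is accurate throughout.
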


\begin{lemma}
\label{lem: c c*}
Let 
\begin{equation}
\label{eq: def d lambda}
d_{\lambda}=\left(\prod_{\sq \in \lambda}\sq\right)
\cdot \Lambda\big(-B_{\lambda}^*+(1-q)(1-t)B_{\lambda}B^*_{\lambda}\big).
\end{equation}
Then for $\mu=\lambda\cup x$ we have
$$
\frac{c(\lambda;x)}{c^*(\mu;x)}=-\frac{(1-q)(1-t)d_{\mu}}{(1-qt)d_{\lambda}}.
$$
\end{lemma}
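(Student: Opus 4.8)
The plan is to reduce everything to the multiplicativity of the operator $\Lambda$, namely the identity $\Lambda(A+B)=\Lambda(A)\Lambda(B)$ (and hence $\Lambda(A-B)=\Lambda(A)/\Lambda(B)$), which is immediate from the definition $\Lambda(\sum \phi_{i,j}q^it^j)=\prod(1-q^it^j)^{\phi_{i,j}}$ since the exponents $\phi_{i,j}$ simply add. The only structural input needed is that passing from $\lambda$ to $\mu=\lambda\cup x$ changes the basic statistics additively: $B_\mu=B_\lambda+x$, $B^*_\mu=B^*_\lambda+x^{-1}$, and $\prod_{\sq\in\mu}\sq=x\prod_{\sq\in\lambda}\sq$.

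First I would compute the ratio $d_\mu/d_\lambda$ from \eqref{eq: def d lambda}. The product of contents contributes a factor $x$, and the two $\Lambda$-factors combine, by multiplicativity, into $\Lambda(A_\mu-A_\lambda)$ where $A_\nu:=-B^*_\nu+(1-q)(1-t)B_\nu B^*_\nu$. Substituting $B_\mu=B_\lambda+x$, $B^*_\mu=B^*_\lambda+x^{-1}$ and expanding, the quadratic term $B_\lambda B^*_\lambda$ cancels, leaving
\[
A_\mu-A_\lambda=-x^{-1}+(1-q)(1-t)\big(B_\lambda x^{-1}+xB^*_\lambda+1\big),
\]
so that $d_\mu/d_\lambda=x\,\Lambda(A_\mu-A_\lambda)$.

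Next I would compute $c(\lambda;x)/c^*(\mu;x)$ directly from \eqref{eq: c for partitions} and \eqref{eq: c* for partitions}, again using multiplicativity of $\Lambda$ to fold the numerator and denominator into a single $\Lambda$ of the difference of their arguments. Here the prefactor $x^{-1}$ of $c^*$ contributes a factor $x$, exactly matching the $x$ produced above, and one obtains
\[
\frac{c(\lambda;x)}{c^*(\mu;x)}=-x\,\Lambda\big(-x^{-1}+(1-q)(1-t)B_\lambda x^{-1}+1+(1-q)(1-t)B^*_\lambda x\big).
\]

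Finally I would compare the two $\Lambda$-arguments. Both share the block $-x^{-1}+(1-q)(1-t)(B_\lambda x^{-1}+xB^*_\lambda)$; the leftover in the expression for $c/c^*$ is the constant $1$, while in $A_\mu-A_\lambda$ it is $(1-q)(1-t)$. Their difference is $1-(1-q)(1-t)=q+t-qt$, and since $\Lambda(q+t-qt)=\frac{(1-q)(1-t)}{1-qt}$, the two results differ precisely by this scalar factor, yielding $\frac{c(\lambda;x)}{c^*(\mu;x)}=-\frac{(1-q)(1-t)}{1-qt}\cdot\frac{d_\mu}{d_\lambda}$ as claimed. There is no genuine obstacle beyond careful bookkeeping of the $\Lambda$-arguments; the only points to watch are the correct handling of the extra factor $x$ (arising both from the product of contents in $d_\mu/d_\lambda$ and from the prefactor $x^{-1}$ in $c^*$) and the overall sign, both of which track cleanly through the computation.
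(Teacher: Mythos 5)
Your proposal is correct and follows essentially the same route as the paper: both compute $d_\mu/d_\lambda$ from the additivity $B_\mu=B_\lambda+x$, $B^*_\mu=B^*_\lambda+x^{-1}$ together with the multiplicativity of $\Lambda$, and match the result against $c(\lambda;x)/c^*(\mu;x)$. Your explicit isolation of the residual scalar $\Lambda(q+t-qt)=\frac{(1-q)(1-t)}{1-qt}$ is just a slightly more spelled-out version of the paper's one-line comparison.
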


\begin{proof}
Indeed, $B_{\mu}=B_{\lambda}+x,B^*_{\mu}=B^*_{\lambda}+x^{-1}$, so
\begin{align*}
\frac{d_{\mu}}{d_{\lambda}}&=x\Lambda\big(-x^{-1}+(1-q)(1-t)B_{\lambda}x^{-1}+(1-q)(1-t)B^*_{\lambda}x+(1-q)(1-t)\big)
\\
&=-\frac{(1-qt)c(\lambda;x)}{(1-q)(1-t)c^*(\mu;x)}.
\end{align*}
\end{proof}

\begin{lemma}
\label{lem: exp identity}
We have the power series identities
$$
\exp\left[-\sum_{m=1}^{\infty}\frac{\z^{-m}}{m}\w^m(1-q^m)(1-t^m)(1-(qt)^{-m})\right]=-\frac{g(\w,\z)}{g(\z,\w)}
$$
and 
$$
\exp\left[-\sum_{m=1}^{\infty}\frac{\z^{m}}{m}\w^{-m}(1-q^m)(1-t^m)(1-(qt)^{-m})\right]=-\frac{g(\z,\w)}{g(\w,\z)}.
$$
\end{lemma}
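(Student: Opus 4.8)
The plan is to reduce both identities to the elementary power-series identity $\exp\left[-\sum_{m\ge 1}\frac{x^m}{m}\right]=1-x$, applied term by term to each geometric piece of the exponent. The first thing I would do is simplify the cubic factor sitting inside the sum. Expanding $(1-q^m)(1-t^m)(1-(qt)^{-m})$ and using $q_1q_2q_3=1$ (so that $q^m(qt)^{-m}=t^{-m}$ and $t^m(qt)^{-m}=q^{-m}$), the mixed terms collapse and one obtains the symmetric expression
\[
(1-q^m)(1-t^m)(1-(qt)^{-m})=\sum_{i=1}^{3}\bigl(q_i^{-m}-q_i^{m}\bigr),
\]
where $q_1=q$, $q_2=t$, $q_3=(qt)^{-1}$. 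This is the key computational step, and it is what makes the rest symmetric and clean.

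Next I would substitute this into the exponent of the first identity and set $u:=\w/\z$, so that $\z^{-m}\w^{m}=u^{m}$. The sum then breaks into six geometric series, each of the form $-\sum_{m\ge 1}(au)^m/m=\log(1-au)$ with $a\in\{q_i,q_i^{-1}\}$. Exponentiating turns the sum of logarithms into a product of linear factors,
\[
\exp\!\left[-\sum_{m=1}^{\infty}\frac{\z^{-m}}{m}\w^{m}(1-q^m)(1-t^m)(1-(qt)^{-m})\right]=\frac{(1-q_1^{-1}u)(1-q_2^{-1}u)(1-q_3^{-1}u)}{(1-q_1u)(1-q_2u)(1-q_3u)}.
\]
It remains to recognize the two products as values of $g$. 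Factoring $\z$ out of each factor of $g(\z,\w)=\prod_i(\z-q_i\w)$ gives $g(\z,\w)=\z^3\prod_i(1-q_iu)$, so the denominator equals $g(\z,\w)/\z^3$. For the numerator, factoring $-q_i\z$ out of each factor of $g(\w,\z)=\prod_i(\w-q_i\z)$ and using $q_1q_2q_3=1$ gives $g(\w,\z)=-\z^3\prod_i(1-q_i^{-1}u)$; this single application of $q_1q_2q_3=1$ is precisely what produces the global minus sign. Taking the ratio yields $-g(\w,\z)/g(\z,\w)$, which is the first identity.

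The second identity is then immediate: its exponent is obtained from the first by the substitution $u\mapsto u^{-1}$, equivalently by interchanging $\z\leftrightarrow\w$, and since the entire computation above is symmetric under this swap it delivers $-g(\z,\w)/g(\w,\z)$. The only point that genuinely requires care is the formal bookkeeping: both equalities are identities of formal power series (in $u$ for the first, in $u^{-1}$ for the second), so the six logarithmic series must be expanded with a consistent convention, and one must keep track of the sign coming from $q_1q_2q_3=1$. I expect this sign/expansion bookkeeping to be the only place where mistakes can creep in; the algebra itself is routine.
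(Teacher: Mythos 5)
Your proof is correct and follows essentially the same route as the paper: both rewrite $(1-q^m)(1-t^m)(1-(qt)^{-m})$ as the symmetric combination $\sum_{i}(q_i^{-m}-q_i^{m})$, sum the resulting logarithmic series into a product of six linear factors in $u=\w/\z$, and identify that product with $-g(\w,\z)/g(\z,\w)$ (the sign coming from $q_1q_2q_3=1$), obtaining the second identity by swapping $\z$ and $\w$. Your explicit factoring of $g(\w,\z)=-\z^3\prod_i(1-q_i^{-1}u)$ is in fact slightly cleaner than the paper's final display, which contains typographical slips in the denominators.
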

\begin{proof}
We prove the first equation, the second follows after swapping $\z$ and $\w$. Note that 
$$
(1-q^m)(1-t^m)(1-(qt)^{-m})=-(q^m+t^m+(qt)^{-m}-q^{-m}-t^{-m}-(qt)^m),
$$
so
\begin{align*}
&\exp\left[-\sum_{m=1}^{\infty}\frac{\z^{-m}}{m}\w^m(1-q^m)(1-t^m)(1-(qt)^{-m})\right]
\\
&=\exp\left[\sum_{m=1}^{\infty}\frac{\z^{-m}}{m}\w^m(q^m+t^m+(qt)^{-m}-q^{-m}-t^{-m}-(qt)^m)\right]
\\
&=\frac{(1-q^{-1}\w\z^{-1})(1-t^{-1}\w\z^{-1})(1-(qt)\w\z^{-1})}{(1-q\w\z^{-1})(1-t\w\z^{-1})(1-(qt)^{-1}\w\z^{-1})}\\
&=-\frac{(\w-q\z)(\w-t\z)(\w-(qt)^{-1}\z)}{(\z-q\w\z)(\z-t\w\z)(\z-(qt)^{-1}\w)}=\frac{g(\w,\z)}{g(\z,\w)}.
\end{align*}
\end{proof}

\begin{corollary}
\label{cor: feigin tsymbaliuk psi exponential}
Equation \eqref{eq: feigin tsymbaliuk psi} can equivalently be written as follows:
$$
\psi^{\pm}(\z)I_{\lambda}=\left(-\frac{1-q^{-1}t^{-1}\z^{-1}}{1-\z^{-1}}\right)\prod_{\square\in \lambda}\exp\left[\mp\sum_{m=1}^{\infty}\frac{\z^{\mp m}}{m}\square^{\pm m}(1-q^m)(1-t^m)(1-(qt)^{-m})\right].
$$
\end{corollary}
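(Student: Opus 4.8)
The plan is to recognize the statement as nothing more than a boxwise reformulation of \eqref{eq: feigin tsymbaliuk psi} through Lemma \ref{lem: exp identity}. Since the prefactor $-\frac{1-q^{-1}t^{-1}\z^{-1}}{1-\z^{-1}}$ is literally the same in both expressions, it suffices to match, for each box $\square \in \lambda$, the rational factor
$$
\frac{(1-q^{-1}\square\z^{-1})(1-t^{-1}\square\z^{-1})(1-qt\square\z^{-1})}{(1-q\square\z^{-1})(1-t\square\z^{-1})(1-q^{-1}t^{-1}\square\z^{-1})}
$$
appearing in \eqref{eq: feigin tsymbaliuk psi} with the corresponding exponential factor, and then take the product over all boxes, using that $\prod_\square \exp[\,\cdot\,]=\exp[\sum_\square\,\cdot\,]$.

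For $\psi^+$, which by the convention recorded after \eqref{eq: feigin tsymbaliuk psi} is obtained by expanding the rational factor as a power series in $\z^{-1}$, I would specialize the \emph{first} identity of Lemma \ref{lem: exp identity} at $\w=\square$. The intermediate line in the proof of that lemma already shows that
$$
\exp\left[-\sum_{m\geq 1}\frac{\z^{-m}}{m}\square^{m}(1-q^m)(1-t^m)(1-(qt)^{-m})\right]=\frac{(1-q^{-1}\square\z^{-1})(1-t^{-1}\square\z^{-1})(1-qt\square\z^{-1})}{(1-q\square\z^{-1})(1-t\square\z^{-1})(1-q^{-1}t^{-1}\square\z^{-1})}
$$
as power series in $\z^{-1}$, which is exactly the upper-sign instance ($\mp\mapsto-$, $\z^{\mp m}\mapsto\z^{-m}$, $\square^{\pm m}\mapsto\square^{m}$) of the claimed formula.

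For $\psi^-$, obtained by expanding the \emph{same} rational function in positive powers of $\z$, I would instead invoke the \emph{second} identity of Lemma \ref{lem: exp identity} at $\w=\square$. Since $-g(\z,\square)/g(\square,\z)$ is the reciprocal of the rational factor above, passing to reciprocals (equivalently, flipping the sign of the exponent) gives
$$
\exp\left[+\sum_{m\geq 1}\frac{\z^{m}}{m}\square^{-m}(1-q^m)(1-t^m)(1-(qt)^{-m})\right]=\frac{(1-q^{-1}\square\z^{-1})(1-t^{-1}\square\z^{-1})(1-qt\square\z^{-1})}{(1-q\square\z^{-1})(1-t\square\z^{-1})(1-q^{-1}t^{-1}\square\z^{-1})},
$$
now expanded in positive powers of $\z$, which is the lower-sign instance. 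Taking $\prod_{\square\in\lambda}$ of these identities and reinstating the common prefactor yields the corollary. There is no genuine obstacle here; the only point that deserves care is the combined bookkeeping of signs and expansion direction, namely verifying that the upper-sign exponential is precisely the $\z^{-1}$-expansion and the lower-sign exponential the $\z$-expansion, so that each matches the ``$\z^{\mp 1}$'' convention attached to $\psi^{\pm}$ after \eqref{eq: feigin tsymbaliuk psi}.
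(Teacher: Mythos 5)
Your proposal is correct and coincides with the paper's (implicit) argument: the corollary is stated without separate proof precisely because it amounts to applying Lemma \ref{lem: exp identity} boxwise with $\w=\square$ — the first identity (via the intermediate product formula in its proof) for $\psi^{+}$, and the reciprocal of the second identity for $\psi^{-}$ — then taking the product over $\square\in\lambda$, exactly as you do. Your sign and expansion bookkeeping (upper sign expanded in $\z^{-1}$, lower sign in $\z$, matching the $\z^{\mp 1}$ convention following \eqref{eq: feigin tsymbaliuk psi}) is accurate.
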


\subsection{Monodromy Conditions}
For later use, we examine the so-called monodromy conditions, introduced in \cite{calibrated}, on the coefficients $c(\lambda;x)$ and $c^{\ast}(\mu; x)$ from \eqref{eq: c for partitions} and \eqref{eq: c* for partitions}, respectively. Consider the Young poset $P$. A \emph{system of coefficients} $\{c(\lambda;x)\}$ on $P$ is an assignment of a value $c(\lambda;x) \in \C(q,t)^{\times}$ for every covering relation $\lambda \prec \lambda\cup x$ on $P$. We say that the system of coefficients $\{c(\lambda;x)\}$ satisfies the \newword{monodromy conditions} if, whenever we have chains $\lambda \prec \lambda\cup x \prec \lambda \cup x \cup y$ and $\lambda \prec \lambda \cup y \prec \lambda \cup y \cup x$ in $P$ we have (cf. \cite[Lemma 3.24]{calibrated}): 
\begin{equation}\label{eq:monodromy-conditions}
\frac{c(\lambda;x)c(\lambda\cup x; y)}{c(\lambda;y)c(\lambda\cup y;x)} = -\frac{(x-ty)(x-qy)(y-qtx)}{(y-tx)(y-qx)(x-qty)}.
\end{equation}

\begin{lemma}[\cite{calibrated}, Theorem 3.32]\label{lem:monodromy-conditions}
The system of coefficients $c(\lambda;x)$ defined by \eqref{eq: c for partitions} satisfies the monodromy conditions.
\end{lemma}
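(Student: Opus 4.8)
The plan is to compute the left-hand side of \eqref{eq:monodromy-conditions} directly from the definition \eqref{eq: c for partitions}, exploiting two elementary facts. First, the operator $\Lambda$ is multiplicative: since $\Lambda(\sum \phi_{i,j}q^it^j)=\prod(1-q^it^j)^{\phi_{i,j}}$, we have $\Lambda(A+B)=\Lambda(A)\Lambda(B)$ for any two $\Z$-linear combinations of Laurent monomials $A,B$. Second, adjoining a box of content $x$ to $\lambda$ changes $B_\lambda$ additively, namely $B_{\lambda\cup x}=B_\lambda+x$ and likewise $B_{\lambda\cup y}=B_\lambda+y$. Throughout, the contents $x,y$ are themselves monomials of the form $q^at^b$, so a product such as $xy^{-1}$ is again a monomial and $\Lambda(xy^{-1})=1-xy^{-1}$.

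First I would write out the four coefficients appearing in the ratio using \eqref{eq: c for partitions}, substituting $B_{\lambda\cup x}=B_\lambda+x$ and $B_{\lambda\cup y}=B_\lambda+y$. The prefactor $-1$ occurs twice in the numerator and twice in the denominator, so all four signs cancel. Multiplicativity of $\Lambda$ then collapses the product of four values into $\Lambda$ of the alternating sum of their arguments,
$$
\frac{c(\lambda;x)c(\lambda\cup x;y)}{c(\lambda;y)c(\lambda\cup y;x)}=\Lambda\big(A_{\lambda,x}+A_{\lambda\cup x,y}-A_{\lambda,y}-A_{\lambda\cup y,x}\big),
$$
where $A_{\nu,z}:=-z^{-1}+(1-q)(1-t)B_\nu z^{-1}+1$ denotes the argument of $\Lambda$ inside $c(\nu;z)$. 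The decisive point is that in this alternating sum every term involving $B_\lambda$, every constant $+1$, and the terms $-x^{-1},-y^{-1}$ cancel in pairs; in particular all dependence on the partition $\lambda$ disappears, as it must. What survives is only the ``cross'' contribution $(1-q)(1-t)\big(xy^{-1}-yx^{-1}\big)$, arising from the extra contents $x$ and $y$ in $B_{\lambda\cup x}$ and $B_{\lambda\cup y}$.

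It then remains to evaluate $\Lambda\big[(1-q)(1-t)(xy^{-1}-yx^{-1})\big]$ and identify it with the right-hand side of \eqref{eq:monodromy-conditions}. Expanding $(1-q)(1-t)=1-q-t+qt$ against $xy^{-1}-yx^{-1}$ produces eight monomials, four with coefficient $+1$ and four with coefficient $-1$; applying $\Lambda$ yields
$$
\frac{(1-xy^{-1})(1-qt\,xy^{-1})(1-q\,yx^{-1})(1-t\,yx^{-1})}{(1-q\,xy^{-1})(1-t\,xy^{-1})(1-yx^{-1})(1-qt\,yx^{-1})}.
$$
Clearing the powers of $x$ and $y$ from each factor (e.g. $1-xy^{-1}=(y-x)/y$ and $1-q\,yx^{-1}=(x-qy)/x$) and cancelling the common $x^2y^2$, this becomes $\frac{(y-x)(y-qtx)(x-qy)(x-ty)}{(y-qx)(y-tx)(x-y)(x-qty)}$, and the single factor $(y-x)/(x-y)=-1$ produces exactly the sign together with the six remaining factors of the target expression. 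The only real care needed anywhere is this final sign bookkeeping and the consistent reading of $x,y$ as Laurent monomials in $q,t$; there is no genuine obstacle, and the identity follows.
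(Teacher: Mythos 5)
Your computation is correct, and it checks out line by line: the four sign prefactors cancel in the ratio, multiplicativity of $\Lambda$ reduces the left-hand side of \eqref{eq:monodromy-conditions} to $\Lambda$ of the alternating sum of arguments, the $B_\lambda$-terms, the constants, and $-x^{-1},-y^{-1}$ all cancel, leaving exactly $(1-q)(1-t)(xy^{-1}-yx^{-1})$, and the eight-factor evaluation together with $(y-x)/(x-y)=-1$ reproduces the right-hand side. The comparison with the paper is slightly unusual here: the paper does not prove this lemma at all but imports it as a citation to \cite[Theorem 3.32]{calibrated}, so you have supplied a self-contained proof where the paper offers none. Your argument is, however, very much in the spirit of what the paper does nearby: Lemma \ref{lem: c c*} exploits exactly the same two mechanisms (additivity $B_{\lambda\cup x}=B_\lambda+x$ and multiplicativity of $\Lambda$), and the paper's proof of the dual monodromy conditions (Lemma \ref{lem:dual-monodromy-conditions}) then reduces to the present lemma via Lemma \ref{lem: c c*}; your direct verification makes that chain of reductions logically self-contained within the paper. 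One small point you glide over: $\Lambda$ is only defined on sums in which the monomial $q^0t^0$ has coefficient zero (since $1-q^0t^0=0$), so one should note that each individual argument $A_{\nu,z}$ has vanishing constant coefficient after collecting terms (this is implicit in the computation of Lemma \ref{lem: c for partitions as a product}), and that none of the eight surviving monomials $q^{\varepsilon_1}t^{\varepsilon_2}xy^{-1}$, $q^{\varepsilon_1}t^{\varepsilon_2}yx^{-1}$ equals $1$ — this follows because the two addable boxes $x$ and $y$ lie in distinct rows and distinct columns, so $xy^{-1}=q^at^b$ with $a,b\neq 0$, and the geometry of addable corners (rows increase forces columns to strictly decrease) rules out the $qt$-shifted cases. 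This is a routine check, not a gap, and with it your proof is complete.
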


Now let us deal with the opposite Young poset $P^{\opp}$. A system of coefficients $\{c^{\ast}(\mu;x)\}$ is an assignment of a value $c^{\ast}(\mu; x)$ for every covering relation $\mu \prec \mu -x$ on $P^{\opp}$. We say that $\{c^{\ast}(\mu;x)\}$ satisfies the \newword{dual monodromy conditions} if, whenever we have chains $\mu \prec \mu - x \prec \mu - x - y$ and $\mu \prec \mu - y \prec \mu - y - x$ the following equation is satisfied:

\begin{equation}\label{eq:dual-monodromy-condition}
\frac{c^{\ast}(\mu;x)c^{\ast}(\mu - x; y)}{c^{\ast}(\mu, y)c^{\ast}(\mu - y; x)} = -\frac{(y - tx)(y - qx)(x - qty)}{(x - ty)(x - qy)(y-qtx)}.
\end{equation}

\begin{lemma}\label{lem:dual-monodromy-conditions}
The system of coefficients $c^{\ast}(\lambda)$ defined  by \eqref{eq: c* for partitions} satisfies the dual monodromy conditions.
\end{lemma}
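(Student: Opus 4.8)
The plan is to deduce the dual monodromy condition \eqref{eq:dual-monodromy-condition} directly from the already-established monodromy condition \eqref{eq:monodromy-conditions} (Lemma \ref{lem:monodromy-conditions}), using Lemma \ref{lem: c c*} as a dictionary that converts each coefficient $c^*$ into the corresponding coefficient $c$ times a ratio of the scalars $d_\lambda$ from \eqref{eq: def d lambda}. The point is that the $d$-factors will telescope away, leaving an expression purely in terms of $c$ that is visibly the reciprocal of the left-hand side of \eqref{eq:monodromy-conditions}.

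First I would fix a chain in $P^{\opp}$ and set $\nu := \mu - x - y$, so that $\mu - x = \nu \cup y$, $\mu - y = \nu \cup x$, and $\mu = \nu \cup x \cup y$. Rewriting Lemma \ref{lem: c c*} in the form $c^*(\lambda \cup z; z) = -\frac{(1-qt)\,d_\lambda}{(1-q)(1-t)\,d_{\lambda \cup z}}\, c(\lambda; z)$, I would substitute it into each of the four factors on the left-hand side of \eqref{eq:dual-monodromy-condition}. For instance $c^*(\mu; x)$ becomes $c(\mu - x; x) = c(\nu \cup y; x)$ times a $d$-ratio $d_{\mu - x}/d_\mu$, while $c^*(\mu - x; y)$ becomes $c(\nu; y)$ times $d_\nu/d_{\mu - x}$, and symmetrically $c^*(\mu; y) \mapsto c(\nu\cup x; y)\cdot d_{\mu-y}/d_\mu$ and $c^*(\mu - y; x)\mapsto c(\nu; x)\cdot d_\nu/d_{\mu-y}$.

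The key observation is that everything except the $c$'s cancels. The four prefactors contribute $(-\tfrac{1-qt}{(1-q)(1-t)})^2$ in both numerator and denominator and cancel; in the numerator the intermediate $d_{\mu-x}$ cancels to leave $d_\nu/d_\mu$, in the denominator the intermediate $d_{\mu-y}$ cancels to leave the same $d_\nu/d_\mu$, and these two ratios cancel against each other. What remains is
$$
\frac{c^*(\mu; x)\, c^*(\mu - x; y)}{c^*(\mu; y)\, c^*(\mu - y; x)} = \frac{c(\nu \cup y; x)\, c(\nu; y)}{c(\nu \cup x; y)\, c(\nu; x)},
$$
which is exactly the reciprocal of the left-hand side of \eqref{eq:monodromy-conditions} applied to the partition $\nu$. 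Inverting the right-hand side of \eqref{eq:monodromy-conditions}, and using that the reciprocal of $-N/D$ is $-D/N$, produces precisely $-\frac{(y-tx)(y-qx)(x-qty)}{(x-ty)(x-qy)(y-qtx)}$, which is the right-hand side of \eqref{eq:dual-monodromy-condition}.

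I expect the only delicate point to be the bookkeeping: correctly identifying, for each of the four $c^*$ factors, which partition plays the role of $\lambda$ and which plays the role of $\lambda \cup z$ in Lemma \ref{lem: c c*}, so that the cancellation of the $d_\lambda$ scalars is set up as above. Once the substitution is made consistently, the telescoping of the $d$'s and the reduction to Lemma \ref{lem:monodromy-conditions} are automatic, and no new computation with the explicit product formulas of Lemma \ref{lem: c for partitions as a product} is needed.
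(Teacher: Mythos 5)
Your proposal is correct and follows essentially the same route as the paper's proof: both use Lemma \ref{lem: c c*} to replace each $c^{\ast}$ by the corresponding $c$ times a $d$-ratio, observe that the constant prefactors and the $d_\lambda$'s telescope away, and then invoke Lemma \ref{lem:monodromy-conditions} applied to $\nu = \mu - x - y$, noting that the resulting ratio of $c$'s is the reciprocal of the left-hand side of \eqref{eq:monodromy-conditions} and hence equals the right-hand side of \eqref{eq:dual-monodromy-condition}. Your bookkeeping of which partition plays the role of $\lambda$ in each of the four substitutions is exactly right, so the argument goes through as written.
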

\begin{proof}
    We use Lemmas \ref{lem: c c*} and \ref{lem:monodromy-conditions}. First, note that by Lemma \ref{lem: c c*} we can write
    \[
    c^{\ast}(\mu;x) = -\frac{(1-qt)d_{\mu - x}}{(1-q)(1-t)d_{\mu}}c(\mu - x; x). 
    \]
    So that we have
\[
\frac{c^{\ast}(\mu;x)c^{\ast}(\mu - x; y)}{c^{\ast}(\mu, y)c^{\ast}(\mu - y; x)} = \frac{\frac{d_{\mu -x}}{d_{\mu}}\frac{d_{\mu - x - y}}{d_{\mu - x}}c(\mu - x; x)c(\mu - x - y; y)}{\frac{d_{\mu -y}}{d_{\mu}}\frac{d_{\mu - x - y}}{d_{\mu - y}}c(\mu - y; y)c(\mu - x - y;x)}  
\]
and the result now follows since $\{c(\lambda;x)\}$ satisfies the monodromy conditions. 
\end{proof}

\section{The Double of $\Bqt$} 

As discussed in the introduction, in order to realize the complete elliptic Hall algebra within the double Dyck path algebra it is necessary to enlarge the latter algebra first. Motivated by a similar decomposition for $\EHA$ we will define a new doubled algebra whose ``positive half'' is the usual double Dyck path algebra, together with an involution that exchanges the positive and negative parts.

\subsection{Definition of $\DBqt$}
\label{sec: def DBqt}
Consider the following infinite quiver $Q_{\Z}$:
\[\begin{tikzcd}
	\cdots && {-2} && {-1} && 0 && 1 && 2 && \cdots
	\arrow["{d_{+}}", curve={height=-6pt}, shorten <=5pt, shorten >=5pt, from=1-1, to=1-3]
	\arrow["{d_-}", curve={height=-6pt}, shorten <=5pt, shorten >=5pt, from=1-3, to=1-1]
	\arrow["\Delta", from=1-3, to=1-3, loop, in=55, out=125, distance=10mm]
	\arrow["z"{description}, from=1-3, to=1-3, loop, in=240, out=300, distance=5mm]
	\arrow["T"{description}, from=1-3, to=1-3, loop, in=230, out=310, distance=15mm]
	\arrow["{d_{+}}", curve={height=-6pt}, shorten <=5pt, shorten >=5pt, from=1-3, to=1-5]
	\arrow["{d_-}", curve={height=-6pt}, shorten <=5pt, shorten >=5pt, from=1-5, to=1-3]
	\arrow["\Delta", from=1-5, to=1-5, loop, in=55, out=125, distance=10mm]
	\arrow["z"{description}, from=1-5, to=1-5, loop, in=240, out=300, distance=5mm]
	\arrow["{d_{+}}", curve={height=-6pt}, shorten <=5pt, shorten >=5pt, from=1-5, to=1-7]
	\arrow["{d_-}", curve={height=-6pt}, shorten <=5pt, shorten >=5pt, from=1-7, to=1-5]
	\arrow["\Delta", from=1-7, to=1-7, loop, in=55, out=125, distance=10mm]
	\arrow["{d_{+}}", curve={height=-6pt}, shorten <=5pt, shorten >=5pt, from=1-7, to=1-9]
	\arrow["{d_-}", curve={height=-6pt}, shorten <=5pt, shorten >=5pt, from=1-9, to=1-7]
	\arrow["\Delta", from=1-9, to=1-9, loop, in=55, out=125, distance=10mm]
	\arrow["z"{description}, from=1-9, to=1-9, loop, in=240, out=300, distance=5mm]
	\arrow["{d_{+}}", curve={height=-6pt}, shorten <=5pt, shorten >=5pt, from=1-9, to=1-11]
	\arrow["{d_-}", curve={height=-6pt}, shorten <=5pt, shorten >=5pt, from=1-11, to=1-9]
	\arrow["\Delta", from=1-11, to=1-11, loop, in=55, out=125, distance=10mm]
	\arrow["z"{description}, from=1-11, to=1-11, loop, in=240, out=300, distance=5mm]
	\arrow["T"{description}, from=1-11, to=1-11, loop, in=230, out=310, distance=15mm]
	\arrow["{d_{+}}", curve={height=-6pt}, shorten <=5pt, shorten >=5pt, from=1-11, to=1-13]
	\arrow["{d_-}", curve={height=-6pt}, shorten <=5pt, shorten >=5pt, from=1-13, to=1-11]
\end{tikzcd}\]
where:
\begin{itemize}
    \item Each loop with label $\Delta$ represents two infinite number of loops, labeled $\Delta_{p_m}$ and $\Delta^{\ast}_{p_m}$ for integers $m > 0$.
    \item The loop at a vertex $k$ labeled by $z$ represents $|k|$ loops, labeled $z_1, \dots, z_{|k|}$.
    \item The loop at vertex $k$ labeled by $T$ represents $|k|-1$ loops, labeled $T_1, \dots, T_{|k|-1}$.
    \item For a vertex $k$, we denote the lazy path at $k$ by $\e_k$. 
\end{itemize}

We consider the path algebra $\C(q,t)Q_\Z$ of $Q_\Z$ with coefficients in $\C(q,t)$, \textbf{assuming that each loop $z_i$ at $k$ is invertible in $\e_k\C(q,t)Q_\Z\e_k$.} Note that, since $Q_\Z$ has an infinite number of vertices, $\C(q,t)Q_\Z$ is non-unital. 

\begin{definition}
The algebra $\DBqt$ is the quotient of the path algebra $\C(q,t)Q_\Z$ by the following relations, where we set $\varphi := \frac{1}{q-1}[d_{+}, d_{-}]$.

\begin{enumerate}
\item[(DB$^{+}$)] The left action on $\e_k$ with $k>0$ satisfies:
\begin{subequations}
\begin{align}
(T_i - 1)(T_i + q) = 0, \qquad T_{i}T_{i+1}T_{i} &= T_{i+1}T_{i}T_{i+1} \qquad T_{i}T_{j} = T_{j}T_{i} \, (|i - j| > 1) \label{eq:hecke relns} \\
T_{i}^{-1}z_{i+1}T_{i}^{-1} &= q^{-1}z_{i} \, (1 \leq i \leq k-1) \label{eq:T and z} \\
z_{i}T_{j} = T_{j}z_{i} \, (i \not\in \{j, j+1\}),& \qquad z_{i}z_{j} = z_{j}z_{i} (1 \leq i, j \leq k) \label{eq:affine Hecke relns}  \\
d^{2}_{-}T_{k-1} = d^{2}_{-} \, (k \geq 2),& \qquad d_{-}T_{i} = T_{i}d_{-} \, (1 \leq i \leq k-2) \label{eq:T d-} \\
T_{1}d_{+}^{2} = d_{+}^{2},& \qquad d_{+}T_{i} = T_{i+1}d_{+} \, (1 \leq i \leq k-1) \label{eq:T d+} \\
q\varphi d_{-} = d_{-}\varphi T_{k-1} \, (k \geq 2),& \qquad T_{1}\varphi d_{+} = qd_{+}\varphi \label{eq:phi} \\
z_{i}d_{-} = d_{-}z_{i},& \qquad d_{+}z_{i} = z_{i+1}d_{+} \label{eq: d z} \\
z_{1}(qd_{+}d_{-} - d_{-}d_{+}) &= qt(d_{+}d_{-} - d_{-}d_{+})z_{k} \label{eq:qphi} \\
[\Delta_{p_m}, \Delta_{p_n}] = [\Delta_{p_m}, T_i] &=   [\Delta_{p_m}, z_j] = [\Delta_{p_m}, d_{-}] = 0 \label{eq:delta} \\
[\Delta^{\ast}_{p_m}, \Delta^{\ast}_{p_n}] = [\Delta^{\ast}_{p_m}, T_i] &=  [\Delta^{\ast}_{p_m}, z_j] =  [\Delta^{\ast}_{p_m}, d_{-}] = 0 \label{eq:delta star} \\
[\Delta^{\ast}_{p_m}, \Delta_{p_n}] & = 0 \label{eq: delta delta star} \\
[\Delta_{p_m}, d_{+}] = z_1^{m}d_{+}, & \qquad [\Delta^{\ast}_{p_m}, d_{+}] = z_1^{-m}d_{+}. \label{eq: delta d+}
\end{align}
\end{subequations}

\item[(DB$^{-}$)] The left action on $\e_k$ with $k<0$ satisfies:
\begin{subequations}
\begin{align}
(T_i - 1)(T_i + q^{-1}) = 0, \quad T_{i}T_{i+1}T_{i} &= T_{i+1}T_{i}T_{i+1}, \quad T_{i}T_{j} = T_{j}T_{i}\, (|i - j| > 1) \label{eq:negative hecke relns} \\
T_{i}^{-1}z_{i+1}T_{i}^{-1} &= qz_{i} \, (1 \leq i \leq k-1) \label{eq:negative T and z} \\
z_{i}T_{j} = T_{j}z_{i} \, (i \not\in \{j, j+1\}),& \qquad z_{i}z_{j} = z_{j}z_{i} (1 \leq i, j \leq k) \label{eq:negative affine Hecke relns}  \\
d^{2}_{+}T_{|k|-1} = d^{2}_{+} \, (k < -1), & \qquad d_{+}T_{i} = T_{i}d_{+} \, (1 \leq i \leq |k|-2) \label{eq:negative T d+} \\
T_{1}d_{-}^{2} = d_{-}^{2},& \qquad d_{-}T_{i} = T_{i+1}d_{-} \, (1 \leq i \leq |k|-1) \label{eq:negative T d-} \\
q^{-1}\varphi d_{+} = d_{+}\varphi T_{|k|-1} , (k < -1),& \qquad T_{1}\varphi d_{-} = q^{-1}d_{-}\varphi \label{eq:negative phi} \\
z_{i}d_{+} = d_{+}z_{i},& \qquad d_{-}z_{i} = z_{i+1}d_{-} \label{eq: negative d z} \\
z_{1}(q^{-1}d_{-}d_{+} - d_{+}d_{-}) &= q^{-1}t^{-1}(d_{-}d_{+} - d_{+}d_{-})z_{|k|} \label{eq:qphi negative} \\
[\Delta_{p_m}, \Delta_{p_n}] = [\Delta_{p_m}, T_i] = &  [\Delta_{p_m}, z_j] = [\Delta_{p_m}, d_{+}] = 0 \label{eq:negative delta} \\
[\Delta^{\ast}_{p_m}, \Delta^{\ast}_{p_n}] = [\Delta^{\ast}_{p_m}, T_i] = & [\Delta^{\ast}_{p_m}, z_j] =  [\Delta^{\ast}_{p_m}, d_{+}] = 0 \label{eq:negative delta star} \\
[\Delta^{\ast}_{p_m}, \Delta_{p_n}] & = 0 \label{eq: negative delta delta star} \\
[\Delta_{p_m}, d_{-}] = -z_1^{m}d_{-}, & \qquad [\Delta_{p_m}^{\ast}, d_{-}] = -z_1^{-m}d_{-}. \label{eq: negative delta d-}
\end{align}
\end{subequations}
\end{enumerate}

\begin{remark}
Note that none of the relations involve the element $\varphi$ acting on $\e_0$, i.e. none of them involve $\e_0[d_{+},d_{-}]\e_0$. 
\end{remark}

\begin{remark}
    For $k > 0$, relations \eqref{eq:hecke relns}, \eqref{eq:T and z} and \eqref{eq:affine Hecke relns} say that $\e_k z_i\e_k$ and $\e_k T_j\e_k$ generate a copy of the affine Hecke algebra $\AH_k$ with parameter $q$. Similarly, for $k < 0$ the relations \eqref{eq:negative hecke relns}, \eqref{eq:negative T and z} and \eqref{eq:negative affine Hecke relns} say that $\e_k z_i\e_k$ and $\e_k T_j\e_k$ generate a copy of the affine Hecke algebra $\AH_{|k|}$ with parameter $q^{-1}$. 
\end{remark}

To express the last relation, we need to define some additional elements in $\DBqt$. 

\begin{definition}\label{def:e-and-f}
For each $m \in \Z$, we define the elements 
\begin{align}\label{eq:def-e-and-f}
e_m := \e_0 d_{-} z_1^{m} d_{+}\e_0, \qquad f_m := \e_0 d_{+}z_1^{m}d_{-} \e_0,
\end{align}
with corresponding generating series given by
\[
e(\z) = \sum_{i \in \Z} e_i\z^{-i}, \qquad f(\w) = \sum_{i \in \Z}f_i\w^{-i}.
\]

We also define elements $\psi^{\pm m},m\ge 0$ via the generating series $\psi^{\pm}(\z) = \sum_{m \geq 0}\psi^{\pm}_{m}\z^{\mp m}$, where: 
\begin{align}
\label{eq: def psi plus}
\psi^+(\z)&:=\left(-\frac{1-q^{-1}t^{-1}\z^{-1}}{1-\z^{-1}}\right)\exp\left[-\sum_{m=1}^{\infty}\frac{\z^{-m}}{m}\e_0\Delta_{p_m}\e_0(1-q^m)(1-t^m)(1-(qt)^{-m})\right] \\
\label{eq: def psi minus}
\psi^-(\z)&:=\left(-\frac{1-q^{-1}t^{-1}\z^{-1}}{1-\z^{-1}}\right)\exp\left[\sum_{m=1}^{\infty}\frac{\z^m}{m}\e_0\Delta^*_{p_m}\e_0(1-q^m)(1-t^m)(1-(qt)^{-m})\right]
\end{align} 
\end{definition}

\begin{remark}
    The $\left(-\frac{1-q^{-1}t^{-1}\z^{-1}}{1-\z^{-1}}\right)$ factor in \eqref{eq: def psi plus} and \eqref{eq: def psi minus} is motivated by Corollary \ref{cor: feigin tsymbaliuk psi exponential}, see also Theorem \ref{thm:dbqt-polynomial-rep}.
\end{remark}
Then, the last relation in $\DBqt$ states:
\begin{enumerate}
\item[(DB$^{0}$)] The elements $e_m, f_m$ and $\psi^\pm_m$ satisfy,
\begin{equation}
\label{eq: weak e and f commutation in double}
[e(\z),f(\w)]= (1-q^{-1}t^{-1})^{-1} \left(\sum_{n\in \Z}\z^n\w^{-n}\right) (\psi^{+}(\w) - \psi^{-}(\z))
\end{equation}
\end{enumerate}
In particular, notice that \eqref{eq: weak e and f commutation in double} is very similar to \eqref{eq: commutation of e and f}, except for a $-(1-q)^{-1}(1-t)^{-1}$ factor. Thus, \eqref{eq: weak e and f commutation in double} can be equivalently written as an infinite collection of relations as in \eqref{eq:explicit-commutation-e-and-f}. The missing $(1-q)^{-1}(1-t)^{-1}$ factor is explained by Corollary \ref{cor: e and f short}, see also Theorem \ref{thm:dbqt-polynomial-rep}.  
\end{definition}
\medskip

The \emph{double Dyck path algebra} $\Bqt$ was originally introduced by Carlsson-Gorsky-Mellit in \cite{CGM}. In \cite{calibrated} we extended 
this definition to include a family of commuting operators $\{\Delta_{p_m}\}$ that act like Macdonald operators on the polynomial representation. These algebras can be realized inside $\DBqt$ as follows. 
\begin{definition}
The \newword{double Dyck path algebra} $\Bqt$ is the subalgebra of $\DBqt$ generated by the arrows and vertices in the following subquiver of $Q_{\Z}$:
\[\begin{tikzcd}
	0 && 1 && 2 && \cdots
	%\arrow["\Delta", from=1-1, to=1-1, loop, in=55, out=125, distance=10mm]
	\arrow["{d_{+}}", curve={height=-6pt}, shorten <=5pt, shorten >=5pt, from=1-1, to=1-3]
	\arrow["{d_-}", curve={height=-6pt}, shorten <=5pt, shorten >=5pt, from=1-3, to=1-1]
	%\arrow["\Delta", from=1-3, to=1-3, loop, in=55, out=125, distance=10mm]
	\arrow["z"{description}, from=1-3, to=1-3, loop, in=240, out=300, distance=5mm]
	\arrow["{d_{+}}", curve={height=-6pt}, shorten <=5pt, shorten >=5pt, from=1-3, to=1-5]
	\arrow["{d_-}", curve={height=-6pt}, shorten <=5pt, shorten >=5pt, from=1-5, to=1-3]
	%\arrow["\Delta", from=1-5, to=1-5, loop, in=55, out=125, distance=10mm]
	\arrow["z"{description}, from=1-5, to=1-5, loop, in=240, out=300, distance=5mm]
	\arrow["T"{description}, from=1-5, to=1-5, loop, in=230, out=310, distance=15mm]
	\arrow["{d_{+}}", curve={height=-6pt}, shorten <=5pt, shorten >=5pt, from=1-5, to=1-7]
	\arrow["{d_-}", curve={height=-6pt}, shorten <=5pt, shorten >=5pt, from=1-7, to=1-5]
\end{tikzcd}\]
\end{definition}

 Its extension by Delta operators can thus be seen as follows.

\begin{definition}
Define the \newword{extended double Dyck path algebra} $\DBqt^{\geq}$ is the subalgebra of $\DBqt$ generated by the arrows and vertices of the following quiver. 

\[\begin{tikzcd}
	0 && 1 && 2 && \cdots
	\arrow["\Delta", from=1-1, to=1-1, loop, in=55, out=125, distance=10mm]
	\arrow["{d_{+}}", curve={height=-6pt}, shorten <=5pt, shorten >=5pt, from=1-1, to=1-3]
	\arrow["{d_-}", curve={height=-6pt}, shorten <=5pt, shorten >=5pt, from=1-3, to=1-1]
	\arrow["\Delta", from=1-3, to=1-3, loop, in=55, out=125, distance=10mm]
	\arrow["z"{description}, from=1-3, to=1-3, loop, in=240, out=300, distance=5mm]
	\arrow["{d_{+}}", curve={height=-6pt}, shorten <=5pt, shorten >=5pt, from=1-3, to=1-5]
	\arrow["{d_-}", curve={height=-6pt}, shorten <=5pt, shorten >=5pt, from=1-5, to=1-3]
	\arrow["\Delta", from=1-5, to=1-5, loop, in=55, out=125, distance=10mm]
	\arrow["z"{description}, from=1-5, to=1-5, loop, in=240, out=300, distance=5mm]
	\arrow["T"{description}, from=1-5, to=1-5, loop, in=230, out=310, distance=15mm]
	\arrow["{d_{+}}", curve={height=-6pt}, shorten <=5pt, shorten >=5pt, from=1-5, to=1-7]
	\arrow["{d_-}", curve={height=-6pt}, shorten <=5pt, shorten >=5pt, from=1-7, to=1-5]
\end{tikzcd}\]

In particular, $\DBqt^{\geq}$ is an enlargement of the $\Bqt^{\ext}$-algebra first defined in \cite{calibrated}. In $\Bqt^{\ext}$  the operators $\Delta^*_{p_m}$ were not included.
\end{definition}

\begin{definition}
The algebra $\DBqt^{\leq}$ is the subalgebra of $\DBqt$ generated by the following quiver:

\[\begin{tikzcd}
	\cdots && {-2} && {-1} && 0
	\arrow["{d_{+}}", curve={height=-6pt}, shorten <=5pt, shorten >=5pt, from=1-1, to=1-3]
	\arrow["{d_-}", curve={height=-6pt}, shorten <=5pt, shorten >=5pt, from=1-3, to=1-1]
	\arrow["\Delta", from=1-3, to=1-3, loop, in=55, out=125, distance=10mm]
	\arrow["z"{description}, from=1-3, to=1-3, loop, in=240, out=300, distance=5mm]
	\arrow["T"{description}, from=1-3, to=1-3, loop, in=230, out=310, distance=15mm]
	\arrow["{d_{+}}", curve={height=-6pt}, shorten <=5pt, shorten >=5pt, from=1-3, to=1-5]
	\arrow["{d_-}", curve={height=-6pt}, shorten <=5pt, shorten >=5pt, from=1-5, to=1-3]
	\arrow["\Delta", from=1-5, to=1-5, loop, in=55, out=125, distance=10mm]
	\arrow["z"{description}, from=1-5, to=1-5, loop, in=240, out=300, distance=5mm]
	\arrow["{d_{+}}", curve={height=-6pt}, shorten <=5pt, shorten >=5pt, from=1-5, to=1-7]
	\arrow["{d_-}", curve={height=-6pt}, shorten <=5pt, shorten >=5pt, from=1-7, to=1-5]
	\arrow["\Delta", from=1-7, to=1-7, loop, in=55, out=125, distance=10mm]
\end{tikzcd}\]    
\end{definition}

\begin{remark}
    Comparing the relations (DB$^+$) with (DB$^-$), it follows from the definitions that $\DBqt^{\leq}$ is isomorphic to $\mathbb{DB}_{q^{-1}, t^{-1}}^{\geq}$. 
\end{remark}

\subsection{An Anti-Involution on $\DBqt$} 
Given the obvious symmetry between $\DBqt^{\geq}$ and $\DBqt^{\leq}$ it is natural to search for a map on $\DBqt$ which exchanges these subalgebras. In this section, we define such an anti-involution $\adj: \DBqt \to \DBqt$. This map is similar to the anti-linear anti-involution $\adj: \Bqt^{\ext} \to \Bqt^{\ext}$ previously defined in \cite[Section 7]{calibrated} with some subtle differences, see Remark \ref{rmk:diff-previous-paper}.

\begin{lemma}\label{lem:antiinvolution-double}
The following defines a $\C(q,t)$-linear anti-involution $\adj: \DBqt \to \DBqt$:
\begin{align*}
    \adj(\e_k) = \e_{-k} & &  \adj(\e_kT_i\e_k) = \e_{-k}T^{-1}_{|k|-i}\e_{-k} & & \adj(\e_kz_j\e_k) = \e_{-k}z_{|k|+1-j}\e_{-k}
\end{align*}
\begin{align*}
\adj(\e_{k+1}d_{+}\e_{k}) = \begin{cases} q^{k/2}\e_{-k}d_{+}\e_{-k-1} & k \geq 0 \\ q^{(k+1)/2}\e_{-k}d_{+}\e_{-k-1} & k < 0 \end{cases}  \\
\adj(\e_{k-1}d_{-}\e_{k}) = \begin{cases}  q^{(k-1)/2}\e_{-k}d_{-}\e_{-k+1} & k > 0  \\ q^{k/2}\e_{-k}d_{-}\e_{-k+1} & k \leq 0 \end{cases}
\end{align*}
\begin{align*}
\adj(\e_k\Delta_{p_m}\e_k) = \e_{-k}(\Delta_{p_m} + \sum_{i = 1}^{|k|}z_i^{m})\e_{-k} & & \adj(\e_k\Delta^{\ast}_{p_m}\e_k) = \e_{-k}(\Delta^{\ast}_{p_m} + \sum_{i = 1}^{|k|}z_i^{-m})\e_{-k}.
\end{align*}
In particular, 
\begin{equation}
\label{eq: Theta for e and f}
\Theta(e_k)=f_k,\  \Theta(\psi_m^{\pm})=\psi_m^{\pm}.
\end{equation}
Moreover,
$\Theta(\DBqt^{\geq}) = \DBqt^{\leq}$ and vice versa.  
\end{lemma}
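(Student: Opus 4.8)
The claim is that the given formulas on generators extend to a well-defined $\C(q,t)$-linear anti-involution $\adj$ of $\DBqt$ exchanging $\DBqt^{\geq}$ and $\DBqt^{\leq}$. The natural strategy is the standard one for maps out of a presented algebra: define $\adj$ on the free path algebra $\C(q,t)Q_{\Z}$ by the stated images (extended anti-multiplicatively, so $\adj(ab)=\adj(b)\adj(a)$, which is forced by the anti-involution requirement), verify that it descends to the quotient by checking that every defining relation in (DB$^+$), (DB$^-$), and (DB$^0$) is sent to a relation that holds in $\DBqt$, and finally check that $\adj^2=\mathrm{id}$ on generators. The symmetry observed in the Remark (that $\DBqt^{\leq}\cong \mathbb{DB}_{q^{-1},t^{-1}}^{\geq}$ via swapping the roles of the positive and negative quivers) is the conceptual reason this works, and it tells us where to expect signs and $q,t$-inversions to appear.

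First I would set up the anti-multiplicative extension and record the effect of $\adj$ on the composite building blocks that recur in the relations — most importantly on $\varphi=\frac{1}{q-1}[d_+,d_-]$, on products $d_\pm d_\mp$, and on $z_1, z_k$ at each vertex. Because $\adj$ reverses arrow orientation and sends vertex $k$ to vertex $-k$, an arrow $d_+\colon k\to k+1$ is sent to an arrow $-k\to -k-1$, i.e. a $d_+$ in the negative part, and the $z$-reindexing $z_j\mapsto z_{|k|+1-j}$ reverses the order of the $z$-variables. The key preliminary computation is the image of $\varphi$: one should show $\adj(\e_k\varphi\e_k)$ equals (up to the correct power of $q$) $\e_{-k}\varphi\e_{-k}$ built from the negative-side $d_\pm$, so that relations like \eqref{eq:phi} and \eqref{eq:qphi} map to \eqref{eq:negative phi} and \eqref{eq:qphi negative}.

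The verification then proceeds relation-family by relation-family. The Hecke and affine-Hecke relations \eqref{eq:hecke relns}–\eqref{eq:affine Hecke relns} map to their negative counterparts \eqref{eq:negative hecke relns}–\eqref{eq:negative affine Hecke relns}: here the crucial point is that $T_i\mapsto T_{|k|-i}^{-1}$ converts the quadratic relation with parameter $q$ into the one with parameter $q^{-1}$ (since the inverse of a root of $(T-1)(T+q)$ solves $(T-1)(T+q^{-1})$ after rescaling), and the reversal of indices in both $T$ and $z$ is compatible with the reversed braid/commutation pattern. The mixed $d_\pm$–$T$ relations \eqref{eq:T d-}–\eqref{eq:T d+} swap into \eqref{eq:negative T d+}–\eqref{eq:negative T d-} precisely because $\adj$ interchanges $d_+$ with $d_-$ as maps between adjacent vertices while reversing arrow order. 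The $\Delta$-relations \eqref{eq:delta}–\eqref{eq: delta d+} require the extra additive terms $\sum_{i}z_i^{\pm m}$ in $\adj(\Delta_{p_m})$: these correction terms are exactly what is needed so that the commutator $[\Delta_{p_m},d_+]=z_1^m d_+$ maps to the negative-side relation $[\Delta_{p_m},d_-]=-z_1^m d_-$ in \eqref{eq: negative delta d-}, and one must check the $z$-index bookkeeping makes the leading monomial match after applying the $z$-reversal. Finally, (DB$^0$) is verified using \eqref{eq: Theta for e and f}: since $\adj(e_k)=f_k$, $\adj(f_k)=e_k$, and $\adj(\psi_m^\pm)=\psi_m^\pm$ (the latter from the $\Delta$ formulas and the fact that the extra $z$-terms vanish at vertex $0$), the commutation relation \eqref{eq: weak e and f commutation in double} is self-dual under $\adj$ up to the anti-multiplicative sign convention, which I would check matches.

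The main obstacle I anticipate is the precise matching of the powers of $q$ in the $d_\pm$ formulas together with the signs and $z$-index reversals in the $\Delta$ and $\varphi$ relations — in particular verifying \eqref{eq:qphi}$\mapsto$\eqref{eq:qphi negative}, where the scalar $qt$ must transform correctly into $q^{-1}t^{-1}$ under the combined action of $z_1\leftrightarrow z_k$ reversal and the $d_+ d_-\leftrightarrow d_- d_+$ swap, and confirming that the half-integer powers $q^{k/2}$ in $\adj(d_\pm)$ are consistent with $\adj^2=\mathrm{id}$ (the exponents from the $k\geq 0$ and $k<0$ cases must compose back to the identity). These are finite but delicate computations; once they check out, the statements $\adj^2=\mathrm{id}$ and $\adj(\DBqt^{\geq})=\DBqt^{\leq}$ follow since $\adj$ sends the generating subquiver of $\DBqt^{\geq}$ to that of $\DBqt^{\leq}$ and back.
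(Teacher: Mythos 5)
Your proposal follows essentially the same route as the paper's proof: extend $\adj$ anti-multiplicatively to the path algebra, observe that $z_i \mapsto z_{|k|+1-i}$, $T_i \mapsto T_{|k|-i}^{-1}$ is an anti-homomorphism $\AH_k(q)\to\AH_k(q^{-1})$ handling the Hecke relations, then check the remaining relation families one by one — including the delicate $\varphi$-relations \eqref{eq:phi}$\leftrightarrow$\eqref{eq:negative phi} and \eqref{eq:qphi}$\leftrightarrow$\eqref{eq:qphi negative}, the $\Delta$-relations where the correction terms $\sum_i z_i^{\pm m}$ are exactly what reconciles \eqref{eq: delta d+} with \eqref{eq: negative delta d-}, and (DB$^0$) via $\adj(e_m)=f_m$, $\adj(\psi_m^{\pm})=\psi_m^{\pm}$ — which is precisely the structure of the paper's argument. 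You correctly flag where the delicate bookkeeping lies (powers of $q$, signs, $z$-index reversal) and even add a checkpoint the paper's proof leaves implicit (verifying $\adj^2=\mathrm{id}$, e.g.\ that the exponents $q^{k/2}$ and $q^{(k+1)/2}$ from the two cases compose to $q^0$), so the plan is sound and matches the paper modulo carrying out the finite computations it defers.
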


\begin{remark}\label{rmk:diff-previous-paper}
    The map $\adj$ is closely related to, but exhibits some subtle differences with, the map from \cite[Lemma 7.2]{calibrated}, that we call $\widetilde{\adj}$. To start, $\adj$ is $\C(q,t)$-linear while $\widetilde{\adj}$ is $\theta$-linear, where $\theta: \C(q,t) \to \C(q,t)$ is defined by $\theta(q) = q^{-1}, \theta(t) = t^{-1}$. The reason is that the relations (DB$^-$) defining $\DBqt$ already take $\theta$ into account. The second difference is that $\widetilde{\adj}$ interchanges the generators $d_{+}$ and $d_{-}$, while $\adj$ preserves them. Taking a close look at the relations (DB$^-$), however, one sees that the relations defining $\DBqt$ also take this into account. Finally, $\widetilde{\adj}$ sends $\Delta_{p_m}$ to $-\Delta_{p_m} + z_1^{m} + \cdots + z_k^{m}$, which differs from $\adj$ by the sign of $\Delta_{p_m}$. This difference is explained by looking at the relations \eqref{eq: delta d+} and \eqref{eq: negative delta d-}, which differ by a sign. 
\end{remark}

\begin{proof}[Proof of Lemma \ref{lem:antiinvolution-double}]
First, note that the map $z_i \mapsto z_{k+1-i}$, $T_i \mapsto T_{k-i}^{-1}$ gives an anti-homomorphism of affine Hecke algebras $\AH_k(q) \to \AH_k(q^{-1})$ that squares to the identity, so that $\adj$ interchanges the relations \eqref{eq:hecke relns}, \eqref{eq:T and z} and \eqref{eq:affine Hecke relns} with the relations \eqref{eq:negative hecke relns}, \eqref{eq:negative T and z} and \eqref{eq:negative affine Hecke relns}, respectively.

Similarly, direct computation shows that relations \eqref{eq:T d-}, \eqref{eq:T d+}, and \eqref{eq: d z} are exchanged under $\adj$ with relations \eqref{eq:negative T d-}, \eqref{eq:negative T d+}, and \eqref{eq: negative d z} respectively. 

Before moving on, we find some values of $\adj$.

\emph{If $k > 0$:}
\begin{align*}%\label{eq:image phi k>0}
\adj(\e_k(d_{+}d_{-} - d_{-}d_{+})\e_k) &= \e_{-k}(q^{k-1}d_{-}d_{+} - q^{k}d_{+}d_{-})\e_{-k} = q^{k}\e_{-k}(q^{-1}d_{-}d_{+} - d_{+}d_{-})\e_{-k} 
\\
%\label{eq:image weird phi k>0}
\adj(\e_k(qd_{+}d_{-} - d_{-}d_{+}) &= q^{k}\e_{-k}(d_{-}d_{+} - d_{+}d_{-})\e_{-k}
\end{align*}

\emph{If $k < 0$}:
\begin{align*}%\label{eq:image phi k < 0}
\Phi(\e_k(d_{-}d_{+} - d_{+}d_{-})\e_k) &= q^{k}\e_{-k}(qd_{+}d_{-} - d_{-}d_{+})\e_{-k}
\\
%\label{eq:image weird phi k < 0}
\Phi(\e_k(q^{-1}d_{-}d_{+} - d_{+}d_{-})\e_k) &= q^{k}\e_{-k}(d_{+}d_{-} - d_{-}d_{+})\e_{-k}.
\end{align*}

Thus, applying $\adj$ to both sides of \eqref{eq:qphi} we obtain
\[
\adj(\e_kz_{1}(qd_{+}d_{-} - d_{-}d_{+})\e_k) = q^{k}\e_{-k}(d_{-}d_{+} - d_{+}d_{-})z_{k}\e_{-k}
\]
and
\[
\adj(qt\e_k(d_{+}d_{-} - d_{-}d_{+})z_{k}\e_k) = qt\e_{-k}q^{k}z_1(q^{-1}d_{-}d_{+} - d_{-}d_{+})\e_{-k}.
\]
Hence, we see that $\adj$ sends relation \eqref{eq:qphi} to relation \eqref{eq:qphi negative}. An analogous computation shows that \eqref{eq:qphi negative} is exchanged with \eqref{eq:qphi}.

Now we ned to verify that $\adj$ interchanges relations \eqref{eq:phi} and \eqref{eq:negative phi}. Applying $\adj$ to the right-hand side of the left equation in \eqref{eq:phi} we obtain
\begin{align*}
\adj(\e_{k-1}q[d_{+},d_{-}]d_{-}\e_k) = & qq^{(k-1)/2}q^{k-1}\e_{-k}d_{-}(q^{-1}d_{-}d_{+} - d_{+}d_{-})\e_{-k+1}\\
= & q^{\frac{3k-1}{2}}\e_{-k}d_{-}(q^{-1}d_{-}d_{+} - d_{+}d_{-})\e_{-k+1} \\ 
= & q^{\frac{3k-1}{2}}\e_{-k}d_{-}(q^{-1}d_{-}d_{+} - q^{-1}d_{+}d_{-} + q^{-1}d_{+}d_{-} - d_{+}d_{-})\e_{-k+1}\\
= & q^{\frac{3k-1}{2}}\e_{-k}(q^{-1}d_{-}[d_{-},d_{+}] + (q^{-1}-1)d_{-}d_{+}d_{-})\e_{-k+1}
\end{align*}
On the other hand, applying $\adj$ to the left-hand side of the same equation we obtain
\begin{align*}
\adj(\e_{k-1}d_{-}[d_{+},d_{-}]T_{k-1}\e_{k}) = & q^{k}q^{(k-1)/2}\e_{-k}T_1^{-1}(q^{-1}d_{-}d_{+} - d_{+}d_{-})d_{-}\e_{-k+1} \\
= & q^{\frac{3k-1}{2}}\e_{-k}(q^{-1}T_1^{-1}[d_{-}, d_{+}]d_{-} + (q^{-1}-1)T_1^{-1}d_{+}d_{-}d_{-})\e_{-k+1} \\
= & q^{\frac{3k-1}{2}}\e_{-k}(T_1[d_{-}, d_{+}]d_{-} + (q^{-1}-1)([d_{-}, d_{+}]d_{-} + d_{+}d_{-}d{-}))\e_{-k+1} \\
= & q^{\frac{3k-1}{2}}\e_{-k}(T_1[d_{-}, d_{+}]d_{-} + (q^{-1}-1)d_{-}d_{+}d_{-})\e_{-k+1}
\end{align*}
where we used that $q^{-1}T_1^{-1} = T_1 + q^{-1} - 1$ (recall that, since $-k < 0$, the correct quadratic relation for $T_1$ is $(T_1 - 1)(T_1 + q^{-1}) = 0$), that $T_1^{-1}d_{+}= d_{+}T_{1}^{-1}$ and that $T_1^{-1}d_{-}^{-2} = d_{-}^{2}$. Thus, we see that $\adj$ sends the left equation in \eqref{eq:phi} to the right equation in \eqref{eq:negative phi}. Analogously, we see that $\adj$ sends the right equation in \eqref{eq:negative phi}, and that it exchanges the right side of \eqref{eq:phi} with the left side of \eqref{eq:negative phi}.

Now we verify relations \eqref{eq:delta}, \eqref{eq:delta star}, \eqref{eq: delta delta star} and \eqref{eq: delta d+}. Since $z_1^{m} + \cdots + z_k^{m}$ is central in the affine Hecke algebra $\AH_k(q^{\pm 1})$, it is clear that the first three of these relations are preserved under $\adj$, with the exception of $[\Delta_{p_m}, d_{-}] = 0$ that we verify next. We have
\begin{align*}
\adj(\e_{k-1}(\Delta_{p_m}d_{-})\e_{k}) = & q^{(k-1)/2}\e_{-k}(d_{-}(\Delta_{p_m} + z_1^{m} + \cdots + z_{k-1}^{m})\e_{-k+1} \\
= & q^{(k-1)/2}\e_{-k}(d_{-}\Delta_{p_m} + d_{-}(z_1^{m} + \cdots + z_{k-1}^{m}))\e_{-k+1} \\
= & q^{(k-1)/2}\e_{-k}(\Delta_{p_m}d_{-} + z_1^{m}d_{-} + (z_2^{m} + \cdots + z_k^{m})d_{-})\e_{-k+1} \\
= & \adj(\e_{k-1}(d_{-}\Delta_{p_m})\e_{k}).
\end{align*}

Let us now check that the relation \eqref{eq: delta d+} is preserved. We can write this relation as
\[
\e_{k+1}\Delta_{p_m}d_{+}\e_{k} = \e_{k+1}d_{+}\Delta_{p_m}\e_{k} + \e_{k+1}z_1^md_{+}\e_k. 
\]
Applying $\adj$ to the left-hand side:
\begin{align*}
\adj(\e_{k+1}\Delta_{p_m}d_{+}\e_k)  & =  q^{\frac{k}{2}}\e_{-k}d_{+}(\Delta_{p_m} + z_1^{m} + \cdots + z_{k+1}^{m})\e_{-k-1} \\
    & = q^{\frac{k}{2}}\e_{-k}(\Delta_{p_m}d_{+} + d_{+}(z_1^{m} + \cdots + z_{k+1}^{m}))\e_{-k-1} \\
    & = q^{\frac{k}{2}}\e_{-k}(\Delta_{p_m} + z_1^{m} + \cdots + z_{k}^{m})d_{+}\e_{-k-1} + q^{\frac{k}{2}}\e_{-k}d_{+}z^{m}_{k+1}\e_{-k-1} \\
    & = \adj(\e_{k+1}d_{+}\Delta_{p_m}\e_k) + \adj(\e_{k+1}z_1^{m}d_{+}\e_k),
\end{align*}
and the result follows. Similarly, one can check that $\adj$ sends \eqref{eq: negative delta d-} to \eqref{eq: delta d+}.

Finally, we need to check that (DB$^0$) is preserved. To this end, note that $\adj(e_m) = f_m$, $\adj(f_k) = e_k$ and $\adj(\e_0\Delta_{p_m}\e_0) = \e_0\Delta_{p_m}\e_0$, $\adj(\e_0\Delta^{\ast}_{p_m}\e_0) = \e_0\Delta^{\ast}_{p_m}\e_0$, so that $\adj$ fixes $\psi_m$ for every $m$ and the preservation of (DB$^0$) follows.
\end{proof}

\subsection{Polynomial Representation of $\DBqt^{\geq}$}\label{sec:polynomial-bqt} The polynomial representation of $\Bqt$ appeared originally in the work of Carlsson and Mellit in \cite{Shuffle} and thereafter, alongside Gorsky, geometrically formulated in \cite{CGM}. Following \cite{calibrated}, we recall the extension of the polynomial representation $V=\bigoplus_{k=0}^{\infty} V_k$ to $\DBqt^{\geq}$. For convenience, we relabel this representation as $V^{+} = \bigoplus_{k = 0}^{\infty}V_k^{+}$. The main theorem in \cite{CGM} gives a convenient basis of $V^{+}$ and the action of the generators in this basis.

Consider a pair of Young diagrams $\lambda\subseteq \mu$ such that $\mu\setminus \lambda$ is a union of horizontal strips, and a standard Young tableau $T$ of skew shape $\mu\setminus \lambda$. Recall, that for a box $\Box \in \lambda$ in row $r$ and column $c$ the \emph{(q,t)-content} of this box is given by $q^{c-1}t^{r-1}$. Let $w_i$ be the $(q,t)$-content for a box labeled by $|\mu \setminus \lambda| - i +1$ in $T$, and denote the skew tableau $T$ by the pair $(\lambda, \underline{w})$, where $\underline{w}
 = (w_{|\mu\setminus\lambda|}, \dots, w_1)$. The basis $\{I_{\lambda, \underline{w}}\}$ of $V_k^{+}$ is given by all such tableaux $(\lambda,\underline{w})$ of size $k$.  
 When $k = 0$, the basis $\{I_{\lambda}\}$ of $V_{0}^{+} = \Sym_{q,t}$ coincides with the modified Macdonald basis in $\Sym_{q,t}$, see \cite[Theorem 7.0.1]{CGM}. More generally, it was recently shown that the basis $\{I_{\lambda, \underline{w}}\}$ of $V_k^{+} = \Sym_{q,t}[y_1, \dots, y_k]$ coincides with the basis of modified partially symmetric Macdonald polynomials, see \cite{OrrMilo}. 

On the basis $\{I_{\lambda, \underline{w}}\}$ the action of $\DBqt^{\geq}$  is given by:
\begin{subequations}
\begin{align}
\label{eq: z polynomial}
z_{j}I_{\lambda, w} &= w_{j}I_{\lambda, w}, 
\\
\label{eq: Delta polynomial}
\Delta_{p_m}I_{\lambda, w} &= \left(\sum_{\square\in \lambda}\square^m+\sum_{j=1}^{k}w_{j}^m\right)I_{\lambda, w},\\
\label{eq: Delta* polynomial}
\Delta_{p_m}^*I_{\lambda, w} &= \left(\sum_{\square\in \lambda}\square^{-m}+\sum_{j=1}^{k}w_{j}^{-m}\right)I_{\lambda, w},
\\
\label{eq: T polynomial}
 T_{i}I_{\lambda, w} &= \frac{(q-1)w_{i+1}}{w_{i} - w_{i+1}}I_{\lambda, w} + \frac{w_{i} - qw_{i+1}}{w_{i} - w_{i+1}}I_{\lambda, s_{i}w},
\\
\label{eq: d- polynomial}
d_{-}I_{\lambda, (w_k, \dots, w_{1})} &= I_{\lambda\cup w_k, (w_{k-1}, \dots, w_{1})},
\\
\label{eq: d+ polynomial}
d_{+}I_{\lambda, (w_k, \dots, w_{1})} &= -q^{k}\sum_{\substack{x \; \text{an addable} \\ \text{box of} \; \mu}} c(\lambda\cup \underline{w};x)\prod_{i = 1}^{k}\frac{x - tw_i}{x-qtw_i}I_{\lambda, (w_k, \dots, w_1,x)}, 
\end{align}
\end{subequations}
where we abuse the notation by identifying an addable box $x$ with its $(q,t)$-content, and  the coefficients $c(\lambda;x)$ are given by \eqref{eq: c for partitions}. Recall the element $e_m$ from \eqref{eq:def-e-and-f}. Note that
\[
e_mI_{\lambda} = d_{-}z_1^{m}d_{+}I_{\lambda} = d_{-}z_1^{m}\sum_{x}c(\lambda;x)I_{\lambda; x} = \sum_{\mu = \lambda \cup x}c(\lambda;x)x^{m}I_{\mu}.
\]
Comparing to Corollary \ref{cor: e and f short}, we obtain the following result.

\begin{lemma}\label{lem:comparison-e-action}
The action of $e_m$ on $V_0^+$ agrees, up to the scalar $-(1-q)(1-t)$, with the action of its eponym in the elliptic Hall algebra $\EHA$. More precisely, if $e_m$ denotes the action of $e_m \in \DBqt^{\geq}$ and $\tilde{e}_m$ the action of its eponymic generator in $\EHA$, then
\[
e_m = -(1-q)(1-t)\tilde{e}_m. 
\]
\end{lemma}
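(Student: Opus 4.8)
The plan is to evaluate the operator $e_m = \e_0 d_- z_1^m d_+ \e_0$ directly on the modified Macdonald basis $\{I_\lambda\}$ of $V_0^+ = \Sym_{q,t}$ and to match the result, term by term, against the $\EHA$-action recorded in Corollary \ref{cor: e and f short}. Since the three constituent operators $d_+$, $z_1$, and $d_-$ all act by explicit formulas on the distinguished basis of the polynomial representation of $\DBqt^{\geq}$, the composite is computed by simply threading $I_\lambda$ through the chain $V_0^+ \to V_1^+ \to V_1^+ \to V_0^+$.

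Concretely, I would first apply $d_+$ via \eqref{eq: d+ polynomial} specialized to $k = 0$, where $\underline{w}$ is empty and the product $\prod_{i=1}^{k}$ is vacuous; this sends $I_\lambda$ to $\sum_x c(\lambda;x) I_{\lambda,(x)} \in V_1^+$, the sum running over addable boxes $x$ of $\lambda$ (identified with their $(q,t)$-content). Next, since $z_1$ acts on $I_{\lambda,(x)}$ by the scalar $w_1 = x$ by \eqref{eq: z polynomial}, applying $z_1^m$ multiplies the $x$-term by $x^m$. Finally $d_-$ returns each $I_{\lambda,(x)}$ to $I_{\lambda \cup x} = I_\mu \in V_0^+$ via \eqref{eq: d- polynomial}. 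Threading these together yields $e_m I_\lambda = \sum_{\mu = \lambda \cup x} c(\lambda;x) x^m I_\mu$, which is precisely the expression displayed just before the statement.

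It then remains to compare this with Corollary \ref{cor: e and f short}, which gives $\tilde{e}_m I_\lambda = -\tfrac{1}{(1-q)(1-t)} \sum_{\mu = \lambda \cup x} c(\lambda;x) x^m I_\mu$ with the \emph{same} coefficients $c(\lambda;x)$ from \eqref{eq: c for partitions}. The two expansions agree term by term up to the global constant $-\tfrac{1}{(1-q)(1-t)}$, so inverting this constant gives $e_m = -(1-q)(1-t)\tilde{e}_m$ on $V_0^+$, as claimed.

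The one point I would verify most carefully — and the only genuine subtlety — is the bookkeeping of the scalar prefactors: the factor $-q^k$ in \eqref{eq: d+ polynomial} collapses to a constant at $k=0$, while $z_1$ and $d_-$ contribute no additional scalar, so one must confirm that the net constant relating $e_m I_\lambda$ to the $\EHA$-expansion is exactly $-(1-q)(1-t)$ and not its reciprocal or its negative. Because both representations live on the same basis and share the same coefficients $c(\lambda;x)$, no input beyond Corollary \ref{cor: e and f short} is required; the entire argument reduces to matching two explicit formulas.
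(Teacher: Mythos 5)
Your proposal is correct and takes exactly the paper's route: the paper performs the same three-step computation (apply $d_+$ via \eqref{eq: d+ polynomial} at $k=0$, scale by $x^m$ via \eqref{eq: z polynomial}, return to $V_0^+$ via \eqref{eq: d- polynomial}) in the display immediately preceding the lemma, and the proof consists of comparing the result with Corollary \ref{cor: e and f short}. The scalar subtlety you flag is well judged: taken literally, \eqref{eq: d+ polynomial} carries a prefactor $-q^{k}$, which at $k=0$ equals $-1$, and the paper's displayed computation suppresses this sign --- the stated constant $-(1-q)(1-t)$ is the one consistent with the paper's convention $d_+ I_\lambda = \sum_x c(\lambda;x)\,I_{\lambda,(x)}$, exactly as in your second paragraph.
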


\subsection{Polynomial Representation of $\DBqt^{\leq}$}\label{sec:polynomial-dbqt} Similarly to the polynomial representation of $\DBqt^{\geq}$, we define a polynomial representation of the non-positive half $\DBqt^{\leq}$ as follows. We set:
\[
V^{-} := \bigoplus_{k =0}^{\infty}V^{-}_{-k}
\]
Where $V^{-}_{-k}$ is defined as follows. Consider a pair of Young diagrams $\mu \subseteq \lambda$ such that $\lambda \setminus \mu$ is a horizontal strip and $|\lambda \setminus \mu| = -k$. Consider also a standard tableau $T$ of shape $\lambda \setminus \mu$. We denote by $w_i$ the $(q,t)$-content of the box labeled by $|\lambda \setminus \mu| - i + 1$ in $T$, and we denote the tableau $T$ by the pair $(\lambda, \underline{w})$. Note that, as opposed to the setting in Section \ref{sec:polynomial-bqt}, here the box $w_{k}$ is a \emph{removable} box of $\lambda$; $w_{k-1}$ is a removable box of $\lambda - w_{k}$ and so on. The space $V_k$ has a $\C(q,t)$-basis given by $I^{-}_{\lambda, \underline{w}}$, for all such tableaux $T$.

In this basis, we have:
\begin{subequations}
\begin{align}
\label{eq: z polynomial negative}
z_{j}I^{-}_{\lambda, w} &= w_{j}I^{-}_{\lambda, w}, 
\\
\label{eq: Delta polynomial negative}
\Delta_{p_m}I^{-}_{\lambda, w} &= \left(\sum_{\square\in \lambda}\square^m-\sum_{j=1}^{k}w_{j}^m\right)I^{-}_{\lambda, w},\\
\label{eq: Delta* polynomial negative}
\Delta_{p_m}^*I_{\lambda, w} &= \left(\sum_{\square\in \lambda}\square^{-m}-\sum_{j=1}^{k}w_{j}^{-m}\right)I^{-}_{\lambda, w},
\\
\label{eq: T polynomial negative}
 T_{i}I^{-}_{\lambda, w} &= \frac{(q^{-1}-1)w_{i+1}}{w_{i} - w_{i+1}}I^{-}_{\lambda, w} + \frac{w_{i} - q^{-1}w_{i+1}}{w_{i} - w_{i+1}}I^{-}_{\lambda, s_{i}w},
\\
\label{eq: d+ polynomial negative}
d_{+}I^{-}_{\lambda, (w_k, \dots, w_{1})} &= I^{-}_{\lambda- w_k, (w_{k-1}, \dots, w_{1})},
\\
\label{eq: d- polynomial negative}
d_{-}I^{-}_{\lambda, (w_k, \dots, w_{1})} &= -q^{k}\sum_{\substack{x \; \text{a removable} \\ \text{box of} \; \mu}} c^{\ast}(\lambda- \underline{w};x)\prod_{i = 1}^{k}\frac{x - t^{-1}w_i}{x-q^{-1}t^{-1}w_i}I^{-}_{\lambda, (w_k, \dots, w_1,x)}, 
\end{align}
\end{subequations}

\noindent where $c^{\ast}(\lambda - \underline{w}; x)$ is as defined in \eqref{eq: c* for partitions}. 

\begin{lemma}
The equations \eqref{eq: z polynomial negative}--\eqref{eq: d- polynomial negative} define a representation of $\DBqt^{\leq}$ on $V^{-}$. 
\end{lemma}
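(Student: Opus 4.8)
The plan is to avoid checking the relations (DB$^{-}$) by hand and instead deduce the statement from the already-recalled polynomial representation of $\DBqt^{\geq}$ given by \eqref{eq: z polynomial}--\eqref{eq: d+ polynomial}. The key input is the isomorphism $\DBqt^{\leq}\cong\mathbb{DB}_{q^{-1},t^{-1}}^{\geq}$ noted after the definition of $\DBqt^{\leq}$. Specializing $q\mapsto q^{-1}$, $t\mapsto t^{-1}$ in \eqref{eq: z polynomial}--\eqref{eq: d+ polynomial} produces a polynomial representation $\tilde V^{+}$ of $\mathbb{DB}_{q^{-1},t^{-1}}^{\geq}$, and pulling it back along this isomorphism $\Phi$ endows the same underlying space with a $\DBqt^{\leq}$-module structure $\Phi^{*}\tilde V^{+}$. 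Being the pullback of a genuine representation, $\Phi^{*}\tilde V^{+}$ is automatically a module, so the entire content of the lemma reduces to checking that, in the basis $\{I^{-}_{\lambda,\underline w}\}$, the operators of $\Phi^{*}\tilde V^{+}$ are precisely those written in \eqref{eq: z polynomial negative}--\eqref{eq: d- polynomial negative}.

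First I would fix the combinatorial bijection of bases. Since $\Phi$ fixes the $z_i$ and preserves the $T_i$ (up to inverting $q$), I would match a tableau $(\lambda,\underline w)$ indexing a basis vector of $V^{-}_{-k}$ with the tableau of $\tilde V^{+}_{k}$ carrying the same tuple of $z$-eigenvalues $\underline w$, where on the $\tilde V^{+}$ side these are read as $(q^{-1},t^{-1})$-contents. With this identification the diagonal and Hecke generators match directly: $z_j$ acts by $w_j$ on both sides; the Hecke formula \eqref{eq: T polynomial negative} is literally \eqref{eq: T polynomial} after $q\mapsto q^{-1}$; and, once the explicit form of $\Phi$ on $\Delta_{p_m},\Delta^{\ast}_{p_m}$ is recorded (the swap of the two families and the sign on the strip contribution being forced by comparing \eqref{eq: delta d+} with \eqref{eq: negative delta d-}), the eigenvalues in \eqref{eq: Delta polynomial negative}--\eqref{eq: Delta* polynomial negative} follow from a routine eigenvalue computation.

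The substantive point, and the main obstacle, is the matching of the off-diagonal operators. Here $\Phi$ carries $d_{-}$ to a scalar multiple of $d_{+}$ and $d_{+}$ to a scalar multiple of $d_{-}$, the scalars being vertex-dependent powers of $q$ of the same kind appearing in the anti-involution of Lemma \ref{lem:antiinvolution-double}; these absorb the prefactor $-q^{k}$. The product $\prod_{i}\frac{x-t^{-1}w_i}{x-q^{-1}t^{-1}w_i}$ of \eqref{eq: d- polynomial negative} is exactly the image of $\prod_{i}\frac{x-tw_i}{x-qtw_i}$ in \eqref{eq: d+ polynomial} under $q\mapsto q^{-1}$, $t\mapsto t^{-1}$, so it causes no trouble. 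What remains is to identify the coefficient $c^{\ast}(\lambda-\underline w;x)$ of \eqref{eq: d- polynomial negative} with the coefficient $c(\lambda\cup\underline w;x)$ of \eqref{eq: d+ polynomial} evaluated at the inverted parameters. I would carry this out from the explicit product expressions of Lemma \ref{lem: c for partitions as a product}, after rescaling the basis $\{I^{-}_{\lambda,\underline w}\}$ by the normalizing factors $d_{\lambda}$ of \eqref{eq: def d lambda}; the precise bookkeeping of this rescaling is exactly what Lemma \ref{lem: c c*} provides. The consistency of these identifications across the different orders in which the boxes of a horizontal strip may be added or removed is guaranteed by the monodromy and dual monodromy conditions (Lemmas \ref{lem:monodromy-conditions} and \ref{lem:dual-monodromy-conditions}), which ensure that $d_{\pm}$ are well defined independently of the chosen saturated chain in the Young poset.

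Once this coefficient identity is in place, $\Phi^{*}\tilde V^{+}$ and $V^{-}$ agree on every generator, and the lemma follows. Alternatively, one could verify the relations (DB$^{-}$) for \eqref{eq: z polynomial negative}--\eqref{eq: d- polynomial negative} directly: the relations not involving $d_{\pm}$ are immediate from the affine Hecke and diagonal formulas, while the relations \eqref{eq:negative phi} and \eqref{eq:qphi negative} coupling $d_{+}$, $d_{-}$ and $\varphi$ are precisely where the dual monodromy condition of Lemma \ref{lem:dual-monodromy-conditions} enters, exactly as in the verification of the polynomial representation of $\DBqt^{\geq}$ recalled in Section \ref{sec:polynomial-bqt} from \cite{calibrated}. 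I prefer the isomorphism route since it reuses that verification verbatim, leaving only the coefficient and content bookkeeping as genuinely new work.
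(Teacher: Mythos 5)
Your main route breaks at its very first step: the pullback $\Phi^{*}\tilde V^{+}$ of the specialized polynomial representation is \emph{not} isomorphic to $V^{-}$, so no matching of bases, rescaling, or coefficient bookkeeping can make the operators agree. The representation $\tilde V^{+}$ of $\mathbb{DB}^{\geq}_{q^{-1},t^{-1}}$ is the calibrated representation attached to the \emph{Young poset} with $(q^{-1},t^{-1})$-contents: its $z$-eigenvalues are monomials $q^{-(c-1)}t^{-(r-1)}$ with non-positive exponents, and its $d_{+}$ \emph{adds} boxes. In $V^{-}$ the $z$-eigenvalues are ordinary $(q,t)$-contents $q^{c-1}t^{r-1}$ and $d_{-}$ \emph{removes} boxes. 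Since $\Phi$ fixes the $z_i$, any isomorphism would have to preserve $z$-spectra, and these differ already at level one: $q$ is a $z_1$-eigenvalue on $V^{-}_{-1}$ (on $I^{-}_{(2),(q)}$) but $q^{-(c-1)}t^{-(r-1)}=q$ has no solution, so $q$ is not an eigenvalue of $z_1$ on $\tilde V^{+}_{1}$. A cleaner witness: in $V^{-}$ the element $f_m=\e_0 d_{+}z_1^m d_{-}\e_0$ annihilates $I^{-}_{\emptyset}$ (no removable boxes), whereas $\Phi(f_m)$ is a multiple of $\e_0 d_{-}z_1^m d_{+}\e_0$, which acts on the vacuum of $\tilde V^{+}$ by adding a box and is nonzero. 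Rescaling by the factors $d_{\lambda}$ of \eqref{eq: def d lambda} cannot repair this, since rescaling a basis preserves $z$-spectra; moreover Lemma \ref{lem: c c*} relates $c$ and $c^{\ast}$ at the \emph{same} parameters $(q,t)$, not at inverted ones, so the coefficient identity you propose to extract from it does not come out either.

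What the isomorphism $\DBqt^{\leq}\cong\mathbb{DB}^{\geq}_{q^{-1},t^{-1}}$ actually buys — and what the paper uses — is not the specific polynomial representation but the general construction of \cite[Theorem 3.25]{calibrated}, which produces a representation of a $\mathbb{DB}^{\geq}$-type algebra from \emph{any} poset equipped with a system of coefficients satisfying the monodromy conditions for the relevant parameters. The paper applies it to the \emph{opposite} Young poset with coefficients $c^{\ast}$; since the monodromy right-hand side \eqref{eq:monodromy-conditions} at $(q^{-1},t^{-1})$ coincides with the dual-monodromy right-hand side \eqref{eq:dual-monodromy-condition} at $(q,t)$, the only hypothesis to verify is exactly Lemma \ref{lem:dual-monodromy-conditions}, and the proof is done in one paragraph. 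Your closing ``alternative'' — a direct check of (DB$^{-}$) in which the dual monodromy condition enters through \eqref{eq:negative phi} and \eqref{eq:qphi negative} — is in fact the viable route and is essentially what the citation to \cite{calibrated} encapsulates; but as written you set it aside in favor of the pullback argument, which is precisely the part that fails.
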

\begin{proof}
Since $\DBqt^{\leq} \cong \mathbb{DB}^{\geq}_{q^{-1}, t^{-1}}$ this follows from \cite[Theorem 3.25]{calibrated}, as follows. The poset $E$ required in \emph{loc. cit.} is the opposite Young poset, and we need to verify that the coefficients $c^{\ast}(\lambda; x)$ satisfy the  monodromy conditions for $q^{-1}, t^{-1}$. Since
\[
-\frac{(x-t^{-1}y)(x-q^{-1}y)(y - q^{-1}t^{-1}x)}{(y - t^{-1}x)(y - q^{-1}x)(x - t^{-1}q^{-1}y)} = -\frac{(y - tx)(y - qx)(x - qty)}{(x - ty)(x - qy)(y - qtx)}
\]
the result now follows from Lemma \ref{lem:dual-monodromy-conditions}. 
\end{proof}

\begin{remark}
The representation $V^{-}$ has the following natural interpretation in terms of the map $\adj$. For any $\DBqt^{\geq}$-representation $W$, the $\C(q,t)$-dual $W^{\ast}$ becomes a $\DBqt^{\leq}$-representation via $\adj$. In the case of the polynomial representation $V = \bigoplus_{k = 0}^{\infty}V_k$ we can do better. Instead of looking at the full dual $V^{\ast}$, we look at the space of locally finite (for the action of $z$'s and $\Delta$'s) vectors in the graded dual $\bigoplus_{k = 0}^{\infty} V_k^{\ast}$, which defines a $\DBqt^{\leq}$-subrepresentation
\[
V^{\ast}_{l.f.} = \bigoplus_{k = 0}^{\infty} V_{k, l.f.}^{\ast}\subseteq \bigoplus_{k = 0}^{\infty} V_k^{\ast}. 
\]
The polynomial representation $V$ of $\DBqt^{\geq}$ is calibrated \cite{calibrated} and, moreover, it is the representation associated to the Young poset with weighting given by the sum of the $(q,t)$-contents of boxes, see \cite[Section 3.6.3]{calibrated}. By \cite[Theorem 7.10]{calibrated}, the space $V^{\ast}_{l.f.}$ is the calibrated representation associated to the opposite Young poset. The basis $I^{-}_{\lambda, (w_k, \dots, w_1)}$ corresponds to a rescaling of the dual basis $I_{\mu, (w_1, \dots, w_k)}^{\ast}$, see \cite[Theorem 3.26 and Lemma 3.29]{calibrated}. 
\end{remark}

Similarly to Lemma \ref{lem:comparison-e-action} we obtain the following result.

\begin{lemma}\label{lem:comparison-f-action}
The action of $f_m \in \DBqt^{\leq}$ defined by \eqref{eq:def-e-and-f} on $V_0$ agrees with the action of its eponym in $\EHA$.
\end{lemma}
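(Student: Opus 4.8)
The plan is to mimic the computation carried out for $e_m$ just before Lemma \ref{lem:comparison-e-action}, now using the explicit polynomial formulas \eqref{eq: z polynomial negative}--\eqref{eq: d- polynomial negative} for the action of $\DBqt^{\leq}$ on $V^-$. Writing $f_m = \e_0 d_{+} z_1^{m} d_{-} \e_0$ as in \eqref{eq:def-e-and-f}, I evaluate $f_m$ on a basis vector $I^-_{\mu} \in V_0^- = \Sym_{q,t}$ by reading the word from right to left and tracking how it moves through $V_0^- \to V_{-1}^- \to V_{-1}^- \to V_0^-$.

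First I apply $d_{-}$ using \eqref{eq: d- polynomial negative}. Since $I^-_{\mu}$ sits in $V_0^-$ (so $k = 0$, and both the product and the power $q^{k}$ are trivial), this produces a sum over the removable boxes $x$ of $\mu$, with coefficient $c^{\ast}(\mu;x)$ from \eqref{eq: c* for partitions}, landing on the vectors $I^-_{\mu,(x)} \in V_{-1}^-$. Next, $z_1$ acts diagonally by \eqref{eq: z polynomial negative}, so $z_1^{m}$ scales the $x$-summand by $x^{m}$. Finally, $d_{+}$ returns us to $V_0^-$: by \eqref{eq: d+ polynomial negative} it simply deletes the recorded content, sending $I^-_{\mu,(x)} \mapsto I^-_{\mu - x}$. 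Collecting these contributions and reindexing the sum over $\lambda = \mu - x$ yields
\[
f_m I^-_{\mu} = \sum_{\lambda = \mu - x} c^{\ast}(\mu;x)\, x^{m}\, I^-_{\lambda}.
\]

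Comparing this with the expression for $f_m$ in Corollary \ref{cor: e and f short}, and identifying $I^-_{\mu}$ with the modified Macdonald basis element $I_{\mu}$ spanning $V_0^- = \Sym_{q,t}$, the two actions coincide. The point to watch -- and the reason the statement asserts exact equality rather than equality up to a scalar, as in Lemma \ref{lem:comparison-e-action} -- is the bookkeeping of normalizing constants: the factor $\frac{1}{1-q}$ that appeared explicitly for $e_m$ is already absorbed into the definition \eqref{eq: c* for partitions} of $c^{\ast}(\mu;x)$, and the $-q^{k}$ prefactors in $d_{\pm}$ are trivial at the relevant levels $k = 0, 1$. I expect verifying that these constants and signs match on the nose, so that no correcting scalar survives, to be the only delicate step; it also requires pinning down the precise level-$0$ identification of $I^-_{\mu}$ with $I_{\mu}$.

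As a consistency check, one may instead argue via the anti-involution. Since $\adj(e_m) = f_m$ by \eqref{eq: Theta for e and f} and $V^-$ is realized as the $\adj$-twisted graded dual of the polynomial representation $V^+$ (as in the Remark preceding this lemma), the action of $f_m$ on $V_0^-$ is transpose to that of $e_m$ on $V_0^+$; on the $\EHA$ side the matching statement follows from $f_m = \Theta_{\mathcal{E}}(e_m)$ in Lemma \ref{lem: theta e f EHA}. Reconciling this duality with the asymmetric normalizations of $c$ and $c^{\ast}$ again reduces to the same constant-tracking, which is why I would present the direct computation above as the primary proof.
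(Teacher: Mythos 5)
Your direct computation is exactly the paper's proof: the paper disposes of this lemma with a one-line ``Similarly to Lemma \ref{lem:comparison-e-action}'', meaning precisely the three-step evaluation you perform --- apply \eqref{eq: d- polynomial negative} at level $0$ to get $\sum_x c^{\ast}(\mu;x)I^{-}_{\mu,(x)}$, scale by $x^m$ via \eqref{eq: z polynomial negative}, delete via \eqref{eq: d+ polynomial negative}, and compare with Corollary \ref{cor: e and f short} --- so both the route and the identification of $c^{\ast}$ as the absorbed normalization are the intended ones. One small caution on the point you yourself flag as delicate: the prefactor $-q^{k}$ in \eqref{eq: d- polynomial negative} equals $-1$ (not $1$) at $k=0$, so it is not literally trivial; however, the paper's own displayed computation of $e_m$ before Lemma \ref{lem:comparison-e-action} silently drops the same sign from \eqref{eq: d+ polynomial}, so your bookkeeping is consistent with the paper's conventions and the conclusion stands.
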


\subsection{Polynomial Representation of $\DBqt$} We now define the polynomial representation of $\DBqt$, by gluing the $\DBqt^{\geq}$-representation $V^{+}$ and the $\DBqt^{\leq}$-representation $V^{-}$ along their zeroth component. We define:
\[
V = (V^{+} \oplus V^{-})/\langle I_{\lambda} - I^{-}_{\lambda}; \lambda \in P\rangle. 
\]
Note that $V = \bigoplus_{k = -\infty}^{\infty}V_k$, where
\[
V_k = \begin{cases}V_k^{+}, & k > 0 \\ V_k^{-}, & k < 0 \\ (V_0^{+} \oplus V_0^{-})/\langle I_{\lambda} - I^{-}_{\lambda}; \lambda \in P\rangle, & k = 0. \end{cases}
\]

\begin{theorem}\label{thm:dbqt-polynomial-rep}
The formulas \eqref{eq: z polynomial}--\eqref{eq: d+ polynomial} and \eqref{eq: z polynomial negative}--\eqref{eq: d- polynomial negative} define a representation of $\DBqt$ on $V$.
\end{theorem}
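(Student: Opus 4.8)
The plan is to treat a $\DBqt$-representation as a representation of the path algebra $\C(q,t)Q_\Z$ on which the relations (DB$^+$), (DB$^-$) and (DB$^0$) hold, and to build it by specifying each arrow of $Q_\Z$ directly on $V=\bigoplus_k V_k$: on the components $V_k$ with $k>0$ we use \eqref{eq: z polynomial}--\eqref{eq: d+ polynomial}, on those with $k<0$ we use \eqref{eq: z polynomial negative}--\eqref{eq: d- polynomial negative}, and on $V_0$ the map $d_{+}$ is given by \eqref{eq: d+ polynomial} at $k=0$ while $d_{-}$ is given by \eqref{eq: d- polynomial negative} at $k=1$. First I would check that this prescription is unambiguous, i.e. that the quotient $V=(V^{+}\oplus V^{-})/\langle I_\lambda-I^{-}_\lambda\rangle$ carries a well-defined action. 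The only generators acting on $V_0$ through both halves are $\e_0$, $\Delta_{p_m}$ and $\Delta^{\ast}_{p_m}$, and comparing \eqref{eq: Delta polynomial}, \eqref{eq: Delta* polynomial} at $k=0$ with \eqref{eq: Delta polynomial negative}, \eqref{eq: Delta* polynomial negative} at $k=0$ shows they all act by the same scalar $\sum_{\square\in\lambda}\square^{\pm m}$ on $I_\lambda=I^{-}_\lambda$; hence the two prescriptions agree on the overlap and descend to the quotient.

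The key structural observation, which reduces the whole theorem to a single computation, is that the defining relations of $\DBqt$ split into three groups according to the components they test. Each relation in (DB$^+$) has leftmost idempotent $\e_k$ with $k>0$ and, by the Remark following the definition, never involves $d_{-}\e_0$ or $\varphi\e_0$; it therefore only constrains $V_{\ge 0}$ together with the boundary maps $d_\pm$ between $V_0$ and $V_1$, on which the action coincides with that of the $\DBqt^{\ge}$-module $V^{+}$ (Section \ref{sec:polynomial-bqt}, following \cite{calibrated}). Thus (DB$^+$) holds on $V$. Symmetrically, every relation in (DB$^-$) only constrains $V_{\le 0}$ and coincides with the action of $\DBqt^{\le}$ on $V^{-}$, which is a representation by the preceding lemma; so (DB$^-$) holds as well.

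It remains only to verify the mixed relation (DB$^0$), equation \eqref{eq: weak e and f commutation in double}, which is the single relation coupling the two halves and hence the heart of the argument. Since $e_m=\e_0 d_{-}z_1^m d_{+}\e_0$ is computed entirely inside $V^{+}$ (along $V_0\to V_1\to V_0$) and $f_m=\e_0 d_{+}z_1^m d_{-}\e_0$ entirely inside $V^{-}$ (along $V_0\to V_{-1}\to V_0$), all of $e_m$, $f_m$ and $\psi^{\pm}_m$ act on $V_0=\Sym_{q,t}$, and I would check \eqref{eq: weak e and f commutation in double} there. By Lemma \ref{lem:comparison-e-action} the operator $e_m$ acts as a scalar multiple of the $\EHA$-generator, by Lemma \ref{lem:comparison-f-action} the operator $f_m$ matches its $\EHA$-eponym, and evaluating Definition \ref{def:e-and-f} on $V_0$ via \eqref{eq: Delta polynomial}, \eqref{eq: Delta* polynomial} at $k=0$ and comparing with Corollary \ref{cor: feigin tsymbaliuk psi exponential} identifies $\psi^{\pm}(\z)$ with the $\EHA$-series. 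Substituting these and using $g(1,1)=(1-q)(1-t)(1-q^{-1}t^{-1})$ reduces \eqref{eq: weak e and f commutation in double} to the commutation relation \eqref{eq: commutation of e and f}, which holds in the polynomial representation of $\EHA$.

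The main obstacle is exactly this last reduction: although the three comparison ingredients are already available, reconciling the normalizations is delicate. The scalar $-(1-q)(1-t)$ relating $e_m$ to its $\EHA$-eponym, the normalization of $f_m$, and the factor $g(1,1)^{-1}$ appearing in \eqref{eq: commutation of e and f} must combine to yield precisely the coefficient $(1-q^{-1}t^{-1})^{-1}$ of \eqref{eq: weak e and f commutation in double}; this is the reason the $(1-q)(1-t)$ factors were built into the comparison with the Feigin--Tsymbaliuk formulas in Corollary \ref{cor: e and f short}. Once this bookkeeping is carried out, everything else is formal, precisely because (DB$^0$) is the only relation that mixes the positive and negative halves.
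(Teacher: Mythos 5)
Your proposal is correct and follows essentially the same route as the paper: the relations (DB$^{+}$) and (DB$^{-}$) hold because $V^{+}$ and $V^{-}$ are already representations of $\DBqt^{\geq}$ and $\DBqt^{\leq}$ respectively, so only (DB$^{0}$) needs checking, and this is done on $V_0$ via Lemmas \ref{lem:comparison-e-action} and \ref{lem:comparison-f-action}, Corollary \ref{cor: feigin tsymbaliuk psi exponential}, and the fact that the Feigin--Tsymbaliuk operators satisfy \eqref{eq: commutation of e and f}. Your additional verification that the $\Delta_{p_m}$ and $\Delta^{\ast}_{p_m}$ actions agree on the glued component $V_0$, and your explicit tracking of how the scalar $-(1-q)(1-t)$ and $g(1,1)^{-1}$ combine into the coefficient of \eqref{eq: weak e and f commutation in double}, are details the paper leaves implicit, but the argument is the same.
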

\begin{proof}
    We only need to check the relation \eqref{eq: weak e and f commutation in double}. From Lemma \ref{lem:comparison-e-action}, we see that the operators $e_m$ on $V_0$ agree, up to the scalar $-(1-q)(1-t)$, with the action of the eponymic elliptic Hall algebra operators of Feigin-Tsymbaliuk on $V_0$, with a similar result for $f_m$ by Lemma \ref{lem:comparison-f-action}. By Corollary \ref{cor: feigin tsymbaliuk psi exponential} and their definition (\eqref{eq: def psi plus}, \eqref{eq: def psi minus}), the action of the $\psi_m^{\pm}$-operators also agrees with their action of their namesake elements of the elliptic Hall algebra. At this point the result follows since, by \cite[Theorem 3.1]{FT}, the operators coming from the elliptic Hall algebra satisfy the relation \eqref{eq: commutation of e and f}. Hence, the operators coming from the double Dyck path algebra also satisfy \eqref{eq: weak e and f commutation in double}.   
\end{proof}

The proof of \ref{thm:dbqt-polynomial-rep} hints at a very close connection between the elliptic Hall algebra $\EHA$ and the algebra $\DBqt$. Indeed, we have the following result.

\begin{lemma}\label{lem: polynomial reps agree}
Consider the polynomial representation $V$ of $\DBqt$ and the polynomial representation $V_0 \subseteq V$ of $\EHA$. The action of the elements $e_m, f_m, \psi_m^{\pm}$ of $\e_0\DBqt\e_0$ on $V_0$ agree, up to a scalar, with the action of their eponyms in $\EHA$.
\end{lemma}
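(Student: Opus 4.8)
The plan is to handle the three families of generators $e_m$, $f_m$, $\psi_m^{\pm}$ separately, assembling comparisons already established before the statement; essentially no new input is needed beyond bookkeeping, since the lemma repackages the ingredients used inside the proof of Theorem~\ref{thm:dbqt-polynomial-rep}.

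For the $e_m$ I would invoke Lemma~\ref{lem:comparison-e-action} directly: on $V_0$ the operator $e_m=\e_0 d_- z_1^m d_+\e_0$ acts as $-(1-q)(1-t)$ times the Feigin--Tsymbaliuk operator of the same name, which is precisely the $\EHA$-action on $\Sym_{q,t}$ described in Section~\ref{sec: EHA polynomial}. Dually, for $f_m\in\DBqt^{\leq}$ I would invoke Lemma~\ref{lem:comparison-f-action}, which gives the analogous agreement for $f_m$ acting on $V_0$. Here I would note that in the glued representation $V$ the spaces $V_0^{+}$ and $V_0^{-}$ are identified via $I_\lambda\mapsto I^{-}_\lambda$, so both comparisons are statements about the single space $V_0\cong\Sym_{q,t}$.

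The only genuine computation is in the $\psi_m^{\pm}$ case. I would begin from the definitions \eqref{eq: def psi plus} and \eqref{eq: def psi minus}, which express $\psi^{\pm}(\z)$ as a prefactor times the exponential of a generating series in $\e_0\Delta_{p_m}\e_0$, respectively $\e_0\Delta^{\ast}_{p_m}\e_0$. Restricting to $V_0$, formula \eqref{eq: Delta polynomial} specialized to $k=0$ shows that $\e_0\Delta_{p_m}\e_0$ acts on $I_\lambda$ by the scalar $\sum_{\square\in\lambda}\square^{m}$, and likewise $\e_0\Delta^{\ast}_{p_m}\e_0$ acts by $\sum_{\square\in\lambda}\square^{-m}$. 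Substituting these eigenvalues and pulling the finite sum over boxes out of the exponential converts the generating series into a product over boxes $\square\in\lambda$, which is exactly the right-hand side of Corollary~\ref{cor: feigin tsymbaliuk psi exponential}. Thus $\psi^{\pm}(\z)$ acts on $V_0$ identically to its namesake in $\EHA$, with no scalar correction.

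The step deserving the most care---though it is routine rather than hard---is matching the two $\psi$-formulas term by term: one must verify that the prefactor $-\frac{1-q^{-1}t^{-1}\z^{-1}}{1-\z^{-1}}$ built into \eqref{eq: def psi plus}--\eqref{eq: def psi minus} coincides with the one in Corollary~\ref{cor: feigin tsymbaliuk psi exponential}, and that the signs $\mp$ and exponents $\square^{\pm m}$ align on both sides, so that the power-series expansions in $\z^{\mp 1}$ agree coefficientwise. Once the three cases are combined, the lemma follows.
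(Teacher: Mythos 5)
Your proposal is correct and matches the paper's reasoning exactly: the paper states this lemma as an immediate consequence of Lemma \ref{lem:comparison-e-action}, Lemma \ref{lem:comparison-f-action}, and the comparison of \eqref{eq: def psi plus}--\eqref{eq: def psi minus} with Corollary \ref{cor: feigin tsymbaliuk psi exponential}, which is precisely the argument spelled out inside the proof of Theorem \ref{thm:dbqt-polynomial-rep}. Your only additions---checking consistency of the $\Delta_{p_m}$, $\Delta^{\ast}_{p_m}$ eigenvalues on the glued space $V_0$ and pulling the finite sum over boxes out of the exponential---are exactly the routine verifications the paper leaves implicit.
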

 
\section{Verification of the EHA relations in the Dyck path algebra}
\label{sec: EHA relations}

In this section we show that the elements $e_m, \psi^{\pm}_{m}$ and $f_m$ satisfy the relations of the elliptic Hall algebra, and thus $\EHA$ arises as a subalgebra of $\DBqt$.
\smallskip

Recall from Definition \ref{def:e-and-f} the elements $e_m = \e_0d_{-}z_1^md_{+}\e_0$ and $f_m = \e_0d_{+}z_1^md_{-}\e_0 \in \e_0\DBqt\e_0$. 
We also recall the elements $\psi^{\pm}_k$, see \eqref{eq: def psi plus} and \eqref{eq: def psi minus}.

\begin{definition}
We define $\Eqt$ as the subalgebra of $\e_0\DBqt\e_0$ generated by $e_m,f_m,\Delta_{p_m}$ and $\Delta_{p_m}^*$. This algebra has several useful subalgebras:
\begin{itemize}
\item $\Eqt^{>}\subset \e_0\Bqt\e_0$ is the subalgebra generated by $e_m$. 
\item $\Eqt^{\geq}\subset \e_0\DBqt\e_0$ 
is the subalgebra generated by $e_m$ and $\Delta_{p_m}, \Delta_{p_m}^*$. 
\item $\Eqt^{<}\subset \e_0\Bqt^{-}\e_0$ is the subalgebra generated by $f_m$. 
\item $\Eqt^{\leq}\subset \e_0\DBqt\e_0$ 
is the subalgebra generated by $f_m$ and $\Delta_{p_m}, \Delta_{p_m}^*$. 
\end{itemize}
\end{definition}

Equation \eqref{eq: Theta for e and f} immediately implies:

\begin{lemma}
\label{lem: theta e f}
We have $\Theta(e_m)=f_m$ and $\Theta(f_m)=e_m$, so $\Theta$ preserves $\Eqt$ and exchanges $\Eqt^{>}$ with $\Eqt^{<}$ (resp.  $\Eqt^{\geq}$ with $\Eqt^{\leq}$).
\end{lemma}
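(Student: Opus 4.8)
The plan is to derive everything formally from the fact, established in Lemma~\ref{lem:antiinvolution-double}, that $\Theta$ is a $\C(q,t)$-linear \emph{anti}-involution, together with the explicit values recorded in~\eqref{eq: Theta for e and f}. Nothing computational remains, so the work is purely in tracking generators through $\Theta$ and invoking closure under multiplication.

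First I would record the action of $\Theta$ on all four families of generators of $\Eqt$. Equation~\eqref{eq: Theta for e and f} already gives $\Theta(e_m)=f_m$; applying $\Theta$ once more and using $\Theta^2=\mathrm{id}$ yields $\Theta(f_m)=\Theta(\Theta(e_m))=e_m$. For the remaining generators, specializing the formula $\adj(\e_k\Delta_{p_m}\e_k)=\e_{-k}(\Delta_{p_m}+\sum_{i=1}^{|k|}z_i^m)\e_{-k}$ from Lemma~\ref{lem:antiinvolution-double} to $k=0$, where the sum is empty, gives $\Theta(\Delta_{p_m})=\Delta_{p_m}$ on $\e_0\DBqt\e_0$, and likewise $\Theta(\Delta_{p_m}^*)=\Delta_{p_m}^*$. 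Thus $\Theta$ restricts to an involution of the generating set $\{e_m,f_m,\Delta_{p_m},\Delta_{p_m}^*\}$ of $\Eqt$, swapping $e_m\leftrightarrow f_m$ and fixing the Delta generators.

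Next I would upgrade this to the subalgebra level. Since $\Theta$ is an anti-homomorphism, the image of any word in the generators is the reversed word in their $\Theta$-images, which is again a word in $\{e_m,f_m,\Delta_{p_m},\Delta_{p_m}^*\}$; as $\Eqt$ is closed under products, $\Theta(\Eqt)\subseteq\Eqt$, and applying $\Theta^2=\mathrm{id}$ forces equality. The same bookkeeping handles the four nested subalgebras: a word in the $e_m$ alone is sent to a reversed word in the $f_m$ alone, so $\Theta(\Eqt^{>})\subseteq\Eqt^{<}$, and by involutivity $\Theta(\Eqt^{>})=\Eqt^{<}$ and symmetrically $\Theta(\Eqt^{<})=\Eqt^{>}$. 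Running the identical argument on the generating sets $\{e_m,\Delta_{p_m},\Delta_{p_m}^*\}$ and $\{f_m,\Delta_{p_m},\Delta_{p_m}^*\}$, and using that $\Theta$ fixes each $\Delta_{p_m}$ and $\Delta_{p_m}^*$, shows that $\Theta$ exchanges $\Eqt^{\geq}$ and $\Eqt^{\leq}$.

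There is no genuine obstacle here; the statement is formal once the values on generators are known, which is exactly why the excerpt phrases it as an immediate consequence of~\eqref{eq: Theta for e and f}. The only point requiring a moment's care is that $\Theta$ \emph{reverses} the order of products, so one should phrase the closure argument in terms of the relevant subalgebras being closed under multiplication, which is automatic, rather than expecting $\Theta$ to be an algebra homomorphism.
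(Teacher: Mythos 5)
Your proposal is correct and takes essentially the same approach as the paper, which gives no separate argument and simply states the lemma as an immediate consequence of \eqref{eq: Theta for e and f}; you merely make explicit the formal bookkeeping the paper leaves implicit, namely that the empty sum at $k=0$ gives $\Theta(\e_0\Delta_{p_m}\e_0)=\e_0\Delta_{p_m}\e_0$ and $\Theta(\e_0\Delta^{*}_{p_m}\e_0)=\e_0\Delta^{*}_{p_m}\e_0$, that involutivity yields $\Theta(f_m)=e_m$ from $\Theta(e_m)=f_m$, and that the anti-homomorphism property sends words in one generating set to reversed words in the image set. Nothing further is needed.
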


\subsection{First Main Result}\label{sec:constructing-representations}
The following theorem is the primary result of this section:

\begin{theorem}
\label{thm: E plus}
The
elements $e_m,\psi_m^{\pm} \in \e_0\DBqt^{\geq}\e_0$ satisfy relations \eqref{eq: quadratic e}, \eqref{eq: commutation e with psi}, \eqref{eq: cubic e}, \eqref{eq: psi commute}, and \eqref{eq: psi 0 invertble}.
\end{theorem}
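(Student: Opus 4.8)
The plan is to verify the five relations in order of increasing difficulty, working entirely inside $\DBqt^{\geq}$ with the defining relations (DB$^+$); note that we cannot yet appeal to a representation, since faithfulness of $\Eqt$ on $V_0$ is only established later. The relations \eqref{eq: psi commute} and \eqref{eq: psi 0 invertble} are essentially immediate from the definitions \eqref{eq: def psi plus}--\eqref{eq: def psi minus}. Each $\psi^{\pm}_m$ is a polynomial in the elements $\e_0\Delta_{p_n}\e_0$ and $\e_0\Delta^{\ast}_{p_n}\e_0$, which pairwise commute by \eqref{eq:delta}, \eqref{eq:delta star} and \eqref{eq: delta delta star}; hence all $\psi^{\pm}_i$ commute, giving \eqref{eq: psi commute}. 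For \eqref{eq: psi 0 invertble} I expand the scalar prefactor in \eqref{eq: def psi plus}--\eqref{eq: def psi minus} and read off that $\psi_0^{+}$ and $\psi_0^{-}$ equal the invertible scalars $-1$ and $-q^{-1}t^{-1}$ respectively, whose inverses lie in $\C(q,t)\subseteq \Eqt$.

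For the commutation \eqref{eq: commutation e with psi}, the key computation is a clean commutator between the Delta operators and $e_m$. Starting from $e_k=\e_0 d_{-}z_1^k d_{+}\e_0$ and pushing $\e_0\Delta_{p_m}\e_0$ to the right, the relations $[\Delta_{p_m},d_{-}]=0$ and $[\Delta_{p_m},z_j]=0$ from \eqref{eq:delta} let it pass $d_{-}$ and $z_1^k$ untouched, while \eqref{eq: delta d+}, namely $[\Delta_{p_m},d_{+}]=z_1^m d_{+}$, produces exactly one extra term; together with \eqref{eq: d z} this yields $[\Delta_{p_m},e_k]=e_{k+m}$, and likewise $[\Delta^{\ast}_{p_m},e_k]=e_{k-m}$. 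Passing to generating series gives $[\Delta_{p_m},e(\w)]=\w^m e(\w)$ and $[\Delta^{\ast}_{p_m},e(\w)]=\w^{-m}e(\w)$. Since the Delta operators commute, for any scalar series $c_m$ one has $\exp(\sum_m c_m\Delta_{p_m})\,e(\w)=e^{\alpha}\,e(\w)\,\exp(\sum_m c_m\Delta_{p_m})$ with $\alpha=\sum_m c_m\w^m$ a scalar; specializing $c_m$ to the coefficients of \eqref{eq: def psi plus} and applying the first identity of Lemma \ref{lem: exp identity} identifies $e^{\alpha}=-g(\w,\z)/g(\z,\w)$. As the scalar prefactor of $\psi^{+}(\z)$ commutes with $e(\w)$, this gives $g(\z,\w)\psi^{+}(\z)e(\w)=-g(\w,\z)e(\w)\psi^{+}(\z)$; the $\psi^{-}$ case is identical using the second identity of Lemma \ref{lem: exp identity}.

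The quadratic relation \eqref{eq: quadratic e} is the heart of the matter and where I expect the real work. Writing $e_m e_k=\e_0 d_{-}z_1^m(d_{+}\e_0 d_{-})z_1^k d_{+}\e_0$, the inner factor $d_{+}\e_0 d_{-}$ is the level-one ``bubble'' $B=d_{+}d_{-}$ descending through level $0$, so the whole product is an operator at level $1$ sandwiched between an outer $d_{-}$ and $d_{+}$. The plan is to derive, from \eqref{eq:qphi} specialized to $k=1$ together with the $d$-$z$ commutations \eqref{eq: d z}, a rule for transporting a single $z_1$ across $B$. Carrying this out forces excursions to level $2$ (a term of the form $d_{-}z_2 d_{+}$), which I handle using the affine Hecke relation \eqref{eq:T and z} and the $\varphi$-relations \eqref{eq:phi} at level $2$; packaging the resulting moves into generating series should produce precisely the factors $g(\z,\w)$ and $g(\w,\z)$ and yield \eqref{eq: quadratic e}. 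Equivalently, one verifies the component identities $e_{a+3}e_b-\sigma_1 e_{a+2}e_{b+1}+\sigma_2 e_{a+1}e_{b+2}-e_a e_{b+3}=-e_{b+3}e_a+\sigma_1 e_{b+2}e_{a+1}-\sigma_2 e_{b+1}e_{a+2}+e_b e_{a+3}$ for all $a,b$. Here the commutators $[\Delta_{p_1},e_j]=e_{j+1}$ and $[\Delta^{\ast}_{p_1},e_j]=e_{j-1}$ from the previous step act on the $e$-index as shifts and link these identities across neighbouring indices, organizing the verification, but the essential input remains the bubble computation itself; the careful tracking of the level-$1$ and level-$2$ contributions around the bubble is the main obstacle of the theorem.

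Finally, for the cubic relation \eqref{eq: cubic e}, $[e_0,[e_1,e_{-1}]]=0$: in the Ding--Iohara--Miki presentation this is genuinely independent of the quadratic relations, so it requires its own verification rather than following formally from \eqref{eq: quadratic e}. I would compute the triple products $e_0 e_1 e_{-1}$, etc., by iterating the bubble analysis of the previous step, now with two nested bubbles reaching level $2$, and collecting terms. Using $e_1=[\Delta_{p_1},e_0]$ and $e_{-1}=[\Delta^{\ast}_{p_1},e_0]$ together with the Jacobi identity and the commutators established above may streamline the computation by re-expressing the nested commutator purely in terms of $e_0$ and the Delta operators. This step is the most calculation-heavy, but once the level-$1$/level-$2$ bubble bookkeeping from the quadratic relation is in place it is of the same nature, requiring no new idea.
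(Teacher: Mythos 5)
Your treatment of \eqref{eq: psi commute}, \eqref{eq: psi 0 invertble} and \eqref{eq: commutation e with psi} is correct and coincides with the paper's argument: the commutators $[\Delta_{p_m},e_k]=e_{k+m}$ and $[\Delta^{\ast}_{p_m},e_k]=e_{k-m}$, the generating-series form $[\Delta_{p_m},e(\w)]=\w^{m}e(\w)$, and the $\exp(\ad)$ computation combined with Lemma \ref{lem: exp identity} reproduce Lemma \ref{lem: e psi commutation abstract} exactly; your identification $\psi_0^{+}=-1$, $\psi_0^{-}=-q^{-1}t^{-1}$ is also right.

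The genuine gap lies in the two relations you yourself flag as the heart of the matter: for \eqref{eq: quadratic e} and \eqref{eq: cubic e} you give plans, not proofs, and the plans omit the ideas that make the computations close. For the quadratic relation, transporting $z_1$ across the bubble via \eqref{eq:qphi} is the correct start, and carried out it yields the telescoping identity $e_{k+1}e_m-te_ke_{m+1}=d_{-}^2(q^{-1}z_1-tz_2)z_1^kz_2^md_{+}^2\e_0$ (Lemma \ref{lem: ee to level 2}). But the mechanism producing the antisymmetry between $g(\z,\w)$ and $g(\w,\z)$ is a separate vanishing statement you never isolate: $d_{-}^2(z_1-qz_2)\,a(z_1,z_2)\,d_{+}^2\e_0=0$ for every \emph{symmetric} Laurent polynomial $a$ (Lemma \ref{lem: level 2 zero}). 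Its proof uses $q^{-1}z_1=T_1^{-1}z_2T_1^{-1}$ from \eqref{eq:T and z} together with the absorption relations $d_{-}^2T_1=d_{-}^2$ and $T_1d_{+}^2=d_{+}^2$ from \eqref{eq:T d-} and \eqref{eq:T d+} --- not the $\varphi$-relations \eqref{eq:phi} you cite for the level-$2$ step. Without this lemma, ``packaging into generating series should produce the factors $g$'' is a hope rather than an argument: after regrouping, the left side of the expanded relation equals $td_{-}^2(z_1-qz_2)(z_1-(qt)^{-1}z_2)(z_2-(qt)^{-1}z_1)z_1^nz_2^md_{+}^2\e_0$, and it is precisely the symmetry of $(z_1-(qt)^{-1}z_2)(z_2-(qt)^{-1}z_1)(z_1^nz_2^m+z_1^mz_2^n)$ fed into the vanishing lemma that makes the two sides cancel.

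For the cubic relation the gap is more serious. As you correctly note, \eqref{eq: cubic e} is independent of the quadratic relations, so the Jacobi manipulation with $e_{1}=[\Delta_{p_1},e_0]$ and $e_{-1}=[\Delta^{\ast}_{p_1},e_0]$ cannot reduce it to what precedes, and a frontal computation of $e_0e_1e_{-1}$ etc.\ with nested bubbles at levels $2$--$3$ has no visible cancellation mechanism; you offer no evidence it terminates. The paper's proof hinges on two specific ingredients absent from your sketch: (i) the expression of $d_{-}^2d_{+}^2\e_0$ as an explicit combination of $e_0^2$ and $[e_1,e_{-1}]$ (Example \ref{eq: Y 00}), which requires the decomposition of $1$ as a combination of $(q^{-1}z_1-tz_2)$- and $(z_1-qz_2)$-multiples (equation \eqref{eq: alpha beta decompose 1}, Lemma \ref{lem: alpha beta decomposition}) fed through the two level-$2$ lemmas above; and (ii) the identity $d_{-}^2d_{+}^2d_{-}d_{+}\e_0=d_{-}d_{+}d_{-}^2d_{+}^2\e_0$ (Lemma \ref{lem: cubic rewritten}), obtained by simplifying $d_{-}^2\varphi d_{+}^2\e_0$ in two ways using \eqref{eq:phi} --- this is where the $\varphi$-relations actually enter. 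Together these give $[e_0,[e_1,e_{-1}]]=0$ immediately, since $e_0$ commutes with both $e_0^2$ and $d_{-}^2d_{+}^2\e_0$. Without either ingredient, your cubic verification as described is not a proof.
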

Equations \eqref{eq: psi commute} and \eqref{eq: psi 0 invertble} follow from definitions. Equation \eqref{eq: commutation e with psi} is proved in Lemma \ref{lem: e psi commutation abstract}, Equation \eqref{eq: quadratic e} is proved in Lemma \ref{lem:quadratic-holds-in-bqt}  and Equation \ref{eq: cubic e} is proved in Lemma \ref{lem:cubic-holds-in-bqt}. Before moving on to prove Theorem \ref{thm: E plus}, we examine a few important consequences of it. 

\begin{corollary}
\label{cor: E minus}
The elements $f_m,\psi_m^{\pm} \in \DBqt^{\leq}$ satisfy relations \eqref{eq: quadratic f}, \eqref{eq: commutation f with psi}, \eqref{eq: cubic f}, \eqref{eq: psi commute}, and \eqref{eq: psi 0 invertble}.
\end{corollary}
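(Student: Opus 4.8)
The plan is to deduce this corollary from Theorem \ref{thm: E plus} by applying the anti-involution $\adj$ of Lemma \ref{lem:antiinvolution-double}. Recall from \eqref{eq: Theta for e and f} that $\adj$ is a $\C(q,t)$-linear anti-homomorphism with $\adj(e_m)=f_m$ and $\adj(\psi_m^{\pm})=\psi_m^{\pm}$, and that $\adj(\DBqt^{\geq})=\DBqt^{\leq}$. In particular $f_m=\adj(e_m)\in\DBqt^{\leq}$, and the $\psi_m^{\pm}$ are fixed by $\adj$ and lie at the vertex $0$ common to both halves, so $f_m,\psi_m^{\pm}\in\DBqt^{\leq}$ as claimed. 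The relations \eqref{eq: psi commute} and \eqref{eq: psi 0 invertble} involve only the $\psi_m^{\pm}$ and are already part of Theorem \ref{thm: E plus}, so they hold verbatim; the substance is to transport \eqref{eq: quadratic e}, \eqref{eq: commutation e with psi}, and \eqref{eq: cubic e} across $\adj$ to obtain \eqref{eq: quadratic f}, \eqref{eq: commutation f with psi}, and \eqref{eq: cubic f}.

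First I would set up the generating-series bookkeeping, extending $\adj$ to series in $\C(q,t)$-linearly over the formal variables $\z,\w$ (treated as central). Then $\adj(e(\z))=f(\z)$ and $\adj(\psi^{\pm}(\z))=\psi^{\pm}(\z)$, and since $\adj$ reverses products one gets $\adj(e(\z)e(\w))=f(\w)f(\z)$ and $\adj(\psi^{\pm}(\z)e(\w))=f(\w)\psi^{\pm}(\z)$, while $\adj(e(\w)\psi^{\pm}(\z))=\psi^{\pm}(\z)f(\w)$. Because $g(\z,\w)$ and $g(\w,\z)$ are scalars fixed by $\adj$, applying $\adj$ to \eqref{eq: quadratic e} yields $g(\z,\w)f(\w)f(\z)=-g(\w,\z)f(\z)f(\w)$, which upon rearranging is exactly \eqref{eq: quadratic f}; here the product-reversal is precisely what effects the $\z\leftrightarrow\w$ interchange matching $g(\z,\w)\leftrightarrow g(\w,\z)$. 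Applying $\adj$ to \eqref{eq: commutation e with psi} gives $g(\z,\w)f(\w)\psi^{\pm}(\z)=-g(\w,\z)\psi^{\pm}(\z)f(\w)$, which rearranges to \eqref{eq: commutation f with psi}. At the level of individual coefficients, each relation among the $e_m$ (extracted from a fixed monomial in $\z,\w$) is a $\C(q,t)$-linear identity in products $e_ie_j$, and $\adj$ sends it to the corresponding identity in $f_jf_i$; since the relations are formal, no convergence issue arises.

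For the cubic relation I would use that $\adj([a,b])=-[\adj(a),\adj(b)]$. Thus $\adj([e_1,e_{-1}])=-[f_1,f_{-1}]$ and consequently $\adj([e_0,[e_1,e_{-1}]])=-[f_0,-[f_1,f_{-1}]]=[f_0,[f_1,f_{-1}]]$; the two sign flips cancel. Since the left-hand side is $\adj(0)=0$ by \eqref{eq: cubic e}, we obtain \eqref{eq: cubic f}. There is essentially no serious obstacle here: the proof is a formal transport of structure along $\adj$, and the only point demanding care is the generating-series bookkeeping, namely checking that the order-reversal of $\adj$ correctly accounts for the asymmetric roles of $\z$ and $\w$ in the polynomials $g(\z,\w)$ and $g(\w,\z)$, together with the verification that $\adj$ indeed fixes each $\psi_m^{\pm}$ (which is recorded in \eqref{eq: Theta for e and f}).
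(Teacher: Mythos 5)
Your proposal is correct and takes essentially the same route as the paper, whose proof of this corollary is exactly to apply the anti-involution $\Theta$ of Lemma \ref{lem:antiinvolution-double} to Theorem \ref{thm: E plus} (invoking Lemmas \ref{lem: theta e f EHA} and \ref{lem: theta e f} to identify $\Theta$ with $\Theta_{\mathcal{E}}$ on these generators). Your generating-series manipulations and the sign-cancellation for the cubic relation merely make explicit the transport-of-structure bookkeeping that the paper leaves to the reader.
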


\begin{proof}
This follows from Theorem \ref{thm: E plus} by applying the anti-involution $\Theta$. By Lemmas \ref{lem: theta e f EHA} and \ref{lem: theta e f} the involutions $\Theta$ and $\Theta_{\mathcal{E}}$ agree.
\end{proof}

\begin{corollary}\label{cor: bqt to EHA}
There exist isomorphisms 
\begin{equation}
\label{eq: bqt to EHA}
\Eqt^{>}\simeq \EHA^{>},\ \Eqt^{\geq}\simeq \EHA^{\geq},\
\Eqt^{<}\simeq \EHA^{<},\ \Eqt^{\leq}\simeq \EHA^{\leq}.\
\end{equation}
and 
$$
\Eqt\simeq \EHA.
$$
\end{corollary}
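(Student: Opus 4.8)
The plan is to obtain each of the five isomorphisms in the same way: construct a surjection out of the abstract presentation of the elliptic Hall (sub)algebra, and then force injectivity by composing with the faithful polynomial representation on $V_0=\Sym_{q,t}$. I would begin with the positive halves. By definition $\EHA^{>}$ is generated by the $e_m$ subject only to \eqref{eq: quadratic e} and \eqref{eq: cubic e}, and $\EHA^{\geq}$ adds the $\psi_m^{\pm}$ together with \eqref{eq: commutation e with psi}, \eqref{eq: psi commute}, \eqref{eq: psi 0 invertble}. Theorem \ref{thm: E plus} says precisely that these relations hold among the elements $e_m,\psi_m^{\pm}\in\e_0\DBqt^{\geq}\e_0$. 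Since $\Eqt^{>}$ is generated by the $e_m$, and $\Eqt^{\geq}$ by the $e_m$ together with the $\psi_m^{\pm}$ — here I would note that over $\C(q,t)$ the generating-series identities \eqref{eq: def psi plus}, \eqref{eq: def psi minus} are invertible, the scalars $(1-q^m)(1-t^m)(1-(qt)^{-m})$ being nonzero, so $\{\psi_m^{\pm}\}$ and $\{\e_0\Delta_{p_m}\e_0,\e_0\Delta^{*}_{p_m}\e_0\}$ generate the same subalgebra — the assignment of each abstract generator to its namesake yields surjective homomorphisms $\EHA^{>}\twoheadrightarrow\Eqt^{>}$ and $\EHA^{\geq}\twoheadrightarrow\Eqt^{\geq}$.

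To prove these surjections are injective, I would compose with the polynomial action $\rho\colon\Eqt^{\geq}\to\End(V_0)$. By Lemma \ref{lem:comparison-e-action} the operator $\rho(e_m)$ is $-(1-q)(1-t)$ times the Feigin--Tsymbaliuk operator $\tilde e_m$, and by Theorem \ref{thm:dbqt-polynomial-rep} together with Corollary \ref{cor: feigin tsymbaliuk psi exponential} one has $\rho(\psi_m^{\pm})=\tilde\psi_m^{\pm}$. Thus the composite $\EHA^{\geq}\to\Eqt^{\geq}\xrightarrow{\rho}\End(V_0)$ is the polynomial representation of $\EHA^{\geq}$ precomposed with the rescaling $e_m\mapsto -(1-q)(1-t)e_m$, $\psi_m^{\pm}\mapsto\psi_m^{\pm}$. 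Because \eqref{eq: quadratic e}, \eqref{eq: commutation e with psi} and \eqref{eq: cubic e} are homogeneous (resp. linear) in the $e_m$, this rescaling is an automorphism of $\EHA^{\geq}$, so the composite is the polynomial representation up to an automorphism and hence faithful by Theorem \ref{thm: poly EHA faithful} (faithfulness of a representation passes to any subalgebra). Injectivity of the composite forces injectivity of the surjection, giving $\EHA^{>}\simeq\Eqt^{>}$ and $\EHA^{\geq}\simeq\Eqt^{\geq}$. The negative halves $\EHA^{<}\simeq\Eqt^{<}$ and $\EHA^{\leq}\simeq\Eqt^{\leq}$ then follow by applying the anti-involution $\Theta$, which by Lemma \ref{lem: theta e f} exchanges $e_m\leftrightarrow f_m$ and the corresponding subalgebras and agrees with $\Theta_{\mathcal{E}}$ on the elliptic Hall side (Lemma \ref{lem: theta e f EHA}); alternatively one repeats the argument verbatim using Corollary \ref{cor: E minus} and Lemma \ref{lem:comparison-f-action}.

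For the full statement $\EHA\simeq\Eqt$ I would run the same scheme, the only new ingredient being the $[e,f]$-commutator \eqref{eq: commutation of e and f}, which is built into $\DBqt$ as relation (DB$^0$), i.e.\ \eqref{eq: weak e and f commutation in double}. The scalars of \eqref{eq: commutation of e and f} and \eqref{eq: weak e and f commutation in double} differ by exactly the factor recorded in Lemma \ref{lem:comparison-e-action}, so I would first rescale the $e_m$ (compensating on the $f_m$) so that \emph{all} the relations of $\EHA$ — most delicately the $[e,f]$-commutator — hold among the $\Eqt$-generators; this gives a surjection $\EHA\twoheadrightarrow\Eqt$ onto $\Eqt=\langle e_m,f_m,\psi_m^{\pm}\rangle$. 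Composing with the action on $V_0$ recovers, up to the rescaling, the polynomial representation of $\EHA$, faithful by Theorem \ref{thm: poly EHA faithful}, whence the surjection is an isomorphism.

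The step I expect to be the main obstacle is the \emph{scalar bookkeeping} for the full algebra: one must choose the normalizations of $e_m$ and $f_m$ so simultaneously that (i) every EHA relation, and in particular the shifted-scalar commutator of (DB$^0$), is genuinely satisfied by the $\Eqt$-generators, and (ii) the resulting change of normalization is an honest automorphism of $\EHA$, so that faithfulness descends to the composite. The verification that (i) holds is supplied by Theorem \ref{thm: E plus}, Corollary \ref{cor: E minus} and relation (DB$^0$), while (ii) rests on the homogeneity of the relations in the $e$'s and in the $f$'s together with the precise matching scalars of Lemmas \ref{lem:comparison-e-action} and \ref{lem:comparison-f-action}; once these compatibilities are pinned down, the surjectivity-plus-faithfulness conclusion is formal.
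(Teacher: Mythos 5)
Your proposal is correct and follows essentially the same route as the paper: Theorem \ref{thm: E plus} (and Corollary \ref{cor: E minus}, relation (DB$^0$)) gives surjections from the presentations of $\EHA^{>},\EHA^{\geq},\EHA$ onto $\Eqt^{>},\Eqt^{\geq},\Eqt$, injectivity is forced by composing with the action on $V_0$ and invoking faithfulness (Theorem \ref{thm: poly EHA faithful}) via the commutative diagram of Lemma \ref{lem: polynomial reps agree}, and the negative halves follow by applying $\Theta$. Your explicit verification that the rescaling $e_m\mapsto -(1-q)(1-t)e_m$ is an automorphism of $\EHA^{\geq}$ (by homogeneity of the relations), and the matching scalar bookkeeping for the $[e,f]$-commutator, is precisely what the paper compresses into the phrase \lq\lq up to a scalar'' in Lemma \ref{lem: polynomial reps agree} — a welcome elaboration, not a different proof.
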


\begin{proof}
Since all relations of $\EHA^{>}$ are satisfied in $\Eqt^{>}$, we get a surjective homomorphism $\EHA^{>}\to \Eqt^{>}$. On the other hand, if $V=\bigoplus_{k=0}^{\infty}V_k$ is the polynomial representation of $\Bqt$ then $V_0$ is the polynomial representation of $\Eqt^{>}$ and by Lemma \ref{lem: polynomial reps agree} we get a commutative diagram
$$
\begin{tikzcd}
\EHA^{>} \arrow{r} \arrow{dr} &  \Eqt^{>} \arrow{d}\\
 & \End(V_0)
\end{tikzcd}
$$
But the polynomial representation is faithful for $\EHA$ by Theorem \ref{thm: poly EHA faithful}, so the maps $\EHA^{>}\to \End(V_0)$ and $\EHA^{>}\to \Eqt^{>}$ are both injective.

The proof of the second isomorphism is similar, and the third and fourth ones follow by applying $\Theta$.
This implies that we have a surjective homomorphism $\EHA\to \Eqt$. By applying Theorem \ref{thm: poly EHA faithful} again, we conclude $
\Eqt\simeq \EHA.
$
\end{proof}

\subsection{Quadratic Relations}
\label{sec: quadratic check}

In this section we check the relations \eqref{eq: quadratic e}.

\begin{lemma}
\label{lem: level 2 zero}
Suppose that $a(z_1,z_2)$ is a symmetric Laurent polynomial in $z_1,z_2$. Then 
$$
d_{-}^2(z_1-qz_2)a(z_1,z_2)d_{+}^2\e_0=0.
$$
\end{lemma}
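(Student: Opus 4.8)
The plan is to exploit the two relations $T_1 d_+^2 = d_+^2$ from \eqref{eq:T d+} and $d_-^2 T_1 = d_-^2$ from \eqref{eq:T d-} (both specialized to level $k=2$), together with the observation that the factor $z_1 - qz_2$ is designed precisely so that $(z_1-qz_2)d_+^2\e_0$ is an eigenvector of $T_1$ with eigenvalue $-q$. Granting this, the vanishing is immediate: setting $w := (z_1 - qz_2)\,a(z_1,z_2)\,d_+^2\e_0$, I will show $T_1 w = -q w$, so that $w = -q^{-1}T_1 w$ and hence
\[
d_-^2 w = -q^{-1}d_-^2 T_1 w = -q^{-1}d_-^2 w,
\]
using $d_-^2 T_1 = d_-^2$. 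Therefore $(1+q^{-1})d_-^2 w = 0$, and since $q \neq -1$ over $\C(q,t)$ we conclude $d_-^2 w = 0$, which is exactly the claim. In other words, $z_1 - qz_2$ carries the $(+1)$-eigenspace of $T_1$ (where $d_+^2\e_0$ lives) into the $(-q)$-eigenspace, which $d_-^2$ annihilates.

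It remains to establish the eigenvector statement. First I would record the Lusztig-type commutation relations inside the affine Hecke algebra $\AH_2$ generated by $T_1,z_1,z_2$. From the quadratic relation $(T_1-1)(T_1+q)=0$ in \eqref{eq:hecke relns} one gets $T_1^{-1} = q^{-1}T_1 + (1-q^{-1})$, and substituting this into \eqref{eq:T and z}, i.e. $T_1^{-1}z_2 T_1^{-1} = q^{-1}z_1$, yields
\[
T_1 z_1 = z_2 T_1 + (q-1)z_2, \qquad T_1 z_2 = z_1 T_1 - (q-1)z_2,
\]
the second relation following from the first since $z_1 z_2$ is central in $\AH_2$. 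A direct computation then gives
\[
T_1(z_1 - qz_2) = (z_2 - qz_1)T_1 + (q^2-1)z_2.
\]
Applying both sides to $d_+^2\e_0$ and using $T_1 d_+^2\e_0 = d_+^2\e_0$, the right-hand side collapses to $\big((z_2-qz_1)+(q^2-1)z_2\big)d_+^2\e_0 = -q(z_1-qz_2)d_+^2\e_0$, proving $T_1(z_1-qz_2)d_+^2\e_0 = -q\,(z_1-qz_2)d_+^2\e_0$.

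Finally I would incorporate the symmetric factor $a$. Since $a(z_1,z_2)$ is a symmetric Laurent polynomial, it is central in $\AH_2$; equivalently $T_1 a = a T_1$, which one checks on the generators $z_1+z_2$ and $z_1 z_2$ (and the inverse $(z_1z_2)^{-1}$) of the symmetric Laurent polynomials directly from the relations above. As $a$ also commutes with $z_1-qz_2$, we obtain $T_1 w = T_1 a (z_1-qz_2)d_+^2\e_0 = a\,T_1(z_1-qz_2)d_+^2\e_0 = -q\,w$, completing the argument. I do not anticipate a genuine obstacle here: the only piece of real content is the eigenvalue computation identifying the $-q$ eigenvalue, and the centrality of $a$; everything else is formal manipulation of the defining relations of $\DBqt$.
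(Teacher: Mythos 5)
Your proof is correct, and it is worth noting that it runs on exactly the same ingredients as the paper's proof --- the relations $d_-^2T_1=d_-^2$ from \eqref{eq:T d-} and $T_1d_+^2=d_+^2$ from \eqref{eq:T d+} at level $k=2$, together with the centrality of symmetric Laurent polynomials in $\AH_2$ --- but organizes them differently. The paper never computes a commutator of $T_1$ with an individual $z_i$: it substitutes $z_1=qT_1^{-1}z_2T_1^{-1}$ directly from \eqref{eq:T and z}, pushes $a(z_1,z_2)$ through the inner $T_1^{-1}$, deletes both $T_1^{-1}$'s against $d_-^2$ and $d_+^2$, and observes that the two resulting copies of $qd_-^2z_2\,a(z_1,z_2)\,d_+^2\e_0$ cancel. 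You instead derive the Bernstein--Lusztig relations $T_1z_1=z_2T_1+(q-1)z_2$ and $T_1z_2=z_1T_1-(q-1)z_2$ and isolate the intermediate statement $T_1(z_1-qz_2)d_+^2\e_0=-q(z_1-qz_2)d_+^2\e_0$, after which $d_-^2T_1=d_-^2$ forces $d_-^2w=-q^{-1}d_-^2w$ and hence $d_-^2w=0$, since $1+q^{-1}\neq 0$ in $\C(q,t)$; all of your computations check out, and the eigenvalues $1$ and $-q$ are consistent with the quadratic relation \eqref{eq:hecke relns}. What your version buys is conceptual clarity: it exhibits $z_1-qz_2$ as precisely the factor carrying the $T_1$-fixed image of $d_+^2\e_0$ into the $(-q)$-eigenspace, which $d_-^2$ annihilates --- a reusable structural fact rather than a one-off cancellation. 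One presentational nit: you deduce $T_1z_2=z_1T_1-(q-1)z_2$ from the first relation ``since $z_1z_2$ is central,'' but commutation of $z_1z_2$ with $T_1$ is not among the axioms \eqref{eq:affine Hecke relns} and must itself be derived; this is easily repaired without circularity, e.g.\ from $T_1z_1T_1=qz_2$ one gets $T_1z_2=q^{-1}T_1^2z_1T_1=(1-q)z_2+z_1T_1$ using the quadratic relation, and then $T_1z_1z_2=q^{-1}T_1z_1T_1z_1T_1=z_1z_2T_1$, so this is a gap in exposition only, not in substance.
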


\begin{proof}
Recall that $a(z_1,z_2)$ is central in the affine Hecke algebra $\AH_2$. 
We have $q^{-1}z_1=T_1^{-1}z_2T_1^{-1}$, so
\begin{align*}
d_{-}^2(z_1-qz_2)a(z_1,z_2)d_{+}^2\e_0&=qd_{-}^2T_1^{-1}z_2T_1^{-1}a(z_1,z_2)d_+^2\e_0-qd_{-}^2z_2a(z_1,z_2)d_{+}^2\e_0
\\
&=qd_{-}^2T_1^{-1}z_2a(z_1,z_2)T_1^{-1}d_+^2\e_0-qd_{-}^2z_2a(z_1,z_2)d_{+}^2\e_0.
\end{align*}
Since $d_{-}^2T_1\e_2=d_{-}^2\e_2,T_1d_{+}^2=d_{+}^2$, we can delete both $T_1^{-1}$ and the terms will cancel out.
\end{proof}

\begin{lemma}
\label{lem: ee to level 2}
We have 
$$
e_{k+1}e_m-te_{k}e_{m+1}=d_{-}^2(q^{-1}z_1-tz_2)z_1^kz_2^md_{+}^2\e_0.
$$
\end{lemma}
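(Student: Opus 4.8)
The plan is to exploit the idempotent grading. Since $e_m = \e_0 d_- z_1^m d_+\e_0$, each product such as $e_{k+1}e_m$ is a \emph{level one} expression: reading right to left it traverses $V_0 \to V_1 \to V_0 \to V_1 \to V_0$, dipping back through $V_0$ in the middle. The right-hand side, by contrast, is a \emph{level two} expression $\e_0 d_-^2(\cdots)d_+^2\e_0$ traversing $V_0 \to V_2 \to V_0$. So the whole point is to fold the two level-one excursions into a single level-two excursion, and the mechanism for doing so is to trade the central $d_+ d_-$ (which dips down to $V_0$) for $d_- d_+$ (which climbs up to $V_2$), using the relation governing the commutator $\varphi$.

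Concretely, I would first absorb the redundant middle idempotent (it acts as the identity on the image of $d_-$) to write $e_{k+1}e_m = \e_0 d_- z_1^{k+1}(d_+ d_-)z_1^m d_+\e_0$ and $e_k e_{m+1} = \e_0 d_- z_1^{k}(d_+ d_-)z_1^{m+1}d_+\e_0$, where $d_+ d_-$ is understood as an operator on $V_1$. Writing $P := d_+ d_-$ and pulling out $z_1^k$ on the left and $z_1^m$ on the right, the difference becomes
\[
e_{k+1}e_m - t e_k e_{m+1} = \e_0 d_- z_1^{k}\,(z_1 P - t P z_1)\,z_1^m d_+\e_0.
\]
The key algebraic input is relation \eqref{eq:qphi} specialized to $k = 1$ (so that $z_k = z_1$): with $Q := d_- d_+$ it reads $z_1(qP - Q) = qt(P - Q)z_1$, which after dividing by $q$ and rearranging gives exactly $z_1 P - t P z_1 = q^{-1}z_1 Q - t Q z_1$. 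Substituting this converts the central $P = d_+ d_-$ into $Q = d_- d_+$, i.e.\ it replaces the dip into $V_0$ by a climb into $V_2$.

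It then remains to slide the monomials in $z_1$ past the inner $d_{\pm}$ to expose a clean $d_-^2(\cdots)d_+^2$. Using \eqref{eq: d z} in the forms $z_1 d_- = d_- z_1$ and $d_+ z_1 = z_2 d_+$, one checks that $\e_0 d_- z_1^{a} d_- d_+ z_1^{b} d_+ \e_0 = \e_0 d_-^2 z_1^{a}z_2^{b} d_+^2\e_0$; applying this to the two resulting terms yields $q^{-1}\e_0 d_-^2 z_1^{k+1}z_2^m d_+^2\e_0 - t\,\e_0 d_-^2 z_1^{k}z_2^{m+1}d_+^2\e_0$, which factors as the stated $\e_0 d_-^2(q^{-1}z_1 - t z_2)z_1^k z_2^m d_+^2\e_0$. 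I expect the only genuine obstacle to be the bookkeeping: making sure \eqref{eq:qphi} is invoked at the correct level $k=1$, and carefully tracking the index shift $z_1 \mapsto z_2$ produced by commuting through $d_+$, since it is precisely this shift that promotes the single-variable expression to the two-variable combination appearing on the right.
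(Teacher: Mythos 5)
Your proof is correct and takes essentially the same route as the paper's: both rearrange relation \eqref{eq:qphi} at level $k=1$ into $z_1 d_+d_- - t\,d_+d_- z_1 = q^{-1}z_1 d_-d_+ - t\,d_-d_+ z_1$ and then slide the $z_1$-monomials through $d_\pm$ via \eqref{eq: d z} to arrive at $d_-^2(q^{-1}z_1-tz_2)z_1^kz_2^md_+^2\e_0$. The only cosmetic difference is that the paper multiplies the rearranged relation on the left by $d_-z_1^k$ and on the right by $z_1^md_+\e_0$, whereas you substitute it inside the expanded product $e_{k+1}e_m - te_ke_{m+1}$; the computation is identical.
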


\begin{proof}
Recall the relation \eqref{eq:qphi} which for $k=1$ has the form
$$
z_1(qd_+d_{-}-d_{-}d_{+})\e_1=qt(d_{+}d_{-}-d_{-}d_{+})z_1\e_1.
$$
We can divide both sides by $q$ and rewrite this as
\begin{equation}
\label{eq: d+ d- commutation level 1}
q^{-1}z_1d_{-}d_{+}\e_1-td_{-}d_{+}z_1\e_1=z_1d_{+}d_{-}\e_1-td_{+}d_{-}z_1\e_1.
\end{equation}
Now we multiply both sides by $d_{-}z_1^k$ on the left and $z_1^{m}d_{+}\e_0$ on the right:
$$
q^{-1}d_{-}z_1^kz_1d_{-}d_{+}z_1^{m}d_{+}\e_0-td_{-}z_1^kd_{-}d_{+}z_1z_1^{m}d_{+}\e_0=d_{-}z_1^kz_1d_{+}d_{-}z_1^{m}d_{+}\e_0-td_{-}z_1^kd_{+}d_{-}z_1z_1^{m}d_{+}\e_0.
$$
The right hand side equals $e_{k+1}e_m-te_ke_{m+1}$, while in the left hand side we can use the identities $z_1d_{-}=d_{-}z_1,d_{+}z_1=z_{2}d_{+}$ and obtain
$$
q^{-1}d_{-}^2z_1^{k+1}z_2^md_{+}^2\e_0-td_{-}^2z_1^kz_2^{m+1}d_{+}^2\e_0=
d_{-}^2(q^{-1}z_1-tz_2)z_1^kz_2^md_{+}^2\e_0.
$$
\end{proof}

\begin{lemma}\label{lem:quadratic-holds-in-bqt}
Relation  \eqref{eq: quadratic e} holds in $\Bqt$.
\end{lemma}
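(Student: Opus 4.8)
The plan is to reduce \eqref{eq: quadratic e} to a statement at ``level two'' (i.e.\ about expressions of the form $d_{-}^2(\cdots)d_{+}^2\e_0$) that is dispatched by combining Lemmas \ref{lem: ee to level 2} and \ref{lem: level 2 zero}. Since $e(\z)=\sum_m e_m\z^{-m}$, summing the identity of Lemma \ref{lem: ee to level 2} over all $k,m\in\Z$ yields the generating-series form
\[
(\z - t\w)\,e(\z)e(\w) = d_{-}^2\,(q^{-1}z_1 - tz_2)\,A(\z,\w)\,d_{+}^2\e_0, \qquad A(\z,\w):=\sum_{k,m\in\Z}z_1^k z_2^m\,\z^{-k}\w^{-m},
\]
where the formal series $A(\z,\w)$ satisfies $\z A = z_1 A$ and $\w A = z_2 A$.

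Next I would invoke the factorization $g(\z,\w) = (\z - q\w)(\z - t\w)(\z - q^{-1}t^{-1}\w)$: multiplying the previous identity by $(\z - q\w)(\z - q^{-1}t^{-1}\w)$ and converting the scalar factors into the operators $z_1,z_2$ via $\z A = z_1 A$, $\w A = z_2 A$ (legitimate since the $z_i$ mutually commute), I obtain
\[
g(\z,\w)\,e(\z)e(\w) = d_{-}^2\,P(z_1,z_2)\,A(\z,\w)\,d_{+}^2\e_0, \qquad P(z_1,z_2):=(z_1 - qz_2)(z_1 - q^{-1}t^{-1}z_2)(q^{-1}z_1 - tz_2).
\]
Running the same computation with $\z$ and $\w$ interchanged gives $g(\w,\z)\,e(\w)e(\z) = d_{-}^2\,P(z_1,z_2)\,A(\w,\z)\,d_{+}^2\e_0$, where $A(\w,\z)$ is $A(\z,\w)$ with $z_1$ and $z_2$ swapped; in particular the coefficient of $\z^{-a}\w^{-b}$ in $A(\z,\w)$ is $z_1^a z_2^b$, while in $A(\w,\z)$ it is $z_1^b z_2^a$.

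Adding the two expressions and extracting the coefficient of $\z^{-a}\w^{-b}$, the claim \eqref{eq: quadratic e} becomes the family of level-two identities
\[
d_{-}^2\,P(z_1,z_2)\,\big(z_1^a z_2^b + z_1^b z_2^a\big)\,d_{+}^2\e_0 = 0, \qquad a,b\in\Z.
\]
The crux, and the step I expect to be the decisive one, is that the product of the two factors of $P$ other than $(z_1 - qz_2)$ is symmetric: a direct expansion gives $(z_1 - q^{-1}t^{-1}z_2)(q^{-1}z_1 - tz_2) = q^{-1}z_1^2 + q^{-1}z_2^2 - (t + q^{-2}t^{-1})z_1 z_2$, which is invariant under $z_1\leftrightarrow z_2$. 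Thus $P = (z_1 - qz_2)\,Q$ with $Q$ symmetric, so $Q\cdot(z_1^a z_2^b + z_1^b z_2^a)$ is a symmetric Laurent polynomial and Lemma \ref{lem: level 2 zero} applies verbatim to give the vanishing, proving \eqref{eq: quadratic e}.

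The only genuinely delicate point is precisely this symmetry of $Q$: it is what matches the hypothesis of Lemma \ref{lem: level 2 zero}, and it holds because of the reciprocal structure of the roots $q,t,(qt)^{-1}$ of $g$ (one removes the factor $(\z - t\w)$ through Lemma \ref{lem: ee to level 2} and the factor $(z_1-qz_2)$ through Lemma \ref{lem: level 2 zero}, leaving a product that must be symmetric). A series-free alternative would verify the displayed level-two identity directly from Lemma \ref{lem: ee to level 2} by induction, with the factor $(z_1-qz_2)$ in $P$ encoding a single use of the commutation \eqref{eq:qphi}, but the generating-series route above is the cleaner presentation.
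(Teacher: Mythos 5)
Your proposal is correct and is essentially the paper's own argument in generating-series dress: summing Lemma \ref{lem: ee to level 2} over all indices is exactly the paper's regrouping of \eqref{eq: quadratic e} into differences $e_{k+1}e_m - te_ke_{m+1}$, and both proofs reduce to the same level-two vanishing $d_{-}^2\,(z_1-qz_2)\,Q(z_1,z_2)\,\bigl(z_1^az_2^b+z_1^bz_2^a\bigr)\,d_{+}^2\e_0=0$ with the same symmetric factor $Q=(z_1-q^{-1}t^{-1}z_2)(q^{-1}z_1-tz_2)=-t\,(z_1-(qt)^{-1}z_2)(z_2-(qt)^{-1}z_1)$, dispatched by Lemma \ref{lem: level 2 zero}. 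The key observation you flag as delicate, namely the symmetry of the residual quadratic factor coming from the reciprocal roots of $g$, is precisely the one the paper uses.
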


\begin{proof}
We can unpack equation \eqref{eq: quadratic e} as follows:
\begin{multline}
\label{eq: e quadratic expanded}
e_{n+3}e_m-\sigma_1e_{n+2}e_{m+1}+\sigma_2e_{n+1}e_{m+2}-e_ne_{m+3}=\\
-e_{m+3}e_n+\sigma_1e_{m+2}e_{n+1}-\sigma_2e_{m+1}e_{n+2}+e_me_{n+3}.
\end{multline} 
To prove it, we regroup the terms in the left hand side:
$$
e_{n+3}e_m-\sigma_1e_{n+2}e_{m+1}+\sigma_2e_{n+1}e_{m+2}-e_ne_{m+3}=
$$
$$
(e_{n+3}e_m-te_{n+2}e_{m+1})-(q+(qt)^{-1})(e_{n+2}e_{m+1}-te_{n+1}e_{m+2})+t^{-1}(e_{n+1}e_{m+2}-te_ne_{m+3}).
$$
By Lemma \ref{lem: ee to level 2} this equals
$$
d_{-}^2\left[z_1^2-(q+(qt)^{-1})z_1z_2+t^{-1}z_2^2\right](q^{-1}z_1-tz_2)z_1^nz_2^md_{+}^2\e_0=
$$
$$
td_{-}^2(z_1-qz_2)(z_1-(qt)^{-1}z_2)(z_2-(qt)^{-1}z_1)z_1^nz_2^md_{+}^2\e_0.
$$
To get the right hand side of \eqref{eq: e quadratic expanded}, we simply swap $m$ and $n$ and change sign.
Therefore to prove \eqref{eq: e quadratic expanded} we need to check that
\[
d_{-}^2(z_1-qz_2)(z_1-(qt)^{-1}z_2)(z_2-(qt)^{-1}z_1)z_1^nz_2^md_{+}^2\e_0=
\]
\[
-d_{-}^2(z_1-qz_2)(z_1-(qt)^{-1}z_2)(z_2-(qt)^{-1}z_1)z_1^mz_2^nd_{+}^2\e_0
\]
or, equivalently,
$$
d_{-}^2(z_1-qz_2)(z_1-(qt)^{-1}z_2)(z_2-(qt)^{-1}z_1)(z_1^nz_2^m+z_1^mz_2^n)d_{+}^2\e_0=0.
$$
This follows from Lemma \ref{lem: level 2 zero} since the polynomial $(z_1-(qt)^{-1}z_2)(z_2-(qt)^{-1}z_1)(z_1^nz_2^m+z_1^mz_2^n)$ is symmetric in $z_1,z_2$.
\end{proof}

\subsection{Cubic Relations}\label{sec: cubic check}
In this section, we show that the cubic relation \eqref{eq: cubic e} holds in $\Bqt$. We start with the following lemma.  
\begin{lemma}
\label{lem: alpha beta decomposition}
For all $m_1,m_2\in \Z$ we can write
$$
z_1^{m_1}z_2^{m_2}=(q^{-1}z_1-tz_2)\alpha+(z_1-qz_2)\beta
$$
for some Laurent polynomial $\alpha$ and some symmetric Laurent polynomial $\beta$.
\end{lemma}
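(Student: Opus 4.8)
The plan is to interpret the statement ring-theoretically. Let $R = \C(q,t)[z_1^{\pm 1}, z_2^{\pm 1}]$ be the ring of Laurent polynomials and let $R^{\mathrm{sym}} \subset R$ denote the subring of symmetric Laurent polynomials (those invariant under $z_1 \leftrightarrow z_2$). Writing $L_1 = q^{-1}z_1 - tz_2$ and $L_2 = z_1 - qz_2$, the lemma is equivalent to the assertion that every element of $R$ — in particular every monomial $z_1^{m_1}z_2^{m_2}$ — lies in $L_1 R + L_2 R^{\mathrm{sym}}$, i.e. that $R = L_1 R + L_2 R^{\mathrm{sym}}$. Here $L_1 R$ is the principal ideal generated by $L_1$, while $L_2 R^{\mathrm{sym}}$ is only an $R^{\mathrm{sym}}$-submodule; the mismatch between the full coefficients allowed for $L_1$ and the symmetric coefficients allowed for $L_2$ is the crux of the statement.

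First I would pass to the quotient $R/(L_1)$. Since $L_1 = q^{-1}(z_1 - qt z_2)$ and $q^{-1}$ is a unit, $(L_1) = (z_1 - qt z_2)$, and the substitution $z_1 \mapsto qt z$, $z_2 \mapsto z$ identifies $R/(L_1)$ with the one-variable Laurent ring $\C(q,t)[z^{\pm 1}]$. Under this identification $L_2 = z_1 - qz_2$ maps to $q(t-1)z$, which is a unit because $t - 1 \neq 0$ in $\C(q,t)$. Thus it remains only to show that the image of $L_2 R^{\mathrm{sym}}$ fills all of $\C(q,t)[z^{\pm 1}]$, and since the image of $L_2$ is a unit, this reduces to showing that the image of $R^{\mathrm{sym}}$ is all of $\C(q,t)[z^{\pm 1}]$.

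To finish, I would use that $R^{\mathrm{sym}} = \C(q,t)[e_1, e_2^{\pm 1}]$ with $e_1 = z_1 + z_2$ and $e_2 = z_1 z_2$ (clear denominators by a power of $e_2$ and apply the fundamental theorem of symmetric polynomials). Under $z_1 \mapsto qt z$, $z_2 \mapsto z$ these generators map to $(qt+1)z$, $qt z^2$ and $(qt)^{-1}z^{-2}$, respectively. Because $qt + 1$ and $qt$ are nonzero in $\C(q,t)$, the image contains $z$ (from $e_1$, up to a unit) and $z^{-2}$ (from $e_2^{-1}$, up to a unit), hence $z^{-1} = z \cdot z^{-2}$, and therefore all of $\C(q,t)[z^{\pm 1}]$. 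This proves the surjectivity, hence $R = L_1 R + L_2 R^{\mathrm{sym}}$: given $f \in R$, choose $\beta \in R^{\mathrm{sym}}$ with $\overline{f} = \overline{L_2 \beta}$ in $R/(L_1)$, so that $f - L_2 \beta \in (L_1)$, i.e. $f - L_2\beta = L_1 \alpha$ for some $\alpha \in R$; rearranging gives the decomposition with $\alpha$ a Laurent polynomial and $\beta$ symmetric.

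The only real obstacle is the asymmetry in the coefficient rings — that $\beta$ must be symmetric while $\alpha$ is unconstrained — and the computation above shows precisely why this is not an obstruction: after killing $L_1$ the single image variable $z$ is recovered from $e_1$ and $z^{-1}$ from $e_2^{-1}$, so the symmetric subring already surjects onto the one-variable quotient. An entirely equivalent but more hands-on alternative would be to induct on $|m_1 - m_2|$, repeatedly using $z_1^{m_1} z_2^{m_2} \equiv (qt)^{m_1} z_2^{m_1 + m_2} \pmod{L_1}$ to collapse to a single variable and then peeling off symmetric multiples of $L_2$; I expect the quotient-ring formulation to be the cleanest to write, and it makes transparent the fact (used in the proof of the cubic relation via Lemma \ref{lem: level 2 zero}) that the symmetric term $L_2\beta$ is exactly the part that will be annihilated between $d_-^2$ and $d_+^2$.
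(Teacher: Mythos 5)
Your proof is correct, and it takes a genuinely different route from the paper's. You argue structurally: pass to the quotient $R/(q^{-1}z_1-tz_2)\cong\C(q,t)[z^{\pm1}]$ via $z_1\mapsto qtz$, $z_2\mapsto z$, observe that $z_1-qz_2$ becomes the unit $q(t-1)z$, and then check that the symmetric subring $R^{\mathrm{sym}}=\C(q,t)[e_1,e_2^{\pm1}]$ already surjects onto the one-variable quotient (using $e_1\mapsto(qt+1)z$ and $e_2^{-1}\mapsto(qt)^{-1}z^{-2}$, with $qt+1\neq 0$ and $t\neq 1$ in $\C(q,t)$); each step is sound, including the identification of the kernel of the evaluation map with the principal ideal $(z_1-qtz_2)$. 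The paper instead proceeds constructively: it reduces to the cases $(m_1\geq 0,m_2=0)$ and $(m_1=0,m_2\geq 0)$ by dividing by powers of the symmetric element $z_1z_2$, exhibits explicit decompositions of $1$, $z_1$ and $z_2$ (e.g.\ \eqref{eq: alpha beta decompose 1}), and inducts via $z_1^{m_1+1}=z_1^{m_1}(z_1+z_2)-z_1^{m_1-1}(z_1z_2)$ — multiplication by the symmetric elements $e_1,e_2$ preserving the shape of the decomposition. Your approach buys conceptual clarity: it isolates exactly why the asymmetry between the unconstrained $\alpha$ and the symmetric $\beta$ is harmless, namely that the symmetric subring surjects onto the quotient by $(q^{-1}z_1-tz_2)$, and it handles all $(m_1,m_2)$ uniformly with no case analysis. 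What the paper's explicit induction buys is concrete coefficients: the identity \eqref{eq: alpha beta decompose 1} is reused verbatim in Example \ref{eq: Y 00} to express $d_-^2d_+^2\e_0$ in terms of $e_0^2$ and $[e_1,e_{-1}]$, which in turn feeds the proof of the cubic relation; your argument, being an existence statement through a quotient, would need to be unwound to recover such formulas. Your closing remark is also apt: the ``hands-on alternative'' you sketch is essentially the paper's proof in a mild disguise.
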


\begin{proof}
Dividing by a power of $(z_1z_2)$ reduces us either to the case $m_1\ge 0,m_2=0$ or $m_1=0,m_2\ge 0$.

For $m_1=m_2=0$ we have an identity
$$
(q^{-1}z_1-tz_2)(z_1^{-1}+tz_2^{-1})-q^{-1}t(z_1-qz_2)(z_1^{-1}+z_2^{-1})=(t+q^{-1})(1-t),$$
so
\begin{equation}
\label{eq: alpha beta decompose 1}
1=\frac{1}{(t+q^{-1})(1-t)}\left[(q^{-1}z_1-tz_2)(z_1^{-1}+tz_2^{-1})-q^{-1}t(z_1-qz_2)(z_1^{-1}+z_2^{-1})\right]
\end{equation}
For $m_1=1,m_2=0$ we have an identity
$$
z_1=\frac{1}{q^{-1}(1-t)}\left[(q^{-1}z_1-tz_2)-q^{-1}t(z_1-qz_2)\right],
$$
and for $m_1=0,m_2=1$ we get
$$
z_2=\frac{1}{(1-t)}\left[(q^{-1}z_1-tz_2)-q^{-1}(z_1-qz_2)\right].
$$
The result follows from induction by observing that
$$
z_1^{m_1+1}=z_1^{m_1}(z_1+z_2)-z_1^{m_1-1}(z_1z_2),\quad 
z_2^{m_2+1}=z_2^{m_2}(z_1+z_2)-z_2^{m_2-1}(z_1z_2).
$$
\end{proof}

\begin{corollary}
We have $d_{-}^2z_1^{m_1}z_2^{m_2}d_+^2\in \Eqt^{>}$.
\end{corollary}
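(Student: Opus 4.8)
The plan is to assemble the statement directly from the three preceding lemmas, so essentially no new computation is required. First I would apply Lemma~\ref{lem: alpha beta decomposition} to write
$$
z_1^{m_1}z_2^{m_2}=(q^{-1}z_1-tz_2)\alpha+(z_1-qz_2)\beta
$$
with $\alpha$ a Laurent polynomial and $\beta$ a symmetric Laurent polynomial, and substitute this into $d_{-}^2z_1^{m_1}z_2^{m_2}d_+^2\e_0$. This splits the element into two pieces, namely $d_{-}^2(q^{-1}z_1-tz_2)\alpha d_+^2\e_0$ and $d_{-}^2(z_1-qz_2)\beta d_+^2\e_0$.

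The second piece vanishes outright: since $\beta$ is symmetric in $z_1,z_2$, Lemma~\ref{lem: level 2 zero} gives $d_{-}^2(z_1-qz_2)\beta d_+^2\e_0=0$. For the first piece, I would expand the Laurent polynomial as $\alpha=\sum_{k,m}c_{k,m}z_1^k z_2^m$ (with $k,m\in\Z$, allowing negative exponents) and treat each monomial separately. By Lemma~\ref{lem: ee to level 2}, each summand satisfies
$$
d_{-}^2(q^{-1}z_1-tz_2)z_1^k z_2^m d_+^2\e_0=e_{k+1}e_m-te_k e_{m+1},
$$
which manifestly lies in $\Eqt^{>}$ since it is a linear combination of products of the generators $e_j$. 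Summing over the finitely many nonzero $c_{k,m}$ then shows the whole first piece lies in $\Eqt^{>}$.

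Combining the two pieces, I conclude $d_{-}^2z_1^{m_1}z_2^{m_2}d_+^2\e_0\in\Eqt^{>}$, as claimed. There is no serious obstacle here: the real content was already packaged into the three lemmas, and the only point worth keeping in mind is that Lemma~\ref{lem: ee to level 2} holds for \emph{all} integer exponents $k,m$. Consequently the possibly negative powers of $z_1,z_2$ appearing in $\alpha$ cause no difficulty, precisely because the generators $e_j$ are defined for every $j\in\Z$.
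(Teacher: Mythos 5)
Your proposal is correct and matches the paper's own proof: both decompose $z_1^{m_1}z_2^{m_2}$ via Lemma~\ref{lem: alpha beta decomposition}, kill the symmetric piece with Lemma~\ref{lem: level 2 zero}, and identify the remaining piece as a combination of products $e_{k+1}e_m-te_ke_{m+1}$ via Lemma~\ref{lem: ee to level 2}. Your only addition is spelling out the monomial expansion of $\alpha$ and the remark that Lemma~\ref{lem: ee to level 2} holds for all integer exponents, which the paper leaves implicit.
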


\begin{proof}
 We can use Lemma \ref{lem: alpha beta decomposition} and rewrite this as
$$
d_{-}^2z_1^{m_1}z_2^{m_2}d_+^2=d_{-}^2\left[(q^{-1}z_1-tz_2)\alpha+(z_1-qz_2)\beta\right]d_{+}^2.
$$
where $\alpha$ is a Laurent polynomial and $\beta$ is a symmetric Laurent polynomial.
By Lemma \ref{lem: ee to level 2} $d_{-}^2(q^{-1}z_1-tz_2)\alpha d_{+}^2$ is in $\Eqt^{>}$ while
by Lemma \ref{lem: level 2 zero} we get 
$d_{-}^2(z_1-qz_2)\beta d_{+}^2=0.$
\end{proof}

\begin{example}
\label{eq: Y 00}
By Lemma \ref{lem: ee to level 2} we get
$$
e_1e_{-1}-te_0^2=d_{-}^2(q^{-1}z_1-tz_2)z_2^{-1}d_{+}^2\e_0,
$$
$$
e_0^2-te_{-1}e_{1}=d_{-}^2(q^{-1}z_1-tz_2)z_1^{-1}d_{+}^2\e_0.
$$
and by \eqref{eq: alpha beta decompose 1} and Lemma \ref{lem: level 2 zero} we get
\begin{align*}
d_{-}^2d_{+}^2\e_0&=\frac{1}{(t+q^{-1})(1-t)}d_{-}^2(q^{-1}z_1-tz_2)(z_1^{-1}+tz_2^{-1})d_{+}^2\e_0
\\
&=\frac{1}{(t+q^{-1})(1-t)}\left[e_0^2-te_{-1}e_{1}+t(e_1e_{-1}-te_0^2)\right]
\\
&=\frac{1-t^2}{(t+q^{-1})(1-t)}e_0^2+\frac{t}{(t+q^{-1})(1-t)}[e_1,e_{-1}].
\end{align*}
\end{example}

\begin{lemma}
\label{lem: cubic rewritten}
We have the relation
$
d_{-}^2d_{+}^2d_{-}d_{+}\e_0=d_{-}d_{+}d_{-}^2d_{+}^2\e_0
$
in $\Bqt$.
\end{lemma}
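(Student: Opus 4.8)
The plan is to prove the equivalent statement that the level-$1$ loop $A := \e_0 d_- d_+ \e_0 = e_0$ commutes with the level-$2$ loop $B := \e_0 d_-^2 d_+^2 \e_0$, since the two sides of the claimed identity are precisely $\e_0 d_-^2 d_+^2 d_- d_+ \e_0 = BA$ and $\e_0 d_- d_+ d_-^2 d_+^2\e_0 = AB$. First I would factor out the outermost $d_-$ and innermost $d_+$: writing $d_+ d_-^2 d_+ = (d_+ d_-)(d_- d_+)$ and $d_- d_+^2 d_- = (d_- d_+)(d_+ d_-)$, both words become $d_-\,(\,\cdots\,)\,d_+\e_0$ applied to a common core, so it suffices to show
\[
d_-\left(d_+ d_-^2 d_+ - d_- d_+^2 d_-\right)d_+\e_0 = 0 .
\]

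Next I would recognize the bracketed operator $M := d_+ d_-^2 d_+ - d_- d_+^2 d_-$, viewed as acting on $V_1$, as a commutator. Indeed the two products are $(d_+d_-)(d_-d_+)$ and $(d_-d_+)(d_+d_-)$, so $M = [\,d_+d_-,\,d_-d_+\,]$ on $V_1$. Using $d_+d_- - d_-d_+ = (q-1)\varphi$ this collapses to
\[
M = (q-1)\,[\varphi,\, d_-d_+]\quad\text{on } V_1,
\]
so that $d_- M d_+\e_0 = (q-1)\big(d_-\varphi d_- d_+^2\e_0 - d_-^2 d_+ \varphi d_+ \e_0\big)$, where $\varphi$ here denotes the operator at level $1$.

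The crux is to move $\varphi$ out of the way using the relations \eqref{eq:phi}. In the first term $\varphi d_-$ contains the map $d_-\colon V_2 \to V_1$, so applying $q\varphi d_- = d_- \varphi T_1$ turns it into $\tfrac1q d_-\varphi T_1$ with $\varphi$ now at level $2$, after which $T_1 d_+^2 = d_+^2$ from \eqref{eq:T d+} absorbs the Hecke generator and yields $\tfrac1q d_-^2 \varphi d_+^2\e_0$. In the second term $d_+\varphi$ contains $d_+\colon V_1 \to V_2$, so applying $T_1 \varphi d_+ = q d_+\varphi$ produces $\tfrac1q T_1 \varphi d_+$ with $\varphi$ at level $2$, after which $d_-^2 T_1 = d_-^2$ from \eqref{eq:T d-} absorbs $T_1$ and again yields $\tfrac1q d_-^2 \varphi d_+^2 \e_0$. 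The two terms are then identical and cancel.

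The main obstacle is this middle step: one must track carefully at which level each occurrence of $\varphi$ and $d_\pm$ acts, and match the two halves of \eqref{eq:phi} against the braid-absorption relations \eqref{eq:T d+} and \eqref{eq:T d-}, so that both terms land on the same level-$2$ expression $d_-^2 \varphi d_+^2 \e_0$. I would stress that the argument uses only the defining relations of $\Bqt$ and never the cubic relation \eqref{eq: cubic e} itself, so it is non-circular; combined with the expansion of $d_-^2 d_+^2\e_0$ in terms of $e_0^2$ and $[e_1,e_{-1}]$ from Example \ref{eq: Y 00} together with the triviality of $[e_0, e_0^2]$, this lemma is exactly what then produces \eqref{eq: cubic e}.
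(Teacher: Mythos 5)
Your proof is correct and is essentially the paper's own argument run in reverse: the paper evaluates $d_{-}^2\varphi d_{+}^2\e_0$ in two ways using the relations \eqref{eq:phi} together with $T_1d_{+}^2=d_{+}^2$ and $d_{-}^2T_1=d_{-}^2$, obtaining (up to a factor of $q$) exactly your two terms $d_{-}\varphi d_{-}d_{+}^2\e_0$ and $d_{-}^2d_{+}\varphi d_{+}\e_0$, whereas you start from the commutator $(q-1)\,d_{-}[\varphi,d_{-}d_{+}]d_{+}\e_0$ and push both level-$1$ occurrences of $\varphi$ up to level $2$, landing on the common expression $\tfrac{1}{q}d_{-}^2\varphi d_{+}^2\e_0$. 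The relations invoked and the intermediate expressions coincide, so this is the same proof with the chain of equalities read in the opposite direction.
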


\begin{proof}
We simplify $d_{-}^2\varphi d_{+}^2\e_0$ in two different ways:
\begin{align*}
d_{-}^2\varphi d_{+}^2\e_0&=d_{-}^2(\varphi d_{+})d_{+}\e_0=
qd_{-}^2T_1^{-1}d_{+}\varphi d_{+}\e_0=qd_{-}^2d_{+}\varphi d_{+}\e_0
\\
&=\frac{q}{q-1}(d_{-}^2d_{+}^2d_{-}d_{+}-d_{-}^2d_{+}d_{-}d_{+}^2)\e_0
\end{align*}
and
\begin{align*}
d_{-}^2\varphi d_{+}^2\e_0&=d_{-}(d_{-}\varphi)d_{+}^2\e_0=qd_{-}\varphi d_{-}T_1^{-1}d_{+}^2\e_0=qd_{-}\varphi d_{-}d_{+}^2\e_0
\\
&=\frac{q}{q-1}(d_{-}d_{+}d_{-}^2d_{+}^2-d_{-}^2d_{+}d_{-}d_{+}^2)\e_0.
\end{align*}
By comparing these, we get the desired relation.
\end{proof}

\begin{lemma}
\label{lem:cubic-holds-in-bqt}
The cubic relation \eqref{eq: cubic e} holds in $\Bqt$.
\end{lemma}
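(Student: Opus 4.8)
The plan is to reduce the cubic relation $[e_0,[e_1,e_{-1}]]=0$ entirely to the identity already recorded in Lemma \ref{lem: cubic rewritten}, using the expansion of $[e_1,e_{-1}]$ obtained in Example \ref{eq: Y 00}. The key observation is that the inner commutator $[e_1,e_{-1}]$ is, up to scalars, nothing but $d_{-}^2d_{+}^2\e_0$ modulo a multiple of $e_0^2$, and that $d_{-}^2d_{+}^2\e_0$ commutes with $e_0$ precisely because of the rewriting in Lemma \ref{lem: cubic rewritten}.

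Concretely, first I would invoke Example \ref{eq: Y 00}, which expresses
\[
d_{-}^2d_{+}^2\e_0=\frac{1-t^2}{(t+q^{-1})(1-t)}e_0^2+\frac{t}{(t+q^{-1})(1-t)}[e_1,e_{-1}].
\]
Solving for the inner commutator shows that $[e_1,e_{-1}]$ is a $\C(q,t)$-linear combination of $d_{-}^2d_{+}^2\e_0$ and $e_0^2$. Consequently $[e_0,[e_1,e_{-1}]]$ is the same linear combination of $[e_0,d_{-}^2d_{+}^2\e_0]$ and $[e_0,e_0^2]$, and it suffices to show both of these vanish.

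The term $[e_0,e_0^2]$ is of course zero. For the other term I would unwind the idempotents: since $e_0=\e_0d_{-}d_{+}\e_0$ and $d_{-}^2d_{+}^2\e_0=\e_0d_{-}^2d_{+}^2\e_0$ both lie in $\e_0\Bqt\e_0$, multiplying in the two orders gives
\[
[e_0,d_{-}^2d_{+}^2\e_0]=\left(d_{-}d_{+}d_{-}^2d_{+}^2-d_{-}^2d_{+}^2d_{-}d_{+}\right)\e_0,
\]
which is exactly the difference of the two sides of Lemma \ref{lem: cubic rewritten} and therefore vanishes. Combining the two vanishing statements yields $[e_0,[e_1,e_{-1}]]=0$, establishing \eqref{eq: cubic e} in $\Bqt$.

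There is essentially no serious obstacle left: the substantive content has been front-loaded into Lemma \ref{lem: cubic rewritten} (the commutation identity $d_{-}^2d_{+}^2d_{-}d_{+}\e_0=d_{-}d_{+}d_{-}^2d_{+}^2\e_0$, proved via the two expansions of $d_{-}^2\varphi d_{+}^2\e_0$) and into Example \ref{eq: Y 00} (the identification of $d_{-}^2d_{+}^2\e_0$ with $[e_1,e_{-1}]$ up to $e_0^2$). The only point requiring a little care is the bookkeeping of the idempotents $\e_0$ when passing between $e_0$ and $d_{-}d_{+}$, to be sure that the commutator genuinely collapses to the two-word difference appearing in Lemma \ref{lem: cubic rewritten} rather than to some expression at a different level of the quiver.
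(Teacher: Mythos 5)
Your proposal is correct and follows essentially the same route as the paper's own proof, which likewise deduces from Lemma \ref{lem: cubic rewritten} that $d_{-}^2d_{+}^2\e_0$ commutes with $e_0$ and then uses Example \ref{eq: Y 00} to transfer this to $[e_1,e_{-1}]$. Your version merely spells out the idempotent bookkeeping and the linear-combination step that the paper leaves implicit.
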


\begin{proof}
By Lemma \ref{lem: cubic rewritten} the operator $d_{-}^2d_{+}^2\e_0$ commutes with $e_0=d_{-}d_{+}\e_0$.
By Example \ref{eq: Y 00} this implies that  $[e_1,e_{-1}]$ commutes with $e_0$ as well, and $[e_0,[e_1,e_{-1}]]=0$.
\end{proof}

\begin{remark}
A different proof of Lemma \ref{lem: cubic rewritten} (and hence of the cubic relation \eqref{eq: cubic e}) follows from the work of Novarini \cite{Novarini}. Indeed, \cite[Proposition 2.5.6]{Novarini} implies that the operators $L_a=\e_0d_{-}\varphi^{a}d_{+}\e_0,\ a\in \Z_{\ge 0}$ pairwise commute. On the other hand, $L_0=\e_0d_{-}d_{+}\e_0$ and $$L_1=\e_0d_{-}\varphi d_{+}\e_0=\frac{1}{q-1}(L_0^2-\e_0d_{-}^2d_{+}^2\e_0),$$
so $L_0$ commutes with $\e_0d_{-}^2d_{+}^2\e_0$.
See also Remark \ref{rem: from Y to cubic}.
\end{remark}

\subsection{Commutation of $e(\z)$ and $\psi^{\pm}(\z)$.}

In this subsection we verify the relations \eqref{eq: commutation e with psi}.

\begin{lemma}
\label{lem: delta e commutation new}
We have 
\begin{equation}
\label{eq: delta e commutation}
[\Delta_{p_m},e_k]=e_{k+m}, [\Delta^*_{p_m},e_k]=e_{k-m}
\end{equation}
and
\begin{equation}
\label{eq: delta f commutation}
[\Delta_{p_m},f_k]=-f_{k+m}, [\Delta^*_{p_m},f_k]=-f_{k-m}.
\end{equation}
\end{lemma}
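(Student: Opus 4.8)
The plan is to prove the two identities in \eqref{eq: delta e commutation} for $e_k$ directly from the defining relations of $\DBqt$, and then to obtain the identities \eqref{eq: delta f commutation} for $f_k$ for free by applying the anti-involution $\Theta$. Throughout I abbreviate $\e_0\Delta_{p_m}\e_0$ by $\Delta_{p_m}$ and $\e_0\Delta^{\ast}_{p_m}\e_0$ by $\Delta^{\ast}_{p_m}$, so that the commutators are understood inside $\e_0\DBqt\e_0$.

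For the first identity, recall that $e_k = \e_0 d_- z_1^k d_+ \e_0$ lies in $\e_0\Bqt\e_0$, with the only intermediate vertex being $\e_1$; hence the positive relations (DB$^+$) apply. I would start from $\Delta_{p_m}d_- z_1^k d_+\e_0$ and push $\Delta_{p_m}$ to the right. Using $[\Delta_{p_m}, d_-] = 0$ and $[\Delta_{p_m}, z_j] = 0$ from \eqref{eq:delta}, the operator $\Delta_{p_m}$ passes freely through $d_-$ and $z_1^k$, producing $d_- z_1^k \Delta_{p_m} d_+\e_0$. Finally, the inhomogeneous relation \eqref{eq: delta d+}, namely $[\Delta_{p_m}, d_+] = z_1^m d_+$, converts this into $d_- z_1^k d_+ \Delta_{p_m}\e_0 + d_- z_1^{k+m} d_+\e_0 = e_k\Delta_{p_m} + e_{k+m}$, which is exactly $[\Delta_{p_m}, e_k] = e_{k+m}$. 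The computation for $\Delta^{\ast}_{p_m}$ is verbatim, using \eqref{eq:delta star} and the relation $[\Delta^{\ast}_{p_m}, d_+] = z_1^{-m} d_+$ in \eqref{eq: delta d+}, and yields $[\Delta^{\ast}_{p_m}, e_k] = e_{k-m}$.

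For \eqref{eq: delta f commutation} I would invoke $\Theta$ rather than redo the calculation. By Lemma \ref{lem:antiinvolution-double} we have $\Theta(\e_0\Delta_{p_m}\e_0) = \e_0\Delta_{p_m}\e_0$, $\Theta(\e_0\Delta^{\ast}_{p_m}\e_0) = \e_0\Delta^{\ast}_{p_m}\e_0$, and $\Theta(e_k) = f_k$. Since $\Theta$ is an anti-involution, applying it to $[\Delta_{p_m}, e_k] = e_{k+m}$ reverses the order of each product and therefore flips the sign of the commutator: $\Theta([\Delta_{p_m}, e_k]) = f_k\Delta_{p_m} - \Delta_{p_m}f_k = -[\Delta_{p_m}, f_k]$, while the right-hand side becomes $f_{k+m}$. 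This gives $[\Delta_{p_m}, f_k] = -f_{k+m}$ at once, and likewise $[\Delta^{\ast}_{p_m}, f_k] = -f_{k-m}$. As a check, one can instead repeat the direct argument on the negative side: now $f_k = \e_0 d_+ z_1^k d_- \e_0$ has intermediate vertex $\e_{-1}$, so the relations (DB$^-$) apply, $\Delta_{p_m}$ commutes through $d_+$ and $z_1^k$ by \eqref{eq:negative delta}, and the sign in \eqref{eq: negative delta d-}, namely $[\Delta_{p_m}, d_-] = -z_1^m d_-$, produces the $-f_{k+m}$.

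I do not expect a genuine obstacle here: the argument is a short commutation whose only delicate point is bookkeeping. One must keep in mind that the symbol $\Delta_{p_m}$ denotes a distinct loop at each vertex, apply the positive relations (DB$^+$) for $e_k$ and the negative relations (DB$^-$) for $f_k$, and observe that the single sign discrepancy between \eqref{eq: delta d+} and \eqref{eq: negative delta d-} is precisely what accounts for the sign difference between \eqref{eq: delta e commutation} and \eqref{eq: delta f commutation}. The $\Theta$-route makes this last point automatic and is the cleanest way to conclude.
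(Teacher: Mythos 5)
Your proposal is correct and follows essentially the same route as the paper: a direct commutation pushing $\Delta_{p_m}$ (resp.\ $\Delta^{\ast}_{p_m}$) through $d_{-}$ and $z_1^k$ via \eqref{eq:delta}--\eqref{eq:delta star} and then applying $[\Delta_{p_m},d_{+}]=z_1^m d_{+}$ from \eqref{eq: delta d+}, followed by an application of the anti-involution $\Theta$ (with the sign flip coming from anti-multiplicativity) to obtain \eqref{eq: delta f commutation}. Your spelled-out verification that $\Theta([\Delta_{p_m},e_k])=-[\Delta_{p_m},f_k]$, and the cross-check against the sign in \eqref{eq: negative delta d-}, are exactly the details the paper leaves implicit.
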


\begin{proof}
Recall that $[\Delta_{p_m},d_{-}]=[\Delta_{p_m},z_1]=0$ and $[\Delta_{p_m},d_{+}]=z_1^md_+$.
This implies
$$
[\Delta_{p_m},e_k]=[\Delta_{p_m},d_{-}z_1^kd_+]=d_{-}z_1^k[\Delta_{p_m},d_{+}]=d_{-}z_1^kz_1^md_+=e_{k+m}.
$$
The proof for $\Delta^*_{p_m}$ is similar, and the relations \eqref{eq: delta f commutation} follow by applying $\Theta$.
\end{proof}

\begin{lemma}
\label{lem: e psi commutation abstract}
The relations \eqref{eq: commutation e with psi}  
are satisfied in $\DBqt^{\geq}$.
\end{lemma}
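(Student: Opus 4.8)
The plan is to exploit the exponential presentation \eqref{eq: def psi plus}--\eqref{eq: def psi minus} of $\psi^{\pm}(\z)$ together with the commutation relations of Lemma \ref{lem: delta e commutation new}, all taking place inside $\e_0\DBqt^{\geq}\e_0$. First I would repackage \eqref{eq: delta e commutation} in generating-function form. Writing $e(\w)=\sum_k e_k\w^{-k}$ and summing $[\Delta_{p_m},e_k]=e_{k+m}$ against $\w^{-k}$ yields
\begin{equation*}
[\Delta_{p_m},e(\w)]=\w^m e(\w),\qquad [\Delta^{\ast}_{p_m},e(\w)]=\w^{-m}e(\w).
\end{equation*}
Equivalently, $\Delta_{p_m}e(\w)=e(\w)(\Delta_{p_m}+\w^m)$ and $\Delta^{\ast}_{p_m}e(\w)=e(\w)(\Delta^{\ast}_{p_m}+\w^{-m})$; since $\w^{\pm m}$ is a scalar, this is a clean \emph{shift} relation, and it is the real content of the lemma.

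The core observation is that a scalar shift relation propagates through the exponential. Because the $\Delta_{p_m}$ pairwise commute by \eqref{eq:delta}, iterating $\Delta_{p_m}e(\w)=e(\w)(\Delta_{p_m}+\w^m)$ gives, for any family of coefficients $a_m$,
\begin{equation*}
\exp\Big[\sum_m a_m\Delta_{p_m}\Big]e(\w)=e(\w)\exp\Big[\sum_m a_m(\Delta_{p_m}+\w^m)\Big]=e(\w)\exp\Big[\sum_m a_m\Delta_{p_m}\Big]\cdot\exp\Big[\sum_m a_m\w^m\Big],
\end{equation*}
where the final factorization uses that $\w^m$ is central. Taking $a_m=-\tfrac{\z^{-m}}{m}(1-q^m)(1-t^m)(1-(qt)^{-m})$ as in \eqref{eq: def psi plus}, the scalar factor $\exp[\sum_m a_m\w^m]$ is precisely the left-hand side of the first identity in Lemma \ref{lem: exp identity}, hence equals $-g(\w,\z)/g(\z,\w)$. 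The remaining scalar prefactor $-\tfrac{1-q^{-1}t^{-1}\z^{-1}}{1-\z^{-1}}$ of $\psi^+(\z)$ commutes with $e(\w)$, so multiplying through we obtain
\begin{equation*}
\psi^+(\z)e(\w)=-\frac{g(\w,\z)}{g(\z,\w)}\,e(\w)\psi^+(\z),
\end{equation*}
which after clearing the denominator $g(\z,\w)$ is exactly \eqref{eq: commutation e with psi} for $\psi^+$.

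For $\psi^-(\z)$ the argument is identical but uses $\Delta^{\ast}_{p_m}$ and the shift by $\w^{-m}$: with $a_m=+\tfrac{\z^{m}}{m}(1-q^m)(1-t^m)(1-(qt)^{-m})$ the scalar factor becomes $\exp[\sum_m a_m\w^{-m}]$, which is the reciprocal of the left-hand side of the second identity in Lemma \ref{lem: exp identity} and therefore equals $-g(\w,\z)/g(\z,\w)$ again; thus both signs $\psi^{\pm}$ produce the same relation. I expect the only genuinely delicate point to be the formal-power-series bookkeeping: one must check that each manipulation---the termwise shift, the factorization of the exponential into commuting pieces, and the substitution from Lemma \ref{lem: exp identity}---is a valid identity of formal series in $\z^{\mp1}$ and $\w$, and that the prefactor is expanded on the correct side to match the $\psi^{\pm}$ conventions. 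All of these are routine once the shift relation $[\Delta_{p_m},e(\w)]=\w^m e(\w)$ is established, so the proof reduces essentially to Lemma \ref{lem: delta e commutation new} and Lemma \ref{lem: exp identity}.
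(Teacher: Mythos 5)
Your argument is correct and is essentially the paper's own proof: the paper likewise converts Lemma \ref{lem: delta e commutation new} into the generating-function relations $[\Delta_{p_m},e(\w)]=\w^m e(\w)$ and $[\Delta^{\ast}_{p_m},e(\w)]=\w^{-m}e(\w)$, conjugates $e(\w)$ by the exponentials $\exp[X^{\pm}(\z)]$ defining $\psi^{\pm}(\z)$ (phrased there as $\Ad_{\exp(X^{\pm}(\z))}=\exp[\ad X^{\pm}(\z)]$, which is equivalent to your shift-and-factor manipulation using commutativity of the $\Delta$'s), and concludes with Lemma \ref{lem: exp identity}. The power-series bookkeeping you flag is handled in the paper by the same observation, namely that $\psi^{+}(\z)$ and $\psi^{-}(\z)$ are invertible power series in $\z^{-1}$ and $\z$ respectively.
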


\begin{proof}
We introduce auxiliary formal series
$$
X^+(\z)=-\sum_{m=1}^{\infty}\frac{\z^{-m}}{m}\Delta_{p_m}(1-q^m)(1-t^m)(1-(qt)^{-m})
$$
and 
$$
X^-(\z)=\sum_{m=1}^{\infty}\frac{\z^m}{m}\Delta^*_{p_m}(1-q^m)(1-t^m)(1-(qt)^{-m}).
$$
such that $\psi^{\pm}(\z)=\left(-\frac{1-q^{-1}t^{-1}\z^{-1}}{1-\z^{-1}}\right)\exp[X^{\pm}(\z)]$.  
We claim that  
\begin{equation}
\label{eq: X plus and e}
[X^+(\z),e(\w)]=-\sum_{m=1}^{\infty}\frac{\z^{-m}}{m}\w^m(1-q^m)(1-t^m)(1-(qt)^{-m})e(\w)
\end{equation}
and 
\begin{equation}
\label{eq: X minus and e}
[X^-(\z),e(\w)]=\sum_{m=1}^{\infty}\frac{\z^{m}}{m}\w^{-m}(1-q^m)(1-t^m)(1-(qt)^{-m})e(\w).
\end{equation}
Indeed, we have $[\Delta_{p_m},e_k]=e_{k+m}$ by Lemma \ref{lem: delta e commutation new} which can be written as
$$
[\Delta_{p_m},e(\w)]=\sum_{k\in \Z} [\Delta_{p_m},e_k]\w^{-k}=\sum_{k\in \Z} \w^{-k}e_{k+m}=\w^{m}e(\w).
$$
Similarly, $[\Delta^*_{p_m},e(\w)]=\w^{-m}e(\w)$ 
and the equations \eqref{eq: X plus and e}-\eqref{eq: X minus and e} follow.

The generating function $\psi^{+}(\z)$ is a power series in $\z^{-1}$, and $\psi^{-}(\z)$ is a power series in $\z$. Both power series are invertible, and we get
\begin{align*}
\Ad_{\psi^{+}(\z)}e(\w)&=\Ad_{\exp(X^{+}(\z))}e(\w)=\exp[\ad X^{+}(\z)]e(\w)
\\
&=\exp\left[-\sum_{m=1}^{\infty}\frac{\z^{-m}}{m}\w^m(1-q^m)(1-t^m)(1-(qt)^{-m})\right]e(\w)=
-\frac{g(\w,\z)}{g(\z,\w)}e(\w)
\end{align*}
by \eqref{eq: X plus and e} and Lemma \ref{lem: exp identity}.

Similarly, by \eqref{eq: X minus and e} and swapping $\z$ and $\w$ in Lemma \ref{lem: exp identity} we get 
\begin{align*}
\Ad_{\psi^{-}(\z)}e(\w)&=\Ad_{\exp(X^{-}(\z))}e(\w)=\exp[\ad X^{-}(\z)]e(\w)
\\
&=\exp\left[\sum_{m=1}^{\infty}\frac{\z^{m}}{m}\w^{-m}(1-q^m)(1-t^m)(1-(qt)^{-m})\right]e(\w)=
-\frac{g(\w,\z)}{g(\z,\w)}e(\w).
\end{align*}
\end{proof}

\begin{remark}
Some references define $\EHA$ using additional generators similar to our $\Delta_{p_m},\Delta^*_{p_m}$. For example, \cite[Definition 3.2]{neguct2023r} uses the generators $h_m,m\in \Z\setminus \{0\}$ which can be translated to our conventions  as
$$
h_m=\begin{cases}
\Delta_{p_m} & \text{if}\ m>0\\
-\Delta^*_{p_{|m|}} & \text{if}\ m<0.
\end{cases}
$$
Then \cite[eq. (3.3)-(3.4)]{neguct2023r} agrees with our \eqref{eq: delta e commutation} and \eqref{eq: delta f commutation}, while the definition of $\psi^{\pm}(\z)$ in terms of $h_m$ in \cite[eq. (3.7)]{neguct2023r} agrees with our \eqref{eq: def psi plus}-\eqref{eq: def psi minus}.
\end{remark}

\section{The spherical subalgebra}
\label{sec: spherical}

 We have seen that the algebra $\Eqt^{>} \subseteq \e_0\Bqt\e_0$ (resp. $\Eqt^{\geq} \subseteq \e_0\DBqt^{\geq}\e_0$ and $\Eqt \subseteq \e_0\DBqt\e_0$) is isomorphic to the positive half $\EHA^{>}$ (resp. non-negative half $\EHA^{\geq}$ and entire $\EHA$) of the elliptic Hall algebra. In this section, we will show that, in fact, every element of $\e_0\Bqt\e_0$ (resp. $\e_0\DBqt^{\geq}\e_0$ and $\e_0\DBqt\e_0$) can be written as a polynomial in the elements $e_k$ (resp. $\{e_k,\psi^{\pm}_{m}\}$ and $\{e_k, f_k, \psi^{\pm}_{m}\}$). The following is the main result of this section.

\begin{theorem}\label{thm:main-spherical}
We have the following equalities:
\[
\Eqt^{>} = \e_0\Bqt\e_0, \qquad \Eqt^{\geq} = \e_0\DBqt^{\geq}\e_0, \qquad \Eqt = \e_0\DBqt\e_0. 
\]
and consequently, by Corollary \ref{cor: bqt to EHA}, the following isomorphisms:
\[
\EHA^{>} \cong \e_0\Bqt\e_0, \qquad \EHA^{\geq} \cong \e_0\DBqt^{\geq}\e_0, \qquad \EHA \cong \e_0\DBqt\e_0. 
\]
\end{theorem}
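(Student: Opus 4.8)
The heart of Theorem~\ref{thm:main-spherical} is the first equality $\Eqt^{>} = \e_0\Bqt\e_0$, since the other two follow by combining it with the structure of the $\Delta$-operators and the anti-involution $\adj$. I would therefore organize the proof around three separate claims, in increasing order of difficulty: first the positive spherical equality $\Eqt^{>} = \e_0\Bqt\e_0$; then the extended equality $\Eqt^{\geq} = \e_0\DBqt^{\geq}\e_0$; and finally the full statement $\Eqt = \e_0\DBqt\e_0$. The isomorphisms with the corresponding halves of $\EHA$ are then immediate from Corollary~\ref{cor: bqt to EHA}.

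\textbf{Step 1: the positive half.} The containment $\Eqt^{>} \subseteq \e_0\Bqt\e_0$ is definitional, so the content is the reverse inclusion: every element of the spherical subalgebra $\e_0\Bqt\e_0$ is a polynomial in the $e_m$. This is exactly where the three-step program outlined in the introduction is deployed. By Theorem~\ref{thm: intro step 2} (proved in Section~\ref{sec: special} as Theorem~\ref{thm: special generate}) the elements $Y_{m_1,\ldots,m_n} = \e_0 d_- z_1^{m_1}\varphi z_2^{m_2}\cdots\varphi z_1^{m_n} d_+\e_0$ generate $\e_0\Bqt\e_0$. By Corollary~\ref{cor: intro Y in E}, which rests on the commutation relations~\eqref{eq: Y relation 1 intro}--\eqref{eq: Y relation 2 intro} of Theorem~\ref{thm: intro step 3} together with the results of \cite{GNtrace1}, each $Y_{m_1,\ldots,m_n}$ lies in $\Eqt^{>}$. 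Combining the two gives $\e_0\Bqt\e_0 \subseteq \Eqt^{>}$, hence equality. I would state this as a standalone proposition and cite these forward references directly, since the actual work lives in the later sections.

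\textbf{Step 2: the extended half.} For $\Eqt^{\geq} = \e_0\DBqt^{\geq}\e_0$ I would argue that an arbitrary spherical element of $\DBqt^{\geq}$ can be reduced to the positive case by ``absorbing'' the $\Delta$-operators. The algebra $\DBqt^{\geq}$ adds the loops $\Delta_{p_m}, \Delta^*_{p_m}$ to $\Bqt$, and relations~\eqref{eq:delta}--\eqref{eq: delta d+} show that these operators commute with everything except $d_+$, past which they move according to $[\Delta_{p_m}, d_+] = z_1^m d_+$. The plan is to push all $\Delta$-generators in a given word to the left until they hit $\e_0$; a word in $\e_0\DBqt^{\geq}\e_0$ has the form $\e_0 \Delta_{p_{a_1}}^{\ast\,\epsilon_1}\cdots\, w\, \e_0$ where $w$ is a word in the Hecke generators and $d_\pm$. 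Using the commutation relations to normal-order the $\Delta$'s, each commutator spawns a factor $z_1^m$ inserted into an otherwise $\Bqt$-type word, which by Step~1 already lies in $\Eqt^{>}$; the surviving $\e_0\Delta_{p_m}\e_0$ and $\e_0\Delta^*_{p_m}\e_0$ prefactors are generators of $\Eqt^{\geq}$ by definition. Iterating reduces any spherical element to a polynomial in $e_m$ and $\Delta_{p_m}, \Delta^*_{p_m}$.

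\textbf{Step 3: the full algebra, and the main obstacle.} The full equality $\Eqt = \e_0\DBqt\e_0$ is the hardest, because $\DBqt$ glues the positive and negative halves and a general spherical word may alternate between $d_+$-strings going up and $d_-$-strings going down across the vertex $0$. The cleanest route is to exploit the anti-involution $\adj$ of Lemma~\ref{lem:antiinvolution-double}: since $\adj(\Eqt^{\geq}) = \Eqt^{\leq}$ and $\adj(\e_0\DBqt^{\geq}\e_0) = \e_0\DBqt^{\leq}\e_0$, Step~2 immediately yields the negative analogue $\Eqt^{\leq} = \e_0\DBqt^{\leq}\e_0$. What remains is to show that $\e_0\DBqt\e_0$ is generated by these two halves together, i.e. that any spherical word can be rewritten as a sum of products of elements of $\e_0\DBqt^{\geq}\e_0$ and $\e_0\DBqt^{\leq}\e_0$. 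The key mechanism is relation~\eqref{eq: weak e and f commutation in double} (DB$^0$), which expresses the commutator $[e(\z), f(\w)]$ in terms of the $\psi^\pm(\z)$, hence allows one to move all ``negative'' excursions (those dipping below vertex $0$) to the right of all ``positive'' excursions, at the cost of lower-order $\psi^\pm$ terms that live in $\Eqt^{\geq}\cap\Eqt^{\leq}$. \textbf{I expect this last reordering to be the main obstacle}: one must verify that a spherical word crossing vertex $0$ finitely many times can be straightened into normal-ordered form with all $f$-type factors to the right, and that each commutation step strictly decreases some complexity measure (e.g. the number of sign changes in the height profile) so the induction terminates. Once this normal-ordering is established, every spherical element becomes a polynomial in $e_m, f_m, \psi_m^\pm$, giving $\e_0\DBqt\e_0 \subseteq \Eqt$ and hence equality; the isomorphism $\EHA \cong \e_0\DBqt\e_0$ then follows from Corollary~\ref{cor: bqt to EHA}.
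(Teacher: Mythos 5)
Your Steps 1 and 2 track the paper closely. The paper likewise reduces everything to the single equality $\Eqt^{>}=\e_0\Bqt\e_0$ (Theorem \ref{thm: spherical generated by e}), proved exactly as you propose: Theorem \ref{thm: special generate} shows the $Y_{m_1,\ldots,m_n}$ generate $\e_0\Bqt\e_0$, and Theorem \ref{thm: Y commutation relation} together with \cite[Proposition 2.13]{GNtrace1} shows each $Y_{m_1,\ldots,m_n}$ is a polynomial in the $e_k$. One cosmetic difference: the paper obtains $\Eqt^{\geq}=\e_0\DBqt^{\geq}\e_0$ at the end, by restricting the full equality $\Eqt=\e_0\DBqt\e_0$ to the non-negative parts, rather than by a separate argument; but your $\Delta$-absorption via $[\Delta_{p_m},d_{+}]=z_1^{m}d_{+}$ is precisely the mechanism underlying the paper's one-line claim that any monomial in $\e_0\DBqt\e_0$ factors through monomials in $\e_0\Bqt\e_0$, $\e_0\Bqt^{-}\e_0$ and $\e_0\Delta_{p_m}\e_0$, $\e_0\Delta_{p_m}^{*}\e_0$, so this is harmless.

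Your Step 3, however, misidentifies the difficulty, and the mechanism you propose there is both unnecessary and, as sketched, unestablished. To prove $\e_0\DBqt\e_0\subseteq\Eqt$ you only need \emph{generation}: every spherical monomial should be a product, \emph{in some order}, of elements of $\e_0\DBqt^{\geq}\e_0$ and $\e_0\DBqt^{\leq}\e_0$. Since $\Eqt$ is a subalgebra, it is closed under products in any order, so no normal-ordering of positive past negative excursions is required. And generation is immediate from the shape of the quiver $Q_{\Z}$: it is a line, so any path from vertex $0$ to vertex $0$ that visits both positive and negative vertices must pass through $0$ at every sign change of its height profile; inserting $\e_0=\e_0^{2}$ at each such visit factors the monomial into excursions, each lying entirely in $\e_0\DBqt^{\geq}\e_0=\Eqt^{\geq}$ or in $\e_0\DBqt^{\leq}\e_0=\Eqt^{\leq}$ (the latter by applying $\adj$ to the former, as you do). This is exactly the paper's one-sentence argument. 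The straightening you flag as ``the main obstacle'' --- moving all $f$-type excursions to the right via (DB$^0$) with a terminating complexity measure --- would only be needed for a PBW-type or triangular-decomposition statement such as $\Eqt=\Eqt^{\leq}\cdot\Eqt^{\geq}$, which Theorem \ref{thm:main-spherical} does not assert; moreover, relation \eqref{eq: weak e and f commutation in double} involves the formal delta function $\sum_{n\in\Z}\z^{n}\w^{-n}$, so the coefficientwise rewriting you envision is genuinely delicate and you supply no termination argument for it. Replacing Step 3's reordering by the factorization-at-vertex-$0$ observation both repairs the proposal and recovers the paper's proof.
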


\begin{remark}
Thanks to Theorem \ref{thm:main-spherical}, we have functors
\[
G^{>}: \Bqt\modd \to \EHA^{>}\modd, \quad G^{\geq}: \DBqt^{\geq}\modd \to \EHA^{\geq}\modd, \quad G: \DBqt\modd \to \EHA\modd,
\]
with their respective left adjoints
\[
F^{>}: \EHA^{>}\modd \to \Bqt\modd, \quad F^{\geq}: \EHA^{\geq}\modd \to \DBqt^{\geq}\modd, \quad F: \EHA\modd \to \DBqt\modd. 
\]
We note, however, that none of the functors are equivalences: in \cite[Remark 4.2]{calibrated} we constructed a nonzero representation of $\DBqt^{\geq}$ that is annihilated by the element $\e_0$. This representation can in turn be used to construct a nonzero $\DBqt$-representation that is annihilated by $\e_0$. 
\end{remark}

Let us sketch the strategy to prove Theorem \ref{thm:main-spherical}. First, we observe that it is enough to show the equality $\Eqt^{>} = \e_0\Bqt\e_0$. Indeed, if this is true then applying the involution $\adj$ from Lemma \ref{lem:antiinvolution-double}, we will have also that $\Eqt^{<} = \e_0(\DBqt^{<})\e_0$. So $\e_0\DBqt\e_0$ is generated by the elements $e_k, f_k$ and $\e_0\Delta_{p_m}\e_0, \e_0\Delta^{\ast}_{p_m}\e_0$, which implies that $\Eqt = \e_0\DBqt\e_0$. In turn, by restricting to the non-negative part of these algebras, this implies that $\Eqt^{\geq} = \e_0\DBqt^{\geq}\e_0$.

The crux of the argument is then to show that every element in $\e_0\Bqt\e_0$ can be written as a polynomial in $e_k$. First, in Section \ref{sec: special} we find a quite large generating set $\{Y_{m_1, \dots, m_n} \mid n > 0, m_1, \dots, m_n \in \Z\}$ for $\e_0\Bqt\e_0$. In Sections \ref{sec:Yrelns-statement} and \ref{sec: Y proofs} we will show that the $Y$ elements satisfy, among themselves and with the $e$-elements, the relations (49) and (50) in \cite{GNtrace1}. By \cite[Theorem 2.13]{GNtrace1}, this implies that every $Y$-element is generated by the $e$-elements, and this will finish the proof of Theorem \ref{thm:main-spherical}. 

\begin{remark}
The elliptic Hall algebra $\EHA$ is equipped with a coproduct and an antipode \cite{EHA} which make it a Hopf algebra. As a consequence, the category of $\EHA$-modules is equipped with a monoidal structure. Both the coproduct and antipode preserve the non-negative part $\EHA^{\geq}$, so that in particular the category of $\EHA^{\geq}$-modules also admits a monoidal structure.

In \cite{calibrated} we have defined a ``generically" monoidal structure on the category of calibrated representations of $\Bqt^{\ext}$. This structure does not come from a coproduct on $\Bqt^{\ext}$, as the  product of two representations is usually larger than the tensor product of the underlying vector spaces:
$$
V_k(E_1\times E_2)=\bigoplus_{m+n=k}\mathrm{Ind}_{\AH_m\otimes \AH_n}^{\AH_{m+n}}V_m(E_1)\otimes V_n(E_2).
$$
See \cite[Section 6]{calibrated} for more details.
The definition of monoidal structure also depends on two rational functions $\psi_1$ and $\psi_2$.

However, $V_0(E_1\times E_2)=V_0(E_1)\otimes V_0(E_2)$, therefore for (calibrated) representations of $\e_0\Bqt^{\ext}\e_0$ the monoidal structure does agree with the tensor product of the underlying vector spaces. One can check by comparing \cite[Lemma 4.11]{EHA} with \cite[Theorem 6.14]{calibrated} that the monoidal structures for $\EHA^{\geq}$ and $\e_0\Bqt^{\ext}\e_0$ agree for an appropriate choice of functions $\psi_1$ and $\psi_2$.
\end{remark}

\subsection{Special Elements}
\label{sec: special}

In this section, we 
find a (quite large) generating set for the algebra $\e_0\Bqt\e_0$. Recall that we have the element $\varphi = \frac{1}{q-1}[d_{+}, d_{-}]$, that we may think of as an extra generator of $\Bqt$.

\begin{definition}
We call an element of $\e_0\Bqt\e_k$ \newword{special} if it can be written as a product of $d_{-},\varphi$ and $z_i$ (but not $T_i$ or $d_{+}$). 

We call an element of $\e_0\Bqt\e_k$ \newword{factorizable} if it can be written as the product $XY$ for $X\in \e_0\Bqt\e_0$ not a unit and $Y\in \e_0\Bqt\e_k$. 
\end{definition}

Note that the special elements are only defined for $k\ge 1$.
We also define the following elements: 
\begin{equation}
\label{eq: def A}
A_{m_1,\ldots,m_n}:=\e_0d_{-}(z_1^{m_1}\varphi\cdots\varphi z_1^{m_{n-1}}\varphi z_1^{m_n})\e_1\in \e_0\Bqt\e_1
\end{equation}
and 
\begin{equation}
\label{eq: def Y}
Y_{m_1,\ldots,m_n}:=\e_0d_{-}(z_1^{m_1}\varphi\cdots\varphi z_1^{m_{n-1}}\varphi z_1^{m_n})d_{+}\e_0=A_{m_1,\ldots,m_n}d_{+}\in \e_0\Bqt\e_0.
\end{equation}
Here $m_i$ are arbitrary integers, and $n\ge 1$. Note that $A_0=\e_0d_{-}\e_1$ and $Y_{m}=e_m$. The following is also clear from the definitions.

\begin{proposition}
The elements $A_{m_1,\ldots,m_n}\in \e_0\Bqt\e_1$ are special, and any special element of $\e_0\Bqt\e_1$ equals $A_{m_1,\ldots,m_n}$ for some $m_i\in \Z$.
\end{proposition}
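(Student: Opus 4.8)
The plan is to establish the two assertions separately, both by direct inspection organized around the level function (the vertex index) on $Q_{\Z}$. The first assertion is immediate: by its very definition $A_{m_1,\ldots,m_n}=\e_0 d_- z_1^{m_1}\varphi\cdots\varphi z_1^{m_n}\e_1$ is a product of the letters $d_-$, $\varphi$ and $z_1$, with no $T_i$ and no $d_+$ occurring, hence it is special.

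For the converse I would argue as follows. Let $w\in\e_0\Bqt\e_1$ be special, i.e. a word in the letters $d_-$, $\varphi$ and the $z_i$, and track the level $\ell$ along $w$. Reading from the right, where $w$ begins at $\e_1$, to the left, where it ends at $\e_0$, the letter $d_-$ lowers $\ell$ by one while every $\varphi$ and every $z_i$ preserves $\ell$. Since $\Bqt$ has only vertices $\ell\ge 0$ and contains no $d_+$, the level is non-increasing and never negative; running from $\ell=1$ down to $\ell=0$ through integer steps it therefore visits only the levels $1$ and $0$ and uses exactly one $d_-$. Consequently every letter to the right of this $d_-$ acts at level $1$, and every letter to its left acts at level $0$.

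The next step is to normalize the level-$1$ part. At a vertex of level $1$ the only available $z$-loop is $z_1$ (the $z$-loop at a vertex $k$ consists of $z_1,\dots,z_{|k|}$), so the portion of $w$ to the right of $d_-$ is a word in $z_1$ and $\varphi$ alone. Collecting adjacent powers of $z_1$ via $z_1^a z_1^b=z_1^{a+b}$ and inserting $z_1^{0}$ between any two consecutive $\varphi$'s, this portion acquires the alternating form $z_1^{m_1}\varphi z_1^{m_2}\cdots\varphi z_1^{m_n}$ for suitable $m_i\in\Z$ and $n\ge 1$.

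The one point requiring care, and the place where the argument is not purely formal, is the treatment of the letters lying to the left of the single $d_-$, which act at level $0$. No $z_i$ is defined at level $0$ and $d_-\e_0=0$ in $\Bqt$, so the only possible such letter is $\varphi$; the cleanest way to finish is to observe that the descent $d_-$ is the last operator of a special word and may be taken to be its leftmost letter, which leaves precisely $w=A_{m_1,\ldots,m_n}$. If one literally allows trailing $\varphi$'s at level $0$, the identity $\e_0\varphi\e_0=-\frac{1}{q-1}e_0=-\frac{1}{q-1}Y_0$ (valid in $\Bqt$, where $d_-\e_0=0$) absorbs each of them and shows that such words reduce to $e_0$-multiples of the $A_{m_1,\ldots,m_n}$. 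Either way the family $\{A_{m_1,\ldots,m_n}\}$ exhausts the special elements of $\e_0\Bqt\e_1$, so that, modulo pinning down the position of the unique $d_-$, the statement follows directly from the definitions as claimed.
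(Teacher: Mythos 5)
Your proof is correct and is precisely the level-counting argument the paper leaves implicit: the paper offers no proof at all (the proposition is introduced with ``the following is also clear from the definitions''), and your observations --- that a word in $d_-,\varphi,z_i$ from $\e_1$ to $\e_0$ has non-increasing level, hence contains exactly one $d_-$, necessarily its leftmost letter, while the level-one portion is a word in $z_1$ and $\varphi$ alone that normalizes to $z_1^{m_1}\varphi\cdots\varphi z_1^{m_n}$ --- are exactly that unpacking. One correction to your secondary remark, though it does not affect your main line: in this paper $\Bqt$ is realized as a subalgebra of $\DBqt$, where $d_-\e_0\neq 0$, so $\e_0\varphi\e_0=\frac{1}{q-1}(f_0-e_0)$ rather than $-\frac{1}{q-1}e_0$; the cleaner way to rule out $\varphi$'s to the left of the unique $d_-$ is that $\e_0\varphi$ contains the path $\e_0 d_+\e_{-1}d_-\e_0$ through the vertex $-1$ and hence leaves $\Bqt$, so such words are not special elements of $\e_0\Bqt\e_1$ in the first place --- which is also why the paper notes that special elements are only defined for $k\ge 1$ and stresses that none of the defining relations involve $\e_0[d_+,d_-]\e_0$.
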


We recall some useful relations for $\varphi$.

\begin{lemma}
\label{lem: phi relations}
a) For $1<i\le k$ we have $z_i\varphi=\varphi z_{i-1}$ and
$T_i\varphi=\varphi T_{i-1}$.

b) We have $\varphi^2 T_{k-1}=T_1\varphi^2$.
\end{lemma}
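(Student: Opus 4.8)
The statement has two parts of rather different character, and I would treat them separately.

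For part a) the plan is purely to commute $z_i$ and $T_i$ to the right through $\varphi=\frac{1}{q-1}(d_+d_--d_-d_+)$, one monomial at a time. Using \eqref{eq: d z} in the forms $z_id_+=d_+z_{i-1}$ and $z_id_-=d_-z_i$ (valid in the range $1<i\le k$), passing $z_i$ through either of the pairs $d_+d_-$ or $d_-d_+$ produces $z_{i-1}$ on the far right: the two terms acquire exactly the same net shift $i\mapsto i-1$, so they recombine into $\varphi z_{i-1}$. The argument for $T_i$ is identical, now invoking \eqref{eq:T d+} and \eqref{eq:T d-} (which give $T_id_+=d_+T_{i-1}$ and $T_id_-=d_-T_i$ for interior indices); again each of $d_+d_-$ and $d_-d_+$ lowers the index of $T_i$ to $T_{i-1}$, and the two terms recombine to $\varphi T_{i-1}$. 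The only thing to watch is that $T_{i-1}$ remains a genuine generator at each intermediate level, which is what pins down the stated range.

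Part b) is the substantive one. The key point is that, unlike part a), the identity $\varphi^2 T_{k-1}=T_1\varphi^2$ relates the two extreme generators of the finite Hecke algebra on $\e_k$ and cannot come from the interior shift of part a) alone, which only yields $\varphi T_j=T_{j+1}\varphi$ for $1\le j\le k-2$. Heuristically $\varphi$ implements a ``rotation by one'' of the Hecke indices, so $\varphi^2$ rotates by two and should send $T_{k-1}\mapsto T_{k+1}=T_1$ around the boundary; the two boundary crossings are exactly the content of the relations \eqref{eq:phi}. I would make this precise by induction on $k$. Granting the identity at level $k-1$, I would transport it upward by multiplying on the left by $d_+$ and repeatedly applying $T_1\varphi d_+=qd_+\varphi$ from \eqref{eq:phi} together with $\varphi T_1=T_2\varphi$ (part a)) and $d_+T_i=T_{i+1}d_+$ from \eqref{eq:T d+}; after cancelling an invertible factor and using the braid relation in \eqref{eq:hecke relns}, both sides rewrite in terms of $\varphi^2$ and one obtains $(\varphi^2 T_{k-1}-T_1\varphi^2)d_+=0$. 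Symmetrically, multiplying on the right by $d_-$ and using $q\varphi d_-=d_-\varphi T_{k-1}$ yields $d_-(\varphi^2 T_{k-1}-T_1\varphi^2)=0$.

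The hard part will be the last step: promoting these two one-sided identities to the bare relation $\varphi^2 T_{k-1}=T_1\varphi^2$ in $\e_k\Bqt\e_k$, i.e. removing the trailing $d_+$ and the leading $d_-$. Since the polynomial representation is not known to be faithful, this cannot be done by evaluation and must be engineered from the relations themselves; my plan is to feed the two one-sided statements back into one another and to use the quadratic boundary relations $T_1d_+^2=d_+^2$ and $d_-^2 T_{k-1}=d_-^2$ from \eqref{eq:T d+} and \eqref{eq:T d-} to absorb the extra $d_\pm$. The induction must bottom out at $k=2$, where there is no interior to exploit and the claim reduces to $[\varphi^2,T_1]=0$ on $\e_2$; I expect to verify this base case directly from \eqref{eq:phi} and \eqref{eq:qphi}. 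I anticipate that handling these boundary cancellations cleanly, rather than the index bookkeeping, is where the real work lies.
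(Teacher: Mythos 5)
Your part a) is exactly the paper's argument: the paper proves $z_i\varphi=\varphi z_{i-1}$ by pushing $z_i$ through each of the two monomials of $\varphi$ separately, via $z_id_{+}d_{-}=d_{+}z_{i-1}d_{-}=d_{+}d_{-}z_{i-1}$ and $z_id_{-}d_{+}=d_{-}z_id_{+}=d_{-}d_{+}z_{i-1}$, with the $T_i$ case identical using \eqref{eq:T d+} and \eqref{eq:T d-}. Nothing to add there.

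Part b) is where the genuine gap lies, and you have located it yourself. Your inductive transport does appear to produce the one-sided identities $(\varphi^2T_{k-1}-T_1\varphi^2)d_{+}=0$ in $\e_k\Bqt\e_{k-1}$ and $d_{-}(\varphi^2T_{k-1}-T_1\varphi^2)=0$ in $\e_{k-1}\Bqt\e_k$, but promoting these to the bare identity in $\e_k\Bqt\e_k$ is not a finishing touch that can be ``engineered'' afterwards: $d_{+}$ and $d_{-}$ are arrows in a quiver algebra, not cancellable elements, and an element of $\e_k\Bqt\e_k$ annihilating the image of $d_+$ and annihilated by $d_-$ has no a priori reason to vanish. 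The boundary relations $T_1d_{+}^2=d_{+}^2$ and $d_{-}^2T_{k-1}=d_{-}^2$ only absorb Hecke generators adjacent to a \emph{repeated} $d_{\pm}$; they provide no mechanism for stripping a single outer $d_{+}$ or $d_{-}$, and ``feeding the one-sided statements back into one another'' has no concrete content as written. The base case $k=2$ is also left unverified. So the proof is incomplete at precisely the decisive step. For context, the paper does not reprove b) either: it cites \cite[Lemma 3.1.11]{CGM}.

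The cited proof, however, is a short direct computation needing neither induction nor any cancellation, and it is worth knowing: expand the \emph{left} factor of $\varphi^2$ on each side as $\frac{1}{q-1}(d_{+}d_{-}-d_{-}d_{+})$. Using $d_{-}\varphi T_{k-1}\e_k=q\varphi d_{-}\e_k$ and $d_{+}\varphi=q^{-1}T_1\varphi d_{+}$ from \eqref{eq:phi}, together with $d_{+}T_{k-1}=T_kd_{+}$ from \eqref{eq:T d+}, $d_{-}T_1=T_1d_{-}$ from \eqref{eq:T d-}, and $d_{-}\varphi T_k\e_{k+1}=q\varphi d_{-}\e_{k+1}$ (relation \eqref{eq:phi} one level up), one gets
\begin{align*}
(q-1)\varphi^2T_{k-1}\e_k &= d_{+}(d_{-}\varphi T_{k-1})\e_k - q^{-1}d_{-}T_1\varphi\, d_{+}T_{k-1}\e_k\\
&= qd_{+}\varphi d_{-}\e_k - q^{-1}T_1(d_{-}\varphi T_k)d_{+}\e_k
= qd_{+}\varphi d_{-}\e_k - T_1\varphi d_{-}d_{+}\e_k,
\end{align*}
while $(q-1)T_1\varphi^2\e_k=T_1\varphi d_{+}d_{-}\e_k-T_1\varphi d_{-}d_{+}\e_k=qd_{+}\varphi d_{-}\e_k-T_1\varphi d_{-}d_{+}\e_k$, again by \eqref{eq:phi}. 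The two sides agree for all $k\ge 2$ at once, with no base case and no removal of $d_{\pm}$ ever required.
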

\begin{proof}
a) We have $z_id_{+}d_{-}=d_{+}z_{i-1}d_{-}=d_{+}d_{-}z_{i-1}$ and $z_id_{-}d_{+}=d_{-}z_id_{+}=d_{-}d_{+}z_{i-1}$. The second equation is proved similarly.

b) This is proved in \cite[Lemma 3.1.11]{CGM}.
\end{proof}

Recall that, at the vertex $k$ of the quiver $Q_\Z$ (see Section \ref{sec: def DBqt}), the algebra generated by $z_1, \dots, z_k$ and $T_1, \dots, T_{k-1}$ is the affine Hecke algebra $\AH_k$. 
For $w\in S_k$ with a reduced expression $w=s_{i_1}\cdots s_{i_\ell}$ we consider the corresponding element $T_w=T_{i_1}\cdots T_{i_\ell}\in \AH_k$. It is well known that $\AH_k$ has a basis 
\begin{equation}
\label{eq: affine Hecke basis}
T_wz_1^{a_1}\cdots z_k^{a_k},\ w\in S_k,\ a_i\in \Z.
\end{equation}
In particular, any expression $f(z_1,\ldots,z_k)T_v$ can be re-expanded in the basis \eqref{eq: affine Hecke basis} which we refer to as ``pushing $f(z_1,\ldots,z_k)$ through $T_{v}$."

\begin{lemma}
\label{lem: special T}
Suppose $k\ge 1$ and $A\in \e_0\Bqt\e_k$ is special. Then for any $M\in \AH_k$ we can write the product $AM$ as a linear combination of special elements.
\end{lemma}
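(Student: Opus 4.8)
The plan is to reduce to a single simple generator and then transport it leftwards through the special word until it can be absorbed at one of the boundaries created by the $d_{-}$'s. First I would use the basis \eqref{eq: affine Hecke basis} of $\AH_k$ to write $M=\sum_w T_w f_w(z_1,\ldots,z_k)$. Since appending a Laurent polynomial in the $z_i$ on the right of a special element again yields a special element, it suffices to treat $M=T_w$; writing $T_w=T_{i_1}\cdots T_{i_\ell}$ and inducting on $\ell$ then reduces the problem to showing that $AT_i$ is a linear combination of special elements for a single simple reflection $T_i$, $1\le i\le k-1$, and an arbitrary special $A$. I would prove this last statement by induction on the length of the word $A$, together with an outer induction on the level $k$; the base case $k=1$ is immediate, since $\AH_1$ is the commutative algebra of Laurent polynomials in $z_1$ and appending $z_1$'s preserves specialness.

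For the inductive step I would push $T_i$ to the left one letter at a time, according to the rightmost letter of $A$. To pass a $z_m$ I would use the Hecke relations \eqref{eq:hecke relns}, \eqref{eq:T and z} and \eqref{eq:affine Hecke relns}, which give $z_mT_i=T_i\,g(z)+h(z)$ with $g,h$ Laurent polynomials; writing $A=A'z_m$ this turns $AT_i$ into $(A'T_i)g(z)+A'h(z)$, and both terms are handled by the length induction (the first because $A'$ is shorter, the second because it is already special). To pass a $\varphi$ when $i\le k-2$ I would use $\varphi T_i=T_{i+1}\varphi$ from Lemma \ref{lem: phi relations}(a), again reducing to a strictly shorter word carrying the generator. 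The only configuration not covered by these moves is the top generator $T_{k-1}$, whose index cannot be raised past a single $\varphi$, and this is the crux of the argument.

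To dispose of $T_{k-1}$ I would exploit the three ``absorption/reset'' relations $\varphi^2T_{k-1}=T_1\varphi^2$ (Lemma \ref{lem: phi relations}(b)), $d_-\varphi T_{k-1}=q\varphi d_-$ (relation \eqref{eq:phi}) and $d_-^2T_{k-1}=d_-^2$ (relation \eqref{eq:T d-}), together with the shuffles $z_i\varphi=\varphi z_{i-1}$, $\varphi z_i=z_{i+1}\varphi$ (Lemma \ref{lem: phi relations}(a)) and the commutation $z_1d_-=d_-z_1$ (relation \eqref{eq: d z}). Since $A$ has exactly $k\ge 1$ copies of $d_-$ (the only level-lowering letter allowed), the level-$k$ block lying to the right of the last $d_-$ is bounded on its left by a $d_-$. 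If that block contains two $\varphi$'s, I would use the shuffles to push all $z_i$ with $i\ge 2$ to the right of the $\varphi$'s and to close any remaining gap via $\varphi z_1=z_2\varphi$, bringing two $\varphi$'s into adjacency so that $\varphi^2T_{k-1}=T_1\varphi^2$ resets the stuck generator to $T_1$, which then propagates freely. If only a single $\varphi$ survives, the configuration becomes $d_-z_1^{b}\varphi T_{k-1}(\cdots)$, and commuting $z_1^{b}$ through $d_-$ followed by \eqref{eq:phi} produces $q\,z_1^{b}\varphi d_-(\cdots)$, which is special; similarly $d_-^2T_{k-1}=d_-^2$ handles the case of a repeated $d_-$. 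Whenever the migrating generator reaches the boundary with index $\le k-2$, I would instead pass it to level $k-1$ using $d_-T_j=T_jd_-$ from \eqref{eq:T d-} and invoke the inductive hypothesis at level $k-1$. I expect the main obstacle to be exactly this bookkeeping for $T_{k-1}$: one must fix a complexity measure — combining the word length, the number of $\varphi$'s, and the distance of the generator from the $d_-$-boundary — that strictly decreases under every move, and verify that the $z$-shuffling required to expose the absorption relations never manufactures a fresh obstruction.
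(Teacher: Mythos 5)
Your overall strategy is the same as the paper's: reduce via the PBW basis $T_wz_1^{a_1}\cdots z_k^{a_k}$ of $\AH_k$ to a single generator $T_i$, then transport it leftward through the special word by induction, using $\varphi T_i=T_{i+1}\varphi$ and $d_-T_i=T_id_-$ (for $i\le k-2$) for the free moves and the relations $\varphi^2T_{k-1}=T_1\varphi^2$, $d_-\varphi T_{k-1}=q\varphi d_-$, $d_-^2T_{k-1}=d_-^2$ to absorb the top generator. However, your case analysis for $T_{k-1}$ has a genuine hole, and it sits exactly where you flagged your uncertainty. When the level-$k$ block to the right of the rightmost $d_-$ contains \emph{no} $\varphi$ at all, after clearing the $z$'s you arrive at $\cdots d_-T_{k-1}$, and the letter preceding that $d_-$ need not be another $d_-$: since $k\ge 2$ forces at least one further $d_-$ somewhere to the left, the adjacent letter at level $k-1$ can be a $\varphi$, producing the configuration $\varphi d_-T_{k-1}$ (with $\varphi$ at level $k-1$). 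None of your three absorption relations applies here: $d_-\varphi T_{k-1}=q\varphi d_-$ has the letters $d_-$ and $\varphi$ in the opposite order, and "repeated $d_-$" covers only the $d_-^2$ subcase. This is precisely Case 4 of the paper's proof ($A=A'\varphi f\,d_-$, $i=k-1$), and it needs an extra trick you did not list: expand via the quadratic relation $T_{k-1}=qT_{k-1}^{-1}+(1-q)$ and use \eqref{eq:phi} in the inverted form $q\varphi d_-T_{k-1}^{-1}=d_-\varphi$, giving
\begin{equation*}
\varphi d_-T_{k-1}=q\varphi d_-T_{k-1}^{-1}+(1-q)\varphi d_-=d_-\varphi+(1-q)\varphi d_-,
\end{equation*}
both terms of which are special. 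Since the tools (the quadratic relation among \eqref{eq:hecke relns}) are already in your kit, the gap is fixable in one line, but as written your enumeration of terminal configurations is incomplete and the induction would get stuck on words such as $\e_0 d_-\varphi d_-T_1\e_2$.

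On your closing worry about a complexity measure: it can be resolved, and the paper's formulation shows the cleanest way. Rather than inducting on the full word length, the paper keeps $M\in\AH_k$ \emph{arbitrary} and inducts only on the total number of $d_-$'s and $\varphi$'s in $A$: any trailing Laurent monomial in the $z_i$ is absorbed into $M$ and re-expanded in the PBW basis, so "pushing $z$'s through $T$" never counts against the induction and can never manufacture a fresh obstruction. If you prefer your letter-by-letter transport, the measure "number of letters strictly to the left of the migrating generator" also works, since every move (passing a $z$ by Bernstein--Lusztig, raising the index past $\varphi$, descending through $d_-$, or absorbing at the boundary) either strictly decreases it or eliminates the generator outright; but the paper's device makes this bookkeeping unnecessary.
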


\begin{proof}
If $k=1$ then $M$ is a polynomial in $z_1$ and there is nothing to prove. So we will assume $k\ge 2$.

First, we make several easy reductions. If $A$ ends with a monomial in $z_1,\ldots,z_k$ then we can absorb it into $M$, so we can assume that $A$ ends with $d_{-}$ or $\varphi$. 

Without loss of generality we can assume $M=T_w z_1^{a_1}\cdots z_k^{a_k}$ as in \eqref{eq: affine Hecke basis}.
Then $AM=AT_{w}z_1^{a_1}\cdots z_k^{a_k}$ and it is sufficient to consider the case $a_1=\ldots=a_k=0$. 
To sum up, it is sufficient to prove the statement for $M=T_i, 1\le i\le k-1$. 

We prove this by induction in the total number of $d_{-}$ and $\varphi$ in $A$. The base of induction is $A=d_{-}$ where $k=1$ and there is nothing to prove. 

For the inductive step, we have the following cases:

{\bf Case 1:} $A=A'd_{-}$ and $i<k-1$. Then
$AT_i=A'd_{-}T_{i}=A'T_id_{-}$ and the assumption of induction applies to $A'T_i$.

{\bf Case 2:} $A=A'\varphi$ and $i<k-1$. Then
$AT_i=A'\varphi T_{i}=A'T_{i+1}\varphi$ by Lemma \ref{lem: phi relations}
and the assumption of induction applies to $A'T_{i+1}$.

{\bf Case 3:} $A=A'd_{-}f(z_1,\ldots,z_{k-1})d_{-}$ and $i=k-1$. First, we can get rid of $f$ by writing 
$$
AT_{k-1}=A'd_{-}^2f(z_1,\ldots,z_{k-1})T_{k-1},
$$
pushing $f(z_1,\ldots,z_{k-1})$ through $T_{k-1}$ and arguing as above. 
Now
$$
A'd_{-}^2T_{k-1}=A'd_{-}^2.
$$

{\bf Case 4:} $A=A'\varphi f(z_1,\ldots,z_{k-1})d_{-}$ and $i=k-1$. Similarly to Case 3 we can assume $f(z_1,\ldots,z_{k-1})=1$. Now
$$
A'\varphi d_{-}T_{k-1}=q A'\varphi d_{-}T_{k-1}^{-1}+(1-q)A'\varphi d_{-}=A'd_{-}\varphi+(1-q)A'\varphi d_{-}.
$$

{\bf Case 5:} $A=A'd_{-} f(z_1,\ldots,z_{k})\varphi$ and $i=k-1$. We can again eliminate $f(z_1,\ldots,z_k)$ using the relations $d_{-}z_1=z_1d_{-}$  and $z_{j}\varphi=\varphi z_{j-1}$ for $j>1$. Now 
$$
A'd_{-}\varphi T_{k-1}=qA'\varphi d_{-}.
$$

{\bf Case 6:} $A=A'\varphi f(z_1,\ldots,z_k)\varphi$ and $i=k-1$.  We can again eliminate $f(z_1,\ldots,z_k)$ similalry to Case 5. Now
$$
A'\varphi^2T_{k-1}=A'T_1\varphi^2
$$
by Lemma \ref{lem: phi relations}(b) and the assumption of induction applies to $A'T_1$.
\end{proof}

\begin{lemma}
\label{lem: special d+}
Suppose $k\ge 2$ and $A\in \e_0\Bqt\e_k$ is special. Then  we can write the product $Ad_{+}$ as a linear combination of special and factorizable elements.
\end{lemma}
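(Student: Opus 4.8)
The plan is to induct on the total number $\ell$ of $d_{-}$ and $\varphi$ factors occurring in the special word $A$. Since any special element of $\e_0\Bqt\e_k$ contains \emph{exactly} $k$ copies of $d_{-}$ (each $d_{-}$ lowers the level by one, while $\varphi$ and the $z_i$ preserve it), we have $\ell = k + \#\varphi \geq k \geq 2$, so the induction is well-founded across all $k\geq 2$ simultaneously. First I would strip the $z$-factors at the right end: write $A = Bg$, where $g = z_1^{a_1}\cdots z_k^{a_k}$ is the maximal trailing monomial in the $z_i$ and $B$ ends in a letter $w\in\{d_{-},\varphi\}$. Repeatedly applying $d_{+}z_i = z_{i+1}d_{+}$ from \eqref{eq: d z} moves $d_{+}$ leftward through $g$: every $z_j$ with $j\geq 2$ becomes $z_{j-1}$ to the right of $d_{+}$, while $z_1^{a_1}$ is stuck, giving $gd_{+} = z_1^{a_1}d_{+}h$ with $h = z_1^{a_2}z_2^{a_3}\cdots z_{k-1}^{a_k}$ a harmless monomial in $z_1,\dots,z_{k-1}$ at level $k-1$. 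Thus, writing $B = Cw$, we reduce to $Ad_{+} = Cwz_1^{a_1}d_{+}h$, and it remains to treat the two cases for $w$.

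If $w = d_{-}$, I commute $z_1^{a_1}$ past $d_{-}$ using $z_1d_{-}=d_{-}z_1$ and then rewrite $d_{-}d_{+} = d_{+}d_{-} - (q-1)\varphi$ (the definition of $\varphi$) to obtain
\[
Ad_{+} = Cz_1^{a_1}d_{+}d_{-}h - (q-1)\,Cz_1^{a_1}\varphi h.
\]
The second term is a word in $d_{-},\varphi,z_i$, hence special. For the first term, $Cz_1^{a_1}\in\e_0\Bqt\e_{k-1}$ is special with $\ell' = \ell - 1$. If $k\geq 3$, so that $k-1\geq 2$, the inductive hypothesis writes $Cz_1^{a_1}d_{+}$ as a combination of special and factorizable elements of $\e_0\Bqt\e_{k-2}$; right-multiplying by $d_{-}h$ sends a special element to a special one and a factorizable element $XY$ to $X(Yd_{-}h)$, again factorizable. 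If $k = 2$, then $Cz_1^{a_1}$ is a special element of $\e_0\Bqt\e_1$, hence equals some $A_{m_1,\dots,m_n}$, so $Cz_1^{a_1}d_{+} = Y_{m_1,\dots,m_n}\in\e_0\Bqt\e_0$; since $Y_{m_1,\dots,m_n}$ has positive degree $n$ (it carries one $d_{+}$), it is either zero or a non-unit, and $Y_{m_1,\dots,m_n}d_{-}h$ is therefore factorizable.

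If $w = \varphi$, I commute $z_1^{a_1}$ past $\varphi$ via $\varphi z_1 = z_2\varphi$ (Lemma \ref{lem: phi relations}(a)) and then apply $\varphi d_{+} = qT_1^{-1}d_{+}\varphi$, which follows from \eqref{eq:phi} and is available since $k\geq 2$. This gives $Ad_{+} = q\,Cz_2^{a_1}T_1^{-1}d_{+}\varphi h$. Now $Cz_2^{a_1}$ is special and $T_1^{-1}\in\AH_k$, so by Lemma \ref{lem: special T} the product $Cz_2^{a_1}T_1^{-1}$ is a combination of special elements $A^{(j)}\in\e_0\Bqt\e_k$. Each $A^{(j)}$ has the same number of $\varphi$'s as $C$, namely $\#\varphi - 1$, so $\ell(A^{(j)}) = \ell - 1$; applying the inductive hypothesis to each $A^{(j)}d_{+}$ and then right-multiplying by $\varphi h$ (which again preserves specialness and factorizability) expresses $Ad_{+}$ as desired.

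The crux is the case $w = \varphi$: pushing $d_{+}$ past $\varphi$ unavoidably introduces the Hecke generator $T_1^{-1}$, and the argument only closes if absorbing it via Lemma \ref{lem: special T} does not increase the complexity driving the induction. This is guaranteed by the grading with $\deg\varphi = 1$ and $\deg d_{-} = \deg z_i = \deg T_i = 0$: the degree of a special element equals its number of $\varphi$'s, and since $\AH_k$ lies in degree zero, Lemma \ref{lem: special T} preserves $\#\varphi$. It is precisely this grading observation that makes the induction on $\ell = k + \#\varphi$ well-founded, and I expect verifying it (rather than the individual commutations) to be the main point requiring care.
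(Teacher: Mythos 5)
Your proof is correct and follows essentially the same route as the paper's: the same induction on the total number of $d_{-}$ and $\varphi$ letters, the same two cases according to the last non-$z$ letter, and the same key identities $d_{-}d_{+}=d_{+}d_{-}-(q-1)\varphi$ and $\varphi d_{+}=qT_1^{-1}d_{+}\varphi$ combined with Lemma \ref{lem: special T}. Your grading observation that Lemma \ref{lem: special T} preserves the $\varphi$-count (making the induction manifestly well-founded) and the degree argument that $Y_{m_1,\ldots,m_n}$ is a non-unit are welcome explicit justifications of two points the paper's proof leaves implicit.
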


\begin{proof}
Again, we prove it by induction in the total number of $d_{-}$ and $\varphi$ in $A$. We have the following cases:

{\bf Case 1:} $A=A'd_{-}f(z_1,\ldots,z_{k}).$ Then 
$$
Ad_{+}=A'd_{-}f(z_1,\ldots,z_{k})d_{+}
$$
and we can eliminate $f(z_1,\ldots,z_{k})$ by using the relations $d_{-}z_1=z_1d_{-}$ and $z_jd_{+}=d_{+}z_{j-1}$ for $j>1$. 
Now
$$
A'd_{-}d_{+}=A'd_{+}d_{-}-(q-1)A'\varphi,
$$
where $A'\in \e_0\Bqt \e_{k-1}$. 
If $k>2$, we can apply the assumption of induction to $A'd_{+}$. If $k=2$ then $A'\in \e_0\Bqt \e_{1}$ and $A'd_{+}\in \e_0\Bqt \e_{0}$, so $A'd_{+}d_{-}$ is factorizable.

{\bf Case 2:} $A=A'\varphi f(z_1,\ldots,z_{k}).$ Then 
$$
Ad_{+}=A'\varphi f(z_1,\ldots,z_{k})d_{+}
$$
and we can eliminate $f(z_1,\ldots,z_{k})$ by using the relations $\varphi z_1=z_2\varphi$ and $z_jd_{+}=d_{+}z_{j-1}$ for $j>1$. Now
$$
A'\varphi d_{+}=qA'T_1^{-1}d_{+}\varphi.
$$
By Lemma \ref{lem: special T} we can write $A'T_1^{-1}=\sum A''_j$  where $A''_j$ are special, and then apply
the assumption of induction to $A''_jd_{+}$. 
\end{proof}

\begin{theorem}
\label{thm: special generate}
a) For $k>0$ any element of $\e_0\Bqt\e_k$ can be written as a linear combination of special and factorizable elements.

b) For $k=0$ any element of $\e_0\Bqt\e_0$ can be written as a linear combination of $Y_{m_1,\ldots,m_n}$ and factorizable elements.

c) The subalgebra $\e_0\Bqt\e_0$ is generated by the elements $Y_{m_1,\ldots,m_n}$.
\end{theorem}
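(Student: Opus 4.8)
The plan is to prove (a) and (b) together by induction, and then deduce (c) as a formal consequence. I would regard an element of $\e_0\Bqt\e_k$ as a linear combination of words in the symbols $d_{+},d_{-},z_i,T_i,\varphi$, and I would induct on the number $b$ of \emph{bare} occurrences of $d_{+}$, meaning those not contained inside a $\varphi$ (so a special element, being a word in $d_{-},\varphi,z_i$, has $b=0$). For the base case $b=0$, a word involves only $d_{-},z_i,T_i,\varphi$; building it up from the left, its leftmost letter must output at vertex $0$ and hence be $d_{-}$, and thereafter appending $d_{-}$, $z_i$ or $\varphi$ keeps the partial product special, while appending $T_i$ keeps it a combination of special elements by Lemma \ref{lem: special T}. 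Thus for $k>0$ such a word is a combination of special elements, and for $k=0$ it lies in $\C(q,t)\e_0$.

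For the inductive step I would take a word $E\in\e_0\Bqt\e_k$ with $b\ge 1$ and isolate its leftmost bare $d_{+}$, writing $E=P\,d_{+}\,S$, where $P$ has no bare $d_{+}$ and $S$ has exactly $b-1$. By the base case $P=\sum_i A_i$ with each $A_i\in\e_0\Bqt\e_{j+1}$ special, so $E=\sum_i A_i\,d_{+}\,S$. For each $i$, if $j+1\ge 2$ then $A_id_{+}$ is a sum of special and factorizable elements by Lemma \ref{lem: special d+}, whereas if $j+1=1$ then $A_i=A_{m_1,\ldots,m_n}$ and $A_id_{+}=Y_{m_1,\ldots,m_n}$. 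Right-multiplying by $S$: a product $(\text{special})\cdot S$ again lies in $\e_0\Bqt\e_k$ but now has only $b-1$ bare $d_{+}$'s, so the inductive hypothesis applies; a product $(\text{factorizable})\cdot S=X(YS)$ is again factorizable; and $Y_{m_1,\ldots,m_n}\cdot S$ is factorizable because $Y_{m_1,\ldots,m_n}$ has positive degree and so is not a unit. Collecting the terms proves (a) and (b).

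For (c), let $\mathcal{A}\subseteq\e_0\Bqt\e_0$ be the subalgebra generated by the elements $Y_{m_1,\ldots,m_n}$, and argue by induction on degree (with $\deg d_{+}=1$ and all other generators of degree $0$). By (b), a homogeneous element of degree $d$ is a combination of $Y$-elements (which lie in $\mathcal{A}$), a scalar multiple of the unit (which lies in $\mathcal{A}$), and factorizable elements $XZ$ with $X,Z\in\e_0\Bqt\e_0$. Since $X$ is not a unit it has degree $\ge 1$, and in the factorizations produced above the complementary factor $Z$ must raise the level back to vertex $0$ and hence also contains a $d_{+}$; thus both $\deg X$ and $\deg Z$ are strictly less than $d$, so $X,Z\in\mathcal{A}$ by the inductive hypothesis and therefore $XZ\in\mathcal{A}$. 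This gives $\e_0\Bqt\e_0=\mathcal{A}$.

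I expect the main obstacle to be the organization of the induction rather than any single computation. The delicate point is choosing the complexity measure so precisely—the number of \emph{bare} $d_{+}$'s, treating $\varphi$ as atomic—that peeling off the leftmost bare $d_{+}$ and invoking Lemma \ref{lem: special d+} always leaves a product $(\text{special})\cdot S$ with strictly fewer bare $d_{+}$'s, so that the recursion genuinely terminates and the $\varphi$'s introduced along the way never create a new bare $d_{+}$ at vertex $0$. A secondary but necessary point is the bookkeeping in (c): one must check that each factorizable element really splits into two factors of strictly smaller degree, handling the degenerate cases where one factor is a scalar (in which case the element is, in fact, a scalar multiple of a single $Y$).
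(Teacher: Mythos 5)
Your proposal is correct and takes essentially the same route as the paper: both arguments reduce everything to special elements via Lemmas \ref{lem: special T} and \ref{lem: special d+}, identify level-one specials with the $A_{m_1,\ldots,m_n}$ so that appending $d_{+}$ produces $Y$-elements, and differ only in induction bookkeeping (you count bare $d_{+}$'s and peel the leftmost one, while the paper scans the word left to right inducting on the total number of $d_{+}$, $d_{-}$, $\varphi$). Your explicit degree induction for part (c) --- observing that every right factor produced is a nonempty path from vertex $0$ to itself and hence contains a $d_{+}$, so both factors have strictly smaller degree --- correctly fills in what the paper compresses into ``Part (c) is a consequence of (b)''.
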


\begin{proof}
Part (c) is a consequence of (b). We prove parts (a) and (b) for any monomial in $\e_0\Bqt$ by induction in the total number of $d_{+},d_{-}$ and $\varphi$. The base case is $\e_0d_{-}$ which is clearly special. 

For the step of induction, consider $A\in \e_0\Bqt\e_k$ which satisfies the conditions of the theorem.  If $k=0$ then $A$ can be only followed by $d_{-}$, and $Ad_{-}$ is factorizable. From now we assume that $k\ge 1$ and $A$ is special.

Clearly, $Ad_{-}$ and $A\varphi$ are special. If $M\in \AH_k$ then by Lemma \ref{lem: special T} we can write $AM$ as a linear combination of special elements. If $k\ge 2$ by Lemma \ref{lem: special d+} we can write $Ad_{+}$ as a linear combination of special and factorizable elements. 

Finally, if $k=1$ and $A\in \e_0\Bqt\e_1$ is special, then $A=A_{m_1,\ldots,m_n}$ for some $m_1,\ldots,m_n\in \Z$. Then
$$
Ad_{+}=A_{m_1,\ldots,m_n}d_{+}=Y_{m_1,\ldots,m_n}.
$$
\end{proof}

\subsection{Relations for $Y_{\underline{m}}$}\label{sec:Yrelns-statement}

In this section we prove that $\e_0\Bqt\e_0$ is generated by $e_k$. By Theorem \ref{thm: special generate}(c) it is sufficient to prove that for all $m_1,\ldots,m_n$ the element $Y_{m_1,\ldots,m_n}$ can be written as a polynomial in $e_k$.
For this, we need some relations for $Y_{\underline{m}}$.

\begin{theorem}
\label{thm: Y commutation relation} The following relations hold in $\Bqt$: 

\begin{equation}
\label{eq: Y relation 1}
Y_{m_1,\ldots,m_i,m_{i+1},\ldots,m_n}-qtY_{m_1,\ldots,m_i-1,m_{i+1}+1,\ldots,m_n}=-Y_{m_1,\ldots,m_i}Y_{m_{i+1},\ldots,m_n},
\end{equation}
\begin{equation}
\label{eq: Y relation 2}
[e_k,Y_{m_1,\ldots,m_n}]=(t-1)(q-1)\sum_{i=1}^{n}\begin{cases}
\sum_{a=1}^{k-m_i}Y_{m_1,\ldots,m_{i-1},k-a,m_i+a,m_{i+1},\ldots,m_n} & \text{if}\ k>m_i\\
0 & \text{if}\ k=m_i\\
-\sum_{a=1}^{m_i-k}Y_{m_1,\ldots,m_{i-1},m_i-a,k+a,m_{i+1},\ldots,m_n} & \text{if}\ k< m_i.
\end{cases}
\end{equation}
\end{theorem}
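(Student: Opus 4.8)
The plan is to establish the two relations separately: \eqref{eq: Y relation 1} is essentially local and I would prove it directly from the relation \eqref{eq:qphi}, while \eqref{eq: Y relation 2} I would bootstrap from \eqref{eq: Y relation 1} together with the level-$2$ computations of Section \ref{sec: quadratic check}. For \eqref{eq: Y relation 1}, everything reduces to a single identity at the vertex $k=1$. Writing $\varphi=\tfrac{1}{q-1}(d_+d_--d_-d_+)$ and abbreviating $D=d_+d_-$, $D'=d_-d_+$ at level $1$, the relation \eqref{eq:qphi} at $k=1$ reads $z_1(qD-D')\e_1=qt(D-D')z_1\e_1$. Substituting this into $z_1\varphi-qt\varphi z_1=\tfrac{1}{q-1}\bigl(z_1(D-D')-qt(D-D')z_1\bigr)$ collapses the expression in two lines to
\[
(z_1\varphi-qt\,\varphi z_1)\e_1=-z_1d_+d_-\e_1.
\]

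Conjugating this by $z_1^{m_i-1}$ on the left and $z_1^{m_{i+1}}$ on the right gives $z_1^{m_i}\varphi z_1^{m_{i+1}}-qt\,z_1^{m_i-1}\varphi z_1^{m_{i+1}+1}=-z_1^{m_i}d_+d_-z_1^{m_{i+1}}$. Since all other slots of $Y_{\underline m}$ and $Y_{\ldots,m_i-1,m_{i+1}+1,\ldots}$ coincide, the left-hand side of \eqref{eq: Y relation 1} localizes at the pair $(i,i+1)$, and inserting the identity above produces the word $\e_0d_-z_1^{m_1}\varphi\cdots z_1^{m_i}d_+d_-z_1^{m_{i+1}}\cdots z_1^{m_n}d_+\e_0$ with an overall minus sign. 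As $d_-$ maps $V_1\to V_0$ and $d_+$ maps $V_0\to V_1$, the factor $d_+d_-$ in the middle forces a passage through level $0$, i.e. $d_+d_-=d_+\e_0d_-$ at this junction; this splits the word into the product of the sub-words computing $Y_{m_1,\ldots,m_i}$ and $Y_{m_{i+1},\ldots,m_n}$, yielding \eqref{eq: Y relation 1} at once.

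For \eqref{eq: Y relation 2} I would induct on $n$. The base case is the commutator $[e_k,e_m]=[Y_k,Y_m]$. Using the definition of $\varphi$ together with $d_+z_1=z_2d_+$ and $z_1d_-=d_-z_1$, one has $Y_{a,b}=\tfrac{1}{q-1}\bigl(e_ae_b-\e_0d_-^2z_1^az_2^bd_+^2\e_0\bigr)$, so I can rewrite everything in terms of the level-$2$ brackets $\langle z_1^az_2^b\rangle:=\e_0d_-^2z_1^az_2^bd_+^2\e_0$. Feeding in \eqref{eq: Y relation 1} gives the clean bridge $\langle z_1^az_2^b\rangle=qt\,Y_{a-1,b+1}-q\,Y_{a,b}$, and applying \eqref{eq: Y relation 1} twice expresses $[e_k,e_m]=qt\,Y_{k-1,m+1}-Y_{k,m}-qt\,Y_{m-1,k+1}+Y_{m,k}$. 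The symmetrization Lemma \ref{lem: level 2 zero} (that $\langle(z_1-qz_2)g\rangle=0$ for symmetric $g$), combined with Lemma \ref{lem: ee to level 2}, supplies exactly the linear relations among the $Y_{a,b}$ of fixed total degree $a+b$ needed to convert this four-term expression into the telescoping sum $(t-1)(q-1)\sum_{a=1}^{k-m}Y_{k-a,m+a}$; the cases $k=m$ and $k<m$ then follow from antisymmetry of the commutator in $k\leftrightarrow m$.

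For the inductive step I would peel off the first slot via \eqref{eq: Y relation 1}, writing $Y_{m_1,\ldots,m_n}=qt\,Y_{m_1-1,m_2+1,m_3,\ldots,m_n}-e_{m_1}Y_{m_2,\ldots,m_n}$, apply $\ad(e_k)$, and use the Leibniz rule to obtain
\[
[e_k,Y_{\underline m}]-qt\,[e_k,Y_{m_1-1,m_2+1,m_3,\ldots,m_n}]=-[e_k,e_{m_1}]\,Y_{m_2,\ldots,m_n}-e_{m_1}\,[e_k,Y_{m_2,\ldots,m_n}],
\]
whose right-hand side is known from the base case and the inductive hypothesis (and re-expanded into single $Y$-chains by \eqref{eq: Y relation 1}). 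The content is then to check that the explicit right-hand side of \eqref{eq: Y relation 2}, with its three cases $k\gtrless m_i$, satisfies the very same recursion in $(m_1,m_2)$. \textbf{The main obstacle} is that this recursion does not terminate: it only shows that the difference $\Delta$ of the two sides obeys the homogeneous relation $\Delta(m_1,m_2,\ldots)=qt\,\Delta(m_1-1,m_2+1,\ldots)$, so $\Delta=(qt)^N\Delta(m_1-N,m_2+N,\ldots)$ for all $N$. I expect to rule out a nonzero solution by a finiteness/grading argument in $\e_0\Bqt\e_0$ that controls the $z$-degrees occurring in $\Delta$ and forces $\Delta=0$; carrying out this elimination, and the careful combinatorial verification that the right-hand side of \eqref{eq: Y relation 2} obeys the recursion, is where I anticipate the real work of Section \ref{sec: Y proofs} lies.
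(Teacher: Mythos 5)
Your proof of \eqref{eq: Y relation 1} is correct and is exactly the paper's argument: the paper's Lemma \ref{lem: A relation 1} rewrites \eqref{eq:qphi} at $k=1$ as $(z_1\varphi-qt\varphi z_1)\e_1=-z_1d_+d_-\e_1$ and splits the word through $\e_0$ at the $d_+d_-$ junction, precisely as you do. Your base case for \eqref{eq: Y relation 2} is also sound: the bridge $\e_0d_-^2z_1^az_2^bd_+^2\e_0=qt\,Y_{a-1,b+1}-q\,Y_{a,b}$ follows from Example \ref{ex: degree 2 example} and \eqref{eq: Y relation 1}, and the relations $\e_0d_-^2(z_1-qz_2)g\,d_+^2\e_0=0$ for symmetric $g$ (Lemma \ref{lem: level 2 zero}) do telescope the four-term expression for $[e_k,e_m]$ into $(q-1)(t-1)\sum_a Y_{k-a,m+a}$; one checks this readily for $k=m+1$ and $k=m+2$, and the general case follows the same pattern.

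The genuine gap is the one you flag yourself, and it cannot be closed by the grading argument you propose. Peeling the first slot via \eqref{eq: Y relation 1} and applying $\ad(e_k)$ determines only the $(qt)$-twisted discrete derivative of $(m_1,m_2)\mapsto[e_k,Y_{m_1,m_2,\ldots}]$ along the flow $(m_1,m_2)\mapsto(m_1-1,m_2+1)$, so the difference $\Delta$ of the two sides satisfies $\Delta(m_1,m_2,\ldots)=qt\,\Delta(m_1-1,m_2+1,\ldots)$ and nothing more. But every element $Y_{m_1-N,m_2+N,m_3,\ldots,m_n}$ along this orbit is homogeneous of the \emph{same} bidegree in the two available gradings of $\Bqt$: degree $n$ in the $d_+$-grading (each $\varphi$ has degree $1$) and degree $m_1+\cdots+m_n$ in the $z$-grading, both of which are invariant under the shift $(m_1,m_2)\mapsto(m_1-1,m_2+1)$. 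So there is no grading that separates the terms of the orbit, no topology in which $(qt)^N\Delta(m_1-N,m_2+N,\ldots)$ could be forced to vanish, and the only relations that could kill a nonzero $\Delta$ are relations among the $Y$'s of fixed bidegree --- which is exactly what is being proved. (Appealing to the polynomial representation would be circular here, since faithfulness of it on $\e_0\Bqt\e_0$ is a consequence of the theorem.) The paper therefore does not induct on $n$ at all: the missing idea is the auxiliary block $d_-T_1^{-1}z_1^kd_+$, a level-shifted avatar of $e_k$, which Lemma \ref{lem: right end} inserts at the right end of the word and Corollary \ref{cor: push T left} transports leftward past each factor $z_1^{m_i}\varphi$ at the cost of an explicit correction term $D_i$ involving $h_{m_i-k-1}(z_1,z_2)$; Lemma \ref{lem: A relation 2} (driven again by \eqref{eq: Y relation 1}) expands each $D_i$ as the $i$-th summand of \eqref{eq: Y relation 2}, and summing the telescoping identity $G_i=G_{i-1}-q^{-1}(1-q)D_{i-1}$ produces the full commutator in one pass. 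Supplying this Hecke-algebra computation (Lemma \ref{lem: z through T inverse} is the engine) is the real content of Section \ref{sec: Y proofs} that your proposal is missing.
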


We prove Theorem \ref{thm: Y commutation relation} below in Section \ref{sec: Y proofs}.

\begin{example}
\label{ex: degree 2 example}
For $n=2$ we have
\begin{align*}
Y_{m_1,m_2}&=d_{-}z_1^{m_1}\varphi z_1^{m_2}d_{+}=\frac{1}{q-1}d_{-}z_1^{m_1}d_{+}d_{-}z_1^{m_2}d_{+}-\frac{1}{q-1}d_{-}z_1^{m_1}d_{-}d_{+}z_1^{m_2}d_{+}
\\
&=\frac{1}{q-1}e_{m_1}e_{m_2}-\frac{1}{q-1}d_{-}^2z_1^{m_1}z_2^{m_2}d_{+}^2.
\end{align*}
\end{example}

\begin{remark}
\label{rem: from Y to cubic}
We can compare the relations \eqref{eq: Y relation 1} and \eqref{eq: Y relation 2} with earlier computations: 
\begin{itemize}
\item The relation \eqref{eq: Y relation 1} implies
$$
Y_{m_1,m_2}-qtY_{m_1-1,m_2+1}+e_{m_1}e_{m_2}=0
$$
which is  equivalent to Lemma \ref{lem: ee to level 2}.

\item  By \eqref{eq: Y relation 2} we get
$$
[e_{1},e_{-1}]=(q-1)(t-1)(Y_{0,0}+Y_{-1,1})
$$
while 
$$
Y_{0,0}-qtY_{-1,1}=-e_0^2,\ Y_{-1,1}=q^{-1}t^{-1}Y_{0,0}+q^{-1}t^{-1}e_0^2.
$$
By combining these, we get
$$
[e_{1},e_{-1}]-q^{-1}t^{-1}(q-1)(t-1)e_0^2=(1+q^{-1}t^{-1})(q-1)(t-1)Y_{0,0}
$$
which is equivalent to \eqref{eq: Y 00}.
\item Finally, by \eqref{eq: Y relation 2} we get
$
[e_0,Y_{0,0}]=0
$
which is equivalent to the cubic relation \eqref{eq: cubic e}.
\end{itemize}
\end{remark}

\begin{theorem}
\label{thm: spherical generated by e}
We have $\e_0\Bqt\e_0=\Eqt^{>}$.
\end{theorem}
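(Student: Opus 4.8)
The plan is to prove the two inclusions separately; the inclusion $\Eqt^{>} \subseteq \e_0\Bqt\e_0$ is immediate, since by definition $\Eqt^{>}$ is the subalgebra of $\e_0\Bqt\e_0$ generated by the $e_m$. For the reverse inclusion $\e_0\Bqt\e_0 \subseteq \Eqt^{>}$, Theorem \ref{thm: special generate}(c) tells us that $\e_0\Bqt\e_0$ is generated as an algebra by the elements $Y_{m_1,\ldots,m_n}$, so it suffices to prove that each $Y_{m_1,\ldots,m_n}$ lies in $\Eqt^{>}$, i.e. can be expressed as a polynomial in the generators $e_k = Y_k$.

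To establish this, I would invoke the relations of Theorem \ref{thm: Y commutation relation}. The key observation is that \eqref{eq: Y relation 1} and \eqref{eq: Y relation 2}, together with the base identity $Y_m = e_m$, are precisely the abstract relations (49) and (50) appearing in \cite{GNtrace1}. These two relations drive an induction on the length $n$ of the index tuple. The base case $n = 1$ is the identity $Y_m = e_m \in \Eqt^{>}$. For the inductive step, relation \eqref{eq: Y relation 2}, applied to a tuple of length $n-1$, expresses the commutator $[e_k, Y_{m_1,\ldots,m_{n-1}}]$ --- which lies in $\Eqt^{>}$ by the inductive hypothesis --- as an explicit linear combination of length-$n$ elements, showing that many such combinations already lie in $\Eqt^{>}$. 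Relation \eqref{eq: Y relation 1}, in turn, shows that modulo $\Eqt^{>}$ the element $Y_{m_1,\ldots,m_n}$ is congruent to $qt\,Y_{m_1,\ldots,m_i-1,m_{i+1}+1,\ldots,m_n}$, since the product $Y_{m_1,\ldots,m_i}Y_{m_{i+1},\ldots,m_n}$ on its right-hand side is a product of two shorter tuples, hence in $\Eqt^{>}$ by induction.

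The remaining work --- showing that these two families of relations combine to pin down every length-$n$ element as a polynomial in the $e_k$ --- is exactly the content of \cite[Theorem 2.13]{GNtrace1}, which I would cite directly once the matching of relations is in place. This yields $Y_{m_1,\ldots,m_n} \in \Eqt^{>}$ for all tuples, whence $\e_0\Bqt\e_0 \subseteq \Eqt^{>}$ and the desired equality follows.

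The main obstacle lies upstream rather than in this final assembly: verifying the relations \eqref{eq: Y relation 1} and \eqref{eq: Y relation 2} in $\Bqt$ (carried out in Section \ref{sec: Y proofs}) and confirming that they match the conventions of \cite{GNtrace1} closely enough to apply \cite[Theorem 2.13]{GNtrace1}. Note that the shift relation \eqref{eq: Y relation 1} does not terminate on its own --- one may shift weight between adjacent entries indefinitely in either direction while preserving their sum --- so it is essential that \eqref{eq: Y relation 2} supplies the additional constraints that make the induction close. This interplay between the two relations is precisely what \cite[Theorem 2.13]{GNtrace1} formalizes.
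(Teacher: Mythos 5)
Your proposal is correct and follows essentially the same route as the paper: both inclusions are handled identically, with Theorem \ref{thm: special generate}(c) reducing the problem to the elements $Y_{m_1,\ldots,m_n}$ and the relations of Theorem \ref{thm: Y commutation relation} feeding into the abstract result of \cite{GNtrace1} (relations (49)--(50) and their Proposition/Theorem 2.13) to express each $Y_{m_1,\ldots,m_n}$ as a polynomial in the $e_k$. Your added sketch of the induction behind the citation and your remark that \eqref{eq: Y relation 1} alone does not terminate are accurate elaborations, not a departure from the paper's argument.
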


\begin{proof}
By definition we have $\Eqt^{>}\subset \e_0\Bqt\e_0$, so we need to prove the reverse inclusion.

By Theorem \ref{thm: special generate} any element  of $\e_0\Bqt\e_0$ can be written as a polynomial in $Y_{m_1,\ldots,m_n}$. 
On the other hand, by \cite[Proposition 2.13]{GNtrace1} 
the relations \eqref{eq: Y relation 1} and \eqref{eq: Y relation 2} imply that 
$Y_{m_1,\ldots,m_n}$ can be written as polynomials in $e_k$. 
\end{proof}

\begin{remark}
By \cite[Proposition 4.9]{negut2018hecke} the relations \eqref{eq: Y relation 1} and \eqref{eq: Y relation 2} imply the quadratic and cubic relations \eqref{eq: quadratic e},\eqref{eq: cubic e} for $e_k=Y_{k}$. So  Theorem \ref{thm: Y commutation relation} yields yet another proof of Theorem \ref{thm: E plus}.
\end{remark}

\begin{corollary}
We have $\e_0\Bqt^{-}\e_0=\Eqt^{<}$ and $\e_0\DBqt\e_0=\Eqt$.
\end{corollary}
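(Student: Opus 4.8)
The plan is to derive both equalities from Theorem \ref{thm: spherical generated by e}, which gives $\e_0\Bqt\e_0=\Eqt^{>}$, by combining it with the anti-involution $\Theta$ of Lemma \ref{lem:antiinvolution-double} and with a decomposition of paths in $\DBqt$ into excursions on either side of the vertex $0$.

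First I would establish $\e_0\Bqt^{-}\e_0=\Eqt^{<}$ by applying $\Theta$. Since $\Theta$ is an anti-involution fixing $\e_0$ and sending the positive-region generators $d_{\pm},z_i,T_i$ to the negative-region generators $d_{\pm},z_i,T_i^{-1}$ (up to the scalars recorded in Lemma \ref{lem:antiinvolution-double}), it carries $\e_0\Bqt\e_0$ anti-isomorphically onto $\e_0\Bqt^{-}\e_0$; by Lemma \ref{lem: theta e f} it also sends $\Eqt^{>}$ onto $\Eqt^{<}$. Applying $\Theta$ to the equality of Theorem \ref{thm: spherical generated by e} therefore yields $\e_0\Bqt^{-}\e_0=\Eqt^{<}$.

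For the second equality I would first upgrade the positive and negative statements to include the Delta operators, that is, show $\e_0\DBqt^{\geq}\e_0=\Eqt^{\geq}$ and $\e_0\DBqt^{\leq}\e_0=\Eqt^{\leq}$. Given a monomial path from $0$ to $0$ in $\DBqt^{\geq}$, I would push every $\Delta_{p_m}$ and $\Delta^{\ast}_{p_m}$ leftward toward the initial vertex $0$: by \eqref{eq:delta}, \eqref{eq:delta star} and \eqref{eq: delta delta star} these operators commute with $z_j$, $T_i$, $d_{-}$ and with each other, while \eqref{eq: delta d+} gives $d_{+}\Delta_{p_m}=\Delta_{p_m}d_{+}-z_1^{m}d_{+}$ (and $d_{+}\Delta^{\ast}_{p_m}=\Delta^{\ast}_{p_m}d_{+}-z_1^{-m}d_{+}$). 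Each pass of a Delta operator across a $d_{+}$ produces one term in which the operator has moved left and one correction term with an extra $z_1^{\pm m}$ but one fewer Delta. Inducting on the number of Delta operators, every such monomial becomes a linear combination of products $P(\Delta_{p_\bullet},\Delta^{\ast}_{p_\bullet})\,B$, where $P$ is a polynomial in the vertex-$0$ generators $\e_0\Delta_{p_m}\e_0,\e_0\Delta^{\ast}_{p_m}\e_0$ and $B\in\e_0\Bqt\e_0=\Eqt^{>}$ by Theorem \ref{thm: spherical generated by e}. Hence $\e_0\DBqt^{\geq}\e_0=\Eqt^{\geq}$, and applying $\Theta$ gives $\e_0\DBqt^{\leq}\e_0=\Eqt^{\leq}$. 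Finally I would take an arbitrary monomial path $p$ from $0$ to $0$ in $\DBqt$ and cut it at the positions where it sits at vertex $0$. Each resulting segment begins and ends at $0$ and never returns to $0$ in between; since consecutive vertices of $Q_{\Z}$ differ by $1$, one cannot pass from a positive to a negative vertex without crossing $0$, so each segment stays entirely among the non-negative vertices or entirely among the non-positive ones. Thus each segment lies in $\e_0\DBqt^{\geq}\e_0=\Eqt^{\geq}$ or in $\e_0\DBqt^{\leq}\e_0=\Eqt^{\leq}$, and $p$, being a product of such factors, lies in $\Eqt$. This yields $\e_0\DBqt\e_0\subseteq\Eqt$; the reverse inclusion is immediate from the definition of $\Eqt$, so $\e_0\DBqt\e_0=\Eqt$.

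The main obstacle is the bookkeeping in the Delta-pushing step: one must check that pushing a Delta operator originally based at a vertex $k>0$ all the way to vertex $0$ produces exactly the generator $\e_0\Delta_{p_m}\e_0$, the intermediate commutators being governed solely by \eqref{eq: delta d+}, and that the correction terms remain monomials of the same shape with strictly fewer Delta operators, so that the induction closes. The excursion decomposition, by contrast, is a purely combinatorial feature of the quiver $Q_{\Z}$ and is routine.
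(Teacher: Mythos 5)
Your proof is correct and follows essentially the same route as the paper: the first equality by applying $\Theta$ to Theorem \ref{thm: spherical generated by e}, and the second by cutting a monomial in $\e_0\DBqt\e_0$ at its visits to the vertex $0$, so that each factor lies in a piece already known to be contained in $\Eqt$. Your explicit Delta-pushing induction via \eqref{eq: delta d+} merely spells out a rewriting step the paper leaves implicit in its claim that any monomial in $\e_0\DBqt\e_0$ is a product of monomials in $\e_0\Bqt\e_0$, $\e_0\Bqt^{-}\e_0$ and $\e_0\Delta_{p_m}\e_0$, $\e_0\Delta^{\ast}_{p_m}\e_0$.
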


\begin{proof}
By applying the involution $\Theta$ we conclude that $\e_0\Bqt^{-}\e_0=\Eqt^{<}$.

By definition, we have $\Eqt\subset \e_0\DBqt\e_0$. Any monomial in $\e_0\DBqt\e_0$ can be written as a product of monomials in $\e_0\Bqt\e_0$, $\e_0\Bqt^{-}\e_0$ and $\e_0\Delta_{p_m}\e_0,\e_0\Delta_{p_m}^*\e_0$. By the above, all these are contained in $\Eqt$, so $\e_0\DBqt\e_0\subset \Eqt$.
\end{proof}

We have proved Theorem \ref{thm:main-spherical} assuming Theorem \ref{thm: Y commutation relation}, whose proof is contained in the next section.

\begin{remark}
In \cite{BHMPS3} Blasiak et al. consider more general elements of $\EHA$ (in the form of shuffle algebra) parametrized by a tuple of integers $(m_1,\ldots,m_n)$ and a triple of Dyck paths $R_q,R_t,R_{qt}$. It would be interesting to find the analogues of such elements in $\DBqt$.
\end{remark}

\subsection{Proof of Theorem \ref{thm: Y commutation relation}}
\label{sec: Y proofs}
We will use the notation
$$
h_{m-1}(z_1,z_2)=\frac{z_1^m-z_2^m}{z_1-z_2},\ m\in \Z.
$$
Note that $h_{m-1}(z_1,z_2)$ is symmetric in $z_1,z_2$ and we have
$$
h_{m-1}(z_1,z_2)=\begin{cases}
z_1^{m-1}+\ldots+z_2^{m-1} & \mathrm{if}\ m>0\\
0 & \mathrm{if}\ m=0\\
-(z_1^{m}z_2^{-1}+\ldots+z_1^{-1}z_2^m)=-(z_1z_2)^{m}h_{-m-1}(z_1,z_2)& \mathrm{if}\ m<0.\\
\end{cases}
$$
Also, we have a recursion 
\begin{equation}
\label{eq: h recursion}
h_{m-1}(z_1,z_2)=z_2^{m-1}+z_1h_{m-2}(z_1,z_2).
\end{equation}

\begin{lemma}
\label{lem: z through T inverse}
We have 
\begin{equation}
\label{eq: z_1 through T inverse}
z_1^aT_1^{-1}=T_1^{-1}z_2^a-q^{-1}(1-q)h_{a-1}(z_1,z_2)z_1
\end{equation}
and
\begin{equation}
\label{eq: z_2 through T inverse}
z_2^aT_1^{-1}=T_1^{-1}z_1^a+q^{-1}(1-q)h_{a-1}(z_1,z_2)z_1.
\end{equation}
\end{lemma}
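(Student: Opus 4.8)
The plan is to prove both identities by a single induction on the exponent $a \in \Z$, bootstrapping from the two relations of the affine Hecke algebra available at vertex $1$: the quadratic relation \eqref{eq:hecke relns}, which I rewrite as $T_1 = qT_1^{-1} + (1-q)$ (equivalently $T_1^{-1} = q^{-1}T_1 + 1 - q^{-1}$), and the cross relation \eqref{eq:T and z}, which for $i = 1$ reads $T_1^{-1}z_2T_1^{-1} = q^{-1}z_1$. Throughout I will use that the $z_i$ commute with each other and that $h_{a-1}(z_1,z_2)$ is symmetric, so any $z_i$ may be freely moved past an $h$-factor.

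First I would establish the base cases $a = 1$. Left-multiplying the cross relation by $T_1$ gives $z_2T_1^{-1} = q^{-1}T_1z_1$, and substituting $T_1 = qT_1^{-1} + (1-q)$ yields $z_2T_1^{-1} = T_1^{-1}z_1 + q^{-1}(1-q)z_1$, which is \eqref{eq: z_2 through T inverse} for $a = 1$ since $h_0 = 1$. Symmetrically, right-multiplying the cross relation by $T_1$ gives $T_1^{-1}z_2 = q^{-1}z_1T_1$, and solving for $z_1T_1^{-1}$ after the same substitution produces $z_1T_1^{-1} = T_1^{-1}z_2 - q^{-1}(1-q)z_1$, which is \eqref{eq: z_1 through T inverse} for $a = 1$. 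The case $a = 0$ is trivial because $h_{-1} = 0$.

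For the inductive step on \eqref{eq: z_2 through T inverse} I would multiply the identity for exponent $a$ on the left by $z_2$ and insert the $a = 1$ base relation into the leading factor $z_2T_1^{-1}$; after commuting the $z_i$, the identity for exponent $a + 1$ collapses to the purely polynomial statement $z_1^a + z_2\, h_{a-1}(z_1,z_2) = h_a(z_1,z_2)$, which is the symmetric form of the recursion \eqref{eq: h recursion}. The same computation for \eqref{eq: z_1 through T inverse}, multiplying by $z_1$ instead, reduces to $z_2^a + z_1\, h_{a-1}(z_1,z_2) = h_a(z_1,z_2)$, which is \eqref{eq: h recursion} as stated. To cover negative $a$ I would run the recursion downward, the needed $a = -1$ base relation following from the $a = 1$ relation after multiplying by appropriate powers of $z_1^{-1}$ and $z_2^{-1}$; the explicit form of $h_{m-1}$ for $m < 0$ recorded before the lemma makes the negative-power bookkeeping routine.

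I expect no serious obstacle: the lemma is elementary, and the only delicate points are selecting the correct form of the $h$-recursion for each identity (the symmetric form $h_a = z_1^a + z_2 h_{a-1}$ for \eqref{eq: z_2 through T inverse} versus the form \eqref{eq: h recursion} itself for \eqref{eq: z_1 through T inverse}) and ensuring the induction runs in both directions so the formulas hold for all $a \in \Z$. As a consistency check, adding the two target identities predicts that the symmetric Laurent polynomial $z_1^a + z_2^a$ commutes with $T_1^{-1}$, which is the standard centrality of symmetric functions in the affine Hecke algebra; this gives an alternative derivation of \eqref{eq: z_1 through T inverse} from \eqref{eq: z_2 through T inverse} once that centrality is granted, but the parallel induction above keeps the argument self-contained.
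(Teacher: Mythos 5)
Your proposal is correct and follows essentially the same route as the paper: the identical base computation $z_2T_1^{-1}=T_1^{-1}z_1+q^{-1}(1-q)z_1$, obtained by combining the cross relation $T_1^{-1}z_2T_1^{-1}=q^{-1}z_1$ with the quadratic relation $T_1=(1-q)+qT_1^{-1}$, followed by induction on $a$ in both directions using the recursion \eqref{eq: h recursion}. The only cosmetic difference is that the paper deduces \eqref{eq: z_1 through T inverse} from \eqref{eq: z_2 through T inverse} via the centrality identity $(z_1^a+z_2^a)T_1^{-1}=T_1^{-1}(z_1^a+z_2^a)$ rather than running a second parallel induction, a shortcut you yourself note at the end of your argument.
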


\begin{proof}
We have
$z_1=qT_1^{-1}z_2T_1^{-1}$, so 
$$
z_2T_1^{-1}=q^{-1}T_1z_1=q^{-1}(1-q)z_1+T_1^{-1}z_1
$$
where we used the quadratic relation
$T_1=(1-q)+qT_1^{-1}.$

If $a=0$ then \eqref{eq: z_2 through T inverse} is clear. For $a\neq 0$ we prove it by induction by observing
$$
z_2^aT_1^{-1}=z_2^{a-1}(q^{-1}(1-q)z_1+T_1^{-1}z_1)=
q^{-1}(1-q)z_2^{a-1}z_1+(z_2^{a-1}T_1^{-1})z_1
$$
and using \eqref{eq: h recursion}.
Finally, \eqref{eq: z_1 through T inverse} follows from \eqref{eq: z_2 through T inverse} since 
$$
(z_1^a+z_2^a)T_1^{-1}=T_1^{-1}(z_1^a+z_2^a).
$$
\end{proof}

\begin{lemma}
\label{lem: push T left}
We have 
$$
\varphi d_{-}T_1^{-1}z_1^{k}d_{+}\e_1=d_{-}T_1^{-1}z_1^{k}d_{+}\varphi\e_1+q^{-1}(1-q)d_{-}h_{k-1}(z_1,z_2)z_1d_{+}\varphi\e_1.
$$
\end{lemma}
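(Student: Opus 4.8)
The plan is to transport the leftmost $\varphi$, which acts at vertex $1$, rightward past $d_-$, $z_1^k$, and $d_+$ using the commutation relations for $\varphi$ in \eqref{eq:phi} and Lemma \ref{lem: phi relations}, and then to tidy up the result with Lemma \ref{lem: z through T inverse}. Throughout one must track the vertex at which each operator acts: in $\varphi d_{-}T_1^{-1}z_1^{k}d_{+}\e_1$, reading right to left, $d_+$ sends $V_1 \to V_2$, so $T_1^{-1}$ and $z_1^k$ act on $V_2$, $d_-$ sends $V_2 \to V_1$, and the outer $\varphi$ acts on $V_1$.

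First I would apply the left relation in \eqref{eq:phi} with $k = 2$, namely $q\varphi d_{-} = d_{-}\varphi T_{1}$, rewritten as $\varphi d_{-} = q^{-1}d_{-}\varphi T_1$; substituting and cancelling $T_1 T_1^{-1} = 1$ yields $q^{-1}d_{-}\varphi z_1^k d_{+}\e_1$, where $\varphi$ now acts on $V_2$. Next I would push $\varphi$ through $z_1^k$ using Lemma \ref{lem: phi relations}(a) in the form $\varphi z_1 = z_2 \varphi$ on $V_2$, which iterates (using invertibility of the $z_i$ for negative $k$) to $\varphi z_1^k = z_2^k \varphi$, giving $q^{-1}d_{-}z_2^k \varphi d_{+}\e_1$. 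Then I would use the right relation in \eqref{eq:phi}, $T_1 \varphi d_{+} = q d_{+}\varphi$, rewritten as $\varphi d_{+} = q T_1^{-1}d_{+}\varphi$, which returns $\varphi$ to vertex $1$ and produces $d_{-}z_2^k T_1^{-1}d_{+}\varphi\e_1$.

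The final step is purely internal to the affine Hecke algebra at vertex $2$: I would invoke \eqref{eq: z_2 through T inverse} of Lemma \ref{lem: z through T inverse} with $a = k$, that is $z_2^k T_1^{-1} = T_1^{-1}z_1^k + q^{-1}(1-q)h_{k-1}(z_1,z_2)z_1$, and substitute it into $d_{-}z_2^k T_1^{-1}d_{+}\varphi\e_1$. This splits the expression precisely into the two terms $d_{-}T_1^{-1}z_1^k d_{+}\varphi\e_1$ and $q^{-1}(1-q)d_{-}h_{k-1}(z_1,z_2)z_1 d_{+}\varphi\e_1$ asserted by the lemma.

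I do not expect a genuine obstacle: the argument is a chain of four substitutions. The only point demanding care is bookkeeping the vertex at which $\varphi$ lives at each stage—it begins on $V_1$, moves to $V_2$ upon crossing $d_-$, remains on $V_2$ while crossing $z_1^k$, and returns to $V_1$ after crossing $d_+$—since the relations \eqref{eq:phi} and Lemma \ref{lem: phi relations} are sensitive to the ambient vertex, and one should confirm that $\varphi$ is never invoked on $\e_0$ (it is not).
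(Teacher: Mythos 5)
Your proof is correct and takes essentially the same route as the paper's: the paper's proof is exactly your chain of four substitutions—the left relation of \eqref{eq:phi} at $\e_2$ (cancelling $T_1T_1^{-1}$), Lemma \ref{lem: phi relations}(a), the right relation of \eqref{eq:phi}, and finally \eqref{eq: z_2 through T inverse} of Lemma \ref{lem: z through T inverse} with $a=k$—applied in the same order. Your vertex bookkeeping (including the remark that $\varphi$ is never invoked on $\e_0$) is accurate, so nothing is missing.
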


\begin{proof}
We have
\begin{align*}
\varphi d_{-}T_1^{-1}z_1^{k}d_{+}\e_1&=q^{-1}d_{-}\varphi z_1^{k}d_{+}\e_1=
q^{-1}d_{-}z_2^{k}\varphi d_{+}\e_1=
d_{-}z_2^{k}T_1^{-1}d_{+}\varphi\e_1
\\
&=d_{-}T_1^{-1}z_1^{k}d_{+}\varphi\e_1+q^{-1}(1-q)d_{-}h_{k-1}(z_1,z_2)z_1d_{+}\varphi\e_1.
\end{align*}
Here we apply the first relation in \eqref{eq:phi}, then Lemma \ref{lem: phi relations}(a), followed by the second relation in \eqref{eq:phi}, and finally Lemma \ref{lem: z through T inverse}.
\end{proof}

\begin{corollary}
\label{cor: push T left}
We have 
$$
z_1^{m}\varphi \left[d_{-}T_1^{-1}z_1^{k}d_{+}\right]\e_1=\left[d_{-}T_1^{-1}z_1^{k}d_{+}\right]z_1^{m}\varphi\e_1-q^{-1}(1-q)d_{-}(z_1z_2)^kh_{m-k-1}(z_1,z_2)z_1d_{+}\varphi\e_1.
$$
\end{corollary}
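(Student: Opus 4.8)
The plan is to mirror the computation in the proof of Lemma \ref{lem: push T left}, carrying the extra factor $z_1^m$ through each step and then performing a slightly longer push through $T_1^{-1}$ at the end. First I would move $\varphi d_-T_1^{-1}$ to the right exactly as before: the first relation in \eqref{eq:phi} (applied at level two, where it produces the factor $T_1$) gives $\varphi d_- T_1^{-1}=q^{-1}d_-\varphi$. I then commute $z_1^m$ past $d_-$ using $z_1d_-=d_-z_1$ from \eqref{eq: d z}, rewrite $\varphi z_1^k=z_2^k\varphi$ via Lemma \ref{lem: phi relations}(a), and apply the second relation in \eqref{eq:phi} in the form $q^{-1}\varphi d_+=T_1^{-1}d_+\varphi$. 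This chain converts the left-hand side into
\[
z_1^m\varphi\,[d_-T_1^{-1}z_1^kd_+]\e_1 = d_-\,z_1^m z_2^k\,T_1^{-1}\,d_+\varphi\,\e_1 .
\]

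The heart of the argument is then a purely affine-Hecke computation in $\AH_2$: I would push the monomial $z_1^m z_2^k$ through $T_1^{-1}$. Applying \eqref{eq: z_2 through T inverse} with $a=k$ to move $z_2^k$ across $T_1^{-1}$, and then \eqref{eq: z_1 through T inverse} with $a=m$ to move the remaining $z_1^m$ across, yields
\[
z_1^m z_2^k T_1^{-1} = T_1^{-1}z_1^k z_2^m + q^{-1}(1-q)\big(z_1^m h_{k-1}(z_1,z_2)-z_1^k h_{m-1}(z_1,z_2)\big)z_1 .
\]
The bracketed combination is handled by the elementary identity
\[
z_1^m h_{k-1}(z_1,z_2)-z_1^k h_{m-1}(z_1,z_2) = -(z_1z_2)^k h_{m-k-1}(z_1,z_2),
\]
which I would verify directly from $h_{j-1}(z_1,z_2)=(z_1^j-z_2^j)/(z_1-z_2)$, since both sides equal $(z_1^k z_2^m - z_1^m z_2^k)/(z_1-z_2)$.

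Sandwiching the resulting formula for $z_1^m z_2^k T_1^{-1}$ between $d_-$ on the left and $d_+\varphi\e_1$ on the right produces the sum $d_-T_1^{-1}z_1^k z_2^m d_+\varphi\e_1 - q^{-1}(1-q)d_-(z_1z_2)^k h_{m-k-1}(z_1,z_2)z_1 d_+\varphi\e_1$. To recognize the first summand as the first term of the Corollary, I would use $z_2^m d_+ = d_+ z_1^m$ (again from \eqref{eq: d z}), so that $d_-T_1^{-1}z_1^k z_2^m d_+\varphi = [d_-T_1^{-1}z_1^k d_+]\,z_1^m\varphi$; the second summand already matches. The only genuine obstacle is bookkeeping: keeping the two applications of Lemma \ref{lem: z through T inverse} in the correct order and with the correct signs, and confirming the $h$-identity. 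Once these are in place, the commutations of $z_1^m$ past $d_\pm$ and of $\varphi$ past the $z_i$ are routine, and the two sides coincide.
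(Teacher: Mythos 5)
Your proposal is correct and takes essentially the same route as the paper: the paper's proof simply multiplies Lemma \ref{lem: push T left} on the left by $z_1^m$ and then pushes $z_1^m$ through $T_1^{-1}$ via \eqref{eq: z_1 through T inverse}, whereas you re-derive that lemma inline and combine the two Hecke pushes of $z_1^m z_2^k$ across $T_1^{-1}$ into a single step. Both arguments rest on the same ingredients --- the relations \eqref{eq:phi}, Lemma \ref{lem: phi relations}(a), both parts of Lemma \ref{lem: z through T inverse}, the commutations \eqref{eq: d z} --- and conclude with the identical identity $z_1^m h_{k-1}(z_1,z_2)-z_1^k h_{m-1}(z_1,z_2)=-(z_1z_2)^k h_{m-k-1}(z_1,z_2)$, so the difference is purely organizational.
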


\begin{proof}
By Lemma \ref{lem: push T left} we get
$$
z_1^{m}\varphi d_{-}T_1^{-1}z_1^ad_{+}\e_1=z_1^md_{-}T_1^{-1}z_1^kd_{+}\varphi\e_1+q^{-1}(1-q)z_1^md_{-}h_{k-1}(z_1,z_2)z_1d_{+}\varphi\e_1.
$$
We can rewrite the terms as
\begin{align*}
z_1^md_{-}T_1^{-1}z_1^{k}d_{+}\varphi\e_1&=
d_{-}z_1^mT_1^{-1}z_1^{k}d_{+}\varphi\e_1
\\
&=d_{-}T_1^{-1}z_2^mz_1^{k}d_{+}\varphi\e_1-q^{-1}(1-q)d_{-}h_{m-1}(z_1,z_2)z_1^{k}d_{+}\varphi\e_1
\\
&=d_{-}T_1^{-1}z_1^{k}d_{+}z_1^m\varphi\e_1-q^{-1}(1-q)d_{-}h_{m-1}(z_1,z_2)z_1^{k}d_{+}\varphi
\e_1
\end{align*}
and 
$$
q^{-1}(1-q)z_1^md_{-}h_{k-1}(z_1,z_2)z_1d_{+}\varphi\e_1=
q^{-1}(1-q)d_{-}z_1^mh_{k-1}(z_1,z_2)z_1d_{+}\varphi\e_1.
$$
The result now follows from the identity
$$
z_1^mh_{k-1}(z_1,z_2)-z_1^{k}h_{m-1}(z_1,z_2)=-(z_1z_2)^{k}h_{m-k-1}(z_1,z_2).
$$
\end{proof}

\begin{lemma}
\label{lem: A relation 1}
We have 
\begin{equation}
\label{eq: A relation 1}
A_{m_1,\ldots,m_i,0}-qtA_{m_1,\ldots,m_i-1, 1}=-A_{m_1,\ldots,m_i}d_{+}d_{-}.
\end{equation}
\end{lemma}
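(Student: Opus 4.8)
The plan is to collapse the whole identity onto the single instance of relation \eqref{eq:qphi} at level $k=1$, and then to propagate it through the common left factor shared by all three $A$-elements. Write $m:=m_i$ and set
\[
Q:=\e_0 d_{-}z_1^{m_1}\varphi\cdots\varphi z_1^{m_{i-1}}\varphi\in\e_0\Bqt\e_1,
\]
the prefix consisting of $i-1$ exponents $m_1,\dots,m_{i-1}$ and $i-1$ copies of $\varphi$ (this is the only spot that needs care: by \eqref{eq: def A} an $A$-symbol with $n$ exponents carries exactly $n-1$ copies of $\varphi$). Then one reads off directly that $A_{m_1,\dots,m_i}=Qz_1^{m}$, that $A_{m_1,\dots,m_i,0}=Qz_1^{m}\varphi$, and that $A_{m_1,\dots,m_i-1,1}=Qz_1^{m-1}\varphi z_1$, and likewise $A_{m_1,\dots,m_i}d_{+}d_{-}=Qz_1^{m}d_{+}d_{-}$. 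Hence \eqref{eq: A relation 1} is obtained by left-multiplying by $Q$ the purely level-$1$ identity
\[
z_1^{m}\varphi\,\e_1-qt\,z_1^{m-1}\varphi z_1\,\e_1=-z_1^{m}d_{+}d_{-}\,\e_1\qquad(\star)
\]
in $\e_1\Bqt\e_1$.

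Next I would establish $(\star)$ in the base case $m=1$ straight from relation \eqref{eq:qphi} at $k=1$, namely $z_1(qd_{+}d_{-}-d_{-}d_{+})\e_1=qt(d_{+}d_{-}-d_{-}d_{+})z_1\e_1$. Using $\varphi=\tfrac{1}{q-1}[d_{+},d_{-}]$ I rewrite $d_{+}d_{-}-d_{-}d_{+}=(q-1)\varphi$ and $qd_{+}d_{-}-d_{-}d_{+}=(q-1)(d_{+}d_{-}+\varphi)$. Substituting these and cancelling the common factor $(q-1)$ turns \eqref{eq:qphi} into $z_1 d_{+}d_{-}\e_1+z_1\varphi\e_1=qt\,\varphi z_1\e_1$, which is precisely $(\star)$ with $m=1$ after rearranging.

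Finally, the general case of $(\star)$ follows by left-multiplying the $m=1$ identity by $z_1^{m-1}$, which is legitimate since $z_1$ commutes with itself and no further relations are required; combining this with the first paragraph gives \eqref{eq: A relation 1}. Since the argument funnels entirely into the single relation \eqref{eq:qphi} at $k=1$, there is no genuinely hard step: the only points demanding attention are the index bookkeeping that identifies $Q$ correctly (matching the count of $\varphi$'s to the count of exponents) and the observation that all the manipulations take place in $\e_1\Bqt\e_1$ \emph{before} the prefix $Q$ is attached on the left.
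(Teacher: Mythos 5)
Your proposal is correct and follows essentially the same route as the paper: both factor out the common prefix (the paper writes the left side as $d_{-}z_1^{m_1}\varphi\cdots z_1^{m_{i-1}}\varphi z_1^{m_i-1}(z_1\varphi - qt\varphi z_1)\e_1$, which is exactly your $Qz_1^{m-1}$ times the $m=1$ case of $(\star)$) and then rewrite \eqref{eq:qphi} at level $1$ as $(z_1\varphi - qt\varphi z_1)\e_1 = -z_1d_{+}d_{-}\e_1$. Your verification of the $\varphi$-bookkeeping and the use of invertibility of $z_1$ to handle arbitrary $m_i\in\Z$ are both sound, so there is nothing to add.
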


\begin{proof}
Note that 
\[A_{m_1, \dots, m_i, 0} - qtA_{m_1, \dots, m_{i}-1, 1} = d_{-}z_1^{m_1}\varphi\cdots z_1^{m_{i-1}}\varphi z_1^{m_i -1}(z_1\varphi - qt\varphi z_1)\e_1.
\]
Now the equation \eqref{eq:qphi} can be rewritten as $z_1\varphi\e_1 + z_1d_+d_-\e_1 = qt\varphi z_1\e_1$, or equivalently $(z_1\varphi - qt\varphi z_1)\e_1 = -z_1d_{+}d_{-}\e_1$, and the result follows.
\end{proof}

\begin{lemma}
\label{lem: A relation 2}
We have 
\begin{multline}
\label{eq: A relation 2}
A_{m_1,\ldots,m_i,0} \left[d_{-}(z_1z_2)^kh_{m_i-k-1}(z_1,z_2)z_1d_{+}\right]\e_1=\\
qA_{m_1,\ldots,k,m_i}-qA_{m_1,\ldots,m_i,k}+
\begin{cases}
q(t-1)
\sum_{a=1}^{m_i-k}A_{m_1,\ldots,m_i-a,k+a} & \mathrm{if}\ m_i>k\\
0 & \mathrm{if}\ m_i=k\\
-q(t-1)
\sum_{a=1}^{k-m_i}A_{m_1,\ldots,k-a,m_i+a} & \mathrm{if}\ m_i<k.
\end{cases}
\end{multline}
\end{lemma}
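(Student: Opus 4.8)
The plan is to deduce \eqref{eq: A relation 2} from Corollary \ref{cor: push T left}, whose correction term is \emph{exactly} the operator $d_-(z_1z_2)^kh_{m_i-k-1}(z_1,z_2)z_1d_+$ appearing in the bracket. First I would isolate a common left factor: by \eqref{eq: def A} we have $A_{m_1,\ldots,m_i,0}=A_{m_1,\ldots,m_{i-1}}\varphi\, z_1^{m_i}\varphi$ (reading $A_{m_1,\ldots,m_{i-1}}$ as $\e_0d_-$ when $i=1$), while every $A$-element on the right-hand side is of the form $A_{m_1,\ldots,m_{i-1}}\varphi\, z_1^{a}\varphi z_1^{b}$ with $a+b=m_i+k$. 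Thus it is enough to prove the reduced identity obtained by deleting the prefix $A_{m_1,\ldots,m_{i-1}}\varphi$, a statement in $\e_1\Bqt\e_1$ about the single ``slot'' $z_1^{m_i}\varphi$ multiplied into the bracket.

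For the reduced identity I would run Corollary \ref{cor: push T left} with $m=m_i$, which rewrites $-q^{-1}(1-q)\,d_-(z_1z_2)^kh_{m_i-k-1}(z_1,z_2)z_1d_+\,\varphi$ as the commutator $z_1^{m_i}\varphi\,[d_-T_1^{-1}z_1^kd_+]-[d_-T_1^{-1}z_1^kd_+]\,z_1^{m_i}\varphi$. The two non-correction terms are then recognized as $A$-elements with a single new index $k$ inserted either before or after the slot $m_i$: pushing $d_-T_1^{-1}z_1^kd_+$ across a $\varphi$ by the relations \eqref{eq:phi} (in the forms $q\varphi d_-=d_-\varphi T_1$ and $T_1\varphi d_+=qd_+\varphi$) together with Lemma \ref{lem: phi relations} turns such an insertion into a factor $\varphi z_1^k$, and accounts for the scalars $q$ in front of $A_{m_1,\ldots,k,m_i}$ and $-q$ in front of $A_{m_1,\ldots,m_i,k}$.

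The interior sum and its three cases come from the explicit shape of $h_{m_i-k-1}$. Expanding $h$ by the recursion \eqref{eq: h recursion} and transporting the resulting $T_1^{\pm1}$ past the monomials by Lemma \ref{lem: z through T inverse} produces, at each step, two adjacent $A$-terms whose indices differ by the elementary shift $(p,\,m_i+k-p)\mapsto(p-1,\,m_i+k-p+1)$; summing these telescopes, leaving precisely the interior terms $\pm q(t-1)\sum_a A_{m_1,\ldots,\cdot,\cdot}$, with the coefficient $(t-1)$ traced to the level-one relation \eqref{eq:qphi} (equivalently to Lemma \ref{lem: A relation 1}). The trichotomy is forced by the piecewise description of $h_{m-1}$ recalled at the beginning of Section \ref{sec: Y proofs}: $h_{m_i-k-1}=0$ exactly when $m_i=k$, which makes the bracket and the right-hand side vanish simultaneously, whereas the identity $h_{m-1}=-(z_1z_2)^{m}h_{-m-1}$ converts the regime $m_i<k$ into the regime $m_i>k$ with the opposite overall sign and the reversed summation range.

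The main obstacle I anticipate is not conceptual but the careful reconciliation of bookkeeping: matching the placement of the auxiliary $\varphi$ produced by Corollary \ref{cor: push T left} (which sits to the \emph{right} of the bracket) with the $\varphi$ ending $A_{m_1,\ldots,m_i,0}$ (which sits to its \emph{left}), and tracking the powers of $q$ and the $(1-q)$-corrections generated by Lemma \ref{lem: z through T inverse} so that the telescoping closes to the stated closed form. To control signs and scalars I would first verify the cases $n=1$ with small $|m_i-k|$ by hand and then promote the general statement by induction on $|m_i-k|$, using \eqref{eq: h recursion} for the inductive step and Lemma \ref{lem: A relation 1} to handle the base relation between the trailing indices.
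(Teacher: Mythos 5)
Your proposal has a genuine structural gap, and it starts with the reduction step. The bracket $d_{-}(z_1z_2)^kh_{m_i-k-1}(z_1,z_2)z_1d_{+}$ already carries the \emph{entire} $(m_i,k)$-content of the identity: its total $z$-degree is $2k+(m_i-k-1)+1=m_i+k$, exactly matching the indices $a+b=m_i+k$ of the $A$-elements on the right-hand side. The word to the left of the bracket must therefore be $\e_0d_{-}z_1^{m_1}\varphi\cdots z_1^{m_{i-1}}\varphi=A_{m_1,\ldots,m_{i-1},0}$, as the paper's own proof and Corollary \ref{lem: Di to Y} make explicit (the subscript in the lemma's statement is off by one, which likely seeded your reading). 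Your reduced identity, which keeps an extra slot $z_1^{m_i}\varphi$ between the prefix and the bracket, double-counts $m_i$: its left side has $z$-degree $2m_i+k$ against $m_i+k$ on the right, so it is false as stated. The correct reduced statement is fully local in $\e_1\Bqt\e_1$, namely $d_{-}(z_1z_2)^kh_{m_i-k-1}(z_1,z_2)z_1d_{+}\e_1=qz_1^k\varphi z_1^{m_i}\e_1-qz_1^{m_i}\varphi z_1^{k}\e_1+q(t-1)\sum_{a=1}^{m_i-k}z_1^{m_i-a}\varphi z_1^{k+a}\e_1$ (for $m_i>k$), and the lemma follows by left multiplication by $A_{m_1,\ldots,m_{i-1},0}$.

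The second, deeper problem is the claimed mechanism for the two boundary terms. You assert that pushing $d_{-}T_1^{-1}z_1^kd_{+}$ across a $\varphi$ via \eqref{eq:phi} and Lemma \ref{lem: phi relations} ``turns such an insertion into a factor $\varphi z_1^k$.'' It does not: Lemma \ref{lem: push T left} and Corollary \ref{cor: push T left} show precisely that commuting $z_1^{m}\varphi$ past this bracket \emph{reproduces the same bracket} plus an $h$-type correction term; the $T_1^{-1}$-bracket only resolves into honest $\varphi,z_1$-words at the two ends of a word, via $d_{-}^2T_1^{-1}=d_{-}^2$ on the far left and Lemma \ref{lem: right end} on the far right. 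That end-to-end telescoping is exactly the proof of Theorem \ref{thm: Y commutation relation}, in which Lemma \ref{lem: A relation 2} is an \emph{input} (it evaluates the correction terms $D_i$); so your route would have to redo the whole theorem while still lacking a way to evaluate the $h$-brackets it generates. There is also a smaller obstruction you flagged but cannot repair by bookkeeping: Corollary \ref{cor: push T left} only expresses the $h$-bracket \emph{with a trailing $\varphi$} as a commutator, whereas the lemma's word ends at $d_{+}\e_1$, and $\varphi$ is not known to be right-cancellable. The paper's actual proof avoids $T_1^{\pm 1}$ entirely and is much more elementary: expand $(z_1z_2)^kh_{m_i-k-1}z_1$ into the monomials $\sum_a z_1^{m_i-a}z_2^{k+a}$, commute the $z$'s past $d_{\pm}$ using \eqref{eq: d z}, substitute $d_{-}d_{+}=d_{+}d_{-}-(q-1)\varphi$, convert each $d_{+}d_{-}$ term by Lemma \ref{lem: A relation 1} (i.e., relation \eqref{eq:qphi} at level one, which is also where your $(t-1)$ correctly comes from via $qt=q+q(t-1)$), and telescope. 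Your trichotomy discussion ($h_{m_i-k-1}=0$ at $m_i=k$, and $h_{m-1}=-(z_1z_2)^mh_{-m-1}$ swapping the regimes with a sign) does match the paper's case analysis, but it sits on top of a core step that fails.
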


\begin{proof} We divide in the three cases appearing in the statement of the lemma.  \\

{\bf Case 1:} $m_i> k$. In this case
$$
(z_1z_2)^kh_{m_i-k-1}(z_1,z_2)z_1=(z_1z_2)^k\left(\sum_{a=0}^{m_i-k-1}z_1^{m_i-k-1-a}z_2^{a}\right)z_1=\sum_{a=0}^{m_i-k-1}z_1^{m_i-a}z_2^{k+a},
$$
hence
\begin{align*}
d_{-}(z_1z_2)^kh_{m_i-k-1}(z_1,z_2)z_1d_{+}&=\sum_{a=0}^{m_i-k-1}
d_{-}z_1^{m_i-a}z_2^{k+a}d_{+}=
\sum_{a=0}^{m_i-k-1}
z_1^{m_i-a}d_{-}d_{+}z_1^{k+a}
\\
&=\sum_{a=0}^{m_i-k-1}z_1^{m_i-a}d_{+}d_{-}z_1^{k+a}-
(q-1)\sum_{a=0}^{m_i-k-1}z_1^{m_i-a}\varphi z_1^{k+a}.
\end{align*}
Therefore the left hand side of \eqref{eq: A relation 2} equals
$$
\sum_{a=0}^{m_i-k-1}A_{m_1,\ldots,m_{i-1},m_i-a}d_{+}d_{-}z_1^{k+a}-(q-1)\sum_{a=0}^{m_i-k-1}A_{m_1,\ldots,m_{i-1},m_i-a,k+a}.
$$
By \eqref{eq: A relation 1} we can rewrite it as
$$
\sum_{a=0}^{m_i-k-1}\left[-qA_{m_1,\ldots,m_i-a,k+a}+qtA_{m_1,\ldots,m_i-a-1,k+a+1}\right]=
$$
$$
qA_{m_1,\ldots,k,m_i}-qA_{m_1,\ldots,m_i,k}+q(t-1)
\sum_{a=1}^{m_i-k}A_{m_1,\ldots,m_i-a,k+a}.
$$

{\bf Case 2: $m_i=k$.} In this case $h_{m_i-k-1}=0$ and there is nothing to prove.

{\bf Case 3: $m_i<k$.} In this case
$$
(z_1z_2)^kh_{m_i-k-1}(z_1,z_2)z_1=
-(z_1z_2)^{m_i}h_{k-m_i-1}(z_1,z_2)z_1=-\sum_{a=0}^{k-m_i-1}z_1^{k-a}z_2^{m_i+a},
$$
and similarly to Case 1 the left hand side of \eqref{eq: A relation 2} equals
$$
-\sum_{a=0}^{k-m_i-1}A_{m_1,\ldots, k-a}d_{+}d_{-}z_1^{m_i+a}+(q-1)\sum_{a=0}^{k-m_i-1}A_{m_1,\ldots,k-a,m_i+a}=
$$
$$
-\sum_{a=0}^{k-m_i-1}\left[-qA_{m_1,\ldots,k-a,m_i+a }+qtA_{m_1,\ldots,k-a-1,m_i+a+1}\right]=
$$
$$
-qA_{m_1,\ldots,m_i,k}+qA_{m_1,\ldots,k,m_i}-q(t-1)
\sum_{a=1}^{k-m_i}A_{m_1,\ldots,k-a,m_i+a}.
$$
\end{proof}

Multiplying both sides of \eqref{eq: A relation 2} by $\varphi z_1^{m_{i+1}}\cdots \varphi z_1^{m_n}d_{+}\e_0$ on the right, we obtain the following result.

\begin{corollary}
\label{lem: Di to Y}
We have 
\begin{multline*}
\label{eq: G step}
d_{-}z_1^{m_1}\varphi
\cdots z_1^{m_{i-1}}\varphi \left[d_{-}(z_1z_2)^kh_{m_i-k-1}(z_1,z_2)z_1d_{+}\right]\varphi  z_1^{m_{i+1}}\cdots \varphi z_1^{m_n}d_{+}=\\
qY_{m_1,\ldots,k,m_i,\ldots,m_{n}}-qY_{m_1,\ldots,m_i,k,\ldots,m_{n}}+
\begin{cases}
q(t-1)
\sum_{a=1}^{m_i-k}Y_{m_1,\ldots,m_i-a,k+a,\ldots,m_{n}} & \mathrm{if}\ m_i>k\\
0 & \mathrm{if}\ m_i=k\\
-q(t-1)
\sum_{a=1}^{k-m_i}Y_{m_1,\ldots,k-a,m_i+a,\ldots,m_{n}} & \mathrm{if}\ m_i<k.
\end{cases}
\end{multline*}
\end{corollary}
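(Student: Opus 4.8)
The plan is to obtain this identity with essentially no new computation, by right-multiplying the already-established relation \eqref{eq: A relation 2} of Lemma \ref{lem: A relation 2} by a single fixed element. Concretely, I would set
\[
S := \varphi z_1^{m_{i+1}}\varphi\cdots\varphi z_1^{m_n}d_{+}\e_0 \in \e_1\Bqt\e_0
\]
and multiply both sides of \eqref{eq: A relation 2} on the right by $S$. Since multiplication is well defined in $\Bqt$ and \eqref{eq: A relation 2} is an honest equality in $\e_0\Bqt\e_1$, right-multiplication preserves it, and the whole corollary reduces to correctly re-reading both sides of the resulting equation.

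The essential point on the right-hand side is a conversion identity turning the level-one elements $A_{\underline{j}}$ into the spherical elements $Y$. For any tuple $(j_1,\ldots,j_N)$, concatenating the defining word \eqref{eq: def A} of $A_{j_1,\ldots,j_N}=\e_0 d_{-}z_1^{j_1}\varphi\cdots\varphi z_1^{j_N}\e_1$ with $S$ reproduces exactly the defining word \eqref{eq: def Y} of a $Y$:
\[
A_{j_1,\ldots,j_N}\cdot S = \e_0 d_{-}z_1^{j_1}\varphi\cdots\varphi z_1^{j_N}\varphi z_1^{m_{i+1}}\varphi\cdots\varphi z_1^{m_n}d_{+}\e_0 = Y_{j_1,\ldots,j_N,m_{i+1},\ldots,m_n},
\]
the $\varphi$ separating $z_1^{j_N}$ from $z_1^{m_{i+1}}$ being supplied by the leading factor of $S$. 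Applying this termwise to the right-hand side of \eqref{eq: A relation 2} sends $A_{m_1,\ldots,k,m_i}$, $A_{m_1,\ldots,m_i,k}$, and the summands $A_{m_1,\ldots,m_i-a,k+a}$ (resp. $A_{m_1,\ldots,k-a,m_i+a}$) to the corresponding $Y$'s with the extra indices $m_{i+1},\ldots,m_n$ appended, which is precisely the right-hand side of the corollary; the three-case structure and the scalars $q$ and $q(t-1)$ are inherited unchanged.

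For the left-hand side I would substitute the definition of $A_{m_1,\ldots,m_i,0}$ into the left-hand side of \eqref{eq: A relation 2} and append $S$; reading off the resulting monomial in $d_{\pm}$, $\varphi$ and $z_1$ yields the bracketed factor $[d_{-}(z_1z_2)^k h_{m_i-k-1}(z_1,z_2)z_1 d_{+}]$ flanked on the left by the prefix ending in $\varphi$ and on the right by the suffix $\varphi z_1^{m_{i+1}}\cdots\varphi z_1^{m_n}d_{+}$, i.e. the displayed left-hand side. Because \eqref{eq: A relation 2} is already proved, there is no genuine obstacle; the one thing to watch is the index bookkeeping — tracking that the inserted index $k$ occupies the correct slot next to position $i$, and that the index count grows from $i+1$ in \eqref{eq: A relation 2} to $n+1$ here exactly by the appended tail $m_{i+1},\ldots,m_n$.
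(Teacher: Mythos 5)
Your proposal is correct and coincides exactly with the paper's proof, which derives this corollary in one line by multiplying both sides of \eqref{eq: A relation 2} on the right by $\varphi z_1^{m_{i+1}}\cdots \varphi z_1^{m_n}d_{+}\e_0$ — precisely your element $S$ — and reading off $A_{j_1,\ldots,j_N}\cdot S = Y_{j_1,\ldots,j_N,m_{i+1},\ldots,m_n}$ termwise via \eqref{eq: def A} and \eqref{eq: def Y}. The only detail you leave implicit is the degenerate case $i=n$, where $S$ reduces to $d_{+}\e_0$ and $A_{j_1,\ldots,j_N}d_{+}\e_0 = Y_{j_1,\ldots,j_N}$ directly by \eqref{eq: def Y}, which is immediate.
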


\begin{lemma}
\label{lem: right end}
We have 
$$
z_1^md_{+}d_{-}z_1^kd_{+}\e_0=
$$
$$(q-1)z_1^m\varphi z_1^kd_{+}\e_0+d_{-}T_1^{-1}z_1^kd_{+}z_1^md_{+}\e_0-q^{-1}(1-q)d_{-}(z_1z_2)^kh_{m-k-1}(z_1,z_2)z_1d_{+}^2\e_0.
$$
\end{lemma}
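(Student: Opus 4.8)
The plan is to peel off the first right-hand term using the definition of $\varphi$, and then reduce what remains to an identity that is visibly annihilated by $d_+^2$. Since $\varphi=\frac{1}{q-1}[d_+,d_-]$ acts at vertex $1$, I would first write $d_+d_-=d_-d_++(q-1)\varphi$ there, obtaining
\[
z_1^md_+d_-z_1^kd_+\e_0 = z_1^md_-d_+z_1^kd_+\e_0+(q-1)z_1^m\varphi z_1^kd_+\e_0 .
\]
The second summand is precisely the first term on the right-hand side of the statement, so the whole lemma reduces to proving
\[
z_1^md_-d_+z_1^kd_+\e_0 = d_-T_1^{-1}z_1^kd_+z_1^md_+\e_0 - q^{-1}(1-q)d_-(z_1z_2)^kh_{m-k-1}(z_1,z_2)z_1d_+^2\e_0 .
\]

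Next I would bring both of the remaining expressions into a common normal form in $\e_1\Bqt\e_0$, namely $d_-(z_1z_2)^k(\cdots)d_+^2\e_0$. Using $d_+z_i=z_{i+1}d_+$ from \eqref{eq: d z}, I slide the $d_+$'s to the right: $d_+z_1^kd_+\e_0=z_2^kd_+^2\e_0$ and $d_+z_1^md_+\e_0=z_2^md_+^2\e_0$. Using $z_1d_-=d_-z_1$ (same relation, valid since the box $z_1$ then sits at vertex $2$), the left-hand side becomes $d_-z_1^mz_2^kd_+^2\e_0=d_-(z_1z_2)^kz_1^{m-k}d_+^2\e_0$, while the first term on the right becomes $d_-T_1^{-1}z_1^kz_2^md_+^2\e_0$. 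Since $(z_1z_2)^k$ is symmetric in $z_1,z_2$, hence central in $\AH_2$, it commutes past $T_1^{-1}$, so this term equals $d_-(z_1z_2)^kT_1^{-1}z_2^{m-k}d_+^2\e_0$. At this point I would invoke \eqref{eq: z_1 through T inverse} with $a=m-k$ to rewrite $T_1^{-1}z_2^{m-k}=z_1^{m-k}T_1^{-1}+q^{-1}(1-q)h_{m-k-1}(z_1,z_2)z_1$; the resulting $h$-term is exactly the correction term appearing in the statement, so after cancellation the identity collapses to
\[
d_-(z_1z_2)^kz_1^{m-k}d_+^2\e_0 = d_-(z_1z_2)^kz_1^{m-k}T_1^{-1}d_+^2\e_0 .
\]

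Finally, this last equality is immediate from relation \eqref{eq:T d+}: since $T_1d_+^2=d_+^2$, we have $T_1^{-1}d_+^2\e_0=d_+^2\e_0$, so both sides agree and the lemma follows. I expect the only delicate points to be bookkeeping: tracking which vertex each $z_i$ and each relation lives at (so that $z_1d_-=d_-z_1$, $d_+z_i=z_{i+1}d_+$, and the centrality of $(z_1z_2)^k$ in $\AH_2$ are applied in the correct range), and matching the precise exponent $a=m-k$ and the sign in \eqref{eq: z_1 through T inverse} so that the $h_{m-k-1}$ correction cancels exactly rather than leaving a residue. There is no genuine obstacle beyond this: once everything is pushed into the normal form $d_-(z_1z_2)^k(\cdots)d_+^2\e_0$, the annihilation $(1-T_1^{-1})d_+^2\e_0=0$ does all the work.
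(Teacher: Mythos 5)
Your proposal is correct and takes essentially the same approach as the paper: both peel off $(q-1)z_1^m\varphi z_1^kd_{+}\e_0$ via $d_{+}d_{-}=d_{-}d_{+}+(q-1)\varphi$ at $\e_1$, then combine Lemma \ref{lem: z through T inverse} (with centrality of $(z_1z_2)^k$ in $\AH_2$) and the relation $T_1d_{+}^2=d_{+}^2$. The only difference is organizational: the paper inserts $T_1^{-1}$ at the start via $d_{+}^2=T_1^{-1}d_{+}^2$ and pushes the monomial through left-to-right, whereas you bring both sides to the common normal form $d_{-}(z_1z_2)^kz_1^{m-k}T_1^{-1}d_{+}^2\e_0$ and cancel at the end.
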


\begin{proof}
We have 
$$
z_1^md_{+}d_{-}z_1^kd_{+}\e_0=(q-1)z_1^m\varphi z_1^kd_{+}\e_0+z_1^md_{-}d_{+}z_1^kd_{+}\e_0.
$$
The last term can be rewritten as
$$
z_1^md_{-}d_{+}z_1^kd_{+}\e_0=d_{-}z_1^mz_2^kd_{+}^2\e_0=d_{-}z_1^mz_2^{k}T_1^{-1}d_{+}^2\e_0=
$$
$$
d_{-}T_1^{-1}z_1^kz_2^md_{+}^2\e_0-q^{-1}(1-q)d_{-}(z_1z_2)^kh_{m-k-1}(z_1,z_2)z_1d_{+}^2\e_0.
$$
\end{proof}

We are ready to prove Theorem \ref{thm: Y commutation relation}.

\begin{proof}[Proof of Theorem \ref{thm: Y commutation relation}]

The first relation \eqref{eq: Y relation 1} follows from Lemma \ref{lem: A relation 1}.

To prove the second relation \eqref{eq: Y relation 2} we combine all the previous results in this section. Let $$
G_i=d_{-}z_1^{m_1}\varphi
\cdots z_1^{m_{i-1}}\varphi \left[d_{-}T_1^{-1}z_1^{k}d_{+}\right]  z_1^{m_{i}}\cdots \varphi z_1^{m_n}d_{+}
$$
and
$$
D_i=d_{-}z_1^{m_1}\varphi
\cdots z_1^{m_{i-1}}\varphi \left[d_{-}(z_1z_2)^kh_{m_i-k-1}(z_1,z_2)z_1d_{+}\right]\varphi  z_1^{m_{i+1}}\cdots \varphi z_1^{m_n}d_{+}.
$$
By Lemma \ref{lem: right end} we get
$$
Y_{m_1,\ldots,m_n}e_k=(q-1)Y_{m_1,\ldots,m_n,k}+G_{n}-
q^{-1}(1-q)D_{n}.
$$
Note that 
\begin{align*}
G_1&=d_{-}^2T_1^{-1}z_1^{k}d_{+} z_1^{m_{1}}\cdots \varphi z_1^{m_n}d_{+}=
d_{-}^2z_1^{k}d_{+} z_1^{m_{i}}\cdots \varphi z_1^{m_n}d_{+}
=d_{-}z_1^{k}d_{-}d_{+} z_1^{m_{i}}\cdots \varphi z_1^{m_n}d_{+}
\\
&=
d_{-}z_1^{k}d_{+}d_{-} z_1^{m_{i}}\cdots \varphi z_1^{m_n}d_{+}-(q-1)d_{-}z_1^{k}\varphi z_1^{m_{i}}\cdots \varphi z_1^{m_n}d_{+}
\\
&=e_kY_{m_1,\ldots,m_n}-(q-1)Y_{k,m_1,\ldots,m_n}.
\end{align*}
By Corollary \ref{cor: push T left} we get
$$
G_i=G_{i-1}-q^{-1}(1-q)D_{i-1}
$$
and 
$$
G_1-G_n=q^{-1}(1-q)\sum_{i=1}^{n-1}D_i.
$$
Therefore
\begin{align*}
[e_k,Y_{m_1,\ldots,m_n}]&=G_1+(q-1)Y_{k,m_1,\ldots,m_n}-(q-1)Y_{m_1,\ldots,m_n,k}-G_n+q^{-1}(1-q)D_n
\\
&=(q-1)Y_{k,m_1,\ldots,m_n}-(q-1)Y_{m_1,\ldots,m_n,k}+q^{-1}(1-q)\sum_{i=1}^{n}D_i.
\end{align*}
We can expand $D_i$ as a linear combination of $Y$'s by Lemma \ref{lem: Di to Y}, and the result agrees with the right hand side of \eqref{eq: Y relation 2}
up to the sum
$$
(q-1)Y_{k,m_1,\ldots,m_n}-(q-1)Y_{m_1,\ldots,m_n,k}+q^{-1}(1-q)\sum_{i=1}^{n}\left[qY_{m_1,\ldots,k,m_i,\ldots,m_{n}}-qY_{m_1,\ldots,m_i,k,\ldots,m_{n}}\right]=0.
$$
\end{proof}

\bibliographystyle{plain}
\bibliography{bibliography.bib}

\end{document}